\newcommand{\E}{\mathbb{E}}
\newcommand{\R}{\mathbb{R}}
\newcommand{\N}{\mathbb{N}}
\newcommand{\Z}{\mathbb{Z}}
\renewcommand{\P}{\mathbb{P}}
\newcommand{\F}{\mathscr{F}}
\newcommand{\cH}{\mathcal{H}}
\newcommand{\cL}{\mathcal{L}}
\newcommand{\cM}{\mathcal{M}}
\newcommand{\cP}{\mathcal{P}}
\newcommand{\cS}{\mathcal{S}}
\newcommand{\cX}{\mathcal{X}}
\newcommand{\cU}{\mathcal{U}}
\newcommand{\whp}{whp}
\newcommand{\whpdot}{whp.} 
\newcommand{\condparentheses}[2]{\left(\left.#1\,\right\vert#2\right)}
\newcommand{\condparenthesesreversed}[2]{\left(#1\left\vert\,#2\right.\right)}
\newcommand{\condP}[2]{\mathbb{P}\condparentheses{#1}{#2}}
\newcommand{\condE}[2]{\mathbb{E}\condparentheses{#1}{#2}}
\DeclareMathOperator{\Var}{Var}
\DeclareMathOperator{\Cov}{Cov}
\DeclareMathOperator{\supp}{supp}
\newcommand{\boundary}{\partial}
\newcommand{\set}[1]{\left\{#1\right\}}
\newcommand{\shortset}[1]{\{#1\}}
\newcommand{\bigset}[1]{\big\{#1\big\}}
\newcommand{\abs}[1]{\left\vert#1\right\vert}
\newcommand{\shortabs}[1]{\vert#1\vert}
\newcommand{\bigabs}[1]{\big\vert#1\big\vert}
\newcommand{\norm}[1]{\left\Vert#1\right\Vert}
\newcommand{\shortnorm}[1]{\Vert#1\Vert}
\newcommand{\floor}[1]{\lfloor#1\rfloor}
\newcommand{\ocinterval}[1]{\left(#1\right]} 
\newcommand{\cointerval}[1]{\left[#1\right)}
\newcommand{\indicatorofset}[1]{{\mathbbm{1}}_{#1}}
\newcommand{\indicator}[1]{\indicatorofset{\set{#1}}}
\newcommand{\union}{\cup}
\newcommand{\bigunion}{\bigcup}
\newcommand{\intersect}{\cap}
\newcommand{\equalsd}{\overset{d}{=}}
\newcommand{\decreasesto}{\downarrow}
\newcommand{\st}{\text{st}}
\renewcommand{\th}{\text{th}}
\newcommand{\graphG}{\mathcal{G}}
\renewcommand{\emptyset}{\varnothing}
\newcommand{\basicparent}{p}
\newcommand{\parent}[1]{\basicparent\left(#1\right)}
\newcommand{\ancestor}[2]{\basicparent^{#1}\!\left(#2\right)}
\newcommand{\tree}{\mathcal{T}}
\newcommand{\twoPWITs}{\tree}
\newcommand{\SWT}{{\sf SWT}}
\newcommand{\BP}{{\sf BP}}
\newcommand{\fr}{{\rm fr}}
\newcommand{\unfr}{{\rm unfr}}
\newcommand{\coll}{{\rm coll}}
\newcommand{\first}{{\rm first}}
\newcommand{\lucky}{{\rm lucky}}
\newcommand{\rmnext}{{\rm next}}
\newcommand{\cemetery}{\dagger}
\newcommand{\cluster}{\mathcal{B}}
\newcommand{\thinnedcluster}{\widetilde{\cluster}}
\newcommand{\thinnedBP}{\widetilde{\BP}}
\newcommand{\explore}{\mathcal{E}}
\newcommand{\thinnedExplore}{\widetilde{\explore}}
\newcommand{\epsilonCondition}{\epsilon_0} 
\newcommand{\deltaCondition}{\delta_0} 
\newtheorem{theorem}{Theorem}[section]
\newtheorem{prop}[theorem]{Proposition}
\newtheorem{lemma}[theorem]{Lemma}
\newtheorem{coro}[theorem]{Corollary}
\newtheorem{cond}[theorem]{Condition}
\theoremstyle{definition}
\newtheorem{defn}[theorem]{Definition}
\newtheorem{remark}[theorem]{Remark}
\newtheorem{example}[theorem]{Example}
\newcommand{\ch}[1]{\color{red}#1\color{black}}
\newcommand{\textandreference}[2]{\texorpdfstring{\hyperref[PartII:#2]{#1\refPartII{#2}}}{#1\refstarPartII{#2}}}
\newcommand{\labelPartII}[1]{\label{PartII:#1}}
\newcommand{\refPartII}[1]{\ref{PartII:#1}}
\newcommand{\refstarPartII}[1]{\ref*{PartII:#1}}
\newcommand{\eqrefPartII}[1]{\eqref{PartII:#1}}
\newcommand{\lbsect}[1]{\labelPartII{s:#1}}
\newcommand{\refsect}[1]{\textandreference{Section~}{s:#1}}
\newcommand{\lbsubsect}[1]{\labelPartII{ss:#1}}
\newcommand{\refsubsect}[1]{\textandreference{Section~}{ss:#1}}
\newcommand{\lbthm}[1]{\labelPartII{t:#1}}
\newcommand{\refthm}[1]{\textandreference{Theorem~}{t:#1}}
\newcommand{\lbprop}[1]{\labelPartII{p:#1}}
\newcommand{\refprop}[1]{\textandreference{Proposition~}{p:#1}}
\newcommand{\lblemma}[1]{\labelPartII{l:#1}}
\newcommand{\reflemma}[1]{\textandreference{Lemma~}{l:#1}}
\newcommand{\lbcoro}[1]{\labelPartII{c:#1}}
\newcommand{\refcoro}[1]{\textandreference{Corollary~}{c:#1}}
\newcommand{\lbcond}[1]{\labelPartII{cond:#1}}
\newcommand{\refcond}[1]{\textandreference{Condition~}{cond:#1}}
\newcommand{\lbdefn}[1]{\labelPartII{d:#1}}
\newcommand{\refdefn}[1]{\textandreference{Definition~}{d:#1}}
\newcommand{\lbexample}[1]{\labelPartII{ex:#1}}
\newcommand{\refexample}[1]{\textandreference{Example~}{ex:#1}}
\newcommand{\lbremark}[1]{\labelPartII{rem:#1}}
\newcommand{\refremark}[1]{\textandreference{Remark~}{rem:#1}}
\newcommand{\lbitem}[1]{\labelPartII{item:#1}}
\newcommand{\refitem}[1]{\refPartII{item:#1}}
\newcommand{\refother}[1]{[Part I, #1]}
\newcommand{\textandreferencePartI}[2]{#1\ref*{PartI:#2}} 
\newcommand{\refstarPartI}[1]{\ref*{PartI:#1}}
\newcommand{\eqrefPartI}[1]{\eqref{PartI:#1}}
\newcommand{\refsectPartI}[1]{\textandreferencePartI{Section~}{s:#1}}
\newcommand{\refsubsectPartI}[1]{\textandreferencePartI{Section~}{ss:#1}}
\newcommand{\refthmPartI}[1]{\textandreferencePartI{Theorem~}{t:#1}}
\newcommand{\refpropPartI}[1]{\textandreferencePartI{Proposition~}{p:#1}}
\newcommand{\reflemmaPartI}[1]{\textandreferencePartI{Lemma~}{l:#1}}
\newcommand{\refcoroPartI}[1]{\textandreferencePartI{Corollary~}{c:#1}}
\newcommand{\refdefnPartI}[1]{\textandreferencePartI{Definition~}{d:#1}}
\definecolor{MyDarkBlue}{rgb}{0,0.08,0.50}
\definecolor{BrickRed}{rgb}{0.65,0.08,0}
\newcommand{\blank}[1]{}
\numberwithin{equation}{section}
\renewcommand{\epsilon}{\varepsilon}
\newcommand{\e}{{\mathrm e}}
\newcommand{\sss}{\scriptscriptstyle}
\newcommand {\convd}{\stackrel{d}{\longrightarrow}}
\newcommand {\convp}{\stackrel{\sss {\mathbb P}}{\longrightarrow}}
\newcommand {\convas}{\stackrel{a.s.}{\longrightarrow}}
\newcommand{\nn}{\nonumber}
\newcommand{\eqn}[1]{\begin{equation} #1 \end{equation}}
\newcommand{\prob}{{\mathbb P}}
\newcommand{\expec}{{\mathbb E}}
\newcommand{\IP}{{\rm IP}}
\newcommand{\Op}{O_{\sss \prob}}
\newcommand{\sop}{o_{\sss \prob}}
\newcommand{\FY}{F_{\sss Y}}
\newcommand{\CovSumsGen}{\Xi}
\newcommand{\eps}{\epsilon}
\newcommand{\increm}{D}
\title{Long paths in first passage percolation\\
on the complete graph II. Global branching dynamics}
\author{
Maren Eckhoff\thanks{Department of Mathematical Sciences, University of Bath, Bath, BA2 7AY, United Kingdom. Email: {\tt eckhoff.maren@gmail.com}}
\and
Jesse Goodman\thanks{Department of Statistics, University of Auckland, Private Bag 92019, Auckland 1142, New Zealand. Email: {\tt jesse.goodman@auckland.ac.nz}}
\and
Remco van der Hofstad\thanks{Department of Mathematics and Computer Science,
Eindhoven University of Technology, P.O.\ Box 513,
5600~MB Eindhoven, The Netherlands. Email:
{\tt rhofstad@win.tue.nl, f.r.nardi@tue.nl}}
\and
Francesca R.\ Nardi\footnotemark[3]
}
\begin{document}

\maketitle

\begin{abstract}
We study the random geometry of first passage percolation on the complete graph equipped with independent and identically distributed edge weights, continuing the program initiated by Bhamidi and van der Hofstad~\cite{BhaHof12}.
We describe our results in terms of a sequence of parameters $(s_n)_{n\geq 1}$ that quantifies the extreme-value behavior of small weights, and that describes different universality classes for first passage percolation on the complete graph.
We consider both $n$-independent as well as $n$-dependent edge weights.
The simplest example consists of edge weights of the form $E^{s_n}$, where $E$ is an exponential random variable with mean 1.

In this paper, we focus on the case where $s_n\rightarrow \infty$ with $s_n=o(n^{1/3})$.
Under mild regularity conditions, we identify the scaling limit of the weight of the smallest-weight path between two uniform vertices, and we prove that the number of edges in this path obeys a central limit theorem with asymptotic mean $s_n\log{(n/s_n^3)}$ and variance $s_n^2\log{(n/s_n^3)}$.
This settles a conjecture of Bhamidi and van der Hofstad~\cite{BhaHof12}.
The proof relies on a decomposition of the smallest-weight tree into an initial part following invasion percolation dynamics, and a main part following branching process dynamics.
The initial part has been studied in \cite{EckGooHofNar14a}; the current article focuses on the global branching dynamics.
\end{abstract}

\section{Model and results}
\lbsect{IntroFPP}
In this paper, we continue the program of studying first passage percolation on the complete graph initiated in \cite{BhaHof12}.
We often need to refer to results presented in \cite{EckGooHofNar14a}, so we choose to cite specific results with the following expression, e.g., \refother{\reflemmaPartI{RluckyBornSoon}},
to say Lemma 2.13 in \cite{EckGooHofNar14a}.

We start by introducing first passage percolation (FPP).
Given a graph $\graphG=(V(\graphG),E(\graphG))$, let $(Y_e^{\sss (\graphG)})_{e\in E(\graphG)}$ denote a collection of positive edge weights.
Thinking of $Y_e^{\sss (\graphG)}$ as the cost of crossing an edge $e$, we can define a metric on $V(\graphG)$ by setting
	\begin{equation}\labelPartII{FPPdistance}
	d_{\graphG,Y^{(\graphG)}}(i,j)=\inf_{\pi \colon i\to j} \sum_{e\in\pi} Y_e^{\sss (\graphG)},
	\end{equation}
where the infimum is over all paths $\pi$ in $\graphG$ that join $i$ to $j$, and $Y^{\sss(\graphG)}$ represents the edge weights $(Y_e^{\sss (\graphG)})_{e\in E(\graphG)}$.
We will always assume that the infimum in \eqrefPartII{FPPdistance} is attained uniquely, by some (finite) path $\pi_{i,j}$.
We are interested in the situation where the edge weights $Y_e^{\sss (\graphG)}$ are \emph{random}, so that $d_{\graphG,Y^{(\graphG)}}$ is a random metric.
In particular, when the graph $\graphG$ is very large, with $\abs{V(\graphG)}=n$ say, we wish to understand for fixed $i,j \in V(\graphG)$ the scaling behavior of the following quantities:
\begin{enumerate}
\item
The \emph{distance} $W_n=d_{\graphG,Y^{(\graphG)}}(i,j)$ -- the total edge cost of the optimal path $\pi_{i,j}$;
\item
The \emph{hopcount} $H_n$ -- the number of edges in the optimal path $\pi_{i,j}$;
\item
The \emph{topological structure} -- the shape of the random neighborhood of a point.
\end{enumerate}

In this paper, we consider FPP on the complete graph, which acts as a mean-field model for FPP on finite graphs. We focus on problem (a) and (b) using results of problem (c) that are given in  \refother{\refsectPartI{IntroFPP}} and \refother{\refsectPartI{IPpartFPP}}.
In \cite{BhaHof12}, the question was raised what the {\it universality classes} are for this model.
We bring the discussion substantially further by describing a way to distinguish several universality classes and by identifying the limiting behavior of first passage percolation in one of these classes.
The cost regime introduced in \eqrefPartII{FPPdistance} uses the information from all edges along the path and is known as the {\it weak disorder} regime.
By contrast, in the {\it strong disorder} regime the cost of a path $\pi$ is given by $\max_{e \in \pi} Y_e^{\sss (\graphG)}$.
We establish a firm connection between the weak and strong disorder regimes in first passage percolation.
Interestingly, this connection also establishes a strong relation to invasion percolation (IP) on the Poisson-weighted infinite tree (PWIT), which is the scaling limit of IP on the complete graph.
This process also arises as the scaling limit of the minimal spanning tree on $K_n$.

Our main interest is in the case $\graphG=K_n$, the complete graph on $n$ vertices $V(K_n)=[n]:=\set{1,\ldots,n}$, equipped with independent and identically distributed (i.i.d.) edge weights $(Y_e^{\sss(K_n)})_{e \in E(K_n)}$.
We write $Y$ for a random variable with $Y\overset{d}{=}Y_e^{\sss(\graphG)}$, and
assume that the distribution function $\FY$ of $Y$ is continuous.
For definiteness, we study the optimal path $\pi_{1,2}$ between vertices $1$ and $2$.
In \cite{BhaHof12} and \cite{EckGooHofNar13} this setup was studied for the case that $Y_e^{\sss(K_n)}\equalsd E^{s}$ where $E$ is an exponential mean $1$ random variable, and $s>0$ constant and $s=s_n>0$ a null-sequence, respectively.
We start by stating our main theorem for this situation where $s=s_n$ tends to infinity.
First, we introduce some notation:

\paragraph{Notation.}
All limits in this paper are taken as $n$ tends to infinity unless stated otherwise.
A sequence of events $(\mathcal{A}_n)_n$ happens \emph{with high probability (\whp)} if $\P(\mathcal{A}_n) \to 1$.
For random variables $(X_n)_n, X$, we write $X_n \convd X$, $X_n \convp X$ and $X_n \convas X$ to denote convergence in distribution, in probability and almost surely, respectively.
For real-valued sequences $(a_n)_n$, $(b_n)_n$, we write $a_n=O(b_n)$ if the sequence $(a_n/b_n)_n$ is bounded; $a_n=o(b_n)$ if $a_n/b_n \to 0$; $a_n =\Theta(b_n)$ if the sequences $(a_n/b_n)_n$ and $(b_n/a_n)_n$ are both bounded; and $a_n \sim b_n$ if $a_n/b_n \to 1$.
Similarly, for sequences $(X_n)_n$, $(Y_n)_n$ of random variables, we write $X_n=\Op(Y_n)$ if the sequence $(X_n/Y_n)_n$ is tight; $X_n=\sop(Y_n)$ if $X_n/ Y_n \convp 0$; and $X_n =\Theta_{\P}(Y_n)$ if the sequences $(X_n/Y_n)_n$ and $(Y_n/X_n)_n$ are both tight.
Moreover, $E$ always denotes an exponentially distributed random variable with mean $1$.

\subsection{\texorpdfstring{First passage percolation with $n$-dependent edge weights}{First passage percolation with n-dependent edge weights}}
\lbsubsect{FPPn-dependent}

We start by investigating the case where $Y=E^{s_n}$ where $s_n\rightarrow \infty$:

\begin{theorem}[Weight and hopcount -- $n$-dependent edge weights]
\lbthm{WH-exp}
Let $Y_e^{\sss(K_n)}\equalsd E^{s_n}$, where $(s_n)_n$ is a positive sequence with $s_n \to \infty$, $s_n=o(n^{1/3})$.
Then
	\begin{equation}
	\labelPartII{weight-res_exp}
	n\Big(W_n-\frac{1}{n^{s_n} \Gamma(1+1/s_n)^{s_n}}\log{(n/s_n^3)}\Big)^{1/s_n} \convd M^{\sss(1)}\vee M^{\sss(2)},
	\end{equation}
and
	\eqn{
	\labelPartII{hopcount-CLT_exp}
	\frac{H_n-s_n\log{(n/s_n^3)}}{\sqrt{s_n^2\log{(n/s_n^3)}}}\convd Z,
	}
where $Z$ is standard normal and $M^{\sss(1)}, M^{\sss(2)}$ are i.i.d.\ random variables
for which $\prob(M^{\sss(j)}\leq x)$ is the survival probability of a Poisson Galton--Watson branching process with
mean $x$.
\end{theorem}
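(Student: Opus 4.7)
The plan is to simultaneously grow the smallest-weight exploration trees $\SWT^{\sss(1)}_n$ and $\SWT^{\sss(2)}_n$ rooted at the two endpoints and to identify the edge at which they first collide, since this edge determines both $W_n$ and $H_n$. Following the framework of \cite{BhaHof12}, the local weak limit of each $\SWT^{\sss(i)}_n$ is a continuous-time branching process (CTBP) on the PWIT whose Malthusian parameter is $\alpha_n = n^{s_n}\Gamma(1+1/s_n)^{s_n}$. For $s_n\to\infty$ the offspring distribution on the PWIT has extremely heavy tails, so the early growth of each $\SWT^{\sss(i)}_n$ is not well-described by the CTBP but rather by invasion percolation (IP) driven by a handful of ``lucky'' edges. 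This initial IP regime is the content of Part I; the task here is to carry out the subsequent global branching-process analysis and patch the two regimes together.

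Concretely, for each $i\in\{1,2\}$, I would invoke Part I to extract a freezing time $\tau_n^{\sss(i)}$ and a lucky vertex $v_n^{\sss(i)}$ inside the frozen IP cluster such that, beyond time $\tau_n^{\sss(i)}$, the exploration rooted at $v_n^{\sss(i)}$ couples to a CTBP $\BP^{\sss(i)}$ whose normalised martingale limit $M_n^{\sss(i)}$ converges in distribution to $M^{\sss(i)}$ (the random variable whose CDF is the Poisson Galton--Watson survival probability). The size $|\SWT^{\sss(i)}_n(\tau_n^{\sss(i)}+t)|$ is then well-approximated by $M_n^{\sss(i)} e^{\alpha_n t}$, and the collision occurs at the first time at which the product $|\SWT^{\sss(1)}_n(t)|\,|\SWT^{\sss(2)}_n(t)|$ reaches order $n$. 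After simplifying, this yields
\begin{equation*}
\alpha_n\bigl(W_n - \log(n/s_n^3)/\alpha_n\bigr) = -\log\bigl(M_n^{\sss(1)} M_n^{\sss(2)}\bigr) + \sop(1),
\end{equation*}
where the $s_n^3$ factor in the centering comes from the Part I analysis of $\tau_n^{\sss(i)}$. Multiplying by $n$ and taking the $1/s_n$-th root turns the product $M_n^{\sss(1)} M_n^{\sss(2)}$ into $M^{\sss(1)}\vee M^{\sss(2)}$ in the limit, using the identity $(ab)^{1/s_n}\to a\vee b$ for positive reals as $s_n\to\infty$; this produces \eqref{PartII:weight-res_exp}.

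For the hopcount, decompose $H_n = H_n^{\sss(1)} + H_n^{\sss(2)} + R_n$, where $H_n^{\sss(i)}$ is the generation of the collision vertex within $\BP^{\sss(i)}$ and $R_n$ is the combined hopcount contributed by the two IP phases. Part I bounds $R_n = O(s_n^3)$, which is $o(\sqrt{s_n^2\log(n/s_n^3)})$ exactly under the hypothesis $s_n=o(n^{1/3})$. A CLT for the generation of a typical vertex alive at time $T$ in a PWIT-CTBP with Malthusian parameter $\alpha_n$ gives asymptotic Gaussian behaviour with mean and variance both proportional to $\alpha_n T$, the proportionality constants being determined by the moments of the rescaled offspring distribution. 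Substituting $T = T_n^{\coll} - \tau_n^{\sss(i)}\sim \tfrac{1}{2}\log(n/s_n^3)/\alpha_n$, evaluating the relevant moments for $Y=E^{s_n}$, and summing two independent sides yields the asymptotic mean $s_n\log(n/s_n^3)$ and variance $s_n^2\log(n/s_n^3)$ claimed in \eqref{PartII:hopcount-CLT_exp}.

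The main obstacle is making all of the above quantitative in a regime where the offspring distribution, the freezing time $\tau_n^{\sss(i)}$, and the Malthusian parameter $\alpha_n$ are simultaneously $n$-dependent and degenerating: the coupling between the true exploration and the CTBP $\BP^{\sss(i)}$ must be uniform in $n$, the generation CLT must be proved with carefully tracked $n$-dependent constants, and one needs enough second-moment control to guarantee that the $O(s_n^3)$ IP correction is dominated by the CLT scale $\sqrt{s_n^2\log(n/s_n^3)}$. The threshold $s_n = o(n^{1/3})$ enters exactly at this final dominance step and is precisely what prevents the IP phase from swamping the Gaussian fluctuations coming from the subsequent branching-process phase.
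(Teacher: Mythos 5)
Your sketch captures the high-level picture---two exploration processes, an IP-dominated early phase handled by Part I, a CTBP/PWIT coupling for the later phase, a collision edge, and a hopcount CLT---but it contains several concrete errors, and the mechanism it invokes is not the one the paper uses, nor is it clear it could be made to work.

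First, the claimed identity ``$(ab)^{1/s_n}\to a\vee b$ for positive reals as $s_n\to\infty$'' is false: for fixed $a,b>0$ one has $(ab)^{1/s_n}\to 1$. The elementary identity that \emph{is} available is $(x_1^{s}+x_2^{s})^{1/s}\to x_1\vee x_2$, a sum-to-max statement (\eqrefPartII{PowerToMax}), and it is applied in the paper via \reflemma{dfWeakMax} to the \emph{sum} $T_\fr^{\sss(1)}+T_\fr^{\sss(2)}$ of the two freezing times, not to a product of martingale limits. The paper's deduction is $W_n=T_\fr^{\sss(1)}+T_\fr^{\sss(2)}+\log(n/s_n^3)/\lambda_n+\Op(1/\lambda_n)$ followed by \refthm{TfrScaling}, which gives $f_n^{-1}(T_\fr^{\sss(j)})\convp M^{\sss(j)}$. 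No martingale limit appears: the freezing procedure is designed precisely so that the front-end randomness which would, in the fixed-$s$ regime, be carried by a martingale limit $L_s$ is instead carried by the IP-type quantity $T_\fr^{\sss(j)}$. Your claim
\begin{equation*}
\alpha_n\bigl(W_n-\log(n/s_n^3)/\alpha_n\bigr)=-\log\bigl(M_n^{\sss(1)}M_n^{\sss(2)}\bigr)+\sop(1)
\end{equation*}
is offered without justification and is not a consequence of Part I, which supplies freezing times and properties of the frozen cluster, not martingale limits of $n$-dependent CTBPs. Even if some version of it held, you would then need $(-\log M_n^{\sss(j)})^{1/s_n}\convd M^{\sss(j)}$, which again is a nontrivial statement needing its own proof (this is exactly the kind of issue the freezing construction is built to sidestep).

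Second, the hopcount error term: the contribution to $H_n$ from the frozen cluster is controlled not by its volume but by its \emph{diameter}, which by \refthm{FrozenCluster}~\refitem{FrozenDiameter} is $O_\P(s_n)$, not $O(s_n^3)$. Your bound $O(s_n^3)$ is both the wrong order and not negligible on the scale $\sqrt{s_n^2\log(n/s_n^3)}$ under only $s_n=o(n^{1/3})$; one would need $s_n^4=o(\log(n/s_n^3))$, which fails for e.g.\ $s_n=n^{1/3}/\log n$. The correct bound $O_\P(s_n)$ is automatically $o(s_n\sqrt{\log(n/s_n^3)})$ once $\log(n/s_n^3)\to\infty$, and \emph{that} is where $s_n=o(n^{1/3})$ is indispensable: it guarantees $n/s_n^3\to\infty$, which makes the CLT variance tend to infinity and makes the thinning correction $O_\P(\log(n/s_n^3)/\sqrt{n/s_n^3})$ in \refthm{FirstPointCoxUnthinned} vanish. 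It is not primarily a ``dominance over the IP phase'' condition.

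Third, the heuristic ``collision when $|\SWT^{\sss(1)}_t|\,|\SWT^{\sss(2)}_t|$ is of order $n$'' hides most of the analytical work. The Cox process intensity involves the two-vertex characteristic $\chi_n(t_1,t_2)=\mu_n(|t_1-t_2|,t_1+t_2)$, whose value swings between $1$ and $O(1/s_n)$ depending on the \emph{ages} of the two candidate endpoints; this is why the paper needs the truncation $\mu_n^{\sss(K)}$ and the two-vertex moment estimates of \refsect{2VertexCharSec}. The mean cluster-size picture alone does not give the precise collision law or the asymptotic independence needed for the joint weight-and-hopcount statement.

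In short: the architecture you describe (IP front end, CTBP back end, collision edge, CLT for generations) is the right shape, but the specific mechanisms you propose---a martingale-limit decomposition of the weight, the product identity $(ab)^{1/s_n}\to a\vee b$, and the $O(s_n^3)$ IP hopcount bound---are either incorrect or unjustified, and the role of $s_n=o(n^{1/3})$ is misattributed. The actual proof replaces the martingale limit by the freezing times $T_\fr^{\sss(j)}$ (with \refthm{TfrScaling}) and the heuristic collision criterion by a bona fide Cox process limit (\refthm{FirstPointCoxAsymp} and \refthm{PnStarSatisfiesConditions}), with the hopcount CLT extracted from the first point of the Cox process rather than from a stand-alone generation CLT for a single CTBP.
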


The convergence in \eqrefPartII{weight-res_exp} was proved in \refother{\refthmPartI{IPWeight_Exp}} without the subtraction of the term $\frac{1}{n^{s_n} \Gamma(1+1/s_n)^{s_n}}\log{(n/s_n^3)}$ in the argument, and under the stronger assumption that $s_n/\log\log{n}\rightarrow \infty$. The convergence \eqrefPartII{hopcount-CLT_exp} states the central limit theorem for the hopcount $H_n$ and verifies the heuristics for the strong disorder regime in \cite[Section 1.4]{BhaHof12}. See \refremark{consistent} for a discussion of the relation between these two results.
Here strong disorder refers to the fact that when $s_n\rightarrow \infty$ the values of the random weights $E_e^{s_n}$ depend strongly on the disorder $(E_e)_{e\in E(\graphG)}$, making small values increasingly more favorable and large values increasingly less favorable.
Mathematically, the elementary limit
	\begin{equation}\labelPartII{PowerToMax}
	\lim_{s\to\infty}(x_1^s+x_2^s)^{1/s}=x_1\vee x_2
	\end{equation}
expresses the convergence of the $\ell^s$ norm towards the $\ell^\infty$ norm and establishes a relationship between the weak disorder regime and the strong disorder regime of FPP.

We continue by discussing the result in \refthm{WH-exp} in more detail.
Any sequence $(u_n(x))_n$ for which $n\FY(u_n(x))\rightarrow x$ is such that, for i.i.d.\ random variables $(Y_i)_{i \in \N}$ with distribution function $\FY$,
	\eqn{
	\labelPartII{unx-res}
	\prob\Bigl( \min_{i\in [n]} Y_i\leq u_n(x) \Bigr) \rightarrow 1-\e^{-x}.
	}
As the distribution function $\FY$ is continuous, we will choose $u_n(x)=\FY^{-1}(x/n)$.
The value $u_n(1)$ is denoted by $u_n$.
In view of \eqrefPartII{unx-res}, the family $(u_n(x))_{x\in(0,\infty)}$ are the \emph{characteristic values} for $\min_{i\in [n]} Y_i$.
See \cite{EmbKluMik97} for a detailed discussion of extreme value theory.
(In the strong disorder regime, $u_n(x)$ varies heavily in $x$ such that the phrase characteristic values can be misleading.)
In the setting of \refthm{WH-exp}, $\FY(y)=1-\e^{-y^{1/s_n}} \approx y^{1/s_n}$ when $y=y_n$ tends to zero fast enough, so that $u_n(x)$ can be taken as $u_n(x)\approx(x/n)^{s_n}$ (here $\approx$ indicates approximation with uncontrolled error).
Then we see in \eqrefPartII{weight-res_exp} that $W_n-\frac{1}{\lambda_n}\log{(n/s_n^3)}\approx u_n(M^{\sss(1)}\vee M^{\sss(2)})$ for $\lambda_n=n^{s_n} \Gamma(1+1/s_n)^{s_n}$, which means that the weight of the smallest-weight path has a deterministic part $\frac{1}{\lambda_n}\log{(n/s_n^3)}$, while its random fluctuations are of the same order of magnitude as some of the typical values for the minimal edge weight adjacent to vertices $1$ and $2$.
For $j \in \set{1,2}$, one can think of $M^{\sss(j)}$ as the time needed to escape from vertex $j$.

\subsection{\texorpdfstring{First passage percolation with $n$-independent edge weights}{First passage percolation with n-independent edge weights}}
\lbsubsect{FPPn-independent}

Our results are applicable not only to the edge-weight distribution $Y_e^{\sss(K_n)}\overset{d}{=}E^{s_n}$,
but also to settings where the edge-weights have an $n$-indepedent distribution.
Suppose that the edge weights $Y_e^{\sss(K_n)}$ follow an $n$-independent distribution $\FY$ with no atoms and $u\mapsto u(\FY^{-1})'(u)/\FY^{-1}(u)$ is regularly varying with index $-\alpha$ as $u\decreasesto 0$, i.e.,
\begin{equation}
\labelPartII{SlowVarCondFY}
u \frac{d}{du} \log \FY^{-1}(u)=u^{-\alpha}L(1/u) \qquad \text{for all } u \in (0,1),
\end{equation}
where $t \mapsto L(t)$ is slowly varying as $t\to \infty$. \refexample{AllExamples} contains 4 classes of examples that satisfy this condition.
We have the following result:

\begin{theorem}[Weight and hopcount -- $n$-independent edge weights]
\lbthm{WH-example}
Suppose that the edge weights $(Y_e^{\sss (K_n)})_{e \in E(K_n)}$ follow an $n$-independent distribution $\FY$ that satisfies \eqrefPartII{SlowVarCondFY}.
Define
\begin{equation}\labelPartII{snSlowVarFormula}
	s_n=n^\alpha L(n)
\end{equation}
and assume that $s_n/\log\log n \to \infty$ and $s_n=o(n^{1/3})$.
Then there exist sequences $(\lambda_n)_n$ and $(\phi_n)_n$ such that $\phi_n/s_n \to 1$, $\lambda_n u_n \to \e^{-\gamma}$, where $\gamma$ is Euler's constant, and
\begin{align}
	\labelPartII{weight-res-SlowVar}
	n\FY\Big(W_n-\frac{1}{\lambda_n}\log{(n/s_n^3)}\Big)
	&\convd M^{\sss(1)}\vee M^{\sss(2)},
	\\
	\labelPartII{hopcount-CLT-SlowVar}
	\frac{H_n-\phi_n\log{(n/s_n^3)}}{\sqrt{s_n^2\log{(n/s_n^3)}}}
	&\convd Z,
	\end{align}
where $Z$ is standard normal, and $M^{\sss(1)}, M^{\sss(2)}$ are i.i.d.\ random variables for which $\prob(M^{\sss(j)}\leq x)$ is the survival probability of a Poisson Galton--Watson branching process with mean $x$.
\end{theorem}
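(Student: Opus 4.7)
The plan is to reduce Theorem~\ref{PartII:t:WH-example} to the same invasion-percolation/branching-process decomposition that proves Theorem~\ref{PartII:t:WH-exp}. Since $\FY$ is continuous, I represent the edge weights as $Y_e^{\sss(K_n)}=\FY^{-1}(1-\e^{-E_e})$ where $(E_e)_{e\in E(K_n)}$ are i.i.d.\ exponential mean $1$ variables. For the small values that dominate the smallest-weight path, $1-\e^{-E_e}\approx E_e$, so the problem reduces to understanding the transformed weights $\FY^{-1}(E_e)$. The heart of the matter is to show that, on the scale of the characteristic value $u_n=\FY^{-1}(1/n)$, the map $\FY^{-1}$ behaves as a power function with effective exponent $s_n=n^\alpha L(n)$.

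First I would establish the local scaling lemma. Integrating \eqref{PartII:SlowVarCondFY} between $1/n$ and $x/n$ gives
\begin{equation*}
\log \FY^{-1}(x/n)-\log \FY^{-1}(1/n)=\int_{1/n}^{x/n} v^{-\alpha-1}L(1/v)\,dv,
\end{equation*}
and Karamata's theorem identifies the right-hand side with $s_n\log x\,(1+o(1))$ uniformly for $x$ in compact subsets of $(0,\infty)$, with the $o(1)$-term controlled by the slow variation of $L$. This says $\FY^{-1}(x/n)/u_n\sim x^{s_n}$ in the relevant regime, which is precisely the distributional identity satisfied (up to the $\Gamma(1+1/s_n)^{s_n}$ prefactor) in the $E^{s_n}$ case. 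In particular, the ratio of two close edge weights is, locally, governed by the same power-law exponent $s_n$ as in Theorem~\ref{PartII:t:WH-exp}.

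Next I would feed this into the decomposition already used for Theorem~\ref{PartII:t:WH-exp}: the first few edges of the optimal path behave as invasion percolation on the PWIT (treated in \cite{EckGooHofNar14a}, using only the local exponent $s_n$), after which the exploration detaches from the IP backbone and evolves according to a continuous-time branching process with offspring intensity governed by $\FY^{-1}$. The choice $\lambda_n u_n\to\e^{-\gamma}$ is then forced by requiring the BP to have the correct asymptotic rate; an explicit computation shows
\begin{equation*}
\lambda_n=\Bigl(\int_0^\infty \FY^{-1}(x/n)\,\e^{-x}\,dx\Bigr)^{-1}\cdot(1+o(1)),
\end{equation*}
and inserting the local scaling and the identity $\int_0^\infty x^{s_n}\e^{-x}dx=\Gamma(1+s_n)$ produces the Euler constant through $\log\Gamma(1+1/s_n)=-\gamma/s_n+O(1/s_n^2)$. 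The hopcount parameter $\phi_n$ is the mean number of children of a typical BP individual whose weight falls in the relevant window; the slow variation of $L$ gives $\phi_n/s_n\to 1$ with error small compared to $\sqrt{s_n^2\log(n/s_n^3)}$, so the CLT \eqref{PartII:hopcount-CLT-SlowVar} inherits the same limiting Gaussian as in the pure-power case, and \eqref{PartII:weight-res-SlowVar} follows from the same competition between the IP initial part and the BP bulk that yields \eqref{PartII:weight-res_exp}.

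The main obstacle is that, unlike $E^{s_n}$, the function $\FY^{-1}$ is a power law only up to slowly-varying corrections, and these corrections must be controlled uniformly over the range of edge weights seen along a path of length $\Theta(s_n\log(n/s_n^3))$. This forces a careful quantitative version of the local scaling lemma (a Potter-bound-style estimate, propagating through sums of $\Theta(s_n\log(n/s_n^3))$ weights), together with a verification that the implicit constants in the IP/BP decomposition from \cite{EckGooHofNar14a} depend only on the parameter $s_n$ and not on the fine structure of $\FY$. Once this robustness of the decomposition is established, the passage from Theorem~\ref{PartII:t:WH-exp} to Theorem~\ref{PartII:t:WH-example} is essentially analytic, reducing to the definition of $\lambda_n$ and $\phi_n$ through the local asymptotics of $\FY^{-1}$.
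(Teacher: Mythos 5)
Your high-level idea — express the edge weights via $\FY^{-1}$, extract an effective exponent $s_n$ from the regular-variation hypothesis, and then re-run the invasion-percolation/branching-process machinery — is in the same spirit as the paper, but the paper does not transfer from Theorem~\refPartII{t:WH-exp}. Instead it reduces both Theorem~\refPartII{t:WH-exp} and Theorem~\refPartII{t:WH-example} to the general Theorem~\refPartII{t:WH-gen}, whose hypotheses are the abstract Conditions~\refPartII{cond:scalingfn}--\refPartII{cond:boundfn} on $f_n(x)=\FY^{-1}(1-\e^{-x/n})$. The substance of the reduction is then (i) the verification that \eqrefPartII{SlowVarCondFY} implies those conditions (Proposition~\refstarPartI{p:RegVarImplies} in Part~I), and (ii) the elementary observation \eqrefPartII{nFYequivfn-1} that $n\FY(T_n)\to\cM$ if and only if $f_n^{-1}(T_n)\to\cM$, which converts \eqrefPartII{weight-res} into \eqrefPartII{weight-res-SlowVar}. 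Your proposal never formulates the analogues of Conditions~\refPartII{cond:LowerBoundfn}--\refPartII{cond:boundfn} (the density bounds), which are essential for the two-vertex-characteristic estimates and the freezing construction; you flag this gap yourself at the end as a ``Potter-bound-style estimate'' but do not carry it out, and this is precisely where the content of the proof lives.

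There are also two concrete errors. First, your local scaling lemma is misstated: with $\frac{d}{du}\log\FY^{-1}(u)=u^{-\alpha-1}L(1/u)$, a substitution $v=w/n$ gives $\int_{1/n}^{x/n}v^{-\alpha-1}L(1/v)\,dv \sim s_n\,\frac{1-x^{-\alpha}}{\alpha}$ for $x$ in a fixed compact subset of $(0,\infty)$, which is \emph{not} $s_n\log x$ unless $x\to1$. The correct statement, which is what Condition~\refPartII{cond:scalingfn} encodes, concerns a shrinking window $x^{1/s_n}$ around $1$, where $\log\FY^{-1}(x^{1/s_n}/n)-\log\FY^{-1}(1/n)\to\log x$. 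Second, your displayed formula for $\lambda_n$,
\begin{equation*}
\lambda_n=\Bigl(\int_0^\infty \FY^{-1}(x/n)\,\e^{-x}\,dx\Bigr)^{-1}(1+o(1)),
\end{equation*}
has $f_n$ and the exponential in the wrong places: $\lambda_n$ is defined by $\hat\mu_n(\lambda_n)=\int_0^\infty\e^{-\lambda_n f_n(x)}\,dx=1$, not by $\int_0^\infty f_n(x)\e^{-x}\,dx=1/\lambda_n$. In the $E^{s_n}$ case your formula yields $n^{s_n}/\Gamma(1+s_n)$, while the correct value is $n^{s_n}\Gamma(1+1/s_n)^{s_n}$ (see Example~\refPartII{ex:Esn}); these are wildly different, so the claimed derivation of $\lambda_n u_n\to\e^{-\gamma}$ would not go through as written.
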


The sequences $(\lambda_n)_n$ and $(\phi_n)_n$ will be identified in \eqrefPartII{lambdanDefinition}--\eqrefPartII{phinDefinitionShort} below, subject to slightly stronger assumptions.
In the setting of \refthm{WH-exp}, $\phi_n=s_n$ and $\lambda_n=n^{s_n}\Gamma(1+1/s_n)^{s_n}$.
Recalling that $u_n\sim n^{-s_n}$, we see that $\lambda_n u_n\sim \Gamma(1+1/s_n)^{s_n}\rightarrow \e^{-\gamma}$.
Thus, the conditions on $(\lambda_n)_n$ and $(\phi_n)_n$ in \refthm{WH-example} are strongly inspired by \refthm{WH-exp}.

Notice that every sequence $(s_n)_n$ of the form $s_n=n^{\alpha} L(n)$, for $\alpha \ge 0$ and $L$ slowly varying at infinity, can be obtained from a $n$-independent distribution by taking $\log \FY^{-1}(u)=\int u^{-1-\alpha}L(1/u)du$, i.e., the indefinite integral of the function $u\mapsto u^{-1-\alpha}L(1/u)$. In \refsubsect{univ-class} we will weaken the requirement $s_n /\log\log n\to \infty$ to the requirement $s_n\to\infty$ subject to an additional regularity assumption.

The optimal paths in Theorems~\refPartII{t:WH-exp} and \refPartII{t:WH-example} are \emph{long paths} because the asymptotic mean of the path length $H_n$ in \eqrefPartII{hopcount-CLT_exp} and \eqrefPartII{hopcount-CLT-SlowVar} is of larger order than $\log n$, that is the path length that arises in many random graph contexts.
See \refsect{DiscExt} for a comprehensive literature overview.
The following example collects some edge-weight distributions that are covered by Theorems~\refPartII{t:WH-exp} and \refPartII{t:WH-example}. See \refother{\refsubsectPartI{univ-class}}
for detailed discussion of this example.

\begin{example}[Examples of weight distributions]
\lbexample{AllExamples}\hfill
\begin{enumerate}
\item\lbitem{EsnExample}
Let $(s_n)_{n \in \N} \in (0,\infty)^{\N}$ with $s_n \to \infty$, $s_n=o(n^{1/3})$.
Take $Y_e^{\sss(K_n)} \equalsd E^{s_n}$, i.e., $\FY(y)=1-\e^{-y^{1/s_n}}$.

\item\lbitem{PowerOfLogExampleFY}
Let $\rho>0, \kappa \in (0,1)$.
Take $Y_e^{\sss(K_n)} \equalsd \exp(-(E/\rho)^{1/\kappa})$, i.e., $\FY(y)=\exp(-\rho(\log(1/y))^\kappa)$, and define $s_n=\frac{(\log n)^{1/\kappa-1}}{\kappa \rho^{1/\kappa}}$.


\item \lbitem{PowerOfnExampleFY}
Let $\rho>0, \alpha \in (0,1)$.
Take $Y_e^{\sss(K_n)} \equalsd \exp\left( - \rho \e^{\alpha E}/\alpha \right)$, i.e., $\FY(y)=\exp(-\frac{1}{\alpha} \log(\frac{\alpha}{\rho} \log(1/y)))$, and define $s_n=\rho n^{\alpha}$.

\item \lbitem{PowerOfnExampleg}
Let $\rho>0,\alpha \in (0,1)$.
Take $Y_e^{\sss(K_n)} \equalsd \exp(-\rho E^{-\alpha}/\alpha)$, i.e., $\FY(y)=1-\exp\bigl( -(\frac{\alpha}{\rho} \log(1/y))^{-1/\alpha} \bigr)$, and define $s_n=\frac{\rho}{n-1} (-\log(1-\frac{1}{n}))^{-(\alpha+1)}$.
Note that $s_n \sim \rho n^{\alpha}$.
\end{enumerate}
\end{example}

\refthm{WH-example} shows that $n$-independent distributions can be understood in the same framework as the $n$-dependent distribution in \refexample{AllExamples}~\refitem{EsnExample}.
We next explain this comparison and generalize our result further.

\subsection{The universal picture}
\lbsubsect{univ-class}
For fixed $n$, the edge weights $(Y_e^{\sss(K_n)})_{e\in E(K_n)}$ are independent for different $e$.
However, there is no requirement that they are independent over $n$, and in fact in \refsect{Coupling}, we will produce $Y_e^{\sss(K_n)}$ using a fixed source of randomness not depending on $n$.
Therefore, it will be useful to describe the randomness on the edge weights $((Y_e^{\sss(K_n)})_{e\in E(K_n)}\colon n \in \N)$ uniformly across the sequence.
It will be most useful to give this description in terms of exponential random variables.
\begin{description}
\item[Distribution function:]
Choose a distribution function $\FY(y)$ having no atoms, and draw the edge weights $Y_e^{\sss(K_n)}$ independently according to $\FY(y)$:
	\begin{equation}\labelPartII{EdgesByDistrFunct}
	\P(Y_e^{\sss(K_n)} \leq y)= \FY(y).
	\end{equation}

\item[Parametrization by Exponential variables:]
Fix independent exponential mean $1$ variables $(E_e^{\sss (K_n)})_{e\in E(K_n)}$, choose a strictly increasing function $g\colon(0,\infty)\to(0,\infty)$, and define
	\begin{equation}\labelPartII{EdgesByIncrFunct}
	Y_e^{\sss(K_n)}=g(E_e^{\sss (K_n)}).
	\end{equation}
\end{description}
The relations between these parametrizations are given by
\begin{equation}\labelPartII{FYandg}
	\FY(y)=1-\e^{-g^{-1}(y)}\quad \text{and}
	\quad
	g(x)=\FY^{-1} \left( 1-\e^{-x} \right)
	.
\end{equation}
We define
	\eqn{
	\labelPartII{fnFromParamEdges}
	f_n(x)=g(x/n)=\FY^{-1} \left( 1-\e^{-x/n} \right).
	}
Let $Y_1,\dotsc,Y_n$ be i.i.d.\ with $Y_i=g(E_i)$ as in \eqrefPartII{EdgesByIncrFunct}.
Since $g$ is increasing,
	\begin{equation}\labelPartII{minYifn}
	\min_{i\in[n]}Y_i=g\bigl( \min_{i\in[n]} E_i \bigr) \equalsd g(E/n)=f_n(E).
	\end{equation}
Because of this convenient relation between the edge weights $Y_e^{\sss(K_n)}$ and exponential random variables, we will express our hypotheses about the distribution of the edge weights in terms of conditions on the functions $f_n(x)$ as $n\to\infty$.

Consider first the case $Y_e^{\sss(K_n)} \equalsd E^{s_n}$ from \refthm{WH-exp} and \refexample{AllExamples}~\refitem{EsnExample}.
From \eqrefPartII{EdgesByIncrFunct}, we have $g(x)=g_n(x)=x^{s_n}$, so that \eqrefPartII{fnFromParamEdges} yields
	\begin{equation}\labelPartII{fnEsnCase}
	\text{for }\quad Y_e^{\sss(K_n)} \equalsd E^{s_n}, \qquad f_n(x)=(x/n)^{s_n}=f_n(1)x^{s_n}.
	\end{equation}
Thus, \eqrefPartII{minYifn}--\eqrefPartII{fnEsnCase} show that the parameter $s_n$ measures the relative \emph{sensitivity} of $\min_{i\in[n]}Y_i$ to fluctuations in the variable $E$.
In general, we will have $f_n(x)\approx f_n(1)x^{s_n}$ if $x$ is appropriately close to 1 and $s_n\approx f_n'(1)/f_n(1)$.
These observations motivate the following conditions on the functions $(f_n)_n$, which we will use to relate the distributions of the edge weights $Y_e^{\sss(K_n)}$, $n\in\N$, to a sequence $(s_n)_n$:

\begin{cond}[Scaling of $f_n$]
\lbcond{scalingfn}
For every $x\geq 0$,
	\eqn{\labelPartII{fnx1oversn}
	\frac{f_n(x^{1/s_n})}{f_n(1)} \to x.
	}
\end{cond}

\begin{cond}[Density bound for small weights]\lbcond{LowerBoundfn}
There exist $\epsilonCondition>0$, $\deltaCondition \in \ocinterval{0,1}$ and $n_0 \in \N$ such that
	\begin{equation}\labelPartII{BoundfnSmall}
	\epsilonCondition s_n \leq \frac{xf_n'(x)}{f_n(x)}\leq s_n/\epsilonCondition,
	\qquad \text{whenever } \quad 1-\deltaCondition\leq x\leq 1, \text{ and }n\geq n_0.
	\end{equation}
\end{cond}

\begin{cond}[Density bound for large weights]
\lbcond{boundfn}\hfill
\begin{enumerate}
\item \lbitem{boundfnR}
For all $R>1$, there exist $\epsilon>0$ and $n_0\in \N$ such that for every $1\leq x\leq R$, and $n \ge n_0$,
	\eqn{
	\labelPartII{fn-lb}
		x\frac{d}{dx} \log f_n(x)\geq \epsilon s_n.
	}
\item \lbitem{boundfnlog}
For all $C>1$, there exist $\epsilon>0$ and $n_0\in\N$ such that \eqrefPartII{fn-lb} holds for every $n\geq n_0$ and every $x\geq 1$ satisfying $f_n(x)\leq C f_n(1) \log n$.
\end{enumerate}
\end{cond}

Notice that \refcond{scalingfn} implies that $f_n(1) \sim u_n$ whenever $s_n=o(n)$.
Indeed, by \eqrefPartII{fnFromParamEdges} we can write $u_n=f_n(x_n^{1/s_n})$ for $x_n = (-n \log(1-1/n))^{s_n}$.
Since $s_n=o(n)$, we have $x_n=1-o(1)$ and the monotonicity of $f_n$ implies that $f_n(x_n^{1/s_n})/f_n(1)\to 1$.
We remark also that \eqrefPartII{unx-res} remains valid if $u_n(x)$ is replaced by $f_n(x)$. In  \refother{\refcoroPartI{examplesconditions}} it has been proved that the edge weights in \refexample{AllExamples} satisfy conditions Conditions~\refPartII{cond:scalingfn}--\refPartII{cond:boundfn}.

We are now in a position to state our main theorem:

\begin{theorem}[Weight and hopcount -- general edge weights]
\lbthm{WH-gen}
Assume that Conditions~\refPartII{cond:scalingfn}--\refPartII{cond:boundfn} hold for a positive sequence $(s_n)_n$ with $s_n\rightarrow \infty$ and $s_n=o(n^{1/3})$.
Then there exist sequences $(\lambda_n)_n$ and $(\phi_n)_n$ such that $\phi_n/s_n \to 1$, $\lambda_n f_n(1) \to \e^{-\gamma}$, where $\gamma$ is Euler's constant, and
\begin{align}
	\labelPartII{weight-res}
	f_n^{-1}\Big(W_n-\frac{1}{\lambda_n}\log{(n/s_n^3)}\Big)
	&\convd M^{\sss(1)}\vee M^{\sss(2)},
	\\
	\labelPartII{hopcount-CLT}
	\frac{H_n-\phi_n\log{(n/s_n^3)}}{\sqrt{s_n^2\log{(n/s_n^3)}}}
	&\convd Z,
	\end{align}
where $Z$ is standard normal, and $M^{\sss(1)}, M^{\sss(2)}$ are i.i.d.\ random variables
for which $\prob(M^{\sss(j)}\leq x)$ is the survival probability of a Poisson Galton--Watson branching process with
mean $x$.
The convergences in \eqrefPartII{weight-res}--\eqrefPartII{hopcount-CLT} hold jointly and the limiting random variables are independent.
\end{theorem}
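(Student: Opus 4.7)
The plan is to couple the FPP explorations from vertices $1$ and $2$ in $K_n$ with two independent explorations on Poisson-weighted infinite trees (PWITs), decompose each exploration into a short \emph{invasion percolation} (IP) initial phase---whose scaling limit is already isolated in Part~I \cite{EckGooHofNar14a}---and a long \emph{global branching process} (BP) phase, and then read off $W_n$ and $H_n$ from the collision of the two BPs. The edge weights on the PWIT are generated as $f_n$ applied to the Poisson arrival times, so Conditions~\refPartII{cond:scalingfn}--\refPartII{cond:boundfn} describe the offspring point process uniformly in $n$. After a freezing time of $O(1/\lambda_n)$ that is negligible for both weight and hopcount, Part~I guarantees that each exploration restarts as a near-stationary Poisson BP of Malthusian rate $\lambda_n$ seeded from a pond of size $\Theta(s_n^3)$, and that the rescaled escape weights converge to the i.i.d.\ limits $M^{(1)},M^{(2)}$.

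\textbf{Weight.} A first/second moment analysis of the expected number of collisions between the two global BPs at time $t$, incorporating the $s_n^3$ reduction in effective starting size coming from the IP ponds, yields a collision time $\tau_n\sim \frac{1}{\lambda_n}\log(n/s_n^3)$. The random correction at collision is of the same order as the minimum edge weight adjacent to vertices $1$ and $2$, and its extreme-value limit is $M^{(1)}\vee M^{(2)}$ via \eqrefPartII{unx-res} and $\lambda_n f_n(1)\to \e^{-\gamma}$. This refines \refother{\refthmPartI{IPWeight_Exp}} by identifying the logarithmic centering and eliminating the stronger hypothesis $s_n/\log\log n\to\infty$ used there.

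\textbf{Hopcount CLT.} Up to $\sop(\sqrt{s_n^2\log(n/s_n^3)})$ contributions from the two IP phases, $H_n$ equals the generation of a typical colliding individual in one of the global BPs at time $\tau_n$. In a continuous-time Poisson BP whose offspring point process is driven by $f_n$, Conditions~\refPartII{cond:scalingfn}--\refPartII{cond:boundfn} give, for the random increment in generation per unit of continuous time, mean $\phi_n$ and variance $\Theta(s_n^2)$; the factor $s_n$ is precisely the sensitivity parameter from Condition~\refPartII{cond:scalingfn}. Summing the $\lambda_n\tau_n\sim\log(n/s_n^3)$ essentially i.i.d.\ increments and applying a Lindeberg CLT produces the Gaussian limit \eqrefPartII{hopcount-CLT} with mean $\phi_n\log(n/s_n^3)$ and variance $s_n^2\log(n/s_n^3)$. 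Joint convergence and independence of the two limits follow because the weight fluctuations live on the (fast) IP timescale while the hopcount fluctuations accumulate on the (slow) BP timescale, so the two sources of randomness decorrelate in the limit.

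The main obstacle will be the hopcount CLT. We need a Lindeberg-type CLT for the generation counter of a BP whose offspring law depends on $n$ through $f_n$, with the precise identifications $\phi_n/s_n\to 1$ and asymptotic variance $s_n^2$ per unit of logarithmic time, uniformly in $n$; this forces careful cumulant estimates for the $f_n$-transformed Poisson offspring process, sharpening the $\Theta(\cdot)$ bounds coming directly from Condition~\refPartII{cond:LowerBoundfn} into the $\sim$ asymptotics needed for the CLT. A secondary difficulty is to show that no ``shortcut'' path using an atypically heavy edge can beat the IP$+$BP path, so that the PWIT coupling remains valid up to the collision time even though the BP size at collision may be a nontrivial power of $n$. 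All remaining pieces are either imported from Part~I or reduce to standard moment computations on continuous-time Poisson BPs.
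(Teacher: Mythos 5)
Your outline captures the right decomposition (couple to two PWITs, split into an IP-governed initial phase and a global branching-process phase, collision analysis via first/second moments, thinning is the coupling obstacle), and this is indeed the spirit of the paper. But there are two substantive mismatches with the actual argument, one of which is a genuine gap.

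\textbf{Role of the freezing time.} You write that each exploration resumes branching dynamics ``after a freezing time of $O(1/\lambda_n)$ that is negligible for both weight and hopcount.'' This is not correct, and it inverts the logic of the weight result. The freezing time $T_\fr^{\sss(j)}$ satisfies $f_n^{-1}(T_\fr^{\sss(j)}) \convp M^{\sss(j)}$ (\refthm{TfrScaling}), so $T_\fr^{\sss(j)} \approx f_n(M^{\sss(j)})$, which is \emph{much larger} than $1/\lambda_n \approx f_n(1)$ (indeed $f_n(x)/f_n(1) \to \infty$ for $x>1$ under \refcond{LowerBoundfn}). Far from being negligible, $T_\fr^{\sss(1)}+T_\fr^{\sss(2)}$ \emph{is} the random part of $W_n$: the paper's proof reads $W_n \equalsd T_\fr^{\sss(1)}+T_\fr^{\sss(2)}+\frac{\log(n/s_n^3)}{\lambda_n}+\Op(1/\lambda_n)$, applies $f_n^{-1}$, and uses \reflemma{dfWeakMax} (``sums behave like maxima under $f_n^{-1}$'') to collapse the sum to $M^{\sss(1)}\vee M^{\sss(2)}$. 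Only the frozen cluster's \emph{diameter} (which is $O_\P(s_n)$ by \refthm{FrozenCluster}, not $\Theta(s_n^3)$; the volume is $O_\P(s_n^2)$) is negligible, and only for the hopcount. Your account of where the $s_n^3$ comes from is also off: it is not the IP pond size but the combination of the mean BP size $\E\abs{\BP_t^{\sss(j)}}\sim s_n\e^{\lambda_n t}$ from each side and the $\Theta(1/s_n)$ scale of the collision intensity measure $\mu_n$, which together force the collision time balance $\frac{1}{n}s_n\cdot s_n\e^{\lambda_n t}\cdot s_n\e^{\lambda_n t}\asymp 1$.

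\textbf{Mechanism of the hopcount CLT.} You propose a Lindeberg CLT for the generation counter, treating the generation as a sum of $\lambda_n\tau_n$ ``essentially i.i.d.\ increments.'' The paper does not do this and it is not clear the approach can be carried out without substantially more work, for two reasons. First, the vertex whose generation becomes $H(I_j)$ is not a typical vertex of the BP at time $\tau_n$: it is selected by a Cox process whose intensity $\tfrac{1}{n}\mu_n(\Delta R_{v_1,v_2},\cdot)$ couples the pair $(v_1,v_2)$, so the law of $\abs{V_\first^{\sss(j)}}$ is effectively size-biased by a two-vertex weight. The ``i.i.d.\ increments'' picture does not straightforwardly survive this selection. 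Second, and more structurally, the paper never argues increment-by-increment. It instead shows that $H_n=\abs{V_\coll^{\sss(1)}}+\abs{V_\coll^{\sss(2)}}+1$ where $(T_\coll^{\sss(\cP_n)},V_\coll^{\sss(1)},V_\coll^{\sss(2)})$ is the first unthinned point of a Cox process, proves a general first-points-of-Cox-processes theorem (\refthm{FirstPointCoxAsymp}), and verifies its hypotheses by first and second moment estimates for \emph{two-vertex} generation-weighted characteristics (\refthm{TwoVertexConvRestrictedSum}, \refthm{TwoVertexRemainder}). The Gaussian target $Q=N(0,\tfrac{1}{2})^{\otimes 2}$ arises inside this framework from the Taylor expansion $\lambda_n(a^{1/s_n})/\lambda_n(1)=1+\tfrac{\phi_n}{s_n}(a-1)+o((a-1)^2)$ (\refcoro{lambdanTaylor}) plugged into the exponent of $\hat{Z}^*_{n,t^*}(\vec{\xi})$, producing the factor $\e^{2t^*+\frac14\norm{\vec{\xi}}^2}$. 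This is a moment-generating-function CLT encoded in the Cox intensity, not a Lindeberg sum. The joint independence of the weight and hopcount limits is then immediate from \refthm{FirstPointCoxAsymp}(d) (the first points are asymptotically independent of $\F_n$ and of their $T$-coordinates), rather than from the heuristic separation of timescales. If you want to salvage the Lindeberg route you would still need: (i) a precise identification of the law of the collision vertex's generation under the Cox selection; (ii) a uniform-in-$n$ cumulant estimate strong enough to handle the $n$-dependent offspring law; and (iii) a separate argument for the joint convergence and independence. The paper's random walk machinery in \refsect{BPbyWalks} (especially the convergence to the Gamma process and the size-biased walk $(S_j^*)_j$) is where these ingredients live; a direct CLT for increments would essentially reprove this machinery in a different dress.

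\textbf{What is right.} The idea to bound the thinned/unthinned discrepancy (your ``shortcut'' concern) by controlling $\abs{\cluster_{T_\first}}$ is correct and corresponds to \refthm{FirstPointCoxUnthinned} plus \reflemma{AtCollision}. The asymptotic identifications $\lambda_n f_n(1)\to\e^{-\gamma}$, $\phi_n/s_n\to 1$ are exactly \reflemma{lambdanAsymp} and follow from the random-walk convergence \refthm{ConvRW}. And your description of the weight limit as an extreme-value phenomenon governed by $f_n^{-1}$ is the right picture. The missing substance is entirely in the hopcount CLT step: you need to either replicate the paper's Cox-process/two-vertex-characteristic framework or genuinely construct the Lindeberg argument in a way that accounts for the Cox selection of the collision vertex.
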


The sequences $(\lambda_n)_n$ and $(\phi_n)_n$ are identified in \eqrefPartII{lambdanDefinition}--\eqrefPartII{phinDefinitionShort} below, subject to the additional \refcond{boundfnExtended}.
The proof of \refthm{WH-gen} is given in \refsubsect{ProofCompletion}.

\paragraph{Relation between \refthm{WH-gen} and Theorems~\refPartII{t:WH-exp} and \refPartII{t:WH-example}.} Theorems~\refPartII{t:WH-exp} and \refPartII{t:WH-example} follow from \refthm{WH-gen}:
in the case $Y_e^{\sss (K_n)}\equalsd E^{s_n}$ from \refthm{WH-exp}, \eqrefPartII{fnx1oversn}--\eqrefPartII{fn-lb} hold identically with $\epsilonCondition=\eps=1$ and we explicitly compute $\lambda_n=n^{s_n} \Gamma(1+1/s_n)^{s_n}$ and $\phi_n=s_n$ in \refexample{ex:Esn} below.
It has been proved in \refother{\refpropPartI{RegVarImplies}} that the distributions in \refthm{WH-example} satisfy the assumptions of \refthm{WH-gen}.
The convergence \eqrefPartII{weight-res-SlowVar} in \refthm{WH-example} is
equivalent to \eqrefPartII{weight-res} in \refthm{WH-gen} by the observation that, for any non-negative random variables $(T_n)_n$ and $\cM$,
	\begin{equation}\labelPartII{nFYequivfn-1}
	n\FY(T_n)\rightarrow \cM \qquad\iff\qquad f_n^{-1}(T_n)\rightarrow \cM,
	\end{equation}
where the convergence is in distribution, in probability or almost surely.

The following example describes a generalization of \refexample{AllExamples}~\refitem{EsnExample}:

\begin{example}
\lbexample{ExZtosn}
Let $(s_n)_n$ be a positive sequence with $s_n \to \infty$, $s_n=o(n^{1/3})$.
Let $U$ be a positive, continuous random variable with distribution function $G$ and
	\begin{equation}\labelPartII{asympExd}
	\lim_{u \downarrow 0} \frac{u (G^{-1})'(u)}{G^{-1}(u)} =1.
	\end{equation}
Take $Y_e^{\sss(K_n)} \equalsd U^{s_n}$, i.e., $\FY(y)=G(y^{1/s_n})$.
Examples for $G$ are the uniform distribution on an interval $(0,b)$, for any $b>0$, or the exponential distribution with any parameter.
\end{example}

It has been proved in \refother{\reflemmaPartI{ExZtosnCond}} that the edge weights in \refexample{ExZtosn} satisfy Conditions~\refPartII{cond:scalingfn}--\refPartII{cond:boundfn}.

\refcond{boundfn} can be strengthened to the following condition that will be equivalent for our purposes:

\begin{cond}[Extended density bound]
\lbcond{boundfnExtended}
There exist $\epsilonCondition>0$ and $n_0 \in \N$ such that
	\eqn{
	\labelPartII{fn-bound}
	\frac{xf_n'(x)}{f_n(x)}\geq \epsilonCondition s_n \qquad\text{for every }x\geq 1, n \ge n_0.
	}
\end{cond}

\begin{lemma}\lblemma{AssumeStrongCond}
It suffices to prove \refthm{WH-gen} assuming Conditions~\refPartII{cond:scalingfn}, \refPartII{cond:LowerBoundfn} and \refPartII{cond:boundfnExtended}.
\end{lemma}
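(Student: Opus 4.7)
The proof is by a truncation argument. The plan is to modify $f_n$ above a carefully chosen threshold $x_n^\ast$ so as to satisfy Condition~\refPartII{cond:boundfnExtended} globally, while preserving Conditions~\refPartII{cond:scalingfn} and~\refPartII{cond:LowerBoundfn}, and so that the smallest-weight path uses only arguments in the unmodified range with high probability. The conclusions of \refthm{WH-gen} then transfer from the modified edge-weight function to the original one.

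Fix large constants $K>1$ and $C>0$, and let $x_n^\ast=\max\{K,\,f_n^{-1}(Cf_n(1)\log n)\}$. Condition~\refPartII{cond:boundfn}(a) with $R=K$ gives a constant $\epsilon_K>0$ such that $xf_n'(x)/f_n(x)\geq \epsilon_K s_n$ on $[1,K]$, while Condition~\refPartII{cond:boundfn}(b) with the chosen $C$ gives a constant $\epsilon_C>0$ such that the same bound holds on $[1,f_n^{-1}(Cf_n(1)\log n)]$. Setting $\epsilon_0=\min\{\epsilon_K,\epsilon_C\}$, we have $xf_n'(x)/f_n(x)\geq\epsilon_0 s_n$ throughout $[1,x_n^\ast]$ for $n$ large. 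Define
\begin{equation*}
\tilde f_n(x)=\begin{cases} f_n(x) & \text{for } 0\leq x\leq x_n^\ast, \\ f_n(x_n^\ast)\,(x/x_n^\ast)^{\epsilon_0 s_n} & \text{for } x>x_n^\ast. \end{cases}
\end{equation*}
Then $\tilde f_n$ is strictly increasing and continuous; since $\tilde f_n\equiv f_n$ on a neighborhood of $1$, Conditions~\refPartII{cond:scalingfn} and~\refPartII{cond:LowerBoundfn} are inherited from $f_n$; and Condition~\refPartII{cond:boundfnExtended} holds with $\epsilon_0$, directly on $[1,x_n^\ast]$ and by the explicit power law on $(x_n^\ast,\infty)$.

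By assumption, \refthm{WH-gen} applies to $\tilde f_n$, yielding sequences $(\tilde\lambda_n)_n,(\tilde\phi_n)_n$ with $\tilde\lambda_n f_n(1)\to\e^{-\gamma}$, $\tilde\phi_n/s_n\to 1$, and convergences of the modified weight $\widetilde W_n$ and hopcount $\widetilde H_n$. Tightness of $M^{\sss(1)}\vee M^{\sss(2)}$ together with $\tilde\lambda_n f_n(1)\to \e^{-\gamma}$ allows us to choose $K$ and $C$ large enough that $\widetilde W_n\leq f_n(x_n^\ast)$ with probability at least $1-\eta$, for any prescribed $\eta>0$. On this event, each edge of the $\tilde f_n$-optimal path has $\tilde f_n$-weight at most $\widetilde W_n\leq\tilde f_n(x_n^\ast)$, so $nE_e\leq x_n^\ast$ by monotonicity of $\tilde f_n$; hence this path has identical weight under $f_n$ and $\tilde f_n$, giving $W_n\leq\widetilde W_n\leq f_n(x_n^\ast)$, and the same monotonicity argument applied to the $f_n$-optimal path forces it to use only $nE_e\leq x_n^\ast$, yielding $W_n=\widetilde W_n$ and $H_n=\widetilde H_n$ on that event. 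Letting $\eta\downarrow 0$ gives these equalities whp. The convergences \eqrefPartII{weight-res} and \eqrefPartII{hopcount-CLT} then transfer to $f_n$ with $\lambda_n=\tilde\lambda_n$, $\phi_n=\tilde\phi_n$, using that $f_n^{-1}$ and $\tilde f_n^{-1}$ coincide on $[0,f_n(x_n^\ast)]$ to pass the limits through $f_n^{-1}$ in \eqrefPartII{weight-res}.

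The main obstacle is the a priori bound $\widetilde W_n\leq f_n(x_n^\ast)$. The weight decomposes as $\widetilde W_n=\tilde\lambda_n^{-1}\log(n/s_n^3)+\tilde f_n(\Op(1))$, with deterministic part of order $\e^\gamma f_n(1)\log n$ and random part of order $\tilde f_n$ evaluated at a tight random variable. Depending on the growth rate of $s_n$, either the deterministic or the random part may dominate; the doubly-maximal definition $x_n^\ast=\max\{K,f_n^{-1}(Cf_n(1)\log n)\}$, with suitably large $K$ and $C$, ensures that $f_n(x_n^\ast)$ dominates both in every regime and so lies safely above $\widetilde W_n$ whp.
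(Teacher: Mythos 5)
Your construction of $\tilde f_n$ and the coupling observation (on the event that the optimal path uses only edges with $nE_e \leq x_n^\ast$, the weight and hopcount agree for $f_n$ and $\tilde f_n$) are both correct and match the paper's approach. However, the final step of your argument has a genuine gap that the paper resolves with a diagonal argument you have omitted.

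The problem is in ``Letting $\eta\downarrow 0$ gives these equalities whp.'' You fix $K,C$, which fixes $\tilde f_n$ and hence fixes the sequences $\tilde\lambda_n, \tilde\phi_n$. For this fixed choice, the event $\{\widetilde W_n \leq f_n(x_n^\ast)\}$ only satisfies $\liminf_n \P(\cdot) \geq 1-\eta(K,C)$ with $\eta(K,C)$ a \emph{positive} constant (not tending to 0 as $n\to\infty$). Concretely, if $s_n/\log\log n\to\infty$, then $f_n^{-1}(Cf_n(1)\log n)\to 1$ and so $x_n^\ast = K$ for all large $n$; then $\P(\widetilde W_n > f_n(K))\to\P(M^{\sss(1)}\vee M^{\sss(2)}>K)>0$, which does not vanish. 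To send $\eta$ to 0 you must increase $K$ and $C$, but that changes $\tilde f_n$ and hence $\tilde\lambda_n,\tilde\phi_n$. You are then left without a single sequence $(\lambda_n,\phi_n)$ for which the claimed convergences hold, and you have not shown (and it is not obvious) that the sequences $\tilde\lambda_n^{\sss(K,C)}$ are asymptotically interchangeable as $K,C$ vary.

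The paper closes this gap by a diagonalization: it upgrades the distributional convergence of $(f_{n,R}^{-1}(w_{n,R}), h_{n,R})$ to a uniform convergence of joint CDFs (via a finite grid of continuity points and monotonicity), obtaining an explicit threshold $n_0^{\sss(R,k)}$ for each truncation parameter $R$ and accuracy $1/k$, and then defines $R_n=(2\vee\max\{k\colon n\geq n_0^{\sss(k,k)}\})\wedge n$ so that $R_n\to\infty$ slowly. Setting $\lambda_n=\lambda_{n,R_n}$ and $\phi_n=\phi_{n,R_n}$ yields one sequence, and because $R_n\to\infty$ both the CDF error $1/R_n$ and $\P(W_{n,R_n}>f_{n,R_n}(x_{n,R_n}))$ tend to 0. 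You need some version of this device --- let your $K$ grow with $n$ at a rate controlled by an explicit $n_0$ furnished by uniform convergence --- to make the proof complete.
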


\reflemma{AssumeStrongCond}, which is proved in \refsubsect{StrongAssumption}, reflects the fact that the upper tail of the edge weight distribution does not substantially influence the first passage percolation (FPP) problem.

Henceforth, except where otherwise noted, we will assume Conditions~\refPartII{cond:scalingfn}, \refPartII{cond:LowerBoundfn} and \refPartII{cond:boundfnExtended}.
We will reserve the notation $\epsilonCondition, \deltaCondition$ for some fixed choice of the constants in Conditions~\refPartII{cond:LowerBoundfn} and \refPartII{cond:boundfnExtended}, with $\epsilonCondition$ chosen small enough to satisfy both conditions.

\subsection{Discussion of our results}
\lbsect{DiscExt}

In this section we discuss our results and state open problems.

\subsubsection{\texorpdfstring{The universality class in terms of $s_n$}{The universality class in terms of sn}}
\lbsect{Universalityclass}

As we have seen in \refsubsect{univ-class}, there are many examples for which Conditions~\refPartII{cond:scalingfn}--\refPartII{cond:boundfn} hold.
In this paper, we have investigated the case where $s_n\to \infty$ with $s_n=o(n^{1/3})$.
We now discuss other choices of $s_n$.

\paragraph{The regime $s_n \to 0$.}
In view of \refexample{ExZtosn}, for $s_n \to 0$ the first passage percolation problem approximates the graph metric, where the approximation is stronger the faster $s_n$ tends to zero.
We distinguish three different scaling regimes:

(i) Firstly, $s_n\log{n}\to \gamma\in \cointerval{0,\infty}$:
The case that $Y\equalsd E^{-\gamma}$ for $\gamma \in (0,\infty)$ falls into this class with $s_n=\gamma /\log n$ (see \cite[Section 1.2]{EckGooHofNar13}) and was investigated in \cite{BhaHofHoo13}, where it is proved that $H_n$ is concentrated on at most two values.
It was observed in \cite{EckGooHofNar13} that when $s_n \log n$ in \refexample{ExZtosn} converges to $\gamma$ fast enough, then the methods of \cite{BhaHofHoo13} can be imitated and the concentration result for the hopcount continues to hold.

(ii) When $s_n\log{n}\to \infty$ but $s_n^2\log{n}\to 0$, the variance factor $s_n^2 \log(n/s_n^3)$ in the CLT in \eqrefPartII{hopcount-CLT} tends to zero.
Since $H_n$ is integer-valued, it follows that \eqrefPartII{hopcount-CLT} must fail in this case.
First order asymptotics are investigated in \cite{EckGooHofNar13}, and it is shown that $H_n/(s_n\log{n})\convp 1$, $W_n/(u_n s_n\log{n})\convp \e$.
It is tempting to conjecture that there exists an integer $k=k_n\approx s_n\log{n}$ such that $H_n\in\set{k_n,k_n+1}$ {\whpdot}

(iii) When $s_n \to 0, s_n^2\log{n}\to \infty$, we conjecture that the CLT for the hopcount in \refthm{WH-gen} remains true, and that $u_n^{-1}W_n-\frac{1}{\lambda_n}\log n$ converges to a Gumbel distribution for a suitable sequence $\lambda_n$.
Unlike in the fixed $s$ case, we expect no martingale limit terms to appear in the limiting distribution.

\paragraph{The fixed $s$ regime.}
The fixed $s$ regime was investigated in \cite{BhaHof12} in the case where $Y\equalsd E^s$.
We conjecture that for other sequences of random variables for which \refcond{scalingfn} is valid for some $s\in (0,\infty)$ the CLT for the hopcount remains valid, while there exist $V$ and $\lambda(s)$ depending on the distribution $Y$ such that $u_n^{-1}W_n-\frac{1}{\lambda(s)}\log{(n/s^3)}\convd V$.
In general, $V$ will not be equal to $(M^{\sss (1)}\vee M^{\sss (2)})^s$, see for example \cite{BhaHof12}.
Instead, it is described by a sum of different terms involving Gumbel distributions and the martingale limit of a certain continuous-time branching process depending on the distribution.
Our proof is inspired by the methods developed in \cite{BhaHof12}.
The CLT for $H_n$ in the fixed $s$ regime can be recovered from our proofs; in fact, the reasoning in that case simplifies considerably compared to our more general setup.

The results in \cite{BhaHof12} match up nicely with ours.
Indeed, in \cite{BhaHof12}, it was shown that
\begin{equation}	\labelPartII{res-Bham-Hof}
	f_n^{-1}
	\big(W_n-\log{n}/\lambda(s)\big) \convd \lambda(s)^{-1/s}
	\big(\Lambda_{1,2}-\log L_s^{\sss(1)}-\log L_s^{\sss(2)}-\log(1/s)\big)^{1/s},
\end{equation}
where $\lambda(s)=\Gamma(1+1/s)^s$, $\Lambda_{1,2}$ is a Gumbel variable so that $\prob(\Lambda_{1,2}\leq x)=\e^{-\e^{-x}}$ and $L_s^{\sss(1)}, L_s^{\sss(2)}$ are two independent copies of the random variable $L_s$ with $\expec(L_s)=1$ solving the distributional equation
\begin{equation}\labelPartII{eq:WfixedPoint}
	L_s\stackrel{d}{=}\sum_{i\geq 1} \e^{-\lambda(s) (E_1+\cdots +E_i)^{s}} L_{s,i},
\end{equation}
where $(L_{s,i})_{i\geq 1}$ are i.i.d.\ copies of $L_s$ and $(E_i)_{i\geq 1}$ are i.i.d.\ exponentials with mean $1$.
We claim that the right-hand side of \eqrefPartII{res-Bham-Hof} converges to $M^{\sss(1)}\vee M^{\sss(2)}$ as $s\rightarrow \infty$, where $M^{\sss(1)}, M^{\sss(2)}$ are as in \refthm{WH-gen}.
This is equivalent to the statement that $(-\log L_s^{\sss(j)})^{1/s} \convd M^{\sss(j)}$ as $s\to \infty$.
Assume that $(-\log L_s^{\sss(j)})^{1/s}$ converges in distribution to a random variable $\cM$.
Then
\begin{align}
\lim_{s \to \infty}\Big(-\log\Big(\sum_{i\geq 1} \e^{-\lambda(s) (E_1+\cdots +E_i)^{s}} L_{s,i}\Big)\Big)^{1/s}
	&= \min_{i\geq 1} \lim_{s\rightarrow \infty} \Big(\lambda(s) (E_1+\cdots +E_i)^{s} -\log L_{s,i}\Big)^{1/s}\nn\\
	&=\min_{i\geq 1}\Big((E_1+\cdots +E_i)\vee \big(\lim_{s\rightarrow \infty} (-\log L_{s,i})^{1/s}\big)\Big),
	\end{align}
	and using
\eqrefPartII{eq:WfixedPoint} we deduce that $\cM$ is the solution of the equation
	\eqn{
	\labelPartII{stoch-rec-M}
	\cM\equalsd \min_{i\geq 1} (E_1+\cdots +E_i)\vee \cM_i,
	}
where $(\cM_i)_{i\geq 1}$ are i.i.d.\ copies of $\cM$
independent of $(E_i)_{i\geq 1}$.
The unique solution to \eqrefPartII{stoch-rec-M} is the random variable with $\prob(\cM\leq x)$ being the survival probability of a Poisson Galton--Watson branching process with mean $x$.

\paragraph{The regime $s_n/n^{1/3} \to \infty$.}
Several of our methods do not extend to the case where $s_n/n^{1/3} \to \infty$; indeed, we conjecture that the CLT in \refthm{WH-gen} ceases to hold in this regime.
First passage percolation (FPP) on the complete graph is closely approximated by invasion percolation (IP) on the Poisson-weighted infinite tree (PWIT), studied in \cite{AddGriKan12}, whenever $s_n\to\infty$, see \cite{EckGooHofNar14a}.

\subsubsection{First passage percolation on random graphs}
\lbsect{FPPonRG}

FPP on random graphs has attracted considerable attention in the past years, and our research was
strongly inspired by its studies.
In \cite{BHH12bPre}, the authors show that for the configuration model
with finite-variance degrees (and related graphs) and edge weights with a continuous distribution not depending on $n$, there exists only a \emph{single} universality class.
Indeed, if we define $W_n$ and $H_n$ to be the weight of and the number of edges in the smallest-weight path between two uniform vertices in the graph, then there exist positive, finite constants $\alpha, \beta, \lambda$ and sequences $(\alpha_n)_n,(\lambda_n)_n$, with $\alpha_n\rightarrow \alpha$, $\lambda_n\rightarrow \lambda$, such that $W_n-(1/\lambda_n)\log{n}$ converges in distribution, while $H_n$ satisfies a CLT with asymptotic mean  $\alpha_n \log{n}$ and asymptotic variance $\beta \log{n}$.

Related results for exponential edge weights appear for the Erd\H{o}s-R\'enyi random graph in \cite{BhaHofHoo11} and to certain inhomogeneous random graphs in \cite{KolKom13pre}.
The diameter of the weighted graph is studied in \cite{AmiDraLel11}, and relations to competition on $r$-regular graphs are examined in \cite{AntDekMosPer11pre}.
Finally, the smallest-weight paths with most edges from a single source or between any pair in the graph are investigated in \cite{AmiPer12pre}.

We conjecture that our results are closely related to FPP on random graphs with \emph{infinite-variance degrees}.
Such graphs, sometimes called \emph{scale-free} random graphs, have been suggested in the networking community as appropriate models for various real-world networks.
See \cite{BarAlb99, Newm03a} for extended surveys of real-world networks, and \cite{Durr07, Hof14pre, Newm10a} for more details on random graph models of such real-world networks.
FPP on infinite-variance random graphs with exponential weights was first studied in \cite{BhaHofHoo10a, BhaHofHoo10b}, of which the case of finite-mean degrees studied in \cite{BhaHofHoo10b} is most relevant for our discussion here.
There, it was shown that a linear transformation of $W_n$ converges in distribution, while $H_n$ satisfies a CLT with asymptotic mean and variance $\alpha \log{n}$, where $\alpha$ is a simple function of the power-law exponent of the degree distribution of the configuration model.
Since the configuration model with infinite-variance degrees \whp\ contains a complete graph of size a positive power of $n$, it can be expected that the universality classes on these random graphs are closely related to those on the complete graph $K_n$.
In particular, the strong universality result for finite-variance random graphs is false, which can be easily seen by observing that for the weight distribution $1+E$, where $E$ is an exponential random variable, the hopcount $H_n$ is of order $\log\log{n}$ (as for the graph distance \cite{HofHooZna07}), rather than $\log{n}$ as it is for exponential weights. See \cite{BarHofKom15} for two examples proving that strong universality indeed fails in the infinite-variance setting.

\subsubsection{Extremal functionals for FPP on the complete graph}
\lbsect{ExtremalFPPonCG}
Many more fine results are known for FPP on the complete graph with exponential edge weights.
In \cite{JAN99}, the weak limits of the rescaled path weight and flooding are determined, where the flooding is the maximal smallest weight between a source and all vertices in the graph.
In \cite{BhaHof14} the same is performed for the diameter of the graph.
It would be of interest to investigate the weak limits of the flooding and diameter in our setting.

\section{Overview of the proof}
\lbsect{OverPf}
In this section, we provide an overview of the proof of our main results.
For a short guide to notation, see page~\pageref{PartII:notation}.

\subsection{Exploration process and FPP on the Poisson-weighted infinite tree}\lbsubsect{PWITFPPIP}

To understand smallest-weight paths in the complete graph, we study the first passage \emph{exploration process}.
Recall from \eqrefPartII{FPPdistance} that $d_{K_n,Y^{\sss(K_n)}}(i,j)$ denotes the total cost of the optimal path $\pi_{i,j}$ between vertices $i$ and $j$.
For a vertex $j\in V(K_n)$, let the \emph{smallest-weight tree} $\SWT_t^{\sss(j)}$ be the connected subgraph of $K_n$ defined by
	\begin{align}
	\begin{split}
	\labelPartII{OneSourceSWT}
	V(\SWT_t^{\sss(j)})
	&=
	\set{i\in V(K_n)\colon d_{K_n,Y^{\sss(K_n)}}(i,j)\leq t} \! ,
	\\
	E(\SWT_t^{\sss(j)})
	&=
	\set{e\in E(K_n)\colon e\in\pi_{j,i}\text{ for some }i\in V(\SWT_t^{\sss(j)})}.
	\end{split}
	\end{align}
Note that $\SWT_t^{\sss(j)}$ is indeed a tree: if two optimal paths $\pi_{j,k},\pi_{j,k'}$ pass through a common vertex $i$, both paths must contain $\pi_{j,i}$ since the minimizers of \eqrefPartII{FPPdistance} are unique.

To visualize the process $(\SWT_t^{\sss(j)})_{t\geq 0}$, think of the edge weight $Y_e^{\sss (K_n)}$ as the time required for fluid to flow across the edge $e$.
Place a source of fluid at $j$ and allow it to spread through the graph.
Then $V(\SWT_t^{\sss(j)})$ is precisely the set of vertices that have been wetted by time $t$, while $E(\SWT_t^{\sss(j)})$ is the set of edges along which, at any time up to $t$, fluid has flowed from a wet vertex to a previously dry vertex.
Equivalently, an edge is added to $\SWT_t^{\sss(j)}$ whenever it becomes completely wet, with the additional rule that an edge is not added if it would create a cycle.

Because fluid begins to flow across an edge only after one of its endpoints has been wetted, the \emph{age} of a vertex -- the length of time that a vertex has been wet -- determines how far fluid has traveled along the adjoining edges.
Given $\SWT_t^{\sss(j)}$, the future of the exploration process will therefore be influenced by the current ages of vertices in $\SWT_t^{\sss(j)}$, and the nature of this effect depends on the probability law of the edge weights $(Y_e^{\sss (K_n)})_e$.
In the sequel, for a subgraph $\graphG=(V(\graphG),E(\graphG))$ of $K_n$, we write $\graphG$ instead of $V(\graphG)$ for the vertex set when there is no risk of ambiguity.

To study the smallest-weight tree from a vertex, say vertex $1$, let us consider the time until the first vertex is added.
By construction, $\min_{i\in [n]\setminus \set{1}} Y_{\set{1,i}}^{\sss(K_n)} \equalsd f_n(\frac{n}{n-1}E)$ (cf.\ \eqrefPartII{minYifn}), where $E$ is an exponential random variable of mean $1$.
We next extend this to describe the distribution of the order statistics of the weights of edges from vertex 1 to \emph{all} other vertices.

Denote by $Y_{(k)}^{\sss(K_n)}$ the $k^\th$ smallest weight from $(Y_{\set{1,i}}^{\sss(K_n)})_{i\in [n]\setminus \set{1}}$.
Then $(Y_{(k)}^{\sss(K_n)})_{k\in [n-1]}\equalsd (f_n(S_{k,n}))_{k\in [n-1]}$, where $S_{k,n}=\sum_{j=1}^k \frac{n}{n-j} E_j$ and $(E_j)_{j\in [n-1]}$ are i.i.d.\ exponential random variables with mean $1$.
The fact that the distribution of $S_{k,n}$ depends on $n$ is awkward, and can be changed by using a \emph{thinned} Poisson point process.
Let $X_1<X_2<\dotsb$ be the points of a Poisson point process with intensity 1, so that $X_k\equalsd\sum_{j=1}^kE_j=\lim_{n\to \infty} S_{k,n}$.
To each $k\in\N$, we associate a \emph{mark} $M_k$ which is chosen uniformly at random from $[n]$, different marks being independent.
We \emph{thin} a point $X_k$ when $M_k=1$ (since 1 is the initial vertex) or when $M_k=M_{k'}$ for some $k'<k$.
Then
\begin{equation}\labelPartII{SingleVertexCoupling}
(Y_{(k)}^{\sss(K_n)})_{k\in [n-1]}\equalsd (f_n(X_k))_{k\in\N, \, X_k\text{~unthinned}}.
\end{equation}
In the next step, we extend this result to the smallest-weight tree $\SWT^{\sss(1)}$ using a relation to FPP on the Poisson-weighted infinite tree.
Before giving the definitions, we recall the Ulam--Harris notation for describing trees.

Define the tree $\tree^{\sss(1)}$ as follows.
The vertices of $\tree^{\sss(1)}$ are given by finite sequences of natural numbers headed by the symbol $\emptyset_1$, which we write as $\emptyset_1 j_1 j_2\dotsb j_k$.
The sequence $\emptyset_1$ denotes the root vertex of $\tree^{\sss(1)}$.
We concatenate sequences $v=\emptyset_1 i_1\dotsb i_k$ and $w=\emptyset_1 j_1\dotsb j_m$ to form the sequence $vw=\emptyset_1 i_1\dotsb i_k j_1\dotsb j_m$ of length $\abs{vw}=\abs{v}+\abs{w}=k+m$.
Identifying a natural number $j$ with the corresponding sequence of length 1, the $j^\th$ child of a vertex $v$ is $vj$, and we say that $v$ is the parent of $vj$.
Write $\parent{v}$ for the (unique) parent of $v\neq\emptyset_1$, and $\ancestor{k}{v}$ for the ancestor $k$ generations before, $k\leq \abs{v}$.

We can place an edge (which we could consider to be directed) between every $v\neq\emptyset_1$ and its parent; this turns $\tree^{\sss(1)}$ into a tree with root $\emptyset_1$.
With a slight abuse of notation, we will use $\tree^{\sss(1)}$ to mean both the set of vertices and the associated graph, with the edges given implicitly according to the above discussion, and we will extend this convention to any subset $\tau\subset\tree^{\sss(1)}$.
We also write $\boundary \tau=\set{v\notin\tau\colon \parent{v}\in\tau}$ for the set of children one generation away from $\tau$.

The Poisson-weighted infinite tree is an infinite edge-weighted tree in which every vertex has infinitely many (ordered) children.
To describe it formally, we associate weights to the edges of $\tree^{\sss(1)}$.
By construction, we can index these edge weights by non-root vertices, writing the weights as $X=(X_v)_{v\neq\emptyset_1}$, where the weight $X_v$ is associated to the edge between $v$ and its parent $p(v)$.
We make the convention that $X_{v0}=0$.

\begin{defn}[Poisson-weighted infinite tree]\lbdefn{PWITdef}
The \emph{Poisson-weighted infinite tree} (PWIT) is the random tree $(\tree^{\sss(1)},X)$ for which $X_{vk}-X_{v(k-1)}$ is exponentially distributed with mean 1, independently for each
$v\in \tree^{\sss(1)}$ and each $k\in\N$.
Equivalently, the weights $(X_{v1},X_{v2},\dotsc)$ are the (ordered) points of a Poisson point process of intensity 1 on $(0,\infty)$, independently for each $v$.
\end{defn}

Motivated by \eqrefPartII{SingleVertexCoupling}, we study FPP on $\tree^{\sss(1)}$ with edge weights $(f_n(X_v))_v$:

\begin{defn}[First passage percolation on the Poisson-weighted infinite tree]\lbdefn{FPPonPWITdef}
For FPP on $\tree^{\sss(1)}$ with edge weights $(f_n(X_v))_v$, let
the FPP edge weight between $v\in\tree^{\sss(1)}\setminus\set{\emptyset_1}$ and $\parent{v}$ be $f_n(X_v)$. The FPP distance from $\emptyset_1$ to $v\in\tree^{\sss(1)}$ is
\begin{equation}\labelPartII{TvDefinition}
T_v = \sum_{k=0}^{\abs{v}-1} f_n(X_{\ancestor{k}{v}})
\end{equation}
and the FPP exploration process $\BP^{\sss(1)}=(\BP^{\sss(1)}_t)_{t\geq 0}$ on $\tree^{\sss(1)}$ is defined by $\BP^{\sss(1)}_t=\set{v\in\tree^{\sss(1)}\colon T_v\leq t}$.
\end{defn}

Note that the FPP edge weights $(f_n(X_{vk}))_{k\in\N}$ are themselves the points of a Poisson point process on $(0,\infty)$, independently for each $v\in\tree^{\sss(1)}$.
The intensity measure of this Poisson point process, which we denote by $\mu_n$, is the image of Lebesgue measure on $(0,\infty)$ under $f_n$.
Since $f_n$ is strictly increasing by assumption, $\mu_n$ has no atoms and we may abbreviate $\mu_n(\ocinterval{a,b})$ as $\mu_n(a,b)$ for simplicity.
Thus $\mu_n$ is characterized by
\begin{equation}\labelPartII{munCharacterization}
\mu_n(a,b) = f_n^{-1}(b) - f_n^{-1}(a),
\qquad
\int_0^\infty h(y) d\mu_n(y) = \int_0^\infty h(f_n(x)) dx,
\end{equation}
for any measurable function $h\colon \cointerval{0,\infty}\to\cointerval{0,\infty}$.

Clearly, and as suggested by the notation, the FPP exploration process $\BP$ is a continuous-time branching process:

\begin{prop}\lbprop{FPPisCTBP}
The process $\BP^{\sss(1)}$ is a continuous-time branching process (CTBP), started from a single individual $\emptyset_1$, where the ages at childbearing of an individual form a Poisson point process with intensity $\mu_n$, independently for each individual.
The time $T_v$ is the birth time $T_v=\inf\set{t\geq 0\colon v\in\BP^{\sss(1)}_t}$ of the individual $v\in\tree^{\sss(1)}$.
\end{prop}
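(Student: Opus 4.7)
\medskip

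The plan is to reduce Proposition~\refPartII{p:FPPisCTBP} directly to the defining properties of the PWIT in \refdefn{PWITdef} together with the mapping theorem for Poisson point processes. The statement has three conjuncts: (i) for each individual $v$, the multiset of ages at childbearing $\set{T_{vj}-T_v\colon j\in\N}$ is a Poisson point process with intensity $\mu_n$; (ii) these point processes are independent across $v\in\tree^{\sss(1)}$; and (iii) $T_v=\inf\set{t\geq 0\colon v\in\BP_t^{\sss(1)}}$. I would treat these in turn.

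First I would verify the birth-time recursion. From \eqrefPartII{TvDefinition}, one sees telescopically that $T_{\emptyset_1}=0$ and, for every $v\in\tree^{\sss(1)}$ and $j\in\N$,
\begin{equation*}
T_{vj} = \sum_{k=0}^{\abs{v}} f_n(X_{\ancestor{k}{vj}}) = T_v + f_n(X_{vj}),
\end{equation*}
using the convention $X_{v0}=0$ so that the $k=\abs{v}$ term equals $f_n(X_{vj})$. Hence the age at which $v$ gives birth to its $j$\th\ child is precisely the FPP edge weight $f_n(X_{vj})$. Then the identity $\BP^{\sss(1)}_t=\set{v\in\tree^{\sss(1)}\colon T_v\leq t}$ is nothing but the definition, giving (iii).

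Next I would apply the Poisson mapping theorem. By \refdefn{PWITdef}, for each fixed $v\in\tree^{\sss(1)}$ the sequence $(X_{v1},X_{v2},\dotsc)$ is the increasing enumeration of the points of a Poisson point process $\Pi_v$ on $(0,\infty)$ with Lebesgue intensity, and the processes $(\Pi_v)_{v\in\tree^{\sss(1)}}$ are mutually independent. Since $f_n\colon(0,\infty)\to(0,\infty)$ is strictly increasing and measurable, the mapping theorem (see, e.g., Kingman) implies that $f_n(\Pi_v)=\set{f_n(X_{vk})\colon k\in\N}$ is a Poisson point process on $(0,\infty)$, and the push-forward intensity is exactly the measure $\mu_n$ characterized by \eqrefPartII{munCharacterization}. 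Independence of the original processes across $v$ is preserved under coordinatewise mapping, yielding (i) and (ii).

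Finally I would assemble the pieces into the CTBP description. Condition on the birth times $(T_w)_{w\neq v}$ of all individuals other than $v$; by the independence in the previous step, the childbearing ages of $v$ are still distributed as a Poisson point process with intensity $\mu_n$, so the $j$\th\ child is born at time $T_v+f_n(X_{vj})=T_{vj}$. This is precisely the dynamics of a continuous-time branching process started from $\emptyset_1$ with reproduction point process of intensity $\mu_n$. I do not anticipate any genuine obstacle: the proposition is essentially a restatement of \refdefn{PWITdef} and \refdefn{FPPonPWITdef} once the Poisson mapping theorem has been invoked, and the only point requiring a line of care is the bookkeeping in the telescoping identity $T_{vj}=T_v+f_n(X_{vj})$.
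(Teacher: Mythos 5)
Your proof is correct and takes the same route the paper implicitly uses: the paper states, just before the proposition, that the weights $(f_n(X_{vk}))_{k\in\N}$ form independent Poisson point processes of intensity $\mu_n$ (this is exactly the Poisson mapping theorem you invoke), and then treats the CTBP identification as immediate, offering no further proof. One small slip in your write-up: in the telescoping identity it is the $k=0$ term of $\sum_{k=0}^{\abs{v}} f_n(X_{\ancestor{k}{vj}})$, not the $k=\abs{v}$ term, that equals $f_n(X_{vj})$, and the convention $X_{v0}=0$ plays no role here; the conclusion $T_{vj}=T_v+f_n(X_{vj})$ is nonetheless correct.
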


Similar to the analysis of the weights of the edges containing vertex $1$, we now introduce a thinning procedure.
Define $M_{\emptyset_1}=1$ and to each $v\in \tree^{\sss(1)}\setminus\set{\emptyset_1}$ associate a mark $M_v$ chosen independently and uniformly from $[n]$.

\begin{defn}\lbdefn{ThinningBP}
The root $\emptyset_1 \in \tree^{\sss(1)}$ is not thinned, i.e.\ \emph{unthinned}.
The vertex $v\in\tree^{\sss(1)}\setminus\set{\emptyset_1}$ is \emph{thinned} if it has an ancestor $v_0=\ancestor{k}{v}$ (possibly $v$ itself) such that $M_{v_0}=M_w$ for some unthinned vertex $w$ with $T_w<T_{v_0}$.
\end{defn}
This definition also appears as \refother{\refdefnPartI{ThinningBP}}.
As explained there, this definition is not circular since whether or not a vertex $v$ is thinned can be assessed recursively in terms of earlier-born vertices.
Write $\thinnedBP_t^{\sss(1)}$ for the subgraph of $\BP_t^{\sss(1)}$ consisting of unthinned vertices.

\begin{defn}\lbdefn{InducedGraph}
Given a subset $\tau\subset\tree^{\sss(1)}$ and marks $M=(M_v \colon v \in \tau)$ with $M_v\in[n]$, define $\pi_M(\tau)$ to be the subgraph of $K_n$ induced by the mapping $\tau\to [n]$, $v\mapsto M_v$.
That is, $\pi_M(\tau)$ has vertex set $\set{M_v\colon v\in\tau}$, with an edge between $M_v$ and $M_{\parent{v}}$ whenever $v,\parent{v}\in\tau$.
\end{defn}
Note that if the marks $(M_v)_{v\in\tau}$ are distinct then $\pi_M(\tau)$ and $\tau$ are isomorphic graphs.

The following theorem, taken from \refother{\refthmPartI{Coupling1Source}} and proved in \refother{\refsubsectPartI{ProofCoupling1Source}}, establishes a close connection between FPP on $K_n$ and FPP on the PWIT with edge weights $(f_n(X_v))_{v\in\tau}$:

\begin{theorem}[Coupling to FPP on PWIT]\lbthm{Coupling1Source}
The law of $(\SWT_t^{\sss(1)})_{t\ge 0}$ is the same as the law of $\Bigl( \pi_M\bigl( \thinnedBP_t^{\sss(1)} \bigr) \Bigr)_{t\ge 0}$.
\end{theorem}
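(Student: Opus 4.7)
The plan is to compare $(\SWT_t^{\sss(1)})_{t\geq 0}$ and $(\pi_M(\thinnedBP_t^{\sss(1)}))_{t\geq 0}$ as right-continuous pure-jump processes both starting at $\{1\}$ (the latter because $M_{\emptyset_1}=1$), and to verify inductively that the sequence of (jump-time, newly-added-vertex) pairs has the same joint law. Since both processes grow by adding a single vertex at each jump, and since the edges added are determined by the sequence of vertices, this reduces the theorem to a Markov-style matching computation at each successive jump time.

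For the inductive step I would condition on the shared history up to the $k^\th$ jump time $\sigma_k$, at which both processes span the same subgraph on a vertex set $V_k\subset[n]$ with $1\in V_k$, with ages $a_v := \sigma_k - t_v$ for each $v\in V_k$. Using the parametrization $Y_e^{\sss(K_n)} = g(E_e^{\sss(K_n)})$ from \eqrefPartII{EdgesByIncrFunct} and the lack-of-memory property of exponentials, the weights on the undiscovered edges $\{v,w\}$ (with $v\in V_k$, $w\in[n]\setminus V_k$) have, conditionally, i.i.d.\ underlying exponential excesses $E_{\{v,w\}}^{\sss(K_n)} - f_n^{-1}(a_v)$; the next $\SWT$-jump thus arises from a race among $|V_k|(n-k)$ exponential clocks pushed forward through $g$. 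On the PWIT side, each unthinned $v\in V_k$ has future offspring whose FPP-weights form a PPP of intensity $\mu_n$ on $(a_v,\infty)$ (cf.\ \eqrefPartII{munCharacterization}), and each child carries an independent uniform mark in $[n]$. Poisson splitting decomposes each such PPP into $n$ independent rate-$\mu_n/n$ streams, one per potential mark; the next \emph{unthinned} birth corresponds to a child with mark in $[n]\setminus V_k$, and one checks directly that the joint intensity of such events indexed by $(v,w)\in V_k\times([n]\setminus V_k)$ matches the $K_n$-side exactly.

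The main obstacle is the thinning bookkeeping: children born from distinct unthinned parents may receive identical marks, and one must verify that retaining only the earliest such occurrence exactly mirrors the $\SWT$-rule that accepts the smallest-weight incoming edge to each newly wetted vertex. This is precisely where \refdefn{ThinningBP} plays its role; a careful Poisson-splitting argument (splitting each vertex's PPP by mark and using independence across vertices of $\tree^{\sss(1)}$) reduces both constructions to a common collection of independent ``edge-clocks'' indexed by $V_k\times[n]$, with thinning removing clocks pointing to already-discovered labels. The base case at $t=0$ is immediate from $M_{\emptyset_1}=1$; iterating the single-step matching across the a.s.\ discrete sequence of jump times, and invoking the strong Markov property at each $\sigma_k$, then yields equality in law of the full processes.
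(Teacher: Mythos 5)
Your argument is sound, but it takes a genuinely different route from the paper's (the proof itself is deferred to the companion Part~I, but the same machinery is set out in \refsect{Coupling} here to prove the two-source generalization \refthm{CouplingFPP}). The paper builds an \emph{explicit almost-sure coupling}: the $K_n$ edge weights are manufactured from the PWIT data and marks via the rule \eqrefPartII{EdgeWeightCoupling}, shown in \refthm{CouplingExpl} to be i.i.d.\ exponentials, and the ``minimal rule'' framework (\refdefn{MinimalRule}, \refprop{MinimalRuleThinning}) then shows that under this coupling $\pi_M$ carries $\thinnedBP^{\sss(1)}_t$ onto $\SWT^{\sss(1)}_t$ step-by-step, almost surely, not merely in law. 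You instead prove equality in law directly by a Markov-type matching: condition on the common state, set up a race indexed by pairs $(v,w)$ with $v\in V_k$, $w\notin V_k$, and check that the conditional hazard of pair $(v,w)$ firing at age $a_v+s$ is $(g^{-1})'(a_v+s)$ on the $K_n$ side and $\frac{1}{n}(f_n^{-1})'(a_v+s)$ on the PWIT side (by Poisson colouring of the offspring point process by mark); these agree since $g^{-1}=f_n^{-1}/n$, cf.\ \eqrefPartII{fnFromParamEdges}. Your route is legitimate and arguably more elementary, but it delivers less: the explicit coupling is not a luxury, since later results (\refthm{CollCondProb} and the Cox-process analysis) need $\thinnedcluster$ and the $K_n$ edge weights to live on a single probability space, and the exploration/minimal-rule framework extends in one stroke to the two-source frozen exploration $\cS$, whereas the Markov race would have to be redone from scratch there. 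Two small slips in your write-up: the memoryless excess should be $E_{\shortset{v,w}}^{\sss(K_n)}-g^{-1}(a_v)=E_{\shortset{v,w}}^{\sss(K_n)}-f_n^{-1}(a_v)/n$, not $E_{\shortset{v,w}}^{\sss(K_n)}-f_n^{-1}(a_v)$; and ``the edges added are determined by the sequence of vertices'' is not quite right, since the winning parent $v_*$ must also be matched, although your race indexed by $(v,w)$-pairs does in fact supply exactly that information.
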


\refthm{Coupling1Source} is based on an explicit coupling between the edge weights $(Y_e^{\sss(K_n)})_e$ on $K_n$ and $(X_v)_v$ on $\tree^{\sss(1)}$.
We will describe a related coupling in \refsubsect{CouplingBP}.
A general form of those couplings are given in \refsect{Coupling}.

\subsection{CTBPs and one-vertex characteristics}\lbsubsect{FPPVertexCharact}
In this section, we analyze the CTBP $\BP^{\sss(1)}$ introduced in \refsubsect{PWITFPPIP}.
Notice that $(\BP_t^{\sss(1)})_{t\ge 0}$ depends on $n$ through its offspring distribution.
We have to understand the coupled double asymptotics of $n$ and $t$ tending to infinity simultaneously.

Recall that we write $\abs{v}$ for the generation of $v$ (i.e., its graph distance from the root in the genealogical tree).
To count particles in $\BP_t^{\sss(1)}$, we use a non-random characteristic $\chi\colon \cointerval{0,\infty} \to \cointerval{0,\infty}$.
Following \cite{BhaHof12}, define the \emph{generation-weighted vertex characteristic} by
	\begin{align}\labelPartII{1VertexCharDef}
	z_t^\chi(a)=z_t^{\chi, \BP^{\sss(1)}}(a)
	=
	\sum_{v\in\BP_t^{\sss(1)}} a^{|v|} \chi(t-T_v) \qquad \text{for all }a,t\ge 0.
	\end{align}
We make the convention that $\chi(t)=z_t^\chi(a)=0$ for $t<0$.
For characteristics $\chi$, $\eta$ and for $a,b,t,u\ge 0$, write
	\begin{align}\labelPartII{NonRescaled1VertexChar}
	m_t^\chi(a)=\E(z_t^\chi(a)) \qquad \text{and} \qquad M_{t,u}^{\chi,\eta}(a,b)=\E(z_t^\chi(a) z_u^\eta(b)).
	\end{align}
Let $\hat{\mu}_n(\lambda)=\int\e^{-\lambda y}d\mu_n(y)$ denote the Laplace transform of $\mu_n$.
For $a>0$, define $\lambda_n(a)>0$ by
	\begin{equation}\labelPartII{lambdaaDefn}
	a\hat{\mu}_n(\lambda_n(a))=1
	\end{equation}
whenever \eqrefPartII{lambdaaDefn} has a unique solution.
The parameters $\lambda_n$ and $\phi_n$ in \refthm{WH-gen} are given by
\begin{align}
\lambda_n&=\lambda_n(1),\labelPartII{lambdanDefinition}\\
\phi_n&= \lambda_n'(1)/\lambda_n(1)
.
\labelPartII{phinDefinitionShort}
\end{align}
The asymptotics of $\lambda_n$ and $\phi_n$ stated in \refthm{WH-gen} is the content of the following lemma:
\begin{lemma}[Asymptotics of BP-parameters]\lblemma{lambdanAsymp}
$\phi_n/s_n \to 1$ and $\lambda_n f_n(1) \to \e^{-\gamma}$, where $\gamma$ is Euler's constant.
\end{lemma}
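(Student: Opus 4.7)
The strategy is to exploit the substitution $x=y^{1/s_n}$ in the integral representation $\hat\mu_n(\lambda)=\int_0^\infty \e^{-\lambda f_n(x)}dx$ coming from \eqrefPartII{munCharacterization}. Writing $\psi_n(y):=f_n(y^{1/s_n})/f_n(1)$ and $c_n:=\lambda f_n(1)$, this produces
\eqn{
\hat\mu_n(\lambda)=\frac{1}{s_n}\int_0^\infty \e^{-c_n\psi_n(y)}\,y^{1/s_n-1}\,dy.
}
By \refcond{scalingfn}, $\psi_n(y)\to y$ pointwise; in the extremal case $\psi_n(y)\equiv y$ (realised, e.g., for $Y=E^{s_n}$, cf.\ \eqrefPartII{fnEsnCase}) the right-hand side equals $\Gamma(1+1/s_n)c_n^{-1/s_n}$ exactly. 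The heart of the proof is the refined asymptotic
\eqn{
\hat\mu_n(\lambda)=\Gamma(1+1/s_n)\,c_n^{-1/s_n}\bigl(1+o(1/s_n)\bigr)
}
uniformly for $c_n$ in compact subsets of $(0,\infty)$.

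Granted this refined asymptotic, the statement for $\lambda_n$ is immediate: setting $\lambda=\lambda_n$ so that $\hat\mu_n(\lambda_n)=1$ and raising to the $s_n$th power gives $\lambda_n f_n(1)=\Gamma(1+1/s_n)^{s_n}\bigl(1+o(1/s_n)\bigr)^{s_n}$, and the Taylor expansion $\log\Gamma(1+1/s_n)=-\gamma/s_n+O(1/s_n^2)$ yields $\lambda_n f_n(1)\to\e^{-\gamma}$. For $\phi_n$, implicit differentiation of $a\,\hat\mu_n(\lambda_n(a))=1$ at $a=1$ gives $\lambda_n'(1)=\bigl[\int_0^\infty f_n(x)\e^{-\lambda_n f_n(x)}dx\bigr]^{-1}$. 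The same substitution converts this integrand into $f_n(1)\psi_n(y)\e^{-c_n\psi_n(y)}y^{1/s_n-1}/s_n$, and the parallel asymptotic analysis produces the ideal value $f_n(1)\Gamma(1+1/s_n)/(s_n c_n^{1+1/s_n})$; invoking the first-step asymptotic $c_n^{1/s_n}\sim\Gamma(1+1/s_n)$ collapses this to $1/(s_n\lambda_n)\cdot(1+o(1))$. Hence $\lambda_n'(1)\sim s_n\lambda_n$ and $\phi_n/s_n=\lambda_n'(1)/(s_n\lambda_n)\to 1$.

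The main obstacle is establishing the refined asymptotic with the \emph{sharp} error rate $o(1/s_n)$: any slower rate is magnified on raising to the $s_n$th power and yields only boundedness rather than convergence of $\lambda_n f_n(1)$. I would decompose the integration region according to the size of $y$. On a bounded interval around $y=1$, \refcond{LowerBoundfn} converts the pointwise convergence $\psi_n(y)\to y$ into a quantitative estimate via the two-sided bounds on $xf_n'(x)/f_n(x)$. For $y$ in a compact set bounded away from $0$ and $\infty$, monotonicity of $\psi_n$ upgrades pointwise to uniform convergence. The upper tail $y\to\infty$ is handled by \refcond{boundfnExtended}, which forces $\psi_n(y)\ge y^{\epsilonCondition}$ for $y\ge 1$ and so produces super-polynomial decay of $\e^{-c_n\psi_n(y)}$ that comfortably dominates $y^{1/s_n-1}$. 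The lower tail $y\to 0$ is controlled by the crude bound $0\le\psi_n(y)\le 1$ together with the explicit evaluation $\int_0^{\eps} y^{1/s_n-1}dy/s_n=\eps^{1/s_n}$, which tends to $1$ uniformly in $n$ as $\eps\uparrow 1$. Combining these estimates to achieve the required $o(1/s_n)$ precision is the main technical work.
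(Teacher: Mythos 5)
Your proposal matches the paper's proof in essentials: the paper derives the lemma from \refthm{ConvRW}, whose key step is exactly your refined asymptotic, stated there as $\hat{\mu}_n(a\e^{-\gamma}/f_n(1)) = 1 - (\log a)/s_n + o(1/s_n)$ and proved by comparison with the exact $E^{s_n}$ case $f_n(\tilde{x})=f_n(1)\tilde{x}^{s_n}$; after the substitution $\tilde{x}=x^{1/s_n}$ the factor $s_n$ is absorbed into the measure and a single dominated-convergence argument does the job, so your region-by-region decomposition collapses to the paper's domination bound (under \refcond{fnWeakBounds}, which Conditions~\refPartII{cond:LowerBoundfn} and \refPartII{cond:boundfnExtended} imply). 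Your implicit differentiation of $a\hat{\mu}_n(\lambda_n(a))=1$ is \reflemma{lambdaaDerivative}, and the parallel integral estimate you need for $\phi_n$ is \eqrefPartII{EYAsymp}. One step you call `immediate' is not: the refined asymptotic is uniform only over compact sets of $c_n=\lambda f_n(1)$, and $\lambda_n f_n(1)$ is not a priori confined to any such set, so you cannot simply substitute $\lambda=\lambda_n$; the paper closes this loop with a short two-sided monotonicity argument (evaluate the asymptotic at fixed $a'$ slightly above and below $1$, use monotonicity of $\hat{\mu}_n$ and of $a\mapsto\lambda_n(a)$ to sandwich $\lambda_n f_n(1)$, then shrink the gap), which a complete version of your proof must supply.
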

\reflemma{lambdanAsymp} is proved in \refsubsect{ConvRWPf}.

Typically, $z_t^\chi(a)$ grows exponentially in $t$ at rate $\lambda_n(a)$.
Therefore, we write
	\begin{align}
\bar{z}_t^\chi(a)&= \e^{-\lambda_n(a) t} z_t^\chi(a),
\notag \\
\bar{m}_t^\chi(a)&= \E(\bar{z}_t^\chi(a)) = \e^{-\lambda_n(a) t} m_t^\chi(a),
	\labelPartII{barredOneVertexChars}
	\\
\bar{M}_{t,u}^{\chi,\eta}(a,b)&=
	\E(\bar{z}_t^\chi(a) \bar{z}_u^\eta(b)) = \e^{-\lambda_n(a)t}\e^{-\lambda_n(b)u} M_{t,u}^{\chi,\eta}(a,b).
	\notag
	\end{align}
In the following theorem, we investigate the asymptotics of such generation-weighted one-vertex characteristics:

\begin{theorem}[Asymptotics one-vertex characteristics]\lbthm{OneCharConv}
Given $\epsilon>0$ and a compact subset $A\subset(0,2)$, there is a constant $K<\infty$ such that for $n$ sufficiently large, uniformly for $a,b\in A$ and for $\chi$ and $\eta$ bounded, non-negative, non-decreasing functions, $\lambda_n(1)[t\wedge u] \geq K$,
\begin{align}
&\abs{s_n^{-1} \bar{m}_t^\chi(a^{1/s_n}) - \textstyle{\int_0^\infty} \e^{-z}\chi\bigl(z/\lambda_n(a^{1/s_n})\bigr) dz}\leq \epsilon\norm{\chi}_\infty,
\labelPartII{OneCharConvFormula}\\
&\abs{s_n^{-3} \bar{M}_{t,u}^{\chi,\eta}(a^{1/s_n},b^{1/s_n}) - \frac{\textstyle{\int_0^\infty} \e^{-z} \chi\bigl( z/\lambda_n(a^{1/s_n}) \bigr)dz \, \textstyle{\int_0^\infty} \e^{-w}\eta\bigl( w/\lambda_n(b^{1/s_n}) \bigr)dw}{\log(1/a+1/b)}} \leq \epsilon \norm{\chi}_\infty\norm{\eta}_\infty.\labelPartII{OneCharCovConv}
\end{align}

Moreover, there is a constant $K'<\infty$ independent of $\epsilon$ such that $\bar{m}_t^\chi(a^{1/s_n}) \leq K' \norm{\chi}_\infty s_n$ and $\bar{M}_{t,u}^{\chi,\eta}(a^{1/s_n},b^{1/s_n}) \leq K' \norm{\chi}_\infty\norm{\eta}_\infty s_n^3$ for all $n$ sufficiently large, uniformly over $u,t\geq 0$ and $a,b\in A$.
\end{theorem}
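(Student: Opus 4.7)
The proof should rest on two renewal identities, obtained by conditioning on the offspring of the root of $\BP^{\sss(1)}$. Writing each non-root $v$ uniquely as $cw$ with $c$ a first-generation child and $w$ in the subtree rooted at $c$, one splits $z_t^\chi(a)=\chi(t)+\sum_c a\,z_{t-\sigma_c}^{\chi,(c)}(a)$, where the $z^{(c)}$ are i.i.d.\ copies of $z$ attached to the Poisson process of birth times $(\sigma_c)$ with intensity $\mu_n$. Taking first moments yields the classical renewal equation
\begin{equation*}
m_t^\chi(a) = \chi(t) + a \int_0^t m_{t-s}^\chi(a)\, d\mu_n(s).
\end{equation*}
Taking the product $z_t^\chi(a)z_u^\eta(b)$, splitting the double sum over pairs of children into the same-child contribution (which feeds back into $M$ by the branching property) and the distinct-child contribution (which factors by independence), and using the identity for $m$ to collapse all $\chi,\eta$- and ``$m$-with-one-subtree'' terms, produces the analogous equation
\begin{equation*}
M_{t,u}^{\chi,\eta}(a,b) = m_t^\chi(a)\,m_u^\eta(b) + ab \int_0^{t\wedge u} M_{t-s,u-s}^{\chi,\eta}(a,b)\, d\mu_n(s).
\end{equation*}
Multiplying through by $\e^{-\lambda_n(a)t}$ (resp.\ $\e^{-\lambda_n(a)t-\lambda_n(b)u}$) and using $a\hat\mu_n(\lambda_n(a))=1$ turns these into proper renewal equations whose driving measures are probability measures.

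For the first-moment estimate, the key renewal theorem gives, as $\lambda_n(a) t \to \infty$,
\begin{equation*}
\bar m_t^\chi(a) \longrightarrow \frac{\int_0^\infty \e^{-\lambda_n(a)s}\chi(s)\,ds}{a\int_0^\infty s\,\e^{-\lambda_n(a)s}\,d\mu_n(s)} = \frac{\lambda_n'(a)}{\lambda_n(a)}\int_0^\infty \e^{-z}\chi\bigl(z/\lambda_n(a)\bigr)\,dz,
\end{equation*}
where the second equality uses $-a\hat\mu_n'(\lambda_n(a)) = 1/\lambda_n'(a)$ (from differentiating $a\hat\mu_n(\lambda_n(a))=1$) and the substitution $z=\lambda_n(a)s$. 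Substituting $a \to a^{1/s_n}$, the prefactor becomes $\lambda_n'(a^{1/s_n})/\lambda_n(a^{1/s_n})$. A quantitative version of \reflemma{lambdanAsymp} shows this is $s_n(1+o(1))$ uniformly on $A$: under \refcond{scalingfn} one extracts $\hat\mu_n(\lambda)\sim \Gamma(1+1/s_n)(\lambda f_n(1))^{-1/s_n}$ in the relevant range, whence $\lambda_n(a^{1/s_n})\sim a\lambda_n(1)$ and $\lambda_n'(a^{1/s_n})/\lambda_n(a^{1/s_n})\sim s_n$ uniformly in $a\in A$. This gives \eqrefPartII{OneCharConvFormula}.

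For the second moment, iterate the barred renewal equation to write
\begin{equation*}
\bar M_{t,u}^{\chi,\eta}(a,b) = \bar m_t^\chi(a)\bar m_u^\eta(b) + \int_0^{t\wedge u}\bar M_{t-s,u-s}^{\chi,\eta}(a,b)\, G_n(ds),
\end{equation*}
where $G_n(ds):=ab\,\e^{-(\lambda_n(a)+\lambda_n(b))s}\,d\mu_n(s)$ has total mass $q_n(a,b):=ab\,\hat\mu_n(\lambda_n(a)+\lambda_n(b))<1$. For $\lambda_n(1)[t\wedge u]\ge K$, the $\bar m$-factors in the iterated Neumann series are within tolerance of $\bar m_\infty^\chi(a)\bar m_\infty^\eta(b)$, and summing the resulting geometric series yields
\begin{equation*}
\bar M_{t,u}^{\chi,\eta}(a,b) \sim \frac{\bar m_\infty^\chi(a)\bar m_\infty^\eta(b)}{1-q_n(a,b)}.
\end{equation*}
At the rescaled arguments, using $\lambda_n(a^{1/s_n})+\lambda_n(b^{1/s_n})\approx (a+b)\lambda_n(1)$ together with the explicit asymptotics of $\hat\mu_n$, one computes
\begin{equation*}
q_n(a^{1/s_n},b^{1/s_n}) \approx \Bigl(\frac{ab}{a+b}\Bigr)^{1/s_n} = 1 - \frac{\log(1/a+1/b)}{s_n} + O(s_n^{-2}),
\end{equation*}
so $(1-q_n)^{-1}\sim s_n/\log(1/a+1/b)$. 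Combined with $\bar m_\infty^\chi(a^{1/s_n})\bar m_\infty^\eta(b^{1/s_n})\sim s_n^2\int\e^{-z}\chi(z/\lambda_n(a^{1/s_n}))\,dz\cdot\int\e^{-w}\eta(w/\lambda_n(b^{1/s_n}))\,dw$, this produces the $s_n^3$ asymptotics of \eqrefPartII{OneCharCovConv}.

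The main obstacle is to make these heuristics quantitative and uniform. This requires (i) an effective key renewal theorem whose rate is robust to the fact that $\mu_n$ and $\lambda_n(\cdot)$ both depend on $n$, exploiting the smoothness of $\mu_n$ (absolutely continuous, with density controlled by Conditions~\refPartII{cond:LowerBoundfn} and \refPartII{cond:boundfnExtended}) and the fact that non-decreasing bounded $\chi$ is automatically directly Riemann integrable; (ii) uniform Taylor expansions of $\lambda_n$, $\lambda_n'$ and $\hat\mu_n$ near $\lambda_n(1)$ and $2\lambda_n(1)$ over the compact $A$; and (iii) the a priori bounds $\bar m_t^\chi(a^{1/s_n})\le K'\|\chi\|_\infty s_n$ and $\bar M_{t,u}^{\chi,\eta}(a^{1/s_n},b^{1/s_n})\le K'\|\chi\|_\infty\|\eta\|_\infty s_n^3$. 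The first bound follows from the crude domination $z_t^\chi(a)\le \|\chi\|_\infty\sum_v a^{|v|}\mathbf{1}\{T_v\le t\}$ together with an analogous renewal bound on the weighted population; the second then follows from iterating the $\bar M$-renewal identity and the upper bound on $q_n$. The hardest step is the quantitative renewal theorem with rate uniform over the family $\{a\e^{-\lambda_n(a)s}d\mu_n(s):a\in A, n\in\N\}$, since the Taylor expansion of $1-q_n$ must be controlled to precision $o(s_n^{-1})$ to extract the sharp prefactor $\log(1/a+1/b)$.
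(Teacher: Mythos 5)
Your proposal sets up precisely the same renewal framework as the paper: the renewal equations for $m_t^\chi(a)$ and $M_{t,u}^{\chi,\eta}(a,b)$ that you write down are the paper's equations \refeq{mtchiaRecursion} and \refeq{MtuchietaabRecursion}, and your normalized versions with the driving probability measure $\nu_a$ (respectively, the sub-probability $G_n$) correspond to the paper's random-walk representations \refeq{mtchiaRW} and \refeq{MtuchietaabRW}. Your computations of the target limits — including the prefactor $a\lambda_n'(a)/\lambda_n(a) = 1/(\lambda_n(a)\E_a(D)) \sim s_n$ (you dropped a factor of $a^{1/s_n}$, but it tends to $1$), the expansion $q_n(a^{1/s_n},b^{1/s_n}) = 1 - \log(1/a+1/b)/s_n + o(1/s_n)$, and hence the factor $(1-q_n)^{-1}\sim s_n/\log(1/a+1/b)$ — all correctly recover the statement.

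The genuine gap is exactly where you flag it: you appeal to ``an effective key renewal theorem whose rate is robust to the fact that $\mu_n$ and $\lambda_n(\cdot)$ both depend on $n$,'' but you do not supply it, and this is the heart of the proof. Standard key-renewal results give no rate and are certainly not uniform across the $n$-dependent family $\{\nu_{a^{1/s_n}}\colon a\in A,\,n\geq n_0\}$. The paper resolves this with a specific probabilistic device: it couples the renewal walk $(S_j)$ to its point-stationary counterpart $(S_j^*)$ (defined via a size-biased first step, \refeq{DefSjStar}) through the stopping time $J=\inf\{j\colon|S_j-S_j^*|\leq\epsilon\}$, compares the two sums term-by-term up to $J$ (Lemma~\reflemma{CharsByCoupling}), and controls the error using the second-moment bound of Lemma~\reflemma{SquaredSumBound}. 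The tightness needed to make $J$ behave uniformly in $(n,a)$ comes from the Gamma-process scaling limit of Theorem~\refthm{ConvRW}: $(\lambda_n(a^{1/s_n})S_{\floor{s_n t}})_t$ and its stationary analogue converge, uniformly over $a\in A$, to independent Gamma processes, which in particular forces $J/s_n$ and $\lambda_n(a^{1/s_n})S^*_J$ to be tight. This replaces the classical ``directly Riemann integrable $+$ spread-out measure'' hypotheses of the key renewal theorem by an explicit, $n$-uniform coupling estimate. Likewise, for the second moment you would need to justify replacing $\bar m^\chi_{t-\sigma_k}\bar m^\eta_{u-\sigma_k}$ by its limit uniformly across the geometric weight, which is again the same uniform renewal input; the paper packages this as Lemma~\reflemma{CovarianceSums}, which localizes the sum to a compact window in $\lambda_n(1)S_j$ using the Gamma limit. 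Also note that the Taylor expansion of $1-q_n$ is the easy part (it is an immediate consequence of \refeq{munhatAsymp}); the bottleneck is not the precision in $q_n$ but the uniform convergence of $\bar m_t^\chi(a^{1/s_n})$ to its limit as $\lambda_n(1)t\to\infty$ and $n\to\infty$ jointly. Without the coupling (or an equivalent quantitative ingredient), your argument remains a heuristic at this key step.
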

\begin{coro}[Asymptotics of means and variance of population size]\lbcoro{BPSizeAsymptotics}
The population size $\abs{\BP^{\sss(1)}_t}$ satisfies $\E(\abs{\BP^{\sss(1)}_t})\sim s_n \e^{\lambda_n(1)t}$ and $\Var(\abs{\BP^{\sss(1)}_t})\sim s_n^3 \e^{2\lambda_n(1)t}/\log 2$ in the limit as $\lambda_n(1)t\to\infty$, $n\to\infty$.
\end{coro}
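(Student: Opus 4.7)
The plan is to derive the corollary as a direct specialization of \refthm{OneCharConv} to the constant characteristic. Observe that if we take $\chi\equiv \mathbf 1$ (the constant function $1$) and $a=1$, then by \eqrefPartII{1VertexCharDef} we have
\begin{equation*}
z_t^{\mathbf 1}(1)=\sum_{v\in\BP_t^{\sss(1)}} 1=\abs{\BP_t^{\sss(1)}},
\end{equation*}
so that $\E(\abs{\BP_t^{\sss(1)}})=m_t^{\mathbf 1}(1)$ and $\E(\abs{\BP_t^{\sss(1)}}^2)=M_{t,t}^{\mathbf 1,\mathbf 1}(1,1)$. Since $\norm{\mathbf 1}_\infty=1$ and $1\in(0,2)$, I can pick any compact set $A\subset(0,2)$ containing $1$ in its interior (say $A=[1/2,3/2]$) and apply \refthm{OneCharConv} at $a=b=1$, which corresponds to $a^{1/s_n}=b^{1/s_n}=1$.

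For the mean, \eqrefPartII{OneCharConvFormula} gives
\begin{equation*}
s_n^{-1}\bar m_t^{\mathbf 1}(1)\;\longrightarrow\;\int_0^\infty \e^{-z}\,dz=1
\end{equation*}
as $n\to\infty$ with $\lambda_n(1)t\to\infty$ (the error $\epsilon\norm{\chi}_\infty=\epsilon$ is arbitrary). Unwinding the rescaling in \eqrefPartII{barredOneVertexChars} yields $\E(\abs{\BP_t^{\sss(1)}})=m_t^{\mathbf 1}(1)\sim s_n\e^{\lambda_n(1)t}$.

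For the variance, apply \eqrefPartII{OneCharCovConv} with $\chi=\eta=\mathbf 1$ and $a=b=1$, $t=u$: the denominator $\log(1/a+1/b)$ becomes $\log 2$, and both integrals equal $1$, so
\begin{equation*}
s_n^{-3}\bar M_{t,t}^{\mathbf 1,\mathbf 1}(1,1)\;\longrightarrow\;\frac{1}{\log 2},
\end{equation*}
hence $\E(\abs{\BP_t^{\sss(1)}}^2)=M_{t,t}^{\mathbf 1,\mathbf 1}(1,1)\sim s_n^3\e^{2\lambda_n(1)t}/\log 2$. Combining,
\begin{equation*}
\Var(\abs{\BP_t^{\sss(1)}})=M_{t,t}^{\mathbf 1,\mathbf 1}(1,1)-\bigl(m_t^{\mathbf 1}(1)\bigr)^2\sim \frac{s_n^3\,\e^{2\lambda_n(1)t}}{\log 2}-s_n^2\,\e^{2\lambda_n(1)t},
\end{equation*}
and since $s_n\to\infty$, the $s_n^3$ term dominates and delivers the claimed asymptotic $\Var(\abs{\BP_t^{\sss(1)}})\sim s_n^3\e^{2\lambda_n(1)t}/\log 2$.

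There is no real obstacle here; the only subtlety is keeping track of the qualifier ``$\lambda_n(1)t\geq K$'' in \refthm{OneCharConv}, which translates (together with $n\to\infty$) into exactly the regime $\lambda_n(1)t\to\infty$, $n\to\infty$ in which the corollary is stated. The proof is thus essentially a one-line substitution into \refthm{OneCharConv}, with the dominance $s_n^3\gg s_n^2$ (coming from $s_n\to\infty$) being the only non-trivial observation in passing from the second moment to the variance.
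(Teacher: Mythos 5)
Your proof is correct and follows the same approach the paper indicates: immediately after \refcoro{BPSizeAsymptotics}, the paper notes that it ``is the special case $\chi=\indicatorofset{\cointerval{0,\infty}}$, $a=1$'' of \refthm{OneCharConv}, which is precisely the constant characteristic you use. You have merely written out the substitution in detail, including the (correct) observation that $s_n^3\gg s_n^2$ is needed to pass from the second moment to the variance.
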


\refthm{OneCharConv} is proved in \refsubsect{LimitingBounds}.
Generally, we will be interested in characteristics $\chi=\chi_n$ for which $\chi_n\left(\lambda_n(1)^{-1}\cdot\right)$ converges as $n\to\infty$, so that the integral in \eqrefPartII{OneCharConvFormula} acts as a limiting value.
In particular, \refcoro{BPSizeAsymptotics} is the special case $\chi=\indicatorofset{\cointerval{0,\infty}}$, $a=1$.

Since $s_n\to\infty$, \refthm{OneCharConv} and \refcoro{BPSizeAsymptotics} show that the variance of $\bar{z}^\chi_t(a^{1/s_n})$ is larger compared to the square of the mean, by a factor of order $s_n$.
This suggests that $\BP^{\sss(1)}_t$ is typically of order 1 when $\lambda_n(1)t$ is of order 1 (i.e., when $t$ is of order $f_n(1)$, see \reflemma{lambdanAsymp}) but has probability of order $1/s_n$ of being of size of order $s_n^2$.
See also \refother{\refpropPartI{LuckyProb}} which confirms this behavior.

\subsection{FPP exploration process from two sources}\lbsubsect{FPP2Sources}

So far we have studied the smallest-weight tree $\SWT^{\sss(1)}$ from one vertex and its coupling to a suitably thinned version of a CTBP starting from one root.
Since we are looking for the optimal path between the vertices $1$ and $2$, we now place sources of fluid on both vertices and wait for the two smallest-weight trees to merge.
Thus, we need to describe exploration processes from two sources.
The analysis of processes from one source plays a key role in the study of processes from two sources and is crucial to understand the much more involved version from two sources.
However, the coupling presented in \refsubsect{PWITFPPIP}, i.e., \refthm{Coupling1Source}, will not be used in the proofs of our main theorems.

As we will see later, because the edge-weight distributions have a heavy tail at zero, the first passage distance between the two vertices comes primarily from a single edge of large weight.
For this reason, we will not grow the two clusters at the same speed.
When one of them becomes large enough (what this means precisely will be explained later), it has to wait for the other one to catch up.
Similarly, we will consider two CTBPs evolving at different speeds.
To formalize this, let $\twoPWITs$ be the disjoint union of two independent copies $(\tree^{\sss(j)},X^{\sss(j)})$, $j \in \set{1,2}$, of the PWIT.
We shall assume that the copies $\tree^{\sss(j)}$ are vertex-disjoint, with roots $\emptyset_j$, so that we can unambiguously write $X_v$ instead of $X^{\sss(j)}_v$ for $v\in \tree^{\sss(j)}$, $v\neq\emptyset_j$.
The notation introduced for $\tree^{\sss(1)}$ is used on $\tree$, verbatim.
For example, for any subset $\tau \subseteq \tree$, we write $\boundary \tau=\set{v \not\in \tau\colon \parent{v} \in \tau}$ for the boundary vertices of $\tau$.

The FPP process on $\twoPWITs$ with edge weights $(f_n(X_v))_v$ starting from $\varnothing_1$ and $\varnothing_2$ is equivalent to the union $\BP=\BP^{\sss(1)} \cup \BP^{\sss(2)}$ of two CTBPs.
Let $T_\fr^{\sss(j)}$ be a stopping time with respect to the filtration induced by $\BP^{\sss(j)}$, $j \in\set{1,2}$.
We call $T_\fr^{\sss(1)}$ and $T_\fr^{\sss(2)}$ \emph{freezing times} and run $\BP$ until $T_\fr^{\sss(1)}\wedge T_\fr^{\sss(2)}$, the time when one of the two CTBPs is large enough.
Then we freeze the larger CTBP and allow the smaller one to evolve normally until it is large enough, at time $T_\fr^{\sss(1)}\vee T_\fr^{\sss(2)}$.
At this time, which we call the \emph{unfreezing} time $T_\unfr=T_\fr^{\sss(1)}\vee T_\fr^{\sss(2)}$, both CTBPs resume their usual evolution.
We denote by $R_j(t)$ the on-off processes describing this behavior: that is, for $j \in \set{1,2}$, 
	\begin{equation}
	\labelPartII{OnOff}
  R_j(t)=(t\wedge T_\fr^{\sss(j)}) + ((t-T_\unfr)\vee 0)
  .
	\end{equation}
The version of $\BP=(\BP_t)_{t\ge 0}$ including freezing is then given by
\begin{equation}\labelPartII{clusterDefinition}
\cluster_t=\bigunion_{j=1}^2 \cluster_t^{\sss(j)},  \quad  \cluster_t^{\sss(j)}=\set{v\in\tree^{\sss(j)} \colon T_v \leq R_j(t)} = \BP_{R_j(t)}^{\sss(j)} \quad \text{for all }t\ge 0.
\end{equation}
As with $\tree$, we can consider $\cluster_t$ to be the union of two trees by placing an edge between each non-root vertex $v\notin\set{\emptyset_1,\emptyset_2}$ and its parent.
We denote by $T_v^\cluster=\inf\set{t\ge 0 \colon  v\in\cluster_t}$ the arrival time of the individual $v\in\tree$ in $\cluster=(\cluster_t)_{t\ge 0}$.
Using the left-continuous inverse of $R_j(t)$, defined by
\begin{equation}\labelPartII{OnOffInverse}
R_j^{-1}(y) = \inf\set{t\geq 0\colon R_j(t)\geq y}
=
\begin{cases}
t & \text{if }t\leq T_\fr^{\sss(j)},\\
T_\unfr-T_\fr^{\sss(j)}+t & \text{if }t>T_\fr^{\sss(j)},
\end{cases}
\end{equation}
we obtain $T_v^\cluster=R_j^{-1}(T_v)$ for $v\in\tree^{\sss(j)}$.

We next define what we mean by two smallest-weight trees on $K_n$ with freezing.
Here we have to be more careful since, in contrast to the definition of $\SWT$, there is competition between the two clusters and we have to actively forbid their merger.
Consequently, we will define the smallest-weight tree $\cS=(\cS_t)_{t\ge 0}$ inductively using $R_j^{-1}$ such that at every time $t\ge 0$, $\cS_t=\cS_t^{\sss(1)}\cup \cS_t^{\sss(2)}$ is the disjoint union of two trees $\cS_t^{\sss(1)}$ and $\cS_t^{\sss(2)}$ with root $1$ and $2$, respectively.

At time $t=0$, let $\cS_0$ be the subgraph of $K_n$ with vertex set $\set{1,2}$ and no edges.
Suppose inductively that we have constructed $(\cS_t)_{0\leq t\leq \tau_{k-1}}$ up to the time $\tau_{k-1}$ where the $(k-1)^\st$ vertex (not including the vertices 1 and 2) was added, $1\leq k\leq n-2$, with the convention that $\tau_0=0$.
Denote by $T^{\sss \cS}(i)=\inf\set{t\geq 0\colon i\in \cS_t}$ the arrival time of a vertex $i\in[n]$.

Consider the set $\boundary \cS_{\tau_{k-1}}$ of edges $e$ joining a previously explored vertex $\underline{e}\in\cS_{\tau_{k-1}}$ to a new vertex $\overline{e}\notin\cS_{\tau_{k-1}}$.
For such an edge, write $j(e)\in\set{1,2}$ for the index defined by $i\in\cS_{\tau_{k-1}}^{\sss(j(e))}$.
At time
\begin{equation}\labelPartII{cSConstruction}
\tau_k=\min_{e\in\boundary \cS_{\tau_{k-1}}} R_{j(e)}^{-1}\left(\Big. R_{j(e)}\left(\big. T^{\sss\cS}(\underline{e}) \right)+Y_e^{\sss(K_n)} \right),
\end{equation}
we add the edge $e_k$ that attains the minimum in \eqrefPartII{cSConstruction}.
Our standing assumption on the edge weights $Y_e^{\sss(K_n)}$ and the processes $R_1,R_2$ will be that this minimum, and in addition the minimum
\begin{equation}\labelPartII{EnlargedMinimum}
\min_{j\in\set{1,2}}\min_{e\colon e\in\boundary\cS^{(j)}_{\tau_{k-1}}} R_j^{-1}\left( R_j(T^{\sss\cS}(\underline{e}))+Y_e^{\sss(K_n)} \right)
\end{equation}
(with edges between $\cS^{\sss(1)}$ and $\cS^{\sss(2)}$ included) are uniquely attained a.s.
We set $\cS_t=\cS_{\tau_{k-1}}$ for $\tau_{k-1}\leq t<\tau_k$, and we define $\cS_{\tau_k}$ to be the graph obtained by adjoining $e_k$ to $\cS_{\tau_{k-1}}$.

The process $\cS=(\cS_t)_{t\ge 0}$ is the smallest-weight tree from vertices $1$ and $2$ with freezing.
We remark that $\cS$ depends implicitly on the choice of the processes $R_1,R_2$, which we have defined in terms of $\cluster$.
In particular, the law of $\cS$ will depend on the relationship between $\cluster$ and the edge weights $(Y_e^{\sss(K_n)})$, which we will specify in \refsubsect{CouplingBP}.

Because the processes $R_1,R_2$ increase at variable speeds, the relationship between $\cS$, $T^{\sss \cS}(i)$ and the FPP distances $d_{K_n,Y^{(K_n)}}(i,j)$ is subtle.
For instance, it need not hold that $d_{K_n,Y^{(K_n)}}(1,i)=R_1(T^{\sss\cS}(i))$ for $i\in\union_{t\geq 0}\cS^{\sss(1)}_t$.

\begin{lemma}\lblemma{WnAsMinimum}
The weight of the optimal path $\pi_{1,2}$ from vertex 1 to vertex 2 is given by
\begin{equation}\labelPartII{WnMin}
W_n = \min_{i_1\in\cS^{\sss(1)},i_2\in\cS^{\sss(2)}} \left( R_1(T^{\sss\cS}(i_1)) + Y^{\sss(K_n)}_{\set{i_1,i_2}} + R_2(T^{\sss\cS}(i_2)) \right),
\end{equation}
and the minimum is attained uniquely a.s.
\end{lemma}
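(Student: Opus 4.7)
My plan is to prove the two inequalities $W_n \le \text{RHS}$ and $W_n \ge \text{RHS}$ separately, together with uniqueness.

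For the upper bound $W_n \le \text{RHS}$, I would first verify by induction on the construction \eqrefPartII{cSConstruction} that $R_j(T^{\sss\cS}(v))$ equals the cumulative weight along the (unique) tree-path in $\cS^{\sss(j)}$ from $j$ to $v$: indeed, whenever a new vertex $\overline{e}$ joins $\cS^{\sss(j(e))}$ via the edge $e$ from $\underline{e}$, the definition of $\tau_k$ gives $R_{j(e)}(T^{\sss\cS}(\overline{e}))=R_{j(e)}(T^{\sss\cS}(\underline{e}))+Y_e^{\sss(K_n)}$. Consequently, for any pair $(i_1,i_2)\in\cS^{\sss(1)}\times\cS^{\sss(2)}$, the concatenation of the tree-path from $1$ to $i_1$ in $\cS^{\sss(1)}$, the edge $\{i_1,i_2\}$, and the tree-path from $i_2$ to $2$ in $\cS^{\sss(2)}$ is a valid path in $K_n$ of weight exactly $R_1(T^{\sss\cS}(i_1))+Y_{\{i_1,i_2\}}^{\sss(K_n)}+R_2(T^{\sss\cS}(i_2))$. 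Since $W_n$ is the minimum path weight, this quantity bounds $W_n$ from above, and minimizing over $(i_1,i_2)$ yields $W_n\le\text{RHS}$.

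For $W_n\ge\text{RHS}$, I would use the (a.s.\ unique) optimal path $\pi_{1,2}$, whose uniqueness follows from the standing assumption on \eqrefPartII{EnlargedMinimum}. Since $1\in\cS^{\sss(1)}$, $2\in\cS^{\sss(2)}$, and every vertex ends up in one of the two clusters, $\pi_{1,2}$ must traverse at least one mixed edge; I select such an edge $(i_1^*,i_2^*)$ and aim to show $W_n=R_1(T^{\sss\cS}(i_1^*))+Y_{\{i_1^*,i_2^*\}}^{\sss(K_n)}+R_2(T^{\sss\cS}(i_2^*))$, which would give $\text{RHS}\le W_n$. A crucial supporting fact is the \emph{Dijkstra property}: for $v\in\cS^{\sss(j)}$, the quantity $R_j(T^{\sss\cS}(v))$ coincides with the FPP distance from $j$ to $v$ in the subgraph of $K_n$ induced by $\cS^{\sss(j)}$, because cluster $j$ grows by Dijkstra in its own cluster-time scale (vertices enter in order of increasing $R_j(T^{\sss\cS}(\cdot))$, and the Bellman identity $R_j(T^{\sss\cS}(v))=\min_u[R_j(T^{\sss\cS}(u))+Y_{\{u,v\}}^{\sss(K_n)}]$ over earlier-added $u\in\cS^{\sss(j)}$ holds). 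By subpath optimality of $\pi_{1,2}$ (a consequence of uniqueness of shortest paths), the 1-to-$i_1^*$ subpath of $\pi_{1,2}$ achieves $d_{K_n,Y^{\sss(K_n)}}(1,i_1^*)$, and similarly for $i_2^*$-to-$2$.

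The main obstacle is to match these FPP distances with $R_j$ simultaneously on both sides of the chosen mixed edge. I would carry this out by arguing that $\pi_{1,2}$ uses \emph{exactly} one mixed edge, so that the $1$-to-$i_1^*$ subpath lies entirely in the $\cS^{\sss(1)}$-subgraph (forcing $d_{K_n,Y^{\sss(K_n)}}(1,i_1^*)=R_1(T^{\sss\cS}(i_1^*))$ via the Dijkstra property) and symmetrically for the $i_2^*$-to-$2$ subpath. The structural claim itself should follow from a shortcut argument in the spirit of Dijkstra: any hypothetical zigzag between $\cS^{\sss(1)}$ and $\cS^{\sss(2)}$ can be replaced by a strictly shorter concatenation of tree-paths plus a single mixed edge, exploiting the one-pass freezing in \eqrefPartII{OnOff} (at most one cluster is ever frozen, so the cluster dynamics do not alternate) together with the shortest-path tree property. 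Uniqueness of the minimizer in \eqrefPartII{WnMin} is then inherited from the a.s.\ unique attainment of \eqrefPartII{EnlargedMinimum}, since a tie in \eqrefPartII{WnMin} would produce two distinct candidate optimal paths and contradict uniqueness of \eqrefPartII{EnlargedMinimum} at the step where the relevant mixed edge would complete.
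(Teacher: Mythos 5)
Your upper bound is correct: \eqrefPartII{cSConstruction} together with $R_j\circ R_j^{-1}=\mathrm{id}$ gives $R_{j(e_k)}(\tau_k) = R_{j(e_k)}(T^{\sss\cS}(\underline{e_k})) + Y_{e_k}^{\sss(K_n)}$, so $R_j(T^{\sss\cS}(v))$ is the cumulative tree-path weight and the concatenated tree-paths furnish a comparison path. The intra-cluster Dijkstra property you state is also correct. The genuine gap is in the lower bound, at the assertion that $\pi_{1,2}$ uses exactly one mixed edge, which you leave to a vague ``shortcut argument.'' That structural claim is the technically non-trivial core of the lemma, and the intuition you offer (one-pass freezing) does not deliver it.

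Here is why the gap is real. Your Dijkstra property identifies $R_j(T^{\sss\cS}(v))$ with the FPP distance from $j$ to $v$ \emph{inside the subgraph induced by $\cS^{\sss(j)}$}, which is generally not the global distance $d_{K_n,Y^{(K_n)}}(j,v)$; since the tree-path is one candidate path, one always has $d_{K_n,Y^{(K_n)}}(j,v)\leq R_j(T^{\sss\cS}(v))$, with strict inequality possible (the paper explicitly warns of this just before the lemma). Freezing is the culprit: while cluster $j$ is frozen, cluster $j'$ can absorb vertices whose true shortest path to $j$ is cheaper, after which some $v\in\cS^{\sss(j)}$ has a strictly shorter global path detouring through $\cS^{\sss(j')}$. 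Take the first mixed edge $\{i_1,i_2\}$ of $\pi_{1,2}$: subpath optimality yields $W_n=d(1,i_1)+Y_{\{i_1,i_2\}}^{\sss(K_n)}+d(2,i_2)$, and $d(1,i_1)=R_1(T^{\sss\cS}(i_1))$ does hold because the $1$-to-$i_1$ subpath has not yet crossed into $\cS^{\sss(2)}$; but the $i_2$-to-$2$ subpath may re-enter $\cS^{\sss(1)}$, in which case $d(2,i_2)<R_2(T^{\sss\cS}(i_2))$ and your bound runs the wrong way. What is missing is the device the paper uses: introduce $\bar{\cS}^{\sss(j)}_u=\SWT^{\sss(j)}_{R_j(u)}$ and the first collision time $\bar{\tau}$ of the two single-source smallest-weight trees, prove that $\cS$ and $\bar{\cS}$ agree strictly before $\bar{\tau}$ (yielding $d(j,i)=R_j(T^{\sss\cS}(i))$ for $i\in\cS^{\sss(j)}_{\bar{\tau}}$), and then show that any path visiting a vertex outside $\cS_{\bar{\tau}}$, as well as any candidate pair in \eqrefPartII{WnMin} outside $\cS_{\bar{\tau}}$, exceeds the bound $R_1(\bar{\tau})+R_2(\bar{\tau})\geq W_n$. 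Without some such localization to the regime where cluster times track the true FPP distances, the ``exactly one mixed edge'' claim is unsupported.
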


The conclusion of \reflemma{WnAsMinimum} is easily seen when $R_1(t)=t, R_2(t)=0$ (in which case $\cS^{\sss(1)}$ is the same as $\SWT^{\sss(1)}$ with vertex 2 removed) or when $R_1(t)=R_2(t)=t$ (in which case, in the quotient graph where vertices 1 and 2 are identified, and thus $\cS$ is the same as the shortest-weight tree from a single source).
The proof of \reflemma{WnAsMinimum} in general requires some care, and is given in \refsubsect{ProofWnHnCollision}.

\begin{defn}\lbdefn{CollisionTimeDef}
The \emph{collision time} is
\begin{equation}
T_\coll = \inf\set{t\geq 0\colon R_1(t)+R_2(t)\geq W_n}.
\end{equation}
The \emph{collision edge} is the edge between the vertices $I_1\in\cS^{\sss(1)}$ and $I_2\in\cS^{\sss(2)}$ that attain the minimum in \eqrefPartII{WnMin}.
We denote by $H(I_1)$, $H(I_2)$ the graph distance between $1$ and $I_1$ in $\cS^{\sss(1)}$ and between $2$ and $I_2$ in $\cS^{\sss(2)}$, respectively.
\end{defn}

\begin{theorem}[Exploration process at the collision time]\lbthm{WnHnFromCollision}
The following statements hold almost surely:
The endpoints $I_1,I_2$ of the collision edge are explored before time $T_\coll$.
The optimal path $\pi_{1,2}$ from vertex 1 to vertex 2 is the union of the unique path in $\cS^{\sss(1)}_{T_\coll}$ from 1 to $I_1$; the collision edge; and the unique path in $\cS^{\sss(2)}_{T_\coll}$ from $I_2$ to 2.
The weight and hopcount satisfy
\begin{equation}\labelPartII{WnHnFromTcollI1I2}
W_n = R_1(T_\coll)+R_2(T_\coll), \qquad H_n = H(I_1)+H(I_2)+1.
\end{equation}
\end{theorem}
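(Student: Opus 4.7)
The plan is to establish the three claims of the theorem in the order they appear. For brevity, write $t_j^\ast = T^{\sss\cS}(I_j)$ for $j=1,2$, $D_j(i) = R_j(T^{\sss\cS}(i))$ for $i \in \cS^{\sss(j)}$, and $Y = Y^{\sss(K_n)}_{\{I_1,I_2\}}$. A direct induction on the construction \eqrefPartII{cSConstruction} shows that $D_j(i)$ coincides with the total $Y^{\sss(K_n)}$-weight of the unique path from the root $j$ to $i$ inside the tree $\cS^{\sss(j)}$; in particular $D_j$ is additive along such paths.

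The main step is showing $t_1^\ast, t_2^\ast \leq T_\coll$ almost surely. \reflemma{WnAsMinimum} yields $W_n = R_1(t_1^\ast) + Y + R_2(t_2^\ast)$, while the definition of $T_\coll$ together with the continuity of $R_1, R_2$ gives $R_1(T_\coll) + R_2(T_\coll) = W_n$. If both $t_j^\ast > T_\coll$, then summing $R_j(t_j^\ast) \geq R_j(T_\coll)$ forces $Y \leq 0$, impossible. By symmetry it suffices to rule out the mixed case $t_1^\ast > T_\coll \geq t_2^\ast$. Subtracting the two identities for $W_n$ rearranges to
\[
R_2(T_\coll) - R_2(t_2^\ast) \;=\; Y + \bigl( R_1(t_1^\ast) - R_1(T_\coll) \bigr) \;\geq\; Y,
\]
so $R_2(T_\coll) \geq D_2(I_2) + Y$. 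On the other hand, at the step $\tau_k = t_1^\ast$ at which $I_1$ is added, one has $I_2 \in \cS_{\tau_{k-1}}$ (since $t_2^\ast < t_1^\ast$) and $I_1 \notin \cS_{\tau_{k-1}}$, so the edge $\{I_2,I_1\}$ belongs to $\partial\cS_{\tau_{k-1}}$ with side index $j=2$, and its addition-time in \eqrefPartII{cSConstruction} equals $R_2^{-1}\bigl(D_2(I_2)+Y\bigr)$. The standing uniqueness assumption on the minimum in \eqrefPartII{cSConstruction}, together with the fact that this minimum is attained by the distinct side-$1$ edge adjoining $I_1$ to $\cS^{\sss(1)}$, gives the strict inequality $t_1^\ast < R_2^{-1}\bigl(D_2(I_2)+Y\bigr)$, equivalently $R_2(t_1^\ast) < D_2(I_2) + Y$. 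Combined with $R_2(t_1^\ast) \geq R_2(T_\coll) \geq D_2(I_2)+Y$ this is a contradiction.

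Given step 1, the remaining claims follow quickly. Since $I_1, I_2 \in \cS_{T_\coll}$, the unique paths from the roots to $I_1$ and $I_2$ in the trees $\cS^{\sss(1)}_{T_\coll}, \cS^{\sss(2)}_{T_\coll}$ are contained in $\cS_{T_\coll}$ and have weights $D_1(I_1)$ and $D_2(I_2)$ by the preliminary observation. Concatenating them through the edge $\{I_1,I_2\}$ produces a path in $K_n$ from $1$ to $2$ of weight $D_1(I_1)+Y+D_2(I_2)=W_n$, hence an optimal path. The infimum in \eqrefPartII{FPPdistance} is uniquely attained because $\FY$ is continuous, so this path equals $\pi_{1,2}$. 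Counting edges gives $H_n = H(I_1)+1+H(I_2)$, while $W_n = R_1(T_\coll)+R_2(T_\coll)$ is just the defining relation of $T_\coll$.

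The main obstacle is the careful bookkeeping in step 1: each $R_j$ is only non-decreasing (constant on the freezing interval of side $j$), so $R_j^{-1}$ must be used in the left-continuous sense of \eqrefPartII{OnOffInverse}, and one must track precisely which vertex belongs to which tree at the step $\tau_{k-1}$ preceding the adjunction of $I_1$. It is crucial that the inequality from the uniqueness assumption is \emph{strict}: this is exactly what closes the contradiction against the opposite bound $R_2(t_1^\ast) \geq D_2(I_2)+Y$ obtained from the collision identity, where only a weak inequality is available.
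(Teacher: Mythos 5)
Your proof is correct and takes essentially the same approach as the paper: both arguments rest on the collision identity $W_n = R_1(T_\coll)+R_2(T_\coll)$, the expression $W_n = R_1(t_1^*)+Y+R_2(t_2^*)$ from \reflemma{WnAsMinimum}, and the uniqueness of the minimizer in \eqrefPartII{cSConstruction} applied to the candidate edge $\{I_1,I_2\}$. The only differences are cosmetic: you organize the argument as casework (``both'' vs.\ ``mixed''), whereas the paper directly rules out $T^{\sss\cS}(I_2)\geq T_\coll$; and you invoke uniqueness at the step $\tau_k=t_1^*$ where $I_1$ is added, giving $t_1^* < R_2^{-1}(D_2(I_2)+Y)$, whereas the paper works with the dual, side-1 virtual adjunction time $R_1^{-1}(R_1(t_1^*)+Y)$ and derives $T^{\sss\cS}(I_2)$ is strictly smaller than it. Note also that the paper obtains the \emph{strict} inequality $T^{\sss\cS}(I_j)<T_\coll$; your casework as written only rules out $t_j^*>T_\coll$, but your uniqueness step at $\tau_k=t_1^*$ already excludes $t_1^*=T_\coll$ with the same argument, so the gap is cosmetic and the weak inequality $t_j^*\leq T_\coll$ is in any case all that the remaining claims require.
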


\refthm{WnHnFromCollision} is proved in \refsubsect{ProofWnHnCollision}. The first equality in
\eqrefPartII{WnHnFromTcollI1I2} is a simple consequence of continuity of $t\mapsto R_j(t)$ and the definition of $T_\coll$ in \refdefn{CollisionTimeDef}.
The equality in \refPartII{WnMin} will be the basis of our analysis of $W_n$.

We note that the law of $(T_{\coll},I_1,I_2)$ depends on the choice of the on-off processes $R_1(t),R_2(t)$; however, the laws of $W_n$ and $H_n$ do not.

\subsection{\texorpdfstring{Coupling FPP on $K_n$ from two sources to a CTBP}{Coupling FPP on Kn to a CTBP}}\lbsubsect{CouplingBP}

Similarly to \refthm{Coupling1Source}, we next couple the FPP process $\cS$ on $K_n$ and the FPP process $\cluster$ on $\tree$.
To this end, we introduce a thinning procedure for $\cluster=(\cluster_t)_{t\ge 0}$:
Define $M_{\emptyset_j}=j$, for $j=1,2$.
To each $v\in \twoPWITs\setminus\set{\emptyset_1,\emptyset_2}$, we associate a mark $M_v$ chosen uniformly and independently from $[n]$.

\begin{defn}\lbdefn{Thinningcluster}
The roots $\emptyset_1, \emptyset_2 \in \tree$ are unthinned.
The vertex $v\in\tree\setminus\set{\emptyset_1,\emptyset_2}$ is \emph{thinned} if it has an ancestor $v_0=\ancestor{k}{v}$ (possibly $v$ itself) such that $M_{v_0}=M_w$ for some unthinned vertex $w$ with $T^\cluster_w<T^\cluster_{v_0}$.
\end{defn}
As with \refdefn{ThinningBP}, this definition is not circular, as vertices are investigated in their order of appearance.
Write $\thinnedcluster_t$ for the subgraph of $\cluster_t$ consisting of unthinned vertices.

From here onwards, we will work on a probability space that contains
\begin{itemize}
\item the two independent PWITs $(\tree^{\sss(j)},X^{\sss(j)})$, $j\in \set{1,2}$,
\item the marks $M_v$, $v \in \tree$,
\item and a family of independent exponential random variables $E_e$, $e\in E(K_{\infty})$, with mean $1$, independent of the PWITs and the marks,
\end{itemize}
where $E(K_\infty)=\set{\set{i,j}\colon i,j \in \N, i< j}$.

On this probability space, we can construct the FPP edge weights on $K_n$ as follows.
Let
\begin{equation}\labelPartII{Tthinnedcluster}
T^{\sss \thinnedcluster}(i)=\inf\set{t \ge 0\colon M_v=i \text{ for some } v\in \thinnedcluster_t}
\end{equation}
be the first time that a vertex with mark $i$ appears in $\thinnedcluster$ and denote the corresponding vertex by $V(i)\in \tree$. Note that $T^{\sss \thinnedcluster}(i)$ is finite for all $i$ almost surely since the FPP exploration process eventually explores every edge.
For every edge $\set{i,i'} \in E(K_n)$, we define
\begin{equation}
X(i,i')=\min\set{X_v \colon M_v=i', \parent{v}=V(i)},
\end{equation}
and
\begin{equation}\labelPartII{EdgeWeightCoupling-FPP2Source}
X_{\set{i,i'}}^{\sss(K_n)}=
\begin{cases}
\tfrac{1}{n} X(i,i') & \text{if } T^{\sss \thinnedcluster}(i)<T^{\sss \thinnedcluster}(i'),\\
\tfrac{1}{n} X(i',i) & \text{if } T^{\sss \thinnedcluster}(i')<T^{\sss \thinnedcluster}(i),\\
E_{\set{i,i'}} & \text{if } T^{\sss \thinnedcluster}(i)=T^{\sss \thinnedcluster}(i')=0.
\end{cases}
\end{equation}

The following proposition states that the random variables in \eqrefPartII{EdgeWeightCoupling-FPP2Source} can be used to produce the correct edge weights on $K_n$ for our FPP problem:

\begin{prop}\lbprop{EdgeWeightCouplingClusterFPP2Source}
Let $(X_e^{\sss(K_n)})_{e \in E(K_n)}$ be defined in \eqrefPartII{EdgeWeightCoupling-FPP2Source}, and write $Y_e^{\sss(K_n)}=g(X_e^{\sss(K_n)})$, where $g$ is the strictly increasing function from \eqrefPartII{EdgesByIncrFunct}--\eqrefPartII{FYandg}.
Then the edge weights $(Y_e^{\sss(K_n)})_{e\in E(K_n)}$ are i.i.d.\ with distribution function $\FY$.
\end{prop}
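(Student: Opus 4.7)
The plan is to establish that $(X_e^{\sss(K_n)})_{e \in E(K_n)}$ is a family of i.i.d.\ exponential random variables with mean $1$. Because $g$ is strictly increasing and $g(E)\sim \FY$ when $E\sim\exp(1)$ by \eqrefPartII{FYandg}, applying $g$ componentwise then gives the i.i.d.\ $\FY$-distributed weights asserted in the proposition.

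First I would dispose of the edge $\{1,2\}$. Since $M_{\emptyset_j}=j$ and both roots are unthinned with birth time zero, $T^{\sss\thinnedcluster}(1)=T^{\sss\thinnedcluster}(2)=0$, so the third case of \eqrefPartII{EdgeWeightCoupling-FPP2Source} gives $X_{\{1,2\}}^{\sss(K_n)}=E_{\{1,2\}}$, an $\exp(1)$ variable independent by construction of all the other randomness. For any other edge $\{i,i'\}$, exactly one of $T^{\sss\thinnedcluster}(i)<T^{\sss\thinnedcluster}(i')$ or the reverse holds almost surely, and, writing $i$ for the earlier-explored endpoint, $X_{\{i,i'\}}^{\sss(K_n)}=\tfrac{1}{n}\min\{X_v:\parent{v}=V(i),\,M_v=i'\}$.

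The main step is to show that this collection, indexed over ordered pairs, is i.i.d.\ $\exp(1)$ and jointly independent of $E_{\{1,2\}}$. I would proceed inductively on the rank of the exploration time: at step $k$, when the $k$-th new mark $i_k$ appears in the unthinned exploration and $V(i_k)$ is identified as a child of some earlier-explored $V(i_j)$, I would reveal simultaneously (i) the value $X_{V(i_k)}$, which in fact equals $\min\{X_w:\parent{w}=V(i_j),\,M_w=i_k\}$, because any earlier child of $V(i_j)$ with mark $i_k$ would itself have been unthinned (mark $i_k$ having not yet been seen) and would therefore have produced $V(i_k)$ already, and (ii) nothing about the offspring data of $V(i_k)$ itself, whose children have strictly positive birth delays. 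The key PPP computation is that, conditionally on the history up to the moment $V(i_j)$ was identified, the offspring pairs $(X_{V(i_j)k},M_{V(i_j)k})_{k\ge 1}$ form a rate-$1$ PPP paired with i.i.d.\ uniform marks on $[n]$, so by Poisson thinning the mark-$i_k$ subprocess is a rate-$(1/n)$ PPP whose first arrival $X_{V(i_k)}$ is $\exp(n)$. Independence across different ordered pairs $(i,i')$ then follows from the independence of offspring PPPs at distinct vertices of $\tree$, combined with the label-by-label independence of thinned subprocesses at a single parent.

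The main obstacle I would need to address carefully is the consistency of this filtration argument: the random identification $i\mapsto V(i)$ is determined by the exploration, which progressively uses data from several vertices' offspring, and one must check that conditioning on the event $\{V(i)=v\}$ for a deterministic $v\in\tree$ does not bias the conditional distribution of $v$'s offspring PPP or marks. The cleanest way is to condition on $\{V(i)=v\}$ for each fixed $v$, use that the offspring data of $v$ is independent of everything attached to vertices outside its subtree (a direct consequence of the PWIT construction and the i.i.d.\ marks), and then integrate out. This is precisely the strong Markov / measurable-selection argument carried out for the one-source coupling \refthmPartI{Coupling1Source}, and the present statement follows by essentially the same reasoning applied on the two PWITs jointly, supplemented by the independent $\exp(1)$ variable $E_{\{1,2\}}$ to handle the root-to-root edge.
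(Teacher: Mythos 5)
Your plan has the right intuition -- the PWIT coupling works because, at the moment $V(i)$ is identified, conditioning on the history does not bias $V(i)$'s offspring Poisson process, and thinning on distinct marks gives independence across ordered pairs.  However, your route is genuinely different from the paper's, and in its present form it has a gap.

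The paper does \emph{not} re-derive the coupling; it factors through the general result \refthm{CouplingExpl} (stated here, proved in Part~I), which says that for \emph{any} exploration process on the union of two PWITs in the sense of \refdefn{Explore}, the edge weights defined by \eqrefPartII{EdgeWeightCoupling} are i.i.d.\ exponential.  The proof of the proposition then amounts to (a) invoking \reflemma{clusterAsExploration} to check that the discrete-time version of $\cluster$ is an exploration process determined by a minimal rule, and (b) observing that $N(i)<N(i')$ if and only if $T^{\sss\thinnedcluster}(i)<T^{\sss\thinnedcluster}(i')$, so that \eqrefPartII{EdgeWeightCoupling-FPP2Source} and \eqrefPartII{EdgeWeightCoupling} coincide.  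Your proposal instead tries to redo the whole coupling from scratch, using the one-source argument of \refthm{Coupling1Source} as a template.

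The gap is that the one-source argument does not involve freezing, whereas $\thinnedcluster$ and $T^{\sss\thinnedcluster}(i)$ are defined via the on-off processes $R_1,R_2$, so the order in which vertices of $\tree^{\sss(1)}$ and $\tree^{\sss(2)}$ appear (hence the map $i\mapsto V(i)$, hence which of $X(i,i')$ or $X(i',i)$ is used for the edge $\{i,i'\}$) depends on the freezing times.  For the strong Markov / measurable-selection argument you sketch to apply, you must verify that the exploration with freezing is $(\F_k)$-adapted in the sense of \refdefn{Explore}, i.e., that the choice of the next vertex to explore depends only on the weights and marks of the currently explored set and its boundary.  This is precisely what \reflemma{clusterAsExploration} establishes, and its proof is a non-trivial case analysis (notably: $T^{\sss(j)}_\fr$ itself is not $\F_k$-measurable, but the event $\{T^{\sss(j)}_\fr<\tau^{\sss(j)}_\rmnext(k)\}$ is, and the functions $R_{j,k}^{-1}$ and $R_j^{-1}$ agree on the relevant domain).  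Simply asserting that ``the same reasoning applies on the two PWITs jointly'' skips this entirely.  A second, smaller issue: your inductive filtration reveals only $X_{V(i_k)}$ at each step, which is coarser than the filtration the exploration actually uses (all children of explored vertices, thinned or not), so your induction as stated does not by itself determine the exploration order; you would need either the finer filtration, or a disintegration over $\{V(i)=v\}$ that is carried out with more care than the sketch provides.
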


\refprop{EdgeWeightCouplingClusterFPP2Source} will be generalized in \refthm{CouplingExpl} and proved in \refsubsect{ProofCouplingFPP}.

In \refthm{WnHnFromCollision}, we explained the (deterministic) relationship between $\cS$ and the FPP problem with edge weights $Y_e^{\sss(K_n)}$.
\refprop{EdgeWeightCouplingClusterFPP2Source} shows that, subject to \eqrefPartII{EdgeWeightCoupling-FPP2Source}, these edge weights have the desired distribution.
We next explain the relationship between $\cluster$ and $\cS$.
Recall the subgraph $\pi_M(\tau)$ of $K_n$ introduced in \refdefn{InducedGraph}, which we extend to the case where $\tau\subset\tree$.

\begin{theorem}[The coupling]
\lbthm{CouplingFPP}
Under the edge-weight coupling \eqrefPartII{EdgeWeightCoupling-FPP2Source}, $\cS_t=\pi_M(\thinnedcluster_t)$ for all $t\ge 0$ almost surely.
Moreover, when $\thinnedcluster$ and $\cS$ are equipped with the FPP edge weights $Y_v=f_n(X_v)$ and $Y^{\sss(K_n)}_e=g(X^{\sss(K_n)}_e)$, respectively, the mapping $\pi_M\colon \thinnedcluster\to\cS$ is an isomorphism of edge-weighted graphs.
\end{theorem}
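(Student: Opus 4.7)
The plan is a proof by induction on jumps, showing that $\cS$ and $\pi_M(\thinnedcluster)$ grow in lockstep. Let $\tau_0<\tau_1<\cdots$ denote the jump times of $\cS$ (with $\tau_0=0$) and let $t_0<t_1<\cdots$ denote the arrival times of the non-root unthinned vertices in $\thinnedcluster$. I would prove by induction on $k\geq 0$ that $\tau_k=t_k$, that the marks $(M_v)_{v\in\thinnedcluster_{t_k}}$ are distinct and equal $V(\cS_{\tau_k})$, that $T^{\cS}(M_v)=T^{\thinnedcluster}(M_v)$ for every $v\in\thinnedcluster_{t_k}$, and that the edge added to $\cS$ at time $\tau_k$ is $\{M_{\parent{v_k}},M_{v_k}\}$ with weight $f_n(X_{v_k})$, where $v_k$ is the new unthinned vertex of $\thinnedcluster$ at time $t_k$. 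The base case is immediate from $\cS_0=\{1,2\}$, $\thinnedcluster_0=\{\emptyset_1,\emptyset_2\}$ and $M_{\emptyset_j}=j$.

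The heart of the inductive step is a termwise identification of the two minimization problems that determine the next jump. Given a pair $(i,i')$ with $i\in\cS^{\sss(j)}_{\tau_{k-1}}$ and $i'\notin\cS_{\tau_{k-1}}$, the inductive hypothesis gives $R_j(T^{\cS}(i))=T_{V(i)}$, and the edge-weight coupling \eqrefPartII{EdgeWeightCoupling-FPP2Source} gives $Y^{\sss(K_n)}_{\{i,i'\}}=g(X(i,i')/n)=f_n(X_{v(i,i')})$, where $v(i,i')$ denotes the earliest child of $V(i)$ in $\twoPWITs$ carrying mark $i'$. Together these yield
\begin{equation*}
R_j^{-1}\!\left(R_j(T^{\cS}(i))+Y^{\sss(K_n)}_{\{i,i'\}}\right)=R_j^{-1}(T_{V(i)}+f_n(X_{v(i,i')}))=R_j^{-1}(T_{v(i,i')})=T^{\cluster}_{v(i,i')},
\end{equation*}
so the boundary edge $\{i,i'\}$ competes in \eqrefPartII{cSConstruction} at exactly the arrival time in $\thinnedcluster$ of the candidate $v(i,i')$. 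Because $i'\notin V(\cS_{\tau_{k-1}})$, the mark is fresh in $\thinnedcluster_{t_{k-1}}$ and $v(i,i')$ is unthinned; conversely, every boundary child of $\thinnedcluster_{t_{k-1}}$ with a fresh mark arises this way. The two minimizations therefore share the same set of candidates with matching values, giving $\tau_k=t_k$ and matching argmins and propagating the inductive hypothesis to step $k$.

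The main obstacle I anticipate is handling the on-off dynamics of $R_1,R_2$ carefully through the frozen interval. The identity $R_j\circ R_j^{-1}=\mathrm{id}$ on the range of $R_j$ is used implicitly in the computation above, but $R_j$ is constant on $[T_\fr^{\sss(j)},T_\unfr]$, so when a candidate arrival time $T_{v(i,i')}$ falls in the frozen range, $T^{\cluster}_{v(i,i')}$ is pushed forward to $T_\unfr$, in lockstep with the freezing of $\cS^{\sss(j)}$; this needs to be checked explicitly case by case. A secondary technical check is that no descendant of $\thinnedcluster_{t_{k-1}}$ lying outside its boundary can preempt the boundary candidate $v(i,i')$; this follows because descendants of thinned vertices are themselves thinned and because $T_v$ strictly increases along genealogical paths in $\twoPWITs$. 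Uniqueness of both minimizers is guaranteed by the a.s.\ uniqueness assumptions made below \eqrefPartII{cSConstruction}--\eqrefPartII{EnlargedMinimum}.

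Once the inductive correspondence holds at every jump time, $\cS_t=\pi_M(\thinnedcluster_t)$ extends to all $t\geq 0$ by piecewise constancy of both processes. The vertex bijection furnished by the mark identification, together with the matching edge weights $Y^{\sss(K_n)}_{e_k}=f_n(X_{v_k})$ established at each step, then show that $\pi_M\colon\thinnedcluster\to\cS$ is an isomorphism of edge-weighted graphs, completing the proof.
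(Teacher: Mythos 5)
Your proposal is correct and follows essentially the same inductive strategy as the paper: induction on jump times, inductively matching the minimization problem in \eqrefPartII{cSConstruction} with the arrival order of unthinned vertices in $\cluster$, and observing that the coupling \eqrefPartII{EdgeWeightCoupling-FPP2Source} gives $Y^{\sss(K_n)}_{\{i,i'\}}=f_n(X_{V(i,i')})$ precisely because the inductive hypothesis forces $T^{\thinnedcluster}(i)<T^{\thinnedcluster}(i')$. The chain $R_j^{-1}\bigl(R_j(T^{\cS}(i))+Y^{\sss(K_n)}_{\{i,i'\}}\bigr)=R_j^{-1}\bigl(T_{V(i)}+f_n(X_{V(i,i')})\bigr)=T^\cluster_{V(i,i')}$ is exactly what the paper's proof reduces the expression in \eqrefPartII{cSConstruction} to.

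The one genuine difference: the paper does not directly minimize $T^\cluster_{V(i,i')}$. Instead it passes through its exploration-process machinery: first \reflemma{clusterAsExploration} shows the discretized $\cluster$ is a minimal-rule exploration process using the $\F_k$-measurable approximations $R_{j,k}^{-1}$ (since $R_j$ itself depends on unexplored parts of $\BP^{\sss(j)}$ and is therefore not filtration-adapted), then \refprop{MinimalRuleThinning} shows $\pi_M(\thinnedExplore_k)$ evolves by a thinned minimal rule minimizing $R_{j,k}^{-1}(T_{V(i,i')})$, and finally statement \refitem{RjRjkAgree} in that lemma shows the $R_{j,k}^{-1}$-minimization and the $R_j^{-1}$-minimization agree. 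You bypass all of this by comparing $\cS$'s construction to the $T^\cluster$-arrival order directly, without ever invoking the exploration filtration. That is legitimate here, because \refthm{CouplingFPP} only needs the deterministic isomorphism of the two growing trees; the $R_{j,k}$ machinery is needed elsewhere (e.g.\ to invoke \refthm{CouplingExpl} and hence \refprop{EdgeWeightCouplingClusterFPP2Source}), but not for this coupling statement per se. Your route is thus shorter, at the cost of not producing the minimal-rule structure the paper reuses.

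Two minor points where your write-up is loose but fixable. First, the ``main obstacle'' you anticipate around the frozen interval is not actually an obstacle: from \eqrefPartII{OnOff}--\eqrefPartII{OnOffInverse}, $R_j\circ R_j^{-1}=\mathrm{id}$ on all of $[0,\infty)$ (the jump in $R_j^{-1}$ at $T_\fr^{\sss(j)}$ lands precisely past the flat piece of $R_j$), so the computation needs no case analysis. Second, your sentence ``$v(i,i')$ is unthinned'' is not quite right as stated: $V(i,i')$ could in principle be thinned if an unthinned vertex with mark $i'$ is born strictly between $t_{k-1}$ and $T^\cluster_{V(i,i')}$. The correct statement, which is all you need, is that the \emph{minimizer} of $T^\cluster_{V(i,i')}$ over boundary pairs is unthinned and equals the next unthinned vertex $v_k$; any thinned $V(i,i')$ would be preempted by an unthinned competitor with the same fresh mark born earlier, contradicting minimality of $T^\cluster_{V(i,i')}$ or the definition of $v_k$.
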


The proof is given in \refsubsect{ProofCouplingFPP}.

\refthm{CouplingFPP} achieves two goals.
First, it relates the exploration process $\cluster$, defined in terms of two infinite underlying trees, to the smallest-weight tree process $\cS$, defined in terms of a single finite graph.
Because thinning gives an explicit coupling between these two objects, we will be able to control its effect, even when the total number of thinned vertices is relatively large.
Consequently we will be able to study the FPP problem by analyzing a corresponding problem expressed in terms of $\cluster$ (see Theorems~\refPartII{t:CouplingFPPCollision} and \refPartII{t:FirstPointCox}) and showing that \whp\ thinning does not affect our conclusions (see \refthm{FirstPointCoxUnthinned}).

Second, \refthm{CouplingFPP} allows us to relate FPP on the complete graph ($n$-independent dynamics run on an $n$-dependent weighted graph) with an exploration defined in terms of a pair of Poisson-weighted infinite trees ($n$-dependent dynamics run on an $n$-independent weighted graph).
By analyzing the dynamics of $\cluster$ when $n$ and $s_n$ are large, we obtain a fruitful \emph{dual picture}:
When the number of explored vertices is large, we find a \emph{dynamic} rescaled branching process approximation that is essentially independent of $n$.
When the number of explored vertices is small, we make use of a \emph{static} approximation by invasion percolation found in \cite{EckGooHofNar14a}.
In fact,
under our scaling assumptions, FPP on the PWIT is closely related to \emph{invasion percolation} (IP) on the PWIT which is defined as follows.
Set $\IP^{\sss(1)}(0)$ to be the subgraph consisting of $\emptyset_1$ only.
For $k\in\N$, form $\IP^{\sss(1)}(k)$ inductively by adjoining to $\IP^{\sss(1)}(k-1)$ the boundary vertex $v\in\boundary\IP^{\sss(1)}(k-1)$ of minimal weight.
We note that, since we consider only the relative ordering of the various edge weights, we can use either the PWIT edge weights $(X_v)_v$ or the FPP edge weights $(f_n(X_v))_v$.

Write $\IP^{\sss(1)}(\infty)=\bigunion_{k=1}^\infty\IP^{\sss(1)}(k)$ for the limiting subgraph.
We remark that $\IP^{\sss(1)}(\infty)$ is a strict subgraph of $\tree^{\sss(1)}$ a.s.\ (in contrast to FPP, which eventually explores every edge).
Indeed, define
\begin{equation}\labelPartII{MDefinition}
M^{\sss(1)}=\sup\set{X_{v}\colon v\in\IP^{\sss(1)}(\infty)\setminus \set{\emptyset_1}},
\end{equation}
the largest weight of an invaded edge.
Then $\P(M^{\sss(1)}<x)$ is the survival probability of a Poisson Galton--Watson branching process with mean $x$, as in Theorems~\refPartII{t:WH-exp}, \refPartII{t:WH-example} and \refPartII{t:WH-gen}.

Consequently, \eqrefPartII{weight-res} in \refthm{WH-gen} can be read as a decomposition of the weight $W_n$ of the smallest-weight path into a deterministic part $\frac{1}{\lambda_n} \log(n/s_n^3)$ coming from the branching process dynamics and the weight of the largest edge explored by invasion percolation starting from two sources $f_n(M^{\sss(1)}\vee M^{\sss(2)})$.

\subsection{The conditional law of the collision time and collision edge}\lbsubsect{FPPExpl}

By definition, there is no path between vertices $1$ and $2$ in $\cS_t$ for any $t$. Hence,
knowing $\cS_t$ for all $t$ (and therefore also the edge weights $Y_e^{\sss(K_n)}$, $e\in\bigunion_{t\geq 0}E(\cS_t)$) does not determine $W_n$ and $T_\coll$.
However, by examining the conditional distribution of the edges joining $\cS^{\sss(1)}$ and $\cS^{\sss(2)}$, we can express the conditional law of $T_\coll$.

\begin{theorem}[Conditional law of the collision time]\lbthm{CollCondProb}
The processes $(\thinnedcluster_u,\cS_u,R_1(u),R_2(u))_{u\geq 0}$ and the edge weights $Y_e^{\sss(K_n)}$ may be coupled so that, with probability 1, $\cS_u=\pi_M(\thinnedcluster_u)$ for all $u\ge 0$ and
	\begin{align}
	\labelPartII{CondLawOfCollision}
	&\P \condparenthesesreversed{T_\coll>t \, }{(\thinnedcluster_u, R_1(u), R_2(u))_{u\geq 0}, (M_v)_{v\in\union_{u\geq 0}\thinnedcluster_u}}
	\\
	&\quad=
	\exp\biggl(
	-\frac{1}{n}\sum_{i_1\in \cS_t^{\sss(1)}} \sum_{i_2\in \cS_t^{\sss(2)}} f_n^{-1}\bigl(R_1(t)-R_1(T^{\sss \cS}(i_1))+R_2(t)-R_2(T^{\sss \cS}(i_2))\bigr)
	-f_n^{-1}\left( \Delta R(i_1,i_2) \right)
	\biggr),
	\notag
	\end{align}
where
	\begin{equation}
	\Delta R(i_1,i_2)=
	\begin{cases}
	R_1(T^{\sss \cS}(i_2))-R_1(T^{\sss \cS}(i_1)), & \text{if } T^{\sss \cS}(i_1)\leq T^{\sss \cS }(i_2),\\
	R_2(T^{\sss \cS }(i_1))-R_2(T^{\sss \cS }(i_2)), &\text{if } T^{\sss \cS}(i_2)\leq T^{\sss \cS}(i_1).
	\end{cases}
	\end{equation}
\end{theorem}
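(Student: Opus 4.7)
The plan is to combine the coupling from \refthm{CouplingFPP} with a Poisson-process computation of the conditional law of the crossing edges between $\cS^{\sss(1)}$ and $\cS^{\sss(2)}$.

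First, invoke \refthm{CouplingFPP} via the edge-weight coupling \eqrefPartII{EdgeWeightCoupling-FPP2Source}, so that on the probability space of \refsubsect{CouplingBP} we have $\cS_u=\pi_M(\thinnedcluster_u)$ for every $u\ge 0$ almost surely, yielding the first assertion. Next, translate $\set{T_\coll>t}$ into a statement about crossing edge weights: by \reflemma{WnAsMinimum} combined with \refthm{WnHnFromCollision} (which places the minimizing endpoints inside $\cS_{T_\coll}$) and the monotonicity of $R_1,R_2$ read off from \eqrefPartII{OnOff}, the event $\set{T_\coll>t}$ is a.s.\ equivalent to
\[
Y^{\sss(K_n)}_{\set{i_1,i_2}} > R_1(t)-R_1(T^{\sss\cS}(i_1))+R_2(t)-R_2(T^{\sss\cS}(i_2)) \quad \text{for every }i_1\in\cS^{\sss(1)}_t,\,i_2\in\cS^{\sss(2)}_t,
\]
any pair with an endpoint outside $\cS_t$ satisfying the inequality trivially by monotonicity of $R_1,R_2$.

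The heart of the proof is to identify, for each such pair, the conditional law of the crossing weight given $(\thinnedcluster_u,R_1(u),R_2(u))_{u\ge 0}$ and the unthinned marks. Assume by symmetry that $T^{\sss\cS}(i_1)\leq T^{\sss\cS}(i_2)$, so that $V(i_1)\in\thinnedcluster^{\sss(1)}$ is born first. By \eqrefPartII{EdgeWeightCoupling-FPP2Source} together with $Y^{\sss(K_n)}_e=g(X^{\sss(K_n)}_e)$ and $g(x/n)=f_n(x)$, the weight $Y^{\sss(K_n)}_{\set{i_1,i_2}}$ equals the minimum of the FPP edge weights $f_n(X_v)$ over children $v$ of $V(i_1)$ with $M_v=i_2$. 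Under the PWIT law these form a Poisson point process on $(0,\infty)$ of intensity $\mu_n/n$, and different pairs $(i_1,i_2)$ yield independent such processes because distinct marks and distinct parents index disjoint families of PWIT edges. Every such child $v$ is thinned, since $V(i_2)\in\tree^{\sss(2)}$ carries mark $i_2$ and is unthinned; consequently every descendant of $v$ is thinned, these entire subtrees are invisible in $\thinnedcluster$, and their marks play no role in the recursive thinning rule, which only looks back at already-unthinned ancestors. Hence the trajectory of $\thinnedcluster$ constrains the Poisson process of mark-$i_2$ children of $V(i_1)$ only through the single requirement that no point satisfies $T^\cluster_v\leq T^\cluster_{V(i_2)}$, which on the FPP scale reads $f_n(X_v)>\Delta R(i_1,i_2)$. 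The standard restriction property for Poisson processes, together with \eqrefPartII{munCharacterization}, then gives
\[
\P\bigl(Y^{\sss(K_n)}_{\set{i_1,i_2}}>y \,\bigm|\, \thinnedcluster,R_1,R_2,M\bigr) = \exp\Bigl(-\tfrac{1}{n}\bigl[f_n^{-1}(y)-f_n^{-1}(\Delta R(i_1,i_2))\bigr]\Bigr)
\]
for every $y\geq\Delta R(i_1,i_2)$, and using $T^{\sss\cS}(i_2)\leq t$ one checks that the threshold $y=R_1(t)-R_1(T^{\sss\cS}(i_1))+R_2(t)-R_2(T^{\sss\cS}(i_2))$ indeed satisfies $y\geq\Delta R(i_1,i_2)$.

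Finally, conditional independence across pairs $(i_1,i_2)\in\cS^{\sss(1)}_t\times\cS^{\sss(2)}_t$ lets me multiply these single-pair probabilities to obtain precisely \eqrefPartII{CondLawOfCollision}. The main technical delicacy I expect is this independence assertion: one must rigorously justify that conditioning on the entire trajectory of $\thinnedcluster$ (not merely up to $t$) leaves the family of Poisson processes of mark-$i_2$ children of $V(i_1)$ mutually independent and each constrained only by its own one-sided bound. This rests on the structural observation that thinned subtrees neither appear in $\thinnedcluster$ nor feed back into the thinning rule through vertices outside them; once it is in place, the product formula is immediate.
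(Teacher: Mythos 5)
Your argument correctly identifies the Poisson computation and the fact that crossing children of $V(i_1)$ carrying mark $i_2$ are all thinned and hence invisible in $\thinnedcluster$. However, there is a genuine gap precisely at the point you flag as the ``main technical delicacy''. You condition on $(\thinnedcluster_u, R_1(u), R_2(u))_{u\ge 0}$, and your justification that the thinned children carry no further information rests only on the fact that thinned subtrees neither appear in $\thinnedcluster$ nor feed back into the thinning rule. That observation is true but insufficient: it says nothing about the processes $R_1,R_2$. The freezing times $T_\fr^{\sss(j)}$ in \refdefn{Freezing} are defined via a sum over \emph{all} of $\BP^{\sss(j)}_t$, including thinned vertices and their descendants. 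So the trajectory of $R_j$ depends on the very PWIT edge weights $X_v$ that determine the crossing weight $Y^{\sss(K_n)}_{\{i_1,i_2\}}$; conditioning on $(R_1,R_2)$ does constrain those Poisson points beyond the one-sided bound $f_n(X_v)>\Delta R(i_1,i_2)$, and the claimed conditional exponential law simply fails under the coupling \eqrefPartII{EdgeWeightCoupling-FPP2Source} that you invoke through \refthm{CouplingFPP}.

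The paper explicitly warns about this at the start of \refsubsect{CouplingCox} and fixes it by constructing a \emph{different} coupling: auxiliary, independent PWITs are grafted at each $V(i)$, and once the threshold $\Delta R(i,i')$ has been observed, the thinned mark-$i'$ children of $V(i)$ (together with their subtrees) are replaced by the auxiliary copies for the purpose of computing $\cluster$, $R_1$, $R_2$ and the thinning, while the original edge weights continue to feed \eqrefPartII{EdgeWeightCoupling}. This replacement leaves the joint law of $(\thinnedcluster,R_1,R_2,\cS,Y^{\sss(K_n)}_e)$ unchanged but severs the dependence of $(R_1,R_2)$ on the crossing-edge weights beyond their thresholds, which is what actually makes your Poisson restriction argument and the product over pairs legitimate. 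Without introducing this (or an equivalent) modified coupling, the step from the thinning structure to conditional independence is unjustified, so the theorem statement, which asserts the existence of a coupling with the stated property, is not proved. Everything else in your outline (the translation of $\{T_\coll>t\}$ via \reflemma{WnAsMinimum} and \refthm{WnHnFromCollision}, the threshold computation, the use of \eqrefPartII{munCharacterization}) is correct and matches the paper's route once the coupling issue is repaired.
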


For technical reasons related to the freezing procedure, the coupling in \refthm{CollCondProb} is not quite the coupling from \eqrefPartII{EdgeWeightCoupling-FPP2Source}.
We describe the precise coupling and give the proof of \refthm{CollCondProb} in \refsubsect{CouplingCox}.

\begin{proof}[Sketch of the proof]
By \reflemma{WnAsMinimum}, \refthm{WnHnFromCollision} and the fact that $R_1+R_2$ is strictly increasing, $T_\coll>t$ is equivalent to $W_n>R_1(t)+R_2(t)$, which is in turn equivalent to
\begin{equation}\labelPartII{ConnectingEdgeCondition}
Y_{\set{i_1,i_2}}^{\sss(K_n)} > R_1(t)+R_2(t)-R_1(T^{\sss\cS}(i_1))-R_2(T^{\sss\cS}(i_2))
\quad
\text{for all }i_1\in \cS^{\sss(1)}_t, i_2\in \cS^{\sss(2)}_t
.
\end{equation}
On the other hand, the fact that $i_1\in \cS^{\sss(1)}_t$ and $i_2\in \cS^{\sss(2)}_t$, implies that between the times $T^{\sss \cS}(i_1)\wedge T^{\sss \cS}(i_2)$ and $T^{\sss \cS}(i_1)\vee T^{\sss \cS}(i_2)$ when the first vertex and the second vertex of $\set{i_1,i_2}$ were explored, respectively, the flow from the first explored vertex did not reach the other vertex.
This translates to precisely the information that $Y_{\set{i_1,i_2}}^{\sss(K_n)}>\Delta R(i_1,i_2)$.

Because of the representation of the edge weights $Y_{\set{i,j}}^{\sss(K_n)}$ in terms of exponential variables, as in \refsubsect{univ-class},
\begin{equation}\labelPartII{EdgeConditionalProbability}
\condP{Y_e^{\sss(K_n)}>y_2}{Y_e^{\sss(K_n)}>y_1} = \exp\left( -\tfrac{1}{n}\left( f_n^{-1}(y_2) - f_n^{-1}(y_1) \right) \right)
\end{equation}
for any fixed $y_2>y_1$.
The right-hand side of \eqrefPartII{CondLawOfCollision} is therefore a product of conditional probabilities.
To complete the proof, it remains to ensure that knowledge of $\thinnedcluster$, $\cS$ and $(M_v)_{v\in\thinnedcluster}$ does not reveal any other information about the edge weights $Y_e^{\sss(K_n)}$ other than $Y_{\set{i_1,i_2}}^{\sss(K_n)}>\Delta R(i_1,i_2)$.
See \refsubsect{CouplingCox} for more details.
\end{proof}

Because of the form of \eqrefPartII{CondLawOfCollision}, \refthm{CollCondProb} shows that $T_\coll$ is equal in law to the first point of a Cox process (see also \cite[Proposition 2.3]{BhaHof12}).
We next reformulate \refthm{CollCondProb} to make this connection explicit.

By a \emph{Cox process} with random intensity measure $Z$ (with respect to a $\sigma$-algebra $\F$) we mean a random point measure $\cP$ such that $Z$ is $\F$-measurable and, conditional on $\F$, $\cP$ has the distribution of a Poisson point process with intensity measure $Z$.
For notational convenience, given a sequence of intensity measures $Z_n$ on $\R \times \cX$, for some measurable space $\cX$, we write $Z_{n,t}$ for the measures on $\cX$ defined by $Z_{n,t}(\cdot)=Z_n(\ocinterval{-\infty, t}\times\cdot)$.

Let $\cP_n^{\sss (K_n)}$ be a Cox process on $\cointerval{0,\infty} \times [n] \times [n]$ (with respect to the $\sigma$-algebra generated by $\thinnedcluster$, $\cS$, $R_1$, $R_2$ and $(M_v)_{v\in\thinnedcluster}$) whose intensity measure $Z_n^{\sss(K_n)}$ is characterized by
\begin{align}\labelPartII{CollisionIntensityKn}
	&Z_{n,t}^{\sss (K_n)}(\set{i_1}\times\set{i_2})\\
	&\qquad= \indicator{i_1\in\cS_t^{\sss(1)},i_2\in\cS_t^{\sss(2)}}
	\tfrac{1}{n} \mu_n\bigl(\Delta R(i_1,i_2), R_1(t)-R_1(T^{\sss \cS}(i_1))+R_2(t)-R_2(T^{\sss \cS}(i_2))\bigr) .\nn
\end{align}

\begin{theorem}
\lbthm{CollisionThm}
The processes $\thinnedcluster,\cS,R_1,R_2$, the edge weights $Y_e^{\sss(K_n)}$ and the Cox process $\cP_n^{\sss(K_n)}$ may be coupled so that, with probability 1, $\cS_t=\pi_M(\thinnedcluster_t)$ for all $t\geq 0$ and the first point of $\cP_n^{\sss (K_n)}$ is equal to $(T_{\coll},I_1,I_2)$.
\end{theorem}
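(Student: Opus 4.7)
The plan is to build directly on the coupling constructed for \refthm{CollCondProb}, which already realizes the processes $(\thinnedcluster_u, \cS_u, R_1(u), R_2(u))_{u\geq 0}$, the marks $(M_v)$, and the edge weights $(Y_e^{\sss(K_n)})_e$ on a common probability space with $\cS_u = \pi_M(\thinnedcluster_u)$ and with the conditional identity \eqrefPartII{CondLawOfCollision} in force. My remaining task is to augment this coupling with $\cP_n^{\sss(K_n)}$, and I would do so pathwise by defining, for each pair $(i_1, i_2)$ that eventually appears in $\cS^{\sss(1)}\times \cS^{\sss(2)}$ (equivalently, for which $Y_{\set{i_1,i_2}}^{\sss(K_n)}>\Delta R(i_1,i_2)$), the time
\[
\tau_{i_1,i_2} = \inf\set{t\geq 0 \colon R_1(t)+R_2(t)-R_1(T^{\sss\cS}(i_1))-R_2(T^{\sss\cS}(i_2)) \geq Y_{\set{i_1,i_2}}^{\sss(K_n)}},
\]
and placing a point of $\cP_n^{\sss(K_n)}$ at $(\tau_{i_1,i_2}, i_1, i_2)$.

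Once $\cP_n^{\sss(K_n)}$ is defined this way, the identification of its first point with $(T_\coll, I_1, I_2)$ is nearly deterministic: by \reflemma{WnAsMinimum} and \refthm{WnHnFromCollision}, the expression $R_1(T^{\sss\cS}(i_1))+Y_{\set{i_1,i_2}}^{\sss(K_n)}+R_2(T^{\sss\cS}(i_2))$ is minimized uniquely at $(I_1,I_2)$ with value $W_n$, and $T_\coll$ is the unique $t$ at which $R_1(t)+R_2(t)=W_n$, so $\tau_{I_1,I_2}=T_\coll$ while $\tau_{i_1,i_2}>T_\coll$ for every other eligible pair by monotonicity of $R_1+R_2$. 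The substantive content is thus to verify that the random point measure $\sum_{(i_1,i_2)}\delta_{(\tau_{i_1,i_2},i_1,i_2)}$ really is a Cox process with intensity $Z_n^{\sss(K_n)}$ as specified in \eqrefPartII{CollisionIntensityKn}. For this I would invoke \eqrefPartII{EdgeConditionalProbability}: conditional on the $\sigma$-algebra generated by $\thinnedcluster$, $\cS$, $R_1$, $R_2$ and the marks, the rescaled overshoots $E_{i_1,i_2}:=\tfrac{1}{n}(f_n^{-1}(Y_{\set{i_1,i_2}}^{\sss(K_n)})-f_n^{-1}(\Delta R(i_1,i_2)))$ are i.i.d.\ standard exponentials across eligible pairs, and the defining condition for $\tau_{i_1,i_2}$ can be rewritten using $\mu_n(a,b)=f_n^{-1}(b)-f_n^{-1}(a)$ as $\tfrac{1}{n}\mu_n\bigl(\Delta R(i_1,i_2),\,R_1(\tau_{i_1,i_2})-R_1(T^{\sss\cS}(i_1))+R_2(\tau_{i_1,i_2})-R_2(T^{\sss\cS}(i_2))\bigr)=E_{i_1,i_2}$, which is exactly the standard exponential-threshold representation of the first jump of a Cox process with cumulative intensity $Z_{n,t}^{\sss(K_n)}(\set{i_1}\times\set{i_2})$.

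The main obstacle is the conditional-independence claim just invoked: it requires that the coupling constructed in \refsubsect{CouplingCox} for \refthm{CollCondProb} exposes about the cross-cluster edge weights $Y_{\set{i_1,i_2}}^{\sss(K_n)}$ precisely the information encoded in the values $\Delta R(i_1,i_2)$ and the inequalities $Y_{\set{i_1,i_2}}^{\sss(K_n)}>\Delta R(i_1,i_2)$, and nothing more. Since that coupling is engineered for exactly this purpose, the proof essentially amounts to upgrading the marginal conditional law \eqrefPartII{CondLawOfCollision} of $T_\coll$ to a joint law of $(T_\coll,I_1,I_2)$ by decorating each pair-indexed conditional-independence statement already present in the proof of \refthm{CollCondProb} with its label $(i_1,i_2)$.
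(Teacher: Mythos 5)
Your overall structure is right and aligned with the paper's: realize the Cox process on the same probability space via the exponential-threshold construction applied to the cross-cluster edge weights, then use \reflemma{WnAsMinimum}, \refthm{WnHnFromCollision}, and strict monotonicity of $R_1+R_2$ to identify the global minimizer $(\tau_{I_1,I_2},I_1,I_2)$ with $(T_\coll,I_1,I_2)$. That part is fine.

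The genuine gap is at exactly the point you flag and then wave away. You assert the conditional-independence claim holds ``since that coupling is engineered for exactly this purpose,'' but that is precisely what the proof has to establish, and under the natural coupling \eqrefPartII{EdgeWeightCoupling-FPP2Source} the claim is \emph{false}. The on-off processes $R_1,R_2$ are not adapted to $\thinnedcluster$: they depend, via the freezing rule \eqrefPartII{TfrDefn}, on the whole of $\cluster$, including thinned subtrees. But for a cross-cluster pair $(i,i')$ with $T^{\sss\thinnedcluster}(i)<T^{\sss\thinnedcluster}(i')$, the children $w$ of $V(i)$ with $M_w=i'$ and $f_n(X_w)>\Delta R(i,i')$ are thinned and therefore participate in $\cluster$ and in the determination of $R_1,R_2$, while $\min_w f_n(X_w)=Y_{\set{i,i'}}^{\sss(K_n)}$ is exactly the edge weight whose residual law you need to control. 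So conditioning on $(\thinnedcluster,\cS,R_1,R_2,(M_v)_{v\in\thinnedcluster})$ leaks information about $Y_{\set{i,i'}}^{\sss(K_n)}$ beyond the inequality $Y_{\set{i,i'}}^{\sss(K_n)}>\Delta R(i,i')$. This is why the paper explicitly remarks that the coupling in \refthm{CollCondProb} is ``not quite the coupling from \eqrefPartII{EdgeWeightCoupling-FPP2Source}.'' The missing content is the modification: at time $T^{\sss\thinnedcluster}(i')$, replace the weights and marks of the offending (thinned) vertices $w$ and their descendants by fresh, independent copies from auxiliary PWITs in the dynamics of $\cluster,R_1,R_2$, while retaining the original $X_w$ for the edge-weight coupling. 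This leaves the joint law of $(\thinnedcluster,\cS,R_1,R_2,(Y_e^{\sss(K_n)})_e)$ unchanged and preserves $\cS=\pi_M(\thinnedcluster)$, but severs the unwanted dependence, giving the conditional Poisson structure you then invoke. Finally, a logical point: in the paper \refthm{CollCondProb} is \emph{derived} from \refthm{CollisionThm} (by computing the void probability of $\cP_n^{\sss(K_n)}$), so ``the coupling constructed for \refthm{CollCondProb}'' is not an independent object you can build on; appealing to it here is circular.
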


The proof of \refthm{CollisionThm} is postponed to \refsubsect{CouplingCox}.

Finally, under the coupling from \refthm{CouplingFPP}, we may lift the Cox process $\cP^{\sss(K_n)}_n$ to a Cox process $\cP_n$ on $\cointerval{0,\infty}\times\tree^{\sss(1)}\times\tree^{\sss(2)}$.

\begin{theorem}[A Cox process for the collision edges]
\lbthm{CouplingFPPCollision}
Let $\cP_n$ be a Cox process on $\cointerval{0,\infty}\times \tree^{\sss (1)} \times \tree^{\sss (2)}$ (with respect to the $\sigma$-algebra generated by $\cluster$ and $(M_v)_{v\in\tree}$) with random intensity measure $Z_n=(Z_{n,t})_{t\ge 0}$ defined by
	\begin{align}\labelPartII{CollisionIntensityTree}
	&Z_{n,t}(\set{v_1}\times\set{v_2}) =
	\indicator{v_1\in\cluster_t^{\sss(1)},v_2\in\cluster_t^{\sss(2)}} \tfrac{1}{n}
  \mu_n\bigl(\Delta R_{v_1,v_2}, R_1(t)-R_1(T_{v_1}^\cluster)+R_2(t)-R_2(T_{v_2}^\cluster)\bigr)
	\end{align}
for all $t\ge 0$, where
\begin{align}\labelPartII{DeltaRcluster}
\Delta R_{v_1,v_2}=
\begin{cases}
R_1(T_{v_2}^\cluster)-R_1(T_{v_1}^\cluster), & \text{if }T_{v_1}^\cluster\leq T_{v_2}^\cluster,\\
R_2(T_{v_1}^\cluster)-R_2(T_{v_2}^\cluster), & \text{if }T_{v_2}^\cluster\leq T_{v_1}^\cluster.
\end{cases}
\end{align}
Let $(T_\coll^{\sss(\cP_n)},V_\coll^{\sss(1)},V_\coll^{\sss(2)})$ denote the first point of $\cP_n$ for which $V_\coll^{\sss(1)}$ and $V_\coll^{\sss(2)}$ are unthinned.
Then the law of $(T_\coll^{\sss(\cP_n)},R_1(T_\coll^{\sss(\cP_n)})+R_2(T_\coll^{\sss(\cP_n)}),\pi_M(\thinnedcluster_{T_\coll^{\sss(\cP_n)}}),M_{V_\coll^{\sss(1)}},M_{V_\coll^{\sss(2)}})$ is the same as the joint law of the collision time $T_\coll$; the optimal weight $W_n$; the smallest-weight tree $\cS_{T_\coll}$ at time $T_\coll$; and the endpoints $I_1,I_2$ of the collision edge.
In particular, the hopcount $H_n$ has the same distribution as $\abs{V_\coll^{\sss(1)}}+\abs{V_\coll^{\sss(2)}}+1$.
\end{theorem}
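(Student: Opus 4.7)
The plan is to deduce this theorem from \refthm{CollisionThm} by lifting the Cox process $\cP_n^{\sss(K_n)}$ on $\cointerval{0,\infty}\times[n]\times[n]$ to a Cox process on $\cointerval{0,\infty}\times\tree^{\sss(1)}\times\tree^{\sss(2)}$ via the mark map, using the isomorphism $\pi_M\colon\thinnedcluster\to\cS$ provided by \refthm{CouplingFPP}. Concretely, I would work on the enlarged probability space where $\cluster$, $(M_v)_{v\in\tree}$, the edge weights $(Y_e^{\sss(K_n)})_e$ and $\cP_n^{\sss(K_n)}$ are jointly constructed as in \refthm{CollisionThm}, and augment it with conditionally independent Poisson point processes to build the remainder of $\cP_n$.

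The construction proceeds as follows. For each ordered pair of unthinned vertices $v_1\in\thinnedcluster^{\sss(1)}$, $v_2\in\thinnedcluster^{\sss(2)}$ with $M_{v_1}=i_1$, $M_{v_2}=i_2$, place a single atom of $\cP_n$ at $(t,v_1,v_2)$ whenever $\cP_n^{\sss(K_n)}$ has an atom at $(t,i_1,i_2)$. By \refthm{CouplingFPP}, for every unthinned $v\in\cluster^{\sss(j)}$ we have $T_v^\cluster=T^{\sss\cS}(M_v)$, so $\Delta R_{v_1,v_2}=\Delta R(i_1,i_2)$ and $R_j(t)-R_j(T_{v_j}^\cluster)=R_j(t)-R_j(T^{\sss\cS}(i_j))$; hence the prescribed intensity of $\cP_n$ at the unthinned pair $(v_1,v_2)$ matches that of $\cP_n^{\sss(K_n)}$ at $(i_1,i_2)$ given in \eqrefPartII{CollisionIntensityKn}. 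For pairs $(v_1,v_2)$ where at least one vertex is thinned, I add an independent Poisson point process with the intensity prescribed by \eqrefPartII{CollisionIntensityTree}, conditionally independent of everything else given $\cluster$ and $(M_v)_v$. By standard superposition and mapping properties of (Cox) Poisson point processes, the resulting point measure on $\cointerval{0,\infty}\times\tree^{\sss(1)}\times\tree^{\sss(2)}$ is a Cox process with the intensity measure $Z_n$ specified in the statement.

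With $\cP_n$ so constructed, the first point of $\cP_n$ with both coordinates unthinned, call it $(T_\coll^{\sss(\cP_n)},V_\coll^{\sss(1)},V_\coll^{\sss(2)})$, is by construction in bijection with the first point of $\cP_n^{\sss(K_n)}$, via the mark map: $T_\coll^{\sss(\cP_n)}=T_\coll$, $M_{V_\coll^{\sss(j)}}=I_j$ for $j=1,2$. Applying \refthm{CollisionThm} then identifies the joint law of $(T_\coll^{\sss(\cP_n)},R_1(T_\coll^{\sss(\cP_n)})+R_2(T_\coll^{\sss(\cP_n)}))$ with $(T_\coll,W_n)$ (using $W_n=R_1(T_\coll)+R_2(T_\coll)$ from \refthm{WnHnFromCollision}), while $\pi_M(\thinnedcluster_{T_\coll^{\sss(\cP_n)}})=\cS_{T_\coll}$ almost surely by \refthm{CouplingFPP}, and the endpoint identification $(M_{V_\coll^{\sss(1)}},M_{V_\coll^{\sss(2)}})=(I_1,I_2)$ is immediate. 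For the hopcount, since $\pi_M$ is a graph isomorphism between $\thinnedcluster$ and $\cS$, the graph distance from $\emptyset_j$ to $V_\coll^{\sss(j)}$ in $\cluster^{\sss(j)}$, which equals $\abs{V_\coll^{\sss(j)}}$, coincides with the graph distance $H(I_j)$ from $j$ to $I_j$ in $\cS^{\sss(j)}$; thus $\abs{V_\coll^{\sss(1)}}+\abs{V_\coll^{\sss(2)}}+1=H(I_1)+H(I_2)+1=H_n$ by \eqrefPartII{WnHnFromTcollI1I2}.

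The main subtlety is the handling of thinned pairs: one must verify that introducing atoms of $\cP_n$ at pairs containing a thinned vertex does not interfere with the identification of the first unthinned point, and that the resulting augmented process is genuinely a Cox process with intensity $Z_n$ on all of $\tree^{\sss(1)}\times\tree^{\sss(2)}$. This is handled cleanly because, conditionally on $(\cluster,(M_v)_v,R_1,R_2)$, the thinning status of every vertex is measurable, so the two contributions (unthinned-to-unthinned, coming from $\cP_n^{\sss(K_n)}$, and the independent Poisson augmentation on the remaining pairs) can be superposed to yield exactly the desired conditional Poisson structure, and selecting the first unthinned atom commutes with this decomposition.
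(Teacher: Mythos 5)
Your argument runs in the reverse direction to the paper's proof: you start from $\cP_n^{\sss(K_n)}$ and lift it to a process on $\cointerval{0,\infty}\times\tree^{\sss(1)}\times\tree^{\sss(2)}$ by inverting $\pi_M$ on unthinned pairs and superposing independent Poisson noise on the remaining pairs, whereas the paper defines $\cP_n$ abstractly, restricts it to unthinned pairs, pushes forward via $v\mapsto M_v$, and observes that the resulting process is a Cox process with the same conditional intensity as $\cP_n^{\sss(K_n)}$. The two routes are dual, and the algebraic identifications ($T_{V(i)}^\cluster=T^{\sss\cS}(i)$, $\Delta R_{v_1,v_2}=\Delta R(M_{v_1},M_{v_2})$, $\pi_M(\thinnedcluster)=\cS$, $\abs{V_\coll^{\sss(j)}}=H(I_j)$) and the invocation of Theorems \refPartII{t:CollisionThm} and \refPartII{t:WnHnFromCollision} are the same in both. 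The paper's direction is the cleaner one, because restriction and measurable pushforward of a Cox process trivially yields another Cox process with respect to the same $\sigma$-field, so nothing further needs checking.

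Your direction is correct in spirit but you gloss over the one step that actually requires care, which is precisely the step the paper's direction avoids. You assert that lifting $\cP_n^{\sss(K_n)}$ and superposing independent Poisson noise produces a Cox process with respect to the $\sigma$-field generated by $\cluster$ and $(M_v)_{v\in\tree}$. But \refthm{CollisionThm} only establishes that $\cP_n^{\sss(K_n)}$ is a Cox process with respect to the strictly smaller $\sigma$-field generated by $\thinnedcluster,\cS,R_1,R_2,(M_v)_{v\in\thinnedcluster}$. Enlarging the conditioning $\sigma$-field of a Cox process does not, in general, preserve the Cox property: if the extra information (here, the thinned parts of $\cluster$ and the marks of thinned vertices) is not conditionally independent of the Poisson randomness, the conditional law of the point process can change. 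In the present setting this does hold, but only because of the auxiliary-PWIT replacement device introduced in the proof of \refthm{CollisionThm}, which is specifically designed to decouple the thinned portions of $\cluster$ from the connecting edge weights; you would need to invoke that construction explicitly and verify that, under the modified coupling, the full $\sigma(\cluster,(M_v)_v)$ reveals no additional information about the points of $\cP_n^{\sss(K_n)}$. As written, the sentence ``the two contributions \dots can be superposed to yield exactly the desired conditional Poisson structure'' asserts the conclusion rather than proving it. So there is a gap, but it is a fillable one, and the surrounding argument is otherwise sound.
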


\refthm{CouplingFPPCollision} is proved in \refsubsect{CouplingCox}.

\refthm{CouplingFPPCollision} means that we can study first passage percolation on $K_n$ by studying a CTBP problem and then controlling the effect of thinning. In fact, with \refthm{CouplingFPPCollision} in hand, we will no longer need to refer to $K_n$ at all.

\subsection{Freezing a CTBP}\lbsubsect{FreezingDisc}

It has been argued in \refother{\refsubsectPartI{FreezingDisc}}, that the freezing times $T_\fr^{\sss(j)}$ have to achieve two goals.
To align the growth of the two CTBPs $\BP^{\sss(1)}$ and $\BP^{\sss(2)}$, the freezing times $T_\fr^{\sss(j)}$ have to
come after exploring the edge of weight $f_n(M^{\sss(j)})$ (and they are instantaneously unfrozen after the last of these times),
and after the population size reached order $s_n$ at the unfreezing time.
The latter requirement guarantees that $\BP^{\sss(j)}$ exhibits typical branching dynamics with exponential growth, and is no longer subject to strong fluctuations caused by a few individuals.

These considerations lead to the following definition of the \emph{freezing times}:

\begin{defn}[Freezing]
\lbdefn{Freezing}
Define, for $j=1,2$, the \emph{freezing times}
	\begin{equation}\labelPartII{TfrDefn}
	T_\fr^{\sss(j)} = \inf\bigg\{ t\geq 0\colon \sum_{v\in\BP_t^{\sss(j)}} \int_{t-T_v}^\infty \e^{-\lambda_n(1) \left(y-(t-T_v)\right)}
	d\mu_n(y) \geq s_n \bigg\},
	\end{equation}
and the \emph{unfreezing time} $T_\unfr=T_\fr^{\sss(1)} \vee T_\fr^{\sss(2)}$.
The \emph{frozen cluster} is given by
\begin{equation}
\cluster_\fr=\cluster_{T_\unfr}=\cluster_\fr^{\sss(1)}\union\cluster_\fr^{\sss(2)} \quad \text{where} \quad \cluster_\fr^{\sss(j)}=\cluster_{T_\fr^{(j)}}^{\sss(j)}.
\end{equation}
\end{defn}
The variable $\int_{t-T_v}^\infty \e^{-\lambda_n(1) \left(y-(t-T_v)\right)} d\mu_n(y)$ represents the expected number of future offspring of vertex $v\in\BP_t^{\sss(j)}$, exponentially time-discounted at rate $\lambda_n(1)$.
Recall from \eqrefPartII{OnOff} that $R_j(t)=(t\wedge T_\fr^{\sss(j)}) + ((t-T_\unfr)\vee 0)$.
Thus, each CTBP evolves at rate $1$ until its expected number of future offspring, exponentially time-discounted at rate $\lambda_n(1)$, first exceeds $s_n$.
At that time the configuration is ``frozen'' and ceases to evolve until both sides have been frozen.
The two sides, which are now of a comparably large size, are then simultaneously unfrozen and thereafter evolve at rate 1.
Henceforth we will always assume this choice of $T_\fr^{\sss(1)},T_\fr^{\sss(2)}$.

The asymptotics of the freezing times have been investigated in \refother{\refthmPartI{TfrScaling}}:
\begin{theorem}[Scaling of the freezing times]
\lbthm{TfrScaling}
The freezing times satisfy $f_n^{-1}(T_\fr^{\sss(j)}) \convp M^{\sss (j)}$ for $j=1,2$.
\end{theorem}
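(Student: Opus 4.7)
The plan is to exploit the close correspondence, valid when $s_n\to\infty$, between FPP on the PWIT with edge weights $f_n(X_v)$ and invasion percolation (IP) on the PWIT with edge weights $X_v$. Because $f_n$ is strictly increasing, the two procedures explore edges in the same relative order; they differ only in the timescale. For IP, the largest invaded edge has PWIT-weight $M^{\sss(j)}$, and by definition $M^{\sss(j)}$ is the unique threshold such that the subgraph of $\tree^{\sss(j)}$ explorable via edges of PWIT-weight $\le x$ is finite for $x<M^{\sss(j)}$ yet infinite for $x>M^{\sss(j)}$. The claim $f_n^{-1}(T_\fr^{\sss(j)})\convp M^{\sss(j)}$ should therefore amount to showing that the freezing criterion \eqrefPartII{TfrDefn} is crossed exactly when FPP steps past this IP threshold.

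My first step would be to analyze the deterministic functional $F(s)=\int_s^\infty \e^{-\lambda_n(1)(y-s)}d\mu_n(y)$ that governs the freezing criterion. By \eqrefPartII{lambdaaDefn}, $F(0)=\hat\mu_n(\lambda_n(1))=1$, and $F$ is decreasing. Hence each living individual contributes at most $1$ to the sum in \eqrefPartII{TfrDefn}, and the freezing criterion is essentially equivalent to $\abs{\BP_t^{\sss(j)}}$ reaching order $s_n$. The task then reduces to showing that, uniformly in $\varepsilon>0$, $\abs{\BP^{\sss(j)}_{f_n(M^{\sss(j)}-\varepsilon)}}=\Op(1)$ while $\abs{\BP^{\sss(j)}_{f_n(M^{\sss(j)}+\varepsilon)}}\gg s_n$ whp.

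For the lower bound $f_n^{-1}(T_\fr^{\sss(j)})\ge M^{\sss(j)}-\varepsilon$, fix $\varepsilon>0$ and consider the (a.s.\ finite) subgraph $\tau_\varepsilon$ of $\tree^{\sss(j)}$ consisting of vertices reachable from $\emptyset_j$ by a path whose edges all have PWIT-weight $\leq M^{\sss(j)}-\varepsilon$. Since $T_v$ is the sum of $f_n$ evaluated at the PWIT-weights along the ancestral path of $v$, one has $\BP^{\sss(j)}_{f_n(M^{\sss(j)}-\varepsilon)}\subseteq\tau_\varepsilon$ whenever $|v|\cdot f_n(M^{\sss(j)}-\varepsilon)\le f_n(M^{\sss(j)}-\varepsilon)$ holds trivially for singletons; more generally the two clusters essentially coincide for large $s_n$ because $f_n$ concentrates on its largest argument. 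The sum in \eqrefPartII{TfrDefn} is then at most $\abs{\tau_\varepsilon}=\Op(1)=o_\prob(s_n)$. For the upper bound $f_n^{-1}(T_\fr^{\sss(j)})\le M^{\sss(j)}+\varepsilon$, I would show that once the invasion has crossed the maximal weight edge, the remaining exploration past $M^{\sss(j)}$ enters an infinite subtree of $\tree^{\sss(j)}$ whose restriction to edges of weight $\leq M^{\sss(j)}+\varepsilon$ supports a supercritical branching process. Coupling FPP on this subtree with a CTBP of offspring measure $\mu_n$ restricted to $[0,f_n(M^{\sss(j)}+\varepsilon)]$, one obtains that $\abs{\BP^{\sss(j)}_{f_n(M^{\sss(j)}+\varepsilon)}}$ diverges quickly enough (in fact exponentially in the natural $\lambda_n(1)$-timescale) to overshoot $s_n$ whp.

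The main obstacle is the quantitative coupling between FPP and IP at a finer level than the above heuristic: one must pin down $f_n^{-1}(T_\fr^{\sss(j)})$ to within $o_\prob(1)$ of $M^{\sss(j)}$, not just within a constant. This requires showing that on the event $X_v\le M^{\sss(j)}+\varepsilon$ for all ancestors of $v$, the FPP arrival time satisfies $T_v = f_n(\max_k X_{\ancestor{k}{v}})(1+o_\prob(1))$, which hinges on how strongly $f_n$ concentrates around its largest argument when $s_n\to\infty$ (a consequence of \refcond{scalingfn} and \refcond{boundfnExtended}). Carrying this out, together with a second-moment analysis controlling the population size in the post-$M^{\sss(j)}$ branching regime, is precisely the content of the invasion-percolation analysis developed in \cite{EckGooHofNar14a}; from there, the two-sided matching gives $f_n^{-1}(T_\fr^{\sss(j)})\convp M^{\sss(j)}$.
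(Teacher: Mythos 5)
This theorem is not proved in the present paper: the sentence immediately before it states that the asymptotics of the freezing times ``have been investigated in'' the companion paper [EckGooHofNar14a], and the statement is only restated here for later use. There is therefore no in-paper proof to compare against, and I can only assess your proposal on its own terms.

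Your overall strategy --- trap $f_n^{-1}(T_\fr^{\sss(j)})$ between $M^{\sss(j)}-\varepsilon$ and $M^{\sss(j)}+\varepsilon$ using the IP/FPP correspondence, with the level-$(M^{\sss(j)}-\varepsilon)$ component of $\emptyset_j$ a.s.\ finite and the level-$(M^{\sss(j)}+\varepsilon)$ one a.s.\ infinite --- is the right one and matches the framework of the companion paper. However, several intermediate steps need repair. (i) Your claim that $F$ is decreasing, hence each summand in \eqrefPartII{TfrDefn} is $\leq F(0)=1$, is not justified: writing $x_0=f_n^{-1}(a)$ gives $F(a)=\int_0^\infty\e^{-\lambda_n(1)(f_n(x_0+u)-f_n(x_0))}\,du$, so $F(a)\leq F(0)$ requires the superadditivity $f_n(x_0+u)\geq f_n(x_0)+f_n(u)$, a convexity property that does not follow from the stated hypotheses. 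A uniform bound $\sup_a F(a)=O(1)$ would suffice and can be extracted from \reflemma{munDensityBounded} together with $\lambda_n(1)f_n(1)=O(1)$; you should do this explicitly rather than assert monotonicity. (ii) The heuristic ``freezing $\iff \abs{\BP_t^{\sss(j)}}\approx s_n$'' is off by a factor of $s_n$: a vertex whose age $t-T_v$ is of order $f_n(1)\sim 1/\lambda_n(1)$ --- the typical age in an exponentially growing CTBP --- contributes only order $1/s_n$ (cf.\ \reflemma{ModerateAgeContribution}), so freezing is actually triggered when $\abs{\BP_t^{\sss(j)}}\approx s_n^2$, which is consistent with \refthm{FrozenCluster}~\refitem{FrozenVolume}. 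This does not sink the argument (the population at time $f_n(M^{\sss(j)}+\varepsilon)$ vastly exceeds both $s_n$ and $s_n^2$), but the reduction should be stated correctly. (iii) The parenthetical about ``$\abs{v}\cdot f_n(M^{\sss(j)}-\varepsilon)\leq f_n(M^{\sss(j)}-\varepsilon)$'' and the two clusters ``essentially coinciding for large $s_n$'' is garbled and, more importantly, unnecessary: the inclusion $\BP^{\sss(j)}_{f_n(M^{\sss(j)}-\varepsilon)}\subseteq\tau_\varepsilon$ is exact and follows purely from strict monotonicity of $f_n$, since if some ancestral edge weight of $v$ exceeds $M^{\sss(j)}-\varepsilon$ then already that single term of $T_v=\sum_k f_n(X_{\ancestor{k}{v}})$ exceeds $f_n(M^{\sss(j)}-\varepsilon)$. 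No concentration of $f_n$ is needed for the lower bound. Concentration enters only in the upper bound, where one must show the vertex across the $M^{\sss(j)}$-weight edge is reached shortly after time $f_n(M^{\sss(j)})$; as you acknowledge, that quantitative work is deferred to [EckGooHofNar14a], which is indeed where this theorem is actually proved.
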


Since $M^{\sss(1)}\neq M^{\sss(2)}$ a.s., \refthm{TfrScaling} confirms that the two CTBPs $\BP^{\sss(1)}$ and $\BP^{\sss(2)}$ require substantially different times to grow large enough.

The following lemma allows us to split $W_n$ into the two freezing times and the remainder for the proof of \refthm{WH-gen}:

\begin{lemma}[Sums behave like maxima]\lblemma{dfWeakMax}
Let $(T_n^{\sss(1)})_n$, $(T_n^{\sss(2)})_n$, $\mathcal{M}^{\sss(1)}$, $\mathcal{M}^{\sss(2)}$ be random variables such that $\mathcal{M}^{\sss (1)} \vee \mathcal{M}^{\sss (2)} \geq 1$ a.s.\ and $(f_n^{-1}(T_n^{\sss(1)}),f_n^{-1}(T_n^{\sss(2)})) \convp (\mathcal{M}^{\sss(1)},\mathcal{M}^{\sss(2)})$.
Then $f_n^{-1}(T_n^{\sss(1)}+T_n^{\sss(2)}) \convp \mathcal{M}^{\sss(1)} \vee \mathcal{M}^{\sss(2)}$.
\end{lemma}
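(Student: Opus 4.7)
The plan is to sandwich $f_n^{-1}(T_n^{\sss(1)}+T_n^{\sss(2)})$ between two quantities that both converge in probability to $\mathcal{M}^{\sss(1)}\vee\mathcal{M}^{\sss(2)}$. Writing $\mathcal{M}_n^{\sss(j)}=f_n^{-1}(T_n^{\sss(j)})$ and $X_n=\mathcal{M}_n^{\sss(1)}\vee \mathcal{M}_n^{\sss(2)}=f_n^{-1}(T_n^{\sss(1)}\vee T_n^{\sss(2)})$, the hypothesis gives $X_n\convp\mathcal{M}^{\sss(1)}\vee\mathcal{M}^{\sss(2)}\geq 1$ a.s. Applying the strictly increasing function $f_n^{-1}$ to the trivial chain $T_n^{\sss(1)}\vee T_n^{\sss(2)}\leq T_n^{\sss(1)}+T_n^{\sss(2)}\leq 2(T_n^{\sss(1)}\vee T_n^{\sss(2)})$ yields
\begin{equation}
X_n\;\leq\;f_n^{-1}\!\left(T_n^{\sss(1)}+T_n^{\sss(2)}\right)\;\leq\;Y_n,\qquad\text{where } Y_n:=f_n^{-1}\!\left(2f_n(X_n)\right).
\end{equation}
Only the convergence $Y_n-X_n\convp 0$ remains.

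For this I will invoke the density lower bound. Because $\mathcal{M}^{\sss(1)}\vee\mathcal{M}^{\sss(2)}\geq 1$ a.s., the event $\{X_n\geq 1-\deltaCondition\}$ holds \whp; on this event Conditions~\refPartII{cond:LowerBoundfn} and \refPartII{cond:boundfnExtended} together give $tf_n'(t)/f_n(t)\geq \epsilonCondition s_n$ for every $t\in[1-\deltaCondition,\infty)$ (and $n\ge n_0$). Since $Y_n\geq X_n$ is automatic, integrating this bound over $[X_n,Y_n]$ yields
\begin{equation}
\log 2\;=\;\log\frac{f_n(Y_n)}{f_n(X_n)}\;=\;\int_{X_n}^{Y_n}\frac{f_n'(t)}{f_n(t)}\,dt\;\geq\;\epsilonCondition s_n\log\frac{Y_n}{X_n},
\end{equation}
so $Y_n/X_n\leq 2^{1/(\epsilonCondition s_n)}$ and therefore $0\leq Y_n-X_n\leq X_n\bigl(2^{1/(\epsilonCondition s_n)}-1\bigr)$. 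Because $s_n\to\infty$ the second factor tends deterministically to $0$, and tightness of $X_n$ (inherited from its convergence in probability) then forces $Y_n-X_n\convp 0$.

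The one delicate point, and thus the main obstacle, is ensuring that the density hypothesis covers the \emph{random} range $[X_n,Y_n]$; this is precisely what the assumption $\mathcal{M}^{\sss(1)}\vee\mathcal{M}^{\sss(2)}\geq 1$ supplies, by forcing $X_n\geq 1-\deltaCondition$ \whp\ and thereby placing us in the regime of Conditions~\refPartII{cond:LowerBoundfn}--\refPartII{cond:boundfnExtended}. Conceptually, the lemma is the FPP manifestation of the identity \eqrefPartII{PowerToMax}: the uniform slope $\epsilonCondition s_n$ of $\log f_n$ in logarithmic coordinates makes $f_n^{-1}$ so flat that multiplying its argument by any bounded constant perturbs the output by $o(1)$, so sums collapse to maxima.
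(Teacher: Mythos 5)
Your proof is correct and follows essentially the same route as the paper: both sandwich $f_n^{-1}(T_n^{\sss(1)}+T_n^{\sss(2)})$ between $f_n^{-1}(T_n^{\sss(1)}\vee T_n^{\sss(2)})$ and $f_n^{-1}(2(T_n^{\sss(1)}\vee T_n^{\sss(2)}))$ and then show the gap vanishes because $f_n^{-1}$ is nearly flat on $[1-\deltaCondition,\infty)$ under the density conditions. The paper invokes \reflemma{ExtendedImpliesWeak} and argues via an additive perturbation $\delta$, while you re-derive that lemma's content directly through the logarithmic integration bound $\log 2\ge\epsilonCondition s_n\log(Y_n/X_n)$, but the mechanism is the same.
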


\reflemma{dfWeakMax} is proved in \refsect{cond}.
\refthm{TfrScaling} and \reflemma{dfWeakMax} yield that $f_n^{-1}(T_{\fr}^{\sss(1)}+T_{\fr}^{\sss(2)})\convp M^{\sss (1)}\vee M^{\sss (2)}$.
If we can show that
	\begin{equation}\labelPartII{W-Tfr-log}
	f_n^{-1}\Big(W_n-T_{\fr}^{\sss(1)}-T_{\fr}^{\sss(2)}-\frac{1}{\lambda_n}\log{(n/s_n^3)}\Big)\convd 1,
	\end{equation}
then the scaling of $W_n$ in \refthm{WH-gen} will follow by another application of \reflemma{dfWeakMax} and the fact that $M^{\sss (1)}, M^{\sss (2)}>1$ a.s.
The presence of a logarithm in \eqrefPartII{W-Tfr-log} reflects the fact that, after $T_\unfr$, the branching processes grow exponentially.

For later use, we state bounds on the geometry of the frozen cluster that have been proved in \refother{\refthmPartI{FrozenCluster}}:
\begin{theorem}[Properties of the frozen cluster]\lbthm{FrozenCluster}{\hfill}
\begin{enumerate}
\item \lbitem{FrozenVolume}
The volume $\abs{\cluster_\fr}$ of the frozen cluster is $O_\P(s_n^2)$.
\item \lbitem{FrozenDiameter}
The diameter $\max\set{\abs{v}\colon v\in\cluster_\fr}$ of the frozen cluster is $O_\P(s_n)$.
\end{enumerate}
\end{theorem}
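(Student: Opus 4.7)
The starting point is the defining identity for $T_\fr^{(j)}$. Writing $\chi(s)=\int_s^\infty \e^{-\lambda_n(1)(y-s)}d\mu_n(y)$ (the characteristic appearing in \refdefn{Freezing}) and using $\chi(0)=\hat{\mu}_n(\lambda_n(1))=1$, the jumps of $t\mapsto z_t^\chi(1)$ are bounded by $1$, so at freezing $s_n \leq z_{T_\fr^{(j)}}^\chi(1) \leq s_n+1$. A Fubini computation combined with implicit differentiation of $\hat{\mu}_n(\lambda_n(a))=1/a$ (which gives $\hat{\mu}_n'(\lambda_n(1)) = -1/\lambda_n'(1)$) yields $\int_0^\infty \e^{-z}\chi(z/\lambda_n(1))\,dz = 1/\phi_n$, so Theorem \refPartII{t:OneCharConv} gives $\bar{m}_t^\chi(1)\to 1$, whereas the untilted population has $\E(|\BP_t^{(j)}|)\sim s_n\e^{\lambda_n(1)t}$: the two expected sizes always differ by a factor $\phi_n\sim s_n$. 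This mean comparison suggests $|\cluster_\fr^{(j)}|\asymp s_n \cdot z_{T_\fr^{(j)}}^\chi(1) = \Op(s_n^2)$.

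To make this rigorous and to handle part (b), I would invoke the IP--FPP coupling of the companion paper \cite{EckGooHofNar14a}: with high probability, $\cluster_\fr^{(j)}$ decomposes into a short invasion-percolation backbone (of length $\Op(1)$) together with hanging subtrees of standard CTBP type. For part (a), each subtree has expected size $\Op(s_n)$ by Theorem \refPartII{t:OneCharConv}, with the number of ``live'' backbone attachment points limited by the characteristic constraint $z^\chi_{T_\fr^{(j)}}(1)\leq s_n+1$; summing gives $\Op(s_n^2)$, and a Chebyshev bound using the covariance estimate \eqrefPartII{OneCharCovConv} controls the fluctuations. For part (b), the diameter is at most (backbone length) $+$ (max subtree depth), with the first term $\Op(1)$. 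Using the generation-weighted characteristic $z_t^\eta(a^{1/s_n})$ with $\eta\equiv 1$ and $a>1$ fixed, together with the first-order expansion $\lambda_n(a^{1/s_n})\sim \lambda_n(1)(1+\log a)$ (valid since $\phi_n\sim s_n$), a Markov bound of the form $\P(\exists v\text{ in subtree with }|v|>Cs_n) \leq a^{-C}\E(z_\tau^\eta(a^{1/s_n}))$ applied at the IP-controlled age $\tau$ of a hanging subtree yields the $\Op(s_n)$ depth estimate for $C$ large enough.

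\textbf{Main obstacle.}
The principal difficulty is that Theorem \refPartII{t:TfrScaling} forces $\lambda_n(1)T_\fr^{(j)} \sim \e^{-\gamma}(M^{(j)})^{s_n}$, which is doubly exponentially large in $s_n$. Consequently $\E(|\BP_{T_\fr^{(j)}}^{(j)}|)\sim s_n\e^{\lambda_n(1)T_\fr^{(j)}}$ vastly exceeds the typical value $\Op(s_n^2)$, and naive first-moment control at the random time $T_\fr^{(j)}$ gives nothing. This is why the IP decomposition is essential: the ``lucky'' early branching driving the blow-up of the expectation gets absorbed into the short backbone of length $\Op(1)$, leaving clean CTBP subtrees to which the mean and variance asymptotics of Theorem \refPartII{t:OneCharConv} apply cleanly, giving the sharp $\Op(s_n^2)$ volume and $\Op(s_n)$ diameter bounds.
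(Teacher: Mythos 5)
This theorem is not proved in the present paper; the text explicitly imports it from \refother{\refthmPartI{FrozenCluster}}, so there is no in-paper proof to compare your sketch against. As an outline of what the companion paper must establish, you have the right skeleton: the a.s.\ constraint $s_n\le z^\chi_{T_\fr^{\sss(j)}}(1)\le s_n+1$ for $\chi(s)=\int\e^{-\lambda_n(1)y}\,\mu_n(s+dy)$ is \reflemma{ExpectedUnfrozenChildren}, your calculation $\int_0^\infty\e^{-z}\chi(z/\lambda_n(1))\,dz=\lambda_n(1)\E_1(\increm)=1/\phi_n$ is correct, and you rightly observe that one cannot evaluate moments at the random time $T_\fr^{\sss(j)}$ because $\lambda_n(1)T_\fr^{\sss(j)}=\Theta_{\P}\bigl((M^{\sss(j)})^{s_n}\bigr)$. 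But the bookkeeping in part (a) does not close. The constraint bounds a $\chi$-weighted sum, and you argue as though it limits the number of ``live attachment points'' to $s_n+1$; it does not, since $\chi$ decays from $\chi(0)=1$ to order $1/s_n$ over ages of order $f_n(1)$, so most contributing vertices carry $\chi$-weight $\Theta(1/s_n)$. The correct extraction is via \reflemma{ModerateAgeContribution}: $\chi(t)\ge\epsilon_K/s_n$ for $0\le t\le Kf_n(1)$, which together with the $s_n+1$ upper bound already yields at most $(s_n+1)s_n/\epsilon_K=O(s_n^2)$ vertices of age $\le Kf_n(1)$ — no subtree accounting needed for this regime. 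What remains is to control the vertices of age $>Kf_n(1)$, which is precisely where the tie to invasion percolation must do the work; asserting a ``short backbone of length $O_\P(1)$'' names the missing piece without supplying it.

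Part (b) has two further concrete problems. First, the expansion $\lambda_n(a^{1/s_n})\sim\lambda_n(1)(1+\log a)$ is wrong: \eqrefPartII{lambdanAsymp} gives $\lambda_n(a^{1/s_n})/\lambda_n(1)\to a$, and \refcoro{lambdanTaylor} gives $1+\frac{\phi_n}{s_n}(a-1)+o((a-1)^2)$, i.e.\ slope $1$ in $(a-1)$, not in $\log a$. Second and more seriously, the Markov bound $\P(\exists v:|v|>Cs_n)\le a^{-C}\,m_\tau^\eta(a^{1/s_n})\approx a^{-C}s_n\e^{a\lambda_n(1)\tau}$ is only useful if $\lambda_n(1)\tau=O_\P(1)$. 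You call $\tau$ an ``IP-controlled age'' but do not establish this bound, and it is precisely here that the lucky-vertex mechanism (\refdefn{Lucky}, \eqrefPartII{FreezingFromLucky}, \eqrefPartII{LuckyProb}) — which forces freezing within $f_n(1)$ of the first lucky birth — must enter. Even granting $\lambda_n(1)\tau=O_\P(1)$, a union over the attachment points (which part (a) only gives as $O_\P(s_n^2)$) multiplies the bound by a power of $s_n$, so one would need $C=\Theta(\log s_n)$ rather than a constant, giving diameter $O_\P(s_n\log s_n)$ rather than $O_\P(s_n)$. A sharper structural input — isolating $O_\P(1)$ subtrees that can actually contribute large depth — is required and is not present in your sketch.
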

\refthm{FrozenCluster} will allow us to ignore the elements coming from the frozen cluster in the proof of \refthm{WH-gen}.
For instance, part \refitem{FrozenDiameter} shows that heights within the frozen cluster are negligible in the central limit theorem scaling of \eqrefPartII{hopcount-CLT}.

\subsection{Two-vertex characteristics for CTBP}\lbsubsect{TwoVerChar}

To analyze expressions like the one appearing in the exponent in \refthm{CollCondProb}, we introduce \emph{generation-weighted two-vertex characteristics}.
Let $\chi$ be a non-random, non-negative function on $\cointerval{0,\infty}^2$ and recall that $T_v=\inf\set{t \ge 0\colon v \in \BP_t}$ denotes the birth time of vertex $v$ and $\abs{v}$ the generation of $v$.
The generation-weighted two-vertex characteristic to count $\BP$ is given by
	\begin{equation}\labelPartII{2VertexCharDef}
	z_{\vec{t}}^\chi(\vec{a}) = \sum_{v_1\in\BP_{t_1}^{\sss(1)}}
	\sum_{v_2\in\BP_{t_2}^{\sss(2)}} a_1^{\abs{v_1}} a_2^{\abs{v_2}} \chi(t_1-T_{v_1},t_2-T_{v_2}),
	\end{equation}
for all $t_1,t_2,a_1,a_2 \ge 0$, where we use vector notation $\vec{a}=(a_1,a_2)$, $\vec{t}=(t_1,t_2)$, and so on.
We make the convention that $\chi(t_1,t_2)=z_{t_1,t_2}^\chi(\vec{a})=0$ for $t_1\wedge t_2<0$.

To apply first and second moment methods, we estimate the moments of $z_{\vec{t}}^\chi(\vec{a})$.
As in \eqrefPartII{barredOneVertexChars}, we rescale and write
	\begin{align}\labelPartII{barredTwoVertexChars}
	\begin{split}
	\bar{z}_{t_1,t_2}^\chi(\vec{a})
	&=
	\e^{-\lambda_n(a_1) t_1} \e^{-\lambda_n(a_2) t_2} z_{t_1,t_2}^\chi(\vec{a}),
	\\
	\bar{m}_{\vec{t}}^\chi(\vec{a})
	&=
	\E(\bar{z}_{\vec{t}}^\chi(\vec{a})),
	\\
	\bar{M}_{\vec{t},\vec{u}}^{\chi,\eta}(\vec{a},\vec{b})
	&=
	\E(\bar{z}_{\vec{t}}^\chi(\vec{a}) \bar{z}_{\vec{u}}^\eta(\vec{b})).
	\end{split}
	\end{align}

Looking again at the two-vertex characteristic appearing in \refthm{CollCondProb}, we notice that the summands in \eqrefPartII{CondLawOfCollision} with $T^{\sss \cS}(i),T^{\sss \cS}(j)>T_\unfr$ simplify to $\tfrac{1}{n}f_n^{-1}(t-T^{\sss \cS}(i)+t-T^{\sss \cS}(j))-\tfrac{1}{n}f_n^{-1}(\abs{T^{\sss \cS}(i)-T^{\sss \cS}(j)})$ since our chosen functions $R_1(t),R_2(t)$ are linear with slope 1 for $t>T_\unfr=T_{\fr}^{\sss (1)}\vee T_{\fr}^{\sss(2)}$.
Hence, we are led to study the characteristic
	\begin{equation}\labelPartII{TwoVertexChar}
	\chi_n(t_1,t_2)=\mu_n(\abs{t_1-t_2},t_1+t_2).
	\end{equation}
	
The characteristic $\chi_n$ will prove difficult to control directly, because its values fluctuate significantly in size: for instance, $\chi_n(\tfrac{1}{2}f_n(1),\tfrac{1}{2}f_n(1))=1$ whereas $\chi_n(\tfrac{1}{2}f_n(1),f_n(1))=O(1/s_n)$.
Therefore, for $K\in(0,\infty)$, we define the truncated measure
	\begin{equation}\labelPartII{muRestricted}
	\mu_n^{\sss(K)}=\mu_n\big\vert_{\ocinterval{f_n(1-K/s_n),f_n(1+K/s_n)}},
	\end{equation}
and again write $\mu_n^{\sss(K)}(\ocinterval{a,b})=\mu_n^{\sss(K)}(a,b)$ to shorten notation.
For convenience, we will always assume that $n$ is large enough that $s_n\geq K$.
By analogy with \eqrefPartII{TwoVertexChar}, define
	\begin{equation}\labelPartII{TwoVertexCharRestricted}
	\chi_n^{\sss(K)}(t_1,t_2)=\mu_n^{\sss(K)}(\abs{t_1-t_2},t_1+t_2).
	\end{equation}
By construction, the total mass of $\mu_n^{\sss(K)}$ is $2K/s_n$, so that $s_n \chi_n^{\sss(K)}$ is uniformly bounded.

The main aim in this section is to state results identifying the asymptotic behavior of $\bar{z}_{t_1,t_2}^{\chi^{\sss(K)}_n}(\vec{a})$ and showing that, for $K\rightarrow \infty$,
the contribution due to $\chi_n-\chi^{\sss(K)}_n$ becomes negligible.
These results are formulated in \refthm{TwoVertexConvRestrictedSum}, which investigates the truncated two-vertex characteristic, and \refthm{TwoVertexRemainder}, which studies the effect of truncation:

\begin{theorem}[Convergence of truncated two-vertex characteristic]\lbthm{TwoVertexConvRestrictedSum}
For every $\epsilon>0$ and every compact subset $A\subset(0,2)$, there exists a constant $K_0<\infty$ such that for every $K \ge K_0$ there are constants $K'<\infty$ and $n_0 \in \N$ such that for all $n \ge n_0$, $a_1,a_2,b_1,b_2\in A$ and $\lambda_n(1)[t_1\wedge t_2\wedge u_1\wedge u_2] \geq K'$,
	\begin{equation}\labelPartII{CharMain1stMomConv}
	\abs{s_n^{-1} \bar{m}_{\vec{t}}^{\chi_n^{\sss(K)}}(\vec{a}^{1/s_n}) - \zeta(a_2/a_1)}
	\leq
	\epsilon,
	\end{equation}
and
	\begin{equation}
	\abs{s_n^{-4} \bar{M}_{\vec{t},\vec{u}}^{\chi_n^{\sss(K)},\chi_n^{\sss(K)}}
	(\vec{a}^{1/s_n},\vec{b}^{1/s_n})
	- \frac{\zeta(a_2/a_1)\zeta(b_2/b_1)}{\log(1/a_1+1/b_1)\log(1/a_2+1/b_2)}}
	\leq
	\epsilon,
	\end{equation}
where $\zeta\colon (0,\infty)\to\R$ is the continuous function defined by
	\begin{equation}\labelPartII{zetaDef}
	\zeta(a_1/a_2)=
	\begin{cases}
	\frac{2a_1 a_2}{a_1+a_2}\frac{\log(a_2/a_1)}{a_2-a_1} , & \text{if } a_1\neq a_2,\\
	1, & \text{if }a_1=a_2.
	\end{cases}
	\end{equation}
Moreover, for every $K<\infty$ there are constants $K''<\infty$ and $n_0'\in \N$ such that for all $n \ge n_0'$, $t_1,t_2,u_1,u_2\geq 0$ and $a_1,a_2,b_1,b_2\in A$, $\bar{m}_{\vec{t}}^{\chi_n^{\sss(K)}}(\vec{a}^{1/s_n}) \leq K'' s_n$ and $\bar{M}_{\vec{t},\vec{u}}^{\chi_n^{\sss(K)},\chi_n^{\sss(K)}}(\vec{a}^{1/s_n},\vec{b}^{1/s_n})\leq K'' s_n^4$.
\end{theorem}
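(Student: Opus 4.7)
The plan is to use independence of the two trees to factor $\bar m$ and $\bar M$ into moments on a single tree, apply \refthm{OneCharConv} once (for $\bar m$) or twice (for $\bar M$) on each tree, and then evaluate the remaining integral of $\chi_n^{\sss(K)}$ against exponential weights in closed form using the scaling built into Conditions~\refPartII{cond:scalingfn}--\refPartII{cond:LowerBoundfn}.

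Tower the expectation in $\bar m_{\vec t}^{\chi_n^{\sss(K)}}(\vec a^{1/s_n})$ over the two trees: the inner factor becomes a one-vertex characteristic on $\BP^{\sss(2)}$ with frozen parameter $s_1=t_1-T_{v_1}$, which by \refthm{OneCharConv} is approximated by $s_n\int_0^\infty\e^{-z}\chi_n^{\sss(K)}(s_1,z/\lambda_n(a_2^{1/s_n}))\,dz$ up to an error of order $\epsilon s_n\|\chi_n^{\sss(K)}(s_1,\cdot)\|_\infty\le 2\epsilon K$. Reapplying \refthm{OneCharConv} to the outer expectation produces
\begin{equation*}
\bar m_{\vec t}^{\chi_n^{\sss(K)}}(\vec a^{1/s_n})
= s_n^2\iint_{(0,\infty)^2}\e^{-w-z}\chi_n^{\sss(K)}\!\Big(\tfrac{w}{\lambda_n(a_1^{1/s_n})},\tfrac{z}{\lambda_n(a_2^{1/s_n})}\Big)dw\,dz + o(s_n).
\end{equation*}
Writing $\chi_n^{\sss(K)}(y_1,y_2)=\int\indicator{|y_1-y_2|<r\le y_1+y_2}d\mu_n^{\sss(K)}(r)$, exchanging orders of integration and switching to coordinates $(y_1+y_2,y_1-y_2)$ reduces the double integral (times $s_n^2$) to
\begin{equation*}
s_n^2\alpha_1\alpha_2\int \frac{2(\e^{-\alpha_2 r}-\e^{-\alpha_1 r})}{\alpha_1^2-\alpha_2^2}\,d\mu_n^{\sss(K)}(r), \qquad \alpha_j=\lambda_n(a_j^{1/s_n}),
\end{equation*}
with the obvious limiting integrand $\alpha r\e^{-\alpha r}$ when $\alpha_1=\alpha_2$. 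Parametrising by $r=f_n(\e^{y/s_n})$ turns $d\mu_n^{\sss(K)}(r)$ into $s_n^{-1}\e^{y/s_n}dy$ on $y\approx(-K,K)$, while \refcond{scalingfn} combined with the defining equation $a\hat\mu_n(\lambda_n(a))=1$ yields the limits $\alpha_j f_n(1)\to c_j$ (with $c_1/c_2=a_1/a_2$) uniformly for $a\in A$. Extending the $y$-integral to $\R$ at the cost of an $O(1/K)$ tail and applying the Frullani identity $\int_0^\infty(\e^{-cv}-\e^{-c'v})v^{-1}dv=\log(c'/c)$ delivers $\zeta(a_2/a_1)$, proving \eqrefPartII{CharMain1stMomConv} after choosing $K$ large, then $n$ large depending on $\epsilon$.

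For the second moment, use the product representation $\chi_n^{\sss(K)}\chi_n^{\sss(K)}=\iint\indicator{E(r)}\indicator{E(r')}\,d\mu_n^{\sss(K)}(r)d\mu_n^{\sss(K)}(r')$ to pull both $\mu_n^{\sss(K)}$-integrals outside the expectation. The remaining four-fold sum over $(v_1,v_1',v_2,v_2')$ factorises across the two independent trees, and on each tree the inner expression is a two-vertex one-tree moment to which the second inequality of \refthm{OneCharConv} applies, contributing $s_n^3/\log(1/a_j+1/b_j)$ per tree. Executing the same Frullani reduction on both $\mu_n^{\sss(K)}$-integrals produces the factor $\zeta(a_2/a_1)\zeta(b_2/b_1)$, yielding the claimed $s_n^4$-scaling. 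The uniform upper bounds $\bar m\le K''s_n$ and $\bar M\le K''s_n^4$ follow immediately from the uniform estimates of \refthm{OneCharConv} together with $s_n\|\chi_n^{\sss(K)}\|_\infty\le 2K$ (since $\mu_n^{\sss(K)}$ has total mass $2K/s_n$). The hardest step is controlling the two nested errors in the first moment so that the $2\epsilon K/s_n$-size inner error survives multiplication by $s_n^2$; this forces the use of the full bound $\bar m^\chi\le K'\|\chi\|_\infty s_n$ for the outer application and requires the uniform derivation of $\alpha_j f_n(1)\to c_j$ for $a\in A$, since \reflemma{lambdanAsymp} treats only $a=1$.
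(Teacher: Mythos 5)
Your first-moment argument takes a genuinely different (and in fact cleaner) route than the paper: instead of introducing the intermediate characteristic $I(z)$ from Proposition~\refPartII{p:HalfTwoVertexConvRestricted}, you iterate \refthm{OneCharConv} twice to reach the double exponential integral and then evaluate it in closed form via a coordinate change and the Frullani identity, recovering $\zeta(a_2/a_1)$ directly. This is correct. Two points you gloss over: (i) \refthm{OneCharConv} applies to bounded, non-negative, \emph{non-decreasing} characteristics, and neither $\chi_n^{\sss(K)}(t_1',\cdot)$ (inner application) nor $\rho_{t_2,\tilde a_2}(\cdot)$ (outer application) is monotone, so you need the decomposition into differences of monotone pieces (this is \refeq{DiffOfIncr} and the remark that $I(z)$ is a difference of bounded non-decreasing functions); and (ii) the uniform asymptotics $\lambda_n(a^{1/s_n})f_n(1)\to a\e^{-\gamma}$ for $a$ in a compact set should be attributed to \refeq{lambdanAsymp} in \refthm{ConvRW}, not to \reflemma{lambdanAsymp} which you correctly note treats only $a=1$. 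Also a typo: the inner error is of size $2\epsilon K$, not $2\epsilon K/s_n$.

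The second-moment argument has a genuine gap. You claim that after writing $\chi_n^{\sss(K)}\chi_n^{\sss(K)}=\iint\indicator{E(r)}\indicator{E(r')}\,d\mu_n^{\sss(K)}(r)\,d\mu_n^{\sss(K)}(r')$ and pulling the $r,r'$ integrals outside, ``the remaining four-fold sum over $(v_1,v_1',v_2,v_2')$ factorises across the two independent trees.'' It does not. The indicator $\indicator{|y_1-y_2|<r\le y_1+y_2}$, with $y_1=t_1-T_{v_1}$ living in tree~1 and $y_2=t_2-T_{v_2}$ living in tree~2, is \emph{not} a product $g^{\sss(1)}(y_1)g^{\sss(2)}(y_2)$, so the expectation over the two independent trees does not split into a product of per-tree moments, and the second inequality of \refthm{OneCharConv} cannot be applied tree-by-tree. (If it did factor, your accounting also gives $s_n^3$ per tree, i.e.\ $s_n^6$ overall, whereas the correct scaling is $s_n^4$.) The correct path is the paper's: use the second-moment recursion for two-vertex characteristics, which expresses $\bar M_{\vec t,\vec u}^{\chi,\eta}$ as a double random-walk expectation (one walk per tree, with step laws $\nu_{a_1b_1}$ and $\nu_{a_2b_2}$) of a product $\bar m^\chi\cdot\bar m^\eta$ of inner \emph{two-vertex} first moments; then apply \reflemma{CovarianceSums} once for each of the two walks, together with the first-moment asymptotics you already established. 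The indicators remain genuinely bivariate, and the ``factorisation'' in the paper happens only across descendants conditional on the fork points, not across trees.
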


The exponents in \refthm{TwoVertexConvRestrictedSum} can be understood as follows.
By \refthm{OneCharConv}, the first and second moments of a bounded one-vertex characteristic are of order $s_n$ and $s_n^3$, respectively.
Therefore, for two-vertex characteristics, one can expect $s_n^2$ and $s_n^6$.
Since $\chi_n^{\sss(K)}=\frac{1}{s_n}s_n\chi_n^{\sss(K)}$ appears once in the first and twice in the second moment, we arrive at $s_n$ and $s_n^4$, respectively.

\begin{theorem}[The effect of truncation]\lbthm{TwoVertexRemainder}
For every $K>0$, $\bar{m}_{\vec{t}}^{\chi_n-\chi_n^{(K)}}(\vec{1})=O(s_n)$, uniformly over $t_1,t_2$.
Furthermore, given $\epsilon>0$, there exists $K<\infty$ such that, for all $n$ sufficiently large, $\bar{m}_{\vec{t}}^{\chi_n-\chi_n^{\sss(K)}}(\vec{1})\leq \epsilon s_n$ whenever $\lambda_n(1)[t_1\wedge t_2]\geq K$.
\end{theorem}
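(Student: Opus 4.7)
The plan is to reduce $\bar m_{\vec t}^{\chi_n-\chi_n^{\sss(K)}}(\vec 1)$ to a one-dimensional integral against $\mu_n$ restricted to the complement of the truncation window, and then to control that integral using the density-bound conditions on $f_n$ near $x=1$. The starting point is the standard representation of the CTBP first moment via the renewal measure $U_n=\sum_{k\geq 0}\mu_n^{*k}$: using independence of the two copies of $\BP$ and writing $\tilde U_n(dt)=e^{-\lambda_n t}U_n(dt)$ for the associated discounted renewal measure, one obtains
\begin{equation*}
\bar m_{\vec t}^\chi(\vec 1)
= \int_0^{t_1}\!\!\int_0^{t_2} e^{-\lambda_n(r_1+r_2)}\chi(r_1,r_2)\,\tilde U_n(d(t_1-r_1))\,\tilde U_n(d(t_2-r_2)).
\end{equation*}
Writing $(\chi_n-\chi_n^{\sss(K)})(r_1,r_2)=\int_{y\notin W_K}\mathbbm{1}\{|r_1-r_2|<y\leq r_1+r_2\}\,d\mu_n(y)$ with $W_K=\ocinterval{f_n(1-K/s_n),f_n(1+K/s_n)}$, and applying Fubini, collapses this two-vertex object to a single integral in $y$.

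For each fixed $y$, the double $\tilde U_n$-integral of $e^{-\lambda_n(r_1+r_2)}\mathbbm{1}\{|r_1-r_2|<y\leq r_1+r_2\}$ is controlled by a uniform bound $\tilde U_n([s,s+h])\leq C(1+h\lambda_n s_n)$, valid for all $s,h\geq 0$ and uniformly in $n$, which is the key renewal theorem for the probability measure $\tilde\mu_n$ (whose mean is $-\hat\mu_n'(\lambda_n)\sim 1/(\lambda_n s_n)$). Combined with the elementary identity
\begin{equation*}
\int_0^\infty\!\!\int_0^\infty e^{-\lambda_n(r_1+r_2)}\mathbbm{1}\{|r_1-r_2|<y\leq r_1+r_2\}\,dr_1\,dr_2 = y e^{-\lambda_n y}/\lambda_n,
\end{equation*}
this yields, after the change of variable $y=f_n(x)$,
\begin{equation*}
\bar m_{\vec t}^{\chi_n-\chi_n^{\sss(K)}}(\vec 1)
\leq C\lambda_n s_n^2 \int_{x\notin[1-K/s_n,\,1+K/s_n]} f_n(x)\,e^{-\lambda_n f_n(x)}\,dx.
\end{equation*}

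To estimate the remaining integral I split into a low tail $x\leq 1-K/s_n$ and a high tail $x\geq 1+K/s_n$. For the low tail, \refcond{LowerBoundfn}---upon integrating the bound $\epsilon_0 s_n\leq xf_n'(x)/f_n(x)\leq s_n/\epsilon_0$ over $[1-\delta_0,1]$---gives $f_n(x)\leq f_n(1)\,x^{\epsilon_0 s_n}$, so, after discarding the exponentially small contribution from $[0,1-\delta_0]$, the low-tail integral is at most $Cf_n(1)e^{-\epsilon_0 K}/(\epsilon_0 s_n)$; multiplying by $\lambda_n s_n^2$ and invoking $\lambda_n f_n(1)\to e^{-\gamma}$ from \reflemma{lambdanAsymp} gives a contribution of order $s_n e^{-\epsilon_0 K}$. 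For the high tail, \refcond{boundfnExtended} gives $f_n(x)\geq f_n(1)\,x^{\epsilon_0 s_n}$ for $x\geq 1$, so $\lambda_n f_n(1+K/s_n)\geq c\,e^{\epsilon_0 K}$; the change of variable $z=\lambda_n f_n(x)$ together with $dx\leq x\,dz/(\epsilon_0 s_n z)$ and $x\leq (z/(\lambda_n f_n(1)))^{1/(\epsilon_0 s_n)}$ shows that the high-tail contribution decays doubly-exponentially in $K$. Both conclusions of the theorem now follow: the uniform $O(s_n)$ bound is obtained for any fixed $K\geq 0$ (the case $K=0$ of the same computation corresponding to the total $\mu_n$-mass), while the $\epsilon s_n$ bound follows from the exponential decay once $K$ is chosen large enough.

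The principal technical obstacle is the uniform renewal estimate $\tilde U_n([s,s+h])\leq C(1+h\lambda_n s_n)$: unlike the asymptotic content of the key renewal theorem, this must hold for all $s\geq 0$ and uniformly in $n$. The assumption $\lambda_n(1)[t_1\wedge t_2]\geq K$ in the sharp statement plays precisely this role, ensuring that both $\tilde U_n$ factors have had time to reach their asymptotic density $1/(-\hat\mu_n'(\lambda_n))$ rather than being dominated by the atom from $\mu_n^{*0}=\delta_0$; the $n$-uniformity itself comes from the scaling of $\tilde\mu_n$ encoded in \refcond{scalingfn} and \refcond{boundfnExtended} together with \reflemma{lambdanAsymp}.
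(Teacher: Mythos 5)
Your reduction via Fubini to a one-dimensional integral against $\mu_n\vert_{W_K^c}$ is the right idea (and parallels the paper's decomposition $\chi_n-\chi_n^{\sss(K)}=\int_{y\notin W_K}\Psi_y\,d\mu_n(y)$, cf.\ \eqrefPartII{mbarchiFromPsi}), but the linchpin of your argument is the uniform bound $\tilde U_n([s,s+h])\leq C(1+h\lambda_n s_n)$, which does not hold in this setting. The step distribution $\tilde\mu_n=\nu_1$ satisfies $\E_1(\increm)\sim 1/(s_n\lambda_n)$ and $\E_1(\increm^2)\sim 1/(s_n\lambda_n^2)$ by \eqrefPartII{EYAsymp}--\eqrefPartII{EY2Asymp}, so its squared coefficient of variation $\E_1(\increm^2)/\E_1(\increm)^2\sim s_n$ diverges. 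By Lorden's inequality the expected overshoot is $\leq\E_1(\increm^2)/\E_1(\increm)\sim s_n\E_1(\increm)$, and Wald's identity then gives
\begin{equation*}
\tilde U_n([0,h])=\frac{\E_1 S_{N(h)+1}}{\E_1(\increm)}\leq \frac{h}{\E_1(\increm)}+\frac{\E_1(\increm^2)}{\E_1(\increm)^2}\sim s_n\bigl(1+h\lambda_n\bigr),
\end{equation*}
so the constant term is $s_n$, not $1$. This is the same phenomenon captured by the paper's crude uniform bound $\bar m_t^{\eta^{(q)}}(1)\leq Cs_n$ in \eqrefPartII{mbaretaUniform}. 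Your ``$+1$'' would require the overshoot to be $O(\E_1\increm)$, which fails precisely because $\Var(\increm)/\E_1(\increm)^2\to\infty$.

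\textbf{Why the correct constant kills the argument.} In the critical regime $y=f_n(\tilde x)$ with $1-\delta_0\leq \tilde x\leq 1-K/s_n$ (your ``low tail''), the band $\Psi_y$ is narrow ($\lambda_n y=O(1)$), so you are applying the renewal bound with $h\lambda_n=O(1)$ in the direction transverse to the diagonal. With the correct bound $\tilde U_n\lesssim s_n(1+h\lambda_n)$ one only obtains $\bar m_{\vec t}^{\Psi_y}(\vec 1)\lesssim s_n^2$ for each such $y$, and since $\mu_n$ assigns mass $\approx 1-\delta_0$ to the whole interval $[f_n(1-\delta_0),f_n(1-K/s_n)]$, the resulting bound on the low-tail contribution is $\Theta(s_n^2)$, with no decay in $K$. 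Your claimed estimate would yield the needed factor $e^{-\epsilon_0 K}$, but only because of the spurious ``$+1$''. The paper avoids this with the bound $\bar m_t^{\eta^{(2q)}}(1)\leq 2/(1-f_n^{-1}(q))$ of \eqrefPartII{mbaretaInverse}, proved by a stopped-walk argument (the number of renewals in a window of length $2q$ is dominated by the waiting time for two increments exceeding $q$). Combined with \reflemma{fnInverseLog} this gives $\bar m_t^{\eta^{(2q)}}(1)\lesssim s_n/\log(f_n(1)/q)$, improving the generic $O(s_n)$ by exactly the logarithmic factor that, after integrating against $d\mu_n$ in the substituted variable, produces the convergent tail $\int_K^\infty u^{-2}\,du=O(1/K)$ needed for the theorem.

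\textbf{A second, smaller issue.} You assert that the contribution from $y\leq f_n(1-\delta_0)$ is ``exponentially small''. In the paper this is the term $I_0$ in \eqrefPartII{mbarchiFromPsi}: it is $O(1)$ uniformly and becomes small only when $\lambda_n(1)[t_1\wedge t_2]$ is large, which is why the sharp statement carries that hypothesis. This region contributes $\mu_n$-mass $\approx 1-\delta_0$, not an exponentially small amount, and bounding $\bar m_{\vec t}^{\Psi_y}(\vec 1)$ there requires the interplay of \eqrefPartII{mbaretaInverse} (giving $O(1)$ in the transverse direction, since $f_n^{-1}(y)\leq 1-\delta_0$) with the one-vertex theory of \refthm{OneCharConv}. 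Your integral $\lambda_n s_n^2\int ye^{-\lambda_n y}d\mu_n(y)$ only appears small over $[0,f_n(1-\delta_0)]$ because it inherits the erroneous $\lambda_n s_n$ density from the incorrect renewal bound; plugging in the correct density $s_n$ gives a contribution of order $s_n^2$, matching what I described above and again not small.

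In short: the reduction to a one-dimensional $\mu_n$-integral is sound, but the pointwise control of $\bar m_{\vec t}^{\Psi_y}(\vec 1)$ cannot come from a single uniform renewal estimate. The paper's Lemmas \reflemma{PsiFrometa} and \reflemma{mbaretaBounds} decompose $\Psi_y$ into rectangles and apply four different regime-dependent bounds, among which the inverse bound \eqrefPartII{mbaretaInverse} is indispensable and is not a renewal-density estimate.
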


Theorems \refPartII{t:TwoVertexConvRestrictedSum} and \refPartII{t:TwoVertexRemainder} are proved in \refsect{2VertexCharSec}.

\subsection{The collision edge and its properties}\lbsubsect{CollisionProperties}

\refthm{CouplingFPPCollision} expresses the collision edge in terms of the first unthinned point of $\cP_n$.
We begin by stating the asymptotic behavior of the first point (whether thinned or not) of $\cP_n$:

\begin{theorem}[The first point of the Cox process]
\lbthm{FirstPointCox}
Let $\cP_n$ be the Cox process in \refthm{CouplingFPPCollision}, and let $(T_\first,V_\first^{\sss(1)},V_\first^{\sss(2)})$ denote its first point.
Then
	\begin{equation}
	\labelPartII{Tfirst}
	T_\first=T_\unfr+\frac{\log(n/s_n^3)}{2\lambda_n}+\Op(1/\lambda_n).
	\end{equation}
Furthermore, recalling the sequence $(\phi_n)_n$ from \eqrefPartII{phinDefinitionShort}, the pair
	\eqn{
	\labelPartII{RescaledPair}
	\left( \frac{\abs{V_\first^{\sss(1)}}-\tfrac{1}{2}\phi_n\log(n/s_n^3)}{\sqrt{s_n^2\log(n/s_n^3)}} , \frac{\abs{V_\first^{\sss(2)}}-\tfrac{1}{2}\phi_n\log(n/s_n^3)}{\sqrt{s_n^2\log(n/s_n^3)}} \right)
	}
converges in distribution to a pair of independent normal random variables of mean 0 and variance $\tfrac{1}{2}$, and is asymptotically independent of $T_\first$ and of $\cluster$.
\end{theorem}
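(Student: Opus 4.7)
By \refthm{CouplingFPPCollision}, $\cP_n$ is a Cox process with random intensity $Z_n$ given $\cluster$ and the marks, hence
\begin{equation}
\P\bigl(T_\first>t\,\big\vert\,\cluster,(M_v)_v\bigr)=\exp\bigl(-\Lambda_n(t)\bigr),\qquad \Lambda_n(t):=Z_{n,t}(\tree^{\sss(1)}\times\tree^{\sss(2)}).
\end{equation}
The plan is threefold: first, identify the leading-order asymptotics of $\Lambda_n(t)$ via the two-vertex characteristics of \refsubsect{TwoVerChar}; second, use the Cox identity to convert this into a Gumbel-type limit for $T_\first$ on the scale $1/\lambda_n$; third, obtain the joint CLT for $(|V_\first^{\sss(1)}|,|V_\first^{\sss(2)}|)$ by inserting generation weights $a_1^{|v_1|}a_2^{|v_2|}$ inside the intensity and applying the same machinery at $\vec a\ne\vec1$.

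\textbf{Time scale via the two-vertex characteristic.} Using $R_j(T_v^\cluster)=T_v$ and splitting the double sum in \eqrefPartII{CollisionIntensityTree} according to whether each $v_j$ is born before or after its freezing time, the dominant part of $\Lambda_n(t)$ for $\lambda_n(t-T_\unfr)\to\infty$ consists of pairs of post-unfreezing vertices, for which the on-off clocks simplify so that the sum reduces to $\tfrac{1}{n}$ times a two-vertex characteristic $\chi_n(t-T_\unfr-a_{v_1},t-T_\unfr-a_{v_2})$ evaluated on the post-unfreezing CTBPs. Each of the $\Theta_\P(s_n^2)$ frozen vertices (\refthm{FrozenCluster}\refitem{FrozenVolume}) seeds an independent post-unfreezing branching process with the same offspring law as $\BP^{\sss(j)}$, so applying \refthm{TwoVertexConvRestrictedSum} with $\vec a=\vec 1$ (using $\zeta(1)=1$) together with \refthm{TwoVertexRemainder} to discard $\chi_n-\chi_n^{\sss(K)}$ yields
\begin{equation}
\Lambda_n(t)=\bigl(1+\sop(1)\bigr)\,C_n\,\frac{s_n^3}{n}\,\e^{2\lambda_n(t-T_\unfr)}
\end{equation}
for some $C_n=\Theta_\P(1)$ measurable with respect to $\cluster_\fr$. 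Pairs with at least one pre-freezing endpoint are controlled by $\abs{\cluster_\fr}=\Op(s_n^2)$ and are of smaller order. Solving $\Lambda_n(t)\asymp 1$ gives the time scale $t-T_\unfr=\tfrac{1}{2\lambda_n}\log(n/s_n^3)+\Op(1/\lambda_n)$ predicted by \eqrefPartII{Tfirst}.

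\textbf{From intensity to $T_\first$ via Cox.} The next step is to plug the asymptotic equivalence for $\Lambda_n$ into the Cox identity to obtain, for any $r\in\R$,
\begin{equation}
\P\bigl(\lambda_n(T_\first-T_\unfr)-\tfrac12\log(n/s_n^3)>r\,\big\vert\,\cluster_\fr\bigr)\longrightarrow\exp\bigl(-C_\infty\,\e^{2r}\bigr),
\end{equation}
where $C_\infty$ is the $\cluster_\fr$-measurable limit of $C_n$. The sharp second-moment bound of \refthm{TwoVertexConvRestrictedSum} (namely $\bar M^{\chi_n^{\sss(K)},\chi_n^{\sss(K)}}=O(s_n^4)$) is used to justify passing to the limit inside the exponential; this is the main obstacle of the whole proof, since the unconditional intensity $\Lambda_n(t)$ has variance of the same order as its squared mean (cf.\ \refcoro{BPSizeAsymptotics}), and one must exploit conditioning on $\cluster_\fr$ to reduce to a sum of nearly independent post-freezing sub-trees before the variance is controlled by the $s_n^4$ bound. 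The displayed Gumbel-type tail gives the tightness in \eqrefPartII{Tfirst}.

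\textbf{Generation CLT.} For the joint convergence in \eqrefPartII{RescaledPair}, introduce the generation-weighted Cox intensity
\begin{equation}
\Lambda_n^{\vec a}(t)=\tfrac{1}{n}\sum_{v_1\in\cluster_t^{\sss(1)},\,v_2\in\cluster_t^{\sss(2)}}a_1^{|v_1|}a_2^{|v_2|}\,\mu_n\bigl(\Delta R_{v_1,v_2},R_1(t)-R_1(T_{v_1}^\cluster)+R_2(t)-R_2(T_{v_2}^\cluster)\bigr),
\end{equation}
so that the marked Cox formula yields
\begin{equation}
\E\bigl[a_1^{|V_\first^{\sss(1)}|}a_2^{|V_\first^{\sss(2)}|}\indicator{T_\first\in dt}\,\big\vert\,\cluster\bigr]=\e^{-\Lambda_n(t)}\,d\Lambda_n^{\vec a}(t).
\end{equation}
Taking $a_j=\exp\bigl(i\theta_j/\sqrt{s_n^2\log(n/s_n^3)}\bigr)$, so that $a_j^{1/s_n}$ lies in a compact neighbourhood of $1$, one reapplies \refthm{TwoVertexConvRestrictedSum} and \refthm{TwoVertexRemainder} to reproduce the asymptotics of the previous step with the growth rate $2\lambda_n$ replaced by $\lambda_n(a_1^{1/s_n})+\lambda_n(a_2^{1/s_n})$ and the cross-factor $\zeta(a_2/a_1)\to\zeta(1)=1$. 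Taylor-expanding the rate to second order around $\theta=0$ using $\phi_n=\lambda_n'(1)/\lambda_n(1)$ (cf.~\eqrefPartII{phinDefinitionShort}) and substituting $t-T_\unfr=\tfrac{1}{2\lambda_n}\log(n/s_n^3)$ produces the quadratic log-characteristic $-\tfrac14(\theta_1^2+\theta_2^2)$, identifying each marginal of \eqrefPartII{RescaledPair} as asymptotically centred Gaussian with variance $\tfrac12$. Independence between the two coordinates follows from the factorized form in $\vec a$ combined with $\zeta(a_2/a_1)\to 1$, while asymptotic independence from $T_\first$ and from $\cluster$ follows because the Gaussian scale $1/(s_n\sqrt{\log(n/s_n^3)})$ is strictly finer than the scale $1/\lambda_n$ governing $T_\first-T_\unfr$ and the frozen-cluster data encoded in $C_\infty$.
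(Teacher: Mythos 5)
Your overall plan — compute the conditional intensity $\Lambda_n(t)$ given $\cluster$, apply the two-vertex characteristic asymptotics to get $\Lambda_n(t)\asymp\tfrac{s_n^3}{n}\e^{2\lambda_n(t-T_\unfr)}$, insert generation weights $a_j$ near $1$ and Taylor-expand $\lambda_n(a_j^{1/s_n})$ via $\phi_n$ to produce a Gaussian characteristic — is the same route the paper takes in \refdefn{PnStar}--\refthm{PnStarSatisfiesConditions}. But there is a genuine gap in your step from intensity asymptotics to the limiting law of $T_\first$.

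Your proposal asserts $\Lambda_n(t)=(1+\sop(1))\,C_n\,\tfrac{s_n^3}{n}\e^{2\lambda_n(t-T_\unfr)}$ with $C_n$ a $\cluster_\fr$-measurable $\Theta_\P(1)$ quantity, i.e.\ concentration of $\Lambda_n(t)$ given the frozen cluster. This concentration does \emph{not} hold. By \refthm{TwoVertexConvRestrictedSum}, $\bar m_{\vec t}^{\chi_n^{\sss(K)}}\asymp s_n$ but $\bar M_{\vec t,\vec u}^{\chi_n^{\sss(K)},\chi_n^{\sss(K)}}\asymp s_n^4$; after integrating against the frozen intensities $\mu_{n,\fr}^{\sss(j)}$ (each of total discounted mass $\asymp s_n$, \reflemma{ExpectedUnfrozenChildren}) and dividing by the $s_n^6$ normalization, the conditional variance of $|Z'_{n,t^*}|$ given $\cluster_\fr$ is $\Theta(1)$, the same order as the squared conditional mean. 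Conditioning on $\cluster_\fr$ does not trade the raw branching-process overdispersion for concentration; it does not reduce the process to ``nearly independent sub-trees'' in the required sense. What the paper actually controls (condition \eqrefPartII{SecondMomentAgainstxi} in \refthm{FirstPointCoxAsymp}, verified in \eqrefPartII{FirstTermSecondMomentOfDifference}--\eqrefPartII{SecondTermSecondMomentOfDifference}) is the variance of the \emph{difference} $\e^{-2t^*}\hat Z'_{n,t^*}(\vec\xi)/\hat Q(\vec\xi)-\e^{-2u^*}|Z'_{n,u^*}|$: the three $\bar M$ terms enter with coefficients $1,-2,1$ and nearly cancel because $\zeta(a_2/a_1)\to1$ uniformly. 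This is strictly weaker than concentration and does \emph{not} yield a Gumbel limit; it yields exactly tightness of $T_\first^*$ and the distributional limit of the $X$-coordinates, which is the content of \refthm{FirstPointCoxAsymp}. Your Cox-identity argument as written requires an input (conditional concentration of $\Lambda_n$) that is false.

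Three smaller issues. First, you take complex generation weights $a_j=\exp\bigl(i\theta_j/\cdot\bigr)$, but \refthm{TwoVertexConvRestrictedSum} and the one-vertex moment machinery are proved for $a\in A\subset(0,2)$ real; the paper uses real $\xi_j$ and handles the resulting unboundedness of $\e^{\vec\xi\cdot\vec x}$ through the truncation argument of \refprop{FirstPointPPPLaplace}. Second, dismissing pairs $(v_1,v_2)$ with a frozen endpoint via ``$|\cluster_\fr|=O_\P(s_n^2)$ is of smaller order'' is not enough: frozen vertices are old, so the summands $\tfrac1n\mu_n(\Delta R_{v_1,v_2},\cdots)$ can be large, and the paper devotes the whole of \refsubsect{FrozenCollisions} to controlling these, splitting via the lucky-vertex times \refdefn{Lucky} and \refthm{TfrScaling}. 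Third, the generation weights naturally measure $|v_j|-|p^\unfr(v_j)|$ rather than $|v_j|$; the correction $|p^\unfr(V_\first^{\sss(j)})|=O_\P(s_n)$ (from \refthm{FrozenCluster}~\refitem{FrozenDiameter}) must be shown negligible on the scale $s_n\sqrt{\log(n/s_n^3)}$, which you do not address.
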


The proof of \refthm{FirstPointCox}, presented at the end of the current section, is based on a general convergence result for Cox processes, which we now describe.
Consider a sequence of Cox processes $(\cP_n^*)_n$ on $\R \times \R^2$ with random intensity measures $(Z_n^*)_n$, with respect to $\sigma$-fields $(\F_n)_n$.
We will write $\cP_{n,t}^*$ for the measure defined by $\cP_{n,t}^*(\cdot)=\cP_n^*(\cointerval{-\infty,t}\times \cdot)$.
Define
\begin{equation}\labelPartII{FirstPointOfPPPTnk}
T_{n,k}^*=\inf\set{t\colon \abs{\cP_{n,t}^*}\geq k}
\end{equation}
and let $A_{n,k}$ be the event that $T_{n,j}^*\notin\set{\pm\infty}$ and $\shortabs{\cP_{n,T_{n,j}}^*}=j$, for $j=1,\dotsc,k$.
That is, $A_{n,k}$ is the event that the points of $\cP_n^*$ with the $k$ smallest $t$-values are uniquely defined.
On $A_{n,k}$, let $X_{n,k}$ denote the unique point for which $\cP_n^*(\set{T_{n,k}^*}\times\set{X_{n,k}})=1$, and otherwise set $X_{n,k}=\cemetery$, an isolated cemetery point.

The following theorem gives a sufficient condition for the first points of such a Cox process to converge towards independent realizations of a probability measure $Q$.
To state it, we write
\begin{equation}\labelPartII{MomentGeneratingNotation}
\hat{R}(\vec{\xi})=\int_{\R^d} \! \e^{\vec{\xi}\cdot \vec{x}} dR(\vec{x})
\end{equation}
for the moment generating function of a measure $R$ on $\R^d$ and $\vec{\xi} \in \R^d$.

\begin{theorem}\lbthm{FirstPointCoxAsymp}
Fix a probability measure $Q$ on $\R^2$ with $\hat{Q}(\vec{\xi})<\infty$ for all $\vec{\xi} \in \R^2$, a non-decreasing continuous function $q\colon\R\to(0,\infty)$ satisfying $\lim_{t\to-\infty}q(t)=0$.
Suppose that we can find a decomposition $Z_n^*=Z^{\prime\sss(K)}_n+Z^{\prime\prime\sss(K)}_n$ for each $K>0$, and sub-$\sigma$-fields $\F'_n\subset\F_n$, such that
\begin{enumerate}
\item \lbitem{MomentAssumptionsCoxTheorem}
for each fixed $\epsilon>0, t,u\in\R, \vec{\xi}\in\R^2$, there exists $K_0<\infty$ such that, for all $K\geq K_0$,
\begin{equation}\labelPartII{FirstMomentAgainstxi}
(1-\epsilon)q(t)\hat{Q}(\vec{\xi})\leq \condE{ \hat{Z}'_{n,t}(\vec{\xi})}{\F'_n} \leq (1+\epsilon)q(t)\hat{Q}(\vec{\xi}),
\end{equation}
\begin{align}\labelPartII{SecondMomentAgainstxi}
&&\condE{\Big( \frac{\hat{Z}^{\prime\sss(K)}_{n,t}(\vec{\xi})}{q(t)\hat{Q}(\vec{\xi})}-\frac{\abs{Z^{\prime\sss(K)}_{n,u}}}{q(u)} \Big)^2}{\F'_n} \leq \epsilon
,&&\text{and}
\end{align}
\begin{equation}\labelPartII{SplittingAssumptions}
\condE{\shortabs{Z''_{n,t}}}{\F'_n} < \epsilon q(t)
\end{equation}
with probability at least $1-\epsilon$ for $n$ sufficiently large; and

\item\lbitem{PPPTightIntensityRightCoxTheorem}
for each $\epsilon>0$, there exists $\overline{t}$ such that
\begin{equation}\labelPartII{ManyParticles}
\liminf_{n\to\infty}\P\left( \bigabs{Z_{n,\overline{t}}^*}>1/\epsilon \right) \geq 1-\epsilon.
\end{equation}

\end{enumerate}
Then the random sequence $(X_{n,j})_{j=1}^\infty$ converges in distribution to an i.i.d.\ random sequence $(X_j)_{j=1}^\infty$ where $X_j$ has law $Q$.
Moreover $\set{(T_{n,j})_{j=1}^k\colon n\in\N}$ is tight, $(X_{n,j})_{j=1}^\infty$ is asymptotically independent of $\F_n$ and, if $(T_j,X_j)_{j=1}^\infty$ is any subsequential limit of $(T_{n,j},X_{n,j})_{j=1}^\infty$, then $(T_j)_{j=1}^\infty$ and $(X_j)_{j=1}^\infty$ are independent.
\end{theorem}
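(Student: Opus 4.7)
My approach is to work conditionally on $\F_n$, so that $\cP^*_n$ becomes a genuine Poisson point process, decompose the intensity as $Z^*_n=Z^{\prime(K)}_n+Z^{\prime\prime(K)}_n$, and establish that the main part factorizes (in Laplace-transform sense) as $Z^{\prime(K)}_n(dt,dx)\approx C_n^{(K)} q(t)\,dt\otimes Q(dx)$ for an $\F'_n$-measurable scalar $C_n^{(K)}$ with conditional mean near $1$. First I would prove tightness of $(T_{n,j}^*)_{j=1}^k$. Upper tightness follows from \refitem{PPPTightIntensityRightCoxTheorem}: conditionally on $\F_n$, $\abs{\cP_{n,\overline{t}}^*}$ is Poisson with mean $\abs{Z_{n,\overline{t}}^*}$, and \eqrefPartII{ManyParticles} makes this mean large with high probability. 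Lower tightness comes from taking $\vec\xi=0$ in \eqrefPartII{FirstMomentAgainstxi} and combining with \eqrefPartII{SplittingAssumptions} to bound $\E[\abs{Z^*_{n,t}}\mid\F'_n]$ by a constant multiple of $q(t)$, which tends to $0$ as $t\to-\infty$.

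Next I would discard the remainder. A conditional Markov argument, using that the expected mass of $Z^{\prime\prime(K)}_n$ on $\cointerval{-\infty,\overline t}$ is at most $\epsilon q(\overline t)$ with $\F'_n$-probability $\geq 1-\epsilon$ by \eqrefPartII{SplittingAssumptions}, shows that with high probability no point of the Cox sub-process driven by $Z^{\prime\prime(K)}_n$ appears before $\overline t$. Combined with step one, the first $k$ points of $\cP^*_n$ coincide with high probability with those of the Cox process driven by $Z^{\prime(K)}_n$.

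The main step is the factorization. Assumption \eqrefPartII{SecondMomentAgainstxi} asserts that for each $t,u,\vec\xi$ the normalized Laplace transform $\hat Z^{\prime(K)}_{n,t}(\vec\xi)/(q(t)\hat Q(\vec\xi))$ and the normalized total mass $\abs{Z^{\prime(K)}_{n,u}}/q(u)$ are within $L^2(\F'_n)$-distance $\sqrt\epsilon$. Both therefore agree, up to $L^2$ error, with a single $\F'_n$-measurable random variable $C_n^{(K)}$ (concretely, the limit obtained at $\vec\xi=0$ and $u$ large), and by \eqrefPartII{FirstMomentAgainstxi} this variable has conditional mean close to $1$. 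Because $\hat Q(\vec\xi)<\infty$ for every $\vec\xi$ the measure $Q$ is determined by its Laplace transform, so the approximation $\hat Z^{\prime(K)}_{n,t}(\vec\xi)\approx q(t)\hat Q(\vec\xi)\,C_n^{(K)}$ translates into the spatial factorization claimed above, with all randomness in the spatial profile collapsed into the scalar $C_n^{(K)}$.

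Substituting this product form into the standard order-statistics density of a Cox process gives a joint conditional density for $\bigl((T^*_{n,j},X_{n,j})\bigr)_{j=1}^k$ proportional to $\prod_{j=1}^k C_n^{(K)} q(t_j)\,dt_j\otimes Q(dx_j)$ times a survival factor depending only on times; the joint moment-generating function of $(X_{n,j})_{j=1}^k$ therefore reduces to $\prod_{j=1}^k \hat Q(\vec\xi_j)$ once the $C_n^{(K)}$ factors cancel between the numerator and denominator of the conditional expectation. This identifies the limit as an i.i.d.\ $Q$-sequence and yields asymptotic independence from $\F_n$ and from any subsequential time limit. The main obstacle is the bookkeeping of the nested limits $\epsilon\downarrow 0$, $K\to\infty$, $n\to\infty$: one must first fix $\overline t$ from \eqrefPartII{ManyParticles}, then choose $K$ large enough for the three bounds in \refitem{MomentAssumptionsCoxTheorem} to hold with sufficient slack, and verify that the cancellation of $C_n^{(K)}$ genuinely survives both the remainder error from $Z^{\prime\prime(K)}_n$ and the survival factor in the order-statistics formula, which is where the global finiteness of $\hat Q$ is essential.
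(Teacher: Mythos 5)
Your tightness and remainder-discarding steps are sound, and the factorization intuition is the right one, but the central step of your argument—converting the $L^2$-agreement of Laplace transforms in \eqrefPartII{SecondMomentAgainstxi} into a measure-level factorization $Z^{\prime\sss(K)}_n(dt,dx)\approx C_n^{\sss(K)} q(t)\,dt\otimes Q(dx)$ and then applying an order-statistics density formula—contains a genuine gap. First, the scalar you propose, $C_n^{\sss(K)}=\bigabs{Z^{\prime\sss(K)}_{n,u}}/q(u)$, is $\F_n$-measurable but not $\F'_n$-measurable as you claim, and ``the limit obtained at \dots $u$ large'' is not a well-defined $\F'_n$-measurable quantity. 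Second, and more seriously, condition \eqrefPartII{SecondMomentAgainstxi} gives $L^2$-closeness of $\hat{Z}^{\prime\sss(K)}_{n,t}(\vec\xi)/(q(t)\hat{Q}(\vec\xi))$ to $\abs{Z^{\prime\sss(K)}_{n,u}}/q(u)$ \emph{for each fixed $\vec\xi$}, not uniformly in $\vec\xi$ and not at the level of the measure; ``the Laplace transform determines the measure'' applies to deterministic measures in the limit, not to a finite-$n$ random approximation, so no spatial density of $Z^{\prime\sss(K)}_n$ need factorize. Third, the ``standard order-statistics density of a Cox process'' presupposes that the intensity is absolutely continuous with respect to $dt\otimes Q(dx)$; in the paper's application $Z^{\prime\sss(K)}_n$ is a finite sum of Dirac masses (coming from a pair of branching processes), so no such density exists, and the substitution step cannot be carried out literally.

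The paper avoids all of this via a discretization argument. It first upgrades the hypotheses (by a diagonal scheme over dyadic $(t,u,\vec\xi)$ and a slowly vanishing $\epsilon_n$) to convergences in probability \eqrefPartII{Convp1Againstxi}--\eqrefPartII{Z''to0}, then verifies that $Z^{\prime\sss(K)}_n$ satisfies \refcond{ZnBasic} for the unbounded test family $h(\vec x)=\e^{\vec\xi\cdot\vec x}$, and finally invokes \refprop{FirstPointPPPLaplace}, which is proved via \reflemma{BnkLikely} and \refprop{FirstPointsPPPbasic}: partition time into small intervals, show that with high probability the first $k$ points lie in distinct intervals with at most one point each, use the Poisson factorization over disjoint intervals to write the conditional expectation as a sum of products of $\int h_j\,\Delta Z_{n,i_j}$, and replace each such integral with $\abs{\Delta Z_{n,i_j}}\int h_j\,dQ$ using \refcond{ZnBasic}~\refitem{PPPPointwiseConv}; the unboundedness of $\e^{\vec\xi\cdot\vec x}$ is handled by working on the event $C_n$ where corner moment generating functions are controlled. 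This machinery is the rigorous realization of your heuristic, and it is exactly the content your proposal is missing. Separately, verifying the quadratic variation condition \refcond{ZnBasic}~\refitem{PPPQuadVarBound} requires a delicate choice of partition tailored to $q$ (so that $\sum_i(q(t_i)/q(t_{i-1})-1)^2$ is small relative to $z_0(\epsilon)^{-2}$), a step your proposal does not address.
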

\refthm{FirstPointCoxAsymp} is proved in \refsect{FirstPointsCoxSec}.

To apply \refthm{FirstPointCoxAsymp}, we will rescale and recentre both time and the heights of vertices.
Furthermore, we will remove the effect of the frozen cluster $\cluster_\fr$.

\begin{defn}\lbdefn{PnStar}
For $v\in\tree\setminus\cluster_\fr$, define $p^\unfr(v)$ to be the unique ancestor $v'$ of $v$ for which $v'\in\boundary\cluster_\fr$ (with $p^\unfr(v)=v$ if $\parent{v}\in\cluster_\fr$).
Write
\begin{align}
\abs{v}^*
&=
\frac{\abs{v} -\abs{p^\unfr(v)} - \tfrac{1}{2}\phi_n \log(n/s_n^3)}{ \sqrt{s_n^2\log(n/s_n^3)}}
,
\labelPartII{v*General}
\\
t^*
&=
\lambda_n(1)(t -T_\unfr) - \tfrac{1}{2}\log(n/s_n^3)
.
\labelPartII{tt*}
\end{align}
Define $\cP_n^*$ to be the image under the mapping $(t,v_1,v_2)\mapsto (t^*,\abs{v_1}^*,\abs{v_2}^*)$ of the restriction of $\cP_n$ to $\cointerval{0,\infty}\times(\tree^{\sss(1)}\setminus\cluster_\fr^{\sss(1)})\times(\tree^{\sss(2)}\setminus\cluster_\fr^{\sss(2)})$.
\end{defn}
\begin{theorem}\lbthm{PnStarSatisfiesConditions}
The point measures $(\cP_n^*)_n$ are Cox processes and satisfy the hypotheses of \refthm{FirstPointCoxAsymp} when $Q$ is the law of a pair of independent $N(0,\tfrac{1}{2})$ random variables, $q(t^*)=\e^{2t^*}$, and $\F'_n$ is the $\sigma$-field generated by the frozen cluster $\cluster_\fr$.
\end{theorem}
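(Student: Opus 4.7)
\emph{Cox property and intensity.} Since $\cP_n$ is a Cox process with respect to $\sigma(\cluster,(M_v)_{v\in\tree})$ by \refthm{CouplingFPPCollision}, and the construction in \refdefn{PnStar} consists of a deterministic rescaling followed by restriction to $(\tree\setminus\cluster_\fr)^2$ — both measurable operations — $\cP_n^*$ inherits the Cox property. To identify the intensity $Z_n^*$, observe that for $t>T_\unfr$ and $v_i\notin\cluster_\fr^{(i)}$, the explicit formula $R_j(s)=s-T_\unfr+T_\fr^{(j)}$ valid on $s\geq T_\unfr$ simplifies \eqrefPartII{DeltaRcluster} and \eqrefPartII{CollisionIntensityTree} to
\begin{equation}
Z_{n,t}(\{v_1\}\times\{v_2\}) = \tfrac{1}{n}\chi_n\bigl(t-T_{v_1}^\cluster,\,t-T_{v_2}^\cluster\bigr)\indicator{v_i\in\cluster_t^{(i)}\text{ for }i=1,2},
\end{equation}
where $\chi_n$ is the two-vertex characteristic of \eqrefPartII{TwoVertexChar}. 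Decomposing $\chi_n=\chi_n^{\sss(K)}+(\chi_n-\chi_n^{\sss(K)})$ yields the split $Z_n^*=Z'^{\sss(K)}_n+Z''^{\sss(K)}_n$; with $\F'_n=\sigma(\cluster_\fr)$, the splitting condition \eqrefPartII{SplittingAssumptions} will follow from \refthm{TwoVertexRemainder} applied conditionally, since $\bar m^{\chi_n-\chi_n^{\sss(K)}}=o(s_n)$ uniformly for $K$ large.

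\emph{First moment and the Gaussian limit.} Conditional on $\F'_n$, the subtrees rooted at $v'\in\boundary\cluster_\fr$ are independent PWITs. Setting $L_n=\log(n/s_n^3)$ and $a_i=\e^{\xi_i/\sqrt{L_n}}\to 1$, the weight $\e^{\xi_i\abs{v_i}^*}$ of a depth-$k_i$ descendant of $v'_i\in\boundary\cluster_\fr^{(i)}$ factors as $\e^{-\xi_i\phi_n\sqrt{L_n}/(2s_n)}(a_i^{1/s_n})^{k_i}$, independently of which boundary ancestor $v'_i$ is chosen. The conditional first moment becomes
\begin{equation}
\condE{\hat Z'^{\sss(K)}_{n,t^*}(\vec{\xi})}{\F'_n} = \tfrac{1}{n}\e^{-(\xi_1+\xi_2)\phi_n\sqrt{L_n}/(2s_n)}\sum_{v'_1,v'_2}\bar m^{\chi_n^{\sss(K)}}_{\vec u}(\vec a^{1/s_n})\e^{\lambda_n(a_1^{1/s_n})u_1+\lambda_n(a_2^{1/s_n})u_2},
\end{equation}
with $u_i(v'_i)=t-T_{v'_i}^\cluster$ and the sum over $(v'_1,v'_2)\in\boundary\cluster_\fr^{(1)}\times\boundary\cluster_\fr^{(2)}$. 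For $K$ large, \refthm{TwoVertexConvRestrictedSum} gives $s_n^{-1}\bar m^{\chi_n^{\sss(K)}}\to\zeta(a_2/a_1)\to 1$. A Taylor expansion of $\lambda_n(a^{1/s_n})$ around $1$, using $\lambda_n'(1)=\phi_n\lambda_n\sim s_n\lambda_n$ from \eqrefPartII{phinDefinitionShort} and \reflemma{lambdanAsymp}, together with the second-order asymptotic $\lambda_n''(1)\sim s_n^2\lambda_n$ (obtained by differentiating $a\hat\mu_n(\lambda_n(a))=1$ twice and invoking \refcond{scalingfn}), shows that the linear term exactly cancels the prefactor (since $\lambda_n u_i\approx L_n/2$ to leading order) while the quadratic term contributes $\xi_i^2/4$. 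A Mecke-style identity for the Poisson offspring of frozen vertices, combined with the freezing rule \eqrefPartII{TfrDefn} at equality, yields $\condE{\sum_{v'\in\boundary\cluster_\fr^{(j)}}\e^{-\lambda_n(T_{v'}^\cluster-T_\unfr)}}{\cluster_\fr}=s_n$. Putting these factors together, and using $\e^{2\lambda_n(t-T_\unfr)}=\e^{2t^*}\e^{L_n}=\e^{2t^*}n/s_n^3$, produces
\begin{equation}
\condE{\hat Z'^{\sss(K)}_{n,t^*}(\vec{\xi})}{\F'_n}\longrightarrow \e^{2t^*}\e^{(\xi_1^2+\xi_2^2)/4}=q(t^*)\hat Q(\vec{\xi}),
\end{equation}
giving \eqrefPartII{FirstMomentAgainstxi} with $Q$ the law of two independent $N(0,\tfrac{1}{2})$ random variables.

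\emph{Second moment and condition (b).} Expanding $\condE{(\hat Z'^{\sss(K)}_{n,t^*})^2}{\F'_n}$ as a quadruple sum over pairs of pairs $(v'_1,v'_2),(w'_1,w'_2)$ and using conditional independence of disjoint subtrees, the $\sim N_1^2N_2^2$ fully off-diagonal contributions reproduce $\condE{\hat Z'^{\sss(K)}_{n,t^*}}{\F'_n}^2$, where $N_j=\abs{\boundary\cluster_\fr^{(j)}}=O_\P(s_n^2)$ by \refthm{FrozenCluster}. The $N_1N_2$ diagonal and $N_1N_2^2+N_1^2N_2$ partially-diagonal corrections are estimated via $\bar M^{\chi_n^{\sss(K)},\chi_n^{\sss(K)}}=O(s_n^4)$ from \refthm{TwoVertexConvRestrictedSum}, yielding a total correction of order $s_n^8$; since the square of the mean is of order $(N_1N_2s_n)^2\sim s_n^{10}$, the relative conditional variance is $O(s_n^{-2})\to 0$. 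The analogous analysis of the cross-moment $\condE{\hat Z'^{\sss(K)}_{n,t^*}(\vec{\xi})\abs{Z'^{\sss(K)}_{n,u^*}}}{\F'_n}$ establishes \eqrefPartII{SecondMomentAgainstxi}. Finally, condition (b) follows by evaluating the first-moment asymptotic at $\vec{\xi}=\vec 0$: $\condE{\abs{Z_{n,\bar t^*}^*}}{\F'_n}\to\e^{2\bar t^*}$ can be made arbitrarily large by choosing $\bar t^*$ large. The main technical obstacle is the second-order control: one must prove $\lambda_n''(1)\sim s_n^2\lambda_n$ with sufficient uniformity to obtain the precise variance $\tfrac{1}{2}$ in the Gaussian, and one must verify the conditional concentration of $\sum_{v'}\e^{-\lambda_n(T_{v'}^\cluster-T_\unfr)}$ around $s_n$ — known in expectation from the freezing rule — in order to upgrade the $L^1$ first-moment estimate to the $L^2$ bound needed in \eqrefPartII{SecondMomentAgainstxi}.
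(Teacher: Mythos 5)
Your setup and first-moment calculation are on the right track — the Cox property, the representation of $\hat Z^*_{n,t^*}(\vec\xi)$ via two-vertex characteristics evaluated against the Cox processes over $\boundary\cluster_\fr^{(j)}$, the decomposition via $\chi_n^{\sss(K)}$, and the Taylor expansion of $\lambda_n(a^{1/s_n})$ that produces the Gaussian factor $\e^{\xi_j^2/4}$ (using $\phi_n/s_n\to 1$) all match the paper's strategy. But the second-moment argument and the verification of condition~(b) both rest on a scaling mistake that collapses the proof.

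\textbf{The variance is not small.} You claim the corrections from the diagonal and partially-diagonal terms in \eqrefPartII{SecondMomentDoublePPP} are of order $s_n^8$ while the square of the mean is of order $s_n^{10}$, giving relative variance $O(s_n^{-2})\to 0$. This is incorrect. Tracking the powers carefully: the prefactor in \eqrefPartII{xiAgainstZ*} is $\e^{2t^*+\cdots}/s_n^3$, each integration against $d\mu_{n,\fr}^{(j)}$ with exponential weight contributes a factor $\approx s_n$ (\reflemma{ExpectedUnfrozenChildren}), and $\bar m^{\chi_n^{(K)}}\approx s_n$, $\bar M^{\chi_n^{(K)},\chi_n^{(K)}}\approx s_n^4$, $\bar M^{\rho,\rho}\approx s_n^3$. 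Plugging in, the first moment is $(\e^{\cdots}/s_n^3)\cdot s_n\cdot s_n\cdot s_n=O(1)$, the fully off-diagonal second moment term is $(\e^{\cdots}/s_n^3)^2\cdot(s_n^3)^2=O(1)$, the diagonal term is $(\e^{\cdots}/s_n^3)^2\cdot s_n^4\cdot s_n^2=O(1)$, and the partially-diagonal terms are $(\e^{\cdots}/s_n^3)^2\cdot s_n^3\cdot s_n^3=O(1)$. All four contributions are of the \emph{same order} as the square of the mean — the paper explicitly notes this as the \emph{reason for freezing}, which is calibrated so that both $\E(Z^*)$ and $\Var(Z^*)$ are $\Theta(1)$. (Also note $|\boundary\cluster_\fr^{(j)}|$ is infinite — a.s.\ every PWIT vertex has infinitely many children — so the count $N_j=O_\P(s_n^2)$ you quote from \refthm{FrozenCluster} is actually $|\cluster_\fr^{(j)}|$, not the boundary size, and the relevant effective weight is $\int\e^{-\lambda_n(1)y}d\mu_{n,\fr}^{(j)}(y)\approx s_n$, not $s_n^2$.)

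\textbf{Why this matters.} Condition \eqrefPartII{SecondMomentAgainstxi} is not a variance bound for $\hat Z'_{n,t^*}$: it is a bound on $\condE{D^2}{\F'_n}$ for the specific difference $D=\hat Z'_{n,t^*}(\vec\xi)/(q(t^*)\hat Q(\vec\xi))-|Z'_{n,u^*}|/q(u^*)$. The argument must exploit cancellation in $D$: when you expand $\condE{D^2}{\F'_n}$ into the twelve summands and look at, say, the diagonal block, you find combinations of the form $\bar M^{\chi^{(K)},\chi^{(K)}}(\vec{\tilde a},\vec{\tilde a})-2\bar M^{\chi^{(K)},\chi^{(K)}}(\vec{\tilde a},\vec 1)+\bar M^{\chi^{(K)},\chi^{(K)}}(\vec 1,\vec 1)$, and these are small only because \refthm{TwoVertexConvRestrictedSum} shows each $\bar M$ converges to the \emph{same} limit $s_n^4\zeta(\cdot)\zeta(\cdot)/\log(\cdots)^2$ as $a_j,b_j\to 1$. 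Without tracking this cancellation, the $O(1)$ corrections do not vanish.

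\textbf{Condition (b) does not follow from the first moment.} Since $\condE{|Z^*_{n,\bar t^*}|}{\F'_n}\to\e^{2\bar t^*}$ and the conditional variance is of the same order $\e^{4\bar t^*}$ (see above), Chebyshev gives no concentration around the mean, and a large first moment does not imply $|Z^*_{n,\bar t^*}|>1/\epsilon$ \whp. The paper instead constructs a subsample: the ``lucky vertex'' $v_{j,R}$ and the set $\cU_R^{(j)}$ of its descendants with parents of age in $[f_n(1),f_n(1+1/s_n)]$, of weight $\geq Rs_n$, so that the restricted intensity $\tilde Z'_{n,t^*,R}\leq|Z^*_{n,t^*}|$ has first moment $\gtrsim R^2 q(t^*)$ while its variance divided by the square of its mean is $O(1/R)$ — \emph{this} ratio can be made small by choosing $R$ large, and only then does Chebyshev deliver the lower bound. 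This is a genuinely different argument and must be supplied separately.

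Finally, you should also make explicit the role of \reflemma{FrozenVerticesAreYoung}: the frozen intensity $\mu_{n,\fr}^{(j)}$ appears with the weight $\e^{-\lambda_n(\tilde a_j)t_j}$ rather than $\e^{-\lambda_n(1)t_j}$, and without a tightness statement on where $\mu_{n,\fr}^{(j)}$ puts its mass, the normalization $\int\e^{-\lambda_n(1)y}d\mu_{n,\fr}^{(j)}(y)\in[s_n,s_n+1]$ from \reflemma{ExpectedUnfrozenChildren} cannot be transferred to $\lambda_n(\tilde a_j)$.
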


We prove \refthm{PnStarSatisfiesConditions} in \refsubsect{SecondMomentEstimates}.
All the vertices relevant to $\cP_n^*$ are born after the unfreezing time $T_\unfr$, and therefore appear according to certain CTBPs.
\refthm{PnStarSatisfiesConditions} will therefore be proved by a first and second moment analysis of the two-vertex characteristics from \refsubsect{TwoVerChar}.

To use \refthm{PnStarSatisfiesConditions} in the proof of \refthm{FirstPointCox}, we will show that the first point $(T_\first^*,H_1^*,H_2^*)$ of $\cP_n^*$ and the first point $(T_\first,V_\first^{\sss(1)},V_\first^{\sss(2)})$ of $\cP_n$ are \whp\ related as in \eqrefPartII{v*General}--\eqrefPartII{tt*}.
This will follow from part~\refitem{NoFrozenCollision} of the following lemma, which we will prove in Sections~\refPartII{ss:VolumeCollisionProof} and \refPartII{ss:FrozenCollisions}.

\begin{lemma}\lblemma{AtCollision}
Let $K<\infty$ and $\overline{t}=T_\unfr+\lambda_n(1)^{-1}(\frac{1}{2} \log(n/s_n^3) +K)$.
Then
\begin{enumerate}
\item\lbitem{VolumeAtCollision}
$\abs{\cluster_{\overline{t}}}=O_{\P}(\sqrt{ns_n})$; and
\item\lbitem{NoFrozenCollision}
$\cP_n\left( [0,\overline{t}]\times \cluster_\fr^{\sss(1)}\times \tree^{\sss(2)} \right) = \cP_n\left( [0,\overline{t}]\times \tree^{\sss(1)}\times \cluster_\fr^{\sss(2)} \right) = 0$ {\whpdot}
\end{enumerate}
\end{lemma}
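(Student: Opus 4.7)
For part (a), write $c_n:=\lambda_n(1)^{-1}(\tfrac12\log(n/s_n^3)+K)$, so that $\overline{t}=T_\unfr+c_n$, $R_j(\overline{t})=T_\fr^{(j)}+c_n$ and $e^{\lambda_n(1)c_n}=e^K\sqrt{n/s_n^3}$. The idea is to combine Theorem \refPartII{t:FrozenCluster}\refitem{FrozenVolume} with the expected exponential growth of the CTBP after unfreezing. Since $\cluster^{(j)}_{\overline{t}}=\BP^{(j)}_{R_j(\overline{t})}$, applying Corollary \refPartII{c:BPSizeAsymptotics} conditionally on the freezing time via the Markov property gives $\E[|\cluster^{(j)}_{\overline{t}}|\mid T_\fr^{(j)}]\lesssim s_n\,e^{\lambda_n(1)R_j(\overline{t})}=s_n\,e^{\lambda_n(1)T_\fr^{(j)}}\cdot e^{\lambda_n(1)c_n}$. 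Corollary \refPartII{c:BPSizeAsymptotics} applied at $T_\fr^{(j)}$ also yields $|\cluster_\fr^{(j)}|\sim s_n\,e^{\lambda_n(1)T_\fr^{(j)}}$, so Theorem \refPartII{t:FrozenCluster}\refitem{FrozenVolume} forces $e^{\lambda_n(1)T_\fr^{(j)}}=\Op(s_n)$. The two estimates combine to $\E[|\cluster^{(j)}_{\overline{t}}|]=\Op(s_n\cdot s_n\cdot \sqrt{n/s_n^3})=\Op(\sqrt{ns_n})$, and Markov's inequality proves (a).

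For part (b), I plan a first-moment bound on the Poisson count. Using \refPartII{e:CollisionIntensityTree}, the expected count conditional on $\cluster$ and the marks equals
\[\frac{1}{n}\sum_{v_1\in\cluster_\fr^{(1)}}\sum_{v_2\in\cluster_{\overline{t}}^{(2)}} \mu_n\bigl(\Delta R_{v_1,v_2},\;R_1(\overline{t})-R_1(T_{v_1}^\cluster)+R_2(\overline{t})-R_2(T_{v_2}^\cluster)\bigr).\]
The key analytic input is the bound $(f_n^{-1})'(y)\leq f_n^{-1}(y)/(\epsilon_0 s_n y)$ from Conditions \refPartII{cond:LowerBoundfn} and \refPartII{cond:boundfnExtended}, which yields $\mu_n(a,b)\leq C f_n^{-1}(b)\log(b/a)/s_n$ in the range of interest. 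For fixed $v_1$, the inner sum over $v_2\in\cluster_{\overline{t}}^{(2)}\setminus\cluster_\fr^{(2)}$ is a one-vertex characteristic on $\BP^{(2)}$; Theorem \refPartII{t:OneCharConv} integrates this bound against the exponential age distribution, and since the quantity $R_1(\overline{t})-R_1(T_{v_2}^\cluster)+R_2(\overline{t})-R_2(T_{v_2}^\cluster)$ is typically of order $1/\lambda_n(1)$, the inner sum is $\Op(\sqrt{n/s_n}/\log n)$. Summing over the $\Op(s_n^2)$ vertices in $\cluster_\fr^{(1)}$ and dividing by $n$ gives a total of $\Op(s_n^{3/2}/(\sqrt n\log n))=\sop(1)$ using $s_n=o(n^{1/3})$. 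Pairs with $v_2\in\cluster_\fr^{(2)}$ are handled separately: their count is $\Op(s_n^4)$ and the average $\mu_n$-contribution is $O(1/s_n)$, giving $\Op(s_n^3/n)=\sop(1)$. Markov's inequality then forces the Poisson count to be zero with probability $1-o(1)$; the second equality in (b) follows by the symmetry between the two clusters.

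The main obstacle is the sharp control of the characteristic sum in (b). The integrand $\mu_n(\Delta R_{v_1,v_2},\text{upper}_{v_1,v_2})$ is not uniformly $O(1/s_n)$ across the birth-time range: it degenerates to $O(1)$ when $\Delta R$ approaches zero, which can happen at the boundaries of the three $R_j$-regimes (pre-freeze, during freeze, post-unfreeze) induced by freezing. Extracting the $1/s_n$ savings therefore requires a careful case analysis of the temporal ordering of $T_{v_1}^\cluster,T_{v_2}^\cluster$ against $T_\fr^{(1)},T_\fr^{(2)},T_\unfr$, integrating against the exponential age distribution in Theorem \refPartII{t:OneCharConv} away from the frozen boundary and treating the frozen-cluster contribution by a direct count, for which the volume and diameter bounds of Theorem \refPartII{t:FrozenCluster} are indispensable.
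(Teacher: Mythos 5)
For part \refitem{VolumeAtCollision}, your final exponent is right but the reasoning does not hold. \refcoro{BPSizeAsymptotics} controls the \emph{unconditional mean} of $\abs{\BP^{\sss(1)}_t}$; it does not say that the population size concentrates around that mean. The corollary itself shows that $\Var(\abs{\BP^{\sss(1)}_t})$ exceeds $\E(\abs{\BP^{\sss(1)}_t})^2$ by a factor of order $s_n$, so $\abs{\BP^{\sss(1)}_t}$ has large fluctuations even when the mean is large. Hence the claim $\abs{\cluster_\fr^{\sss(j)}}\sim s_n e^{\lambda_n(1)T_\fr^{\sss(j)}}$ is not implied, and the conclusion $e^{\lambda_n(1)T_\fr^{\sss(j)}}=\Op(s_n)$ that you draw from it is in fact false: \refthm{TfrScaling} gives $f_n^{-1}(T_\fr^{\sss(j)})\convp M^{\sss(j)}>1$, and combined with \reflemma{ExtendedImpliesWeak} and \reflemma{lambdanAsymp} this forces $\lambda_n(1)T_\fr^{\sss(j)}$ to diverge at an exponential rate in $s_n$. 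The Markov step also requires conditioning on the full frozen cluster $\cluster_\fr$, not merely on $T_\fr^{\sss(j)}$; the quantity that should play the role of ``$e^{\lambda_n(1)T_\fr}$'' is the discounted frozen intensity $\int e^{-\lambda_n(1)y}\,d\mu_{n,\fr}^{\sss(j)}(y)$, which lies in $[s_n,s_n+1]$ by \reflemma{ExpectedUnfrozenChildren} and the very calibration of the freezing rule. That bound, together with $\bar{m}^1_{\cdot}(1)\leq K's_n$ from \refthm{OneCharConv} and $\abs{\cluster_\fr}=\Op(s_n^2)=\sop(\sqrt{ns_n})$ from \refthm{FrozenCluster}~\refitem{FrozenVolume}, is what produces the $\Op(\sqrt{ns_n})$ bound.

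For part \refitem{NoFrozenCollision}, you correctly name the central difficulty --- $\mu_n(\Delta R_{v_1,v_2},\cdot)$ degenerates to order $1$ when $\Delta R_{v_1,v_2}$ is small --- but the plan does not resolve it, and the quantitative estimates offered in its place are not established. The assertion that the inner sum over $v_2$ is $\Op(\sqrt{n/s_n}/\log n)$ needs precisely the $1/s_n$ saving that fails near the degeneracy, and for pairs $(v_1,v_2)\in\cluster_\fr^{\sss(1)}\times\cluster_\fr^{\sss(2)}$ the claimed ``average $\mu_n$-contribution $O(1/s_n)$'' is the crux and is left unproved; bounding by total mass $1$ gives only $\Op(s_n^4/n)$, which under $s_n=o(n^{1/3})$ need not be $\sop(1)$. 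Two ingredients missing from your plan are essential. First, the troublesome large-$y$ contribution is controlled via the lucky-vertex machinery: the sum $\sum_{v\in\cluster_\fr^{\sss(j)}}\bigl(f_n^{-1}(T_\lucky^{\sss(j)}-T_v)-1\bigr)^+$ is \emph{exactly} exponentially distributed with rate $\P(v\text{ is lucky})\geq\delta/s_n$ (see \eqrefPartII{Sumfninverse}--\eqrefPartII{LuckyProb}), hence $\Op(s_n)$; this supplies the missing $1/s_n$ factor. Second, for the restriction to $[0,f_n(1)]$, the paper splits the pairs by birth ordering against $T_\fr^{\sss(J)}<T_\fr^{\sss(J')}$ and argues via \refthm{TfrScaling} that the smaller frozen cluster $\cluster^{\sss(J')}_{T_\fr^{\sss(J)}}$ has $\Op(1)$ vertices (using $M^{\sss(1)}\neq M^{\sss(2)}$ a.s.), with the remaining cases controlled again through \reflemma{ExpectedUnfrozenChildren} rather than an averaged heuristic.
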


Assuming \reflemma{AtCollision}~\refitem{NoFrozenCollision} and \refthm{PnStarSatisfiesConditions}, we can now prove \refthm{FirstPointCox}:

\begin{proof}[Proof of \refthm{FirstPointCox}]
By construction, the first point $(T_\first^*,H_1^*,H_2^*)$ of $\cP_n^*$ is the image of some point $(T,V_1,V_2)$ of $\cP_n$ under the mapping $(t,v_1,v_2)\mapsto (t^*,\abs{v_1}^*,\abs{v_2}^*)$.
Theorems~\refPartII{t:FirstPointCoxAsymp} and \refPartII{t:PnStarSatisfiesConditions} imply that $T_\first^*=O_\P(1)$, so that $T=T_\unfr+\lambda_n(1)^{-1}(\tfrac{1}{2}\log(n/s_n^3)+O_\P(1))$ by \eqrefPartII{tt*}.
We may therefore apply \reflemma{AtCollision}~\refitem{NoFrozenCollision} to conclude that $\cP_n\left( [0,T]\times\cluster_\fr^{\sss(1)}\times \tree^{\sss(2)} \right)= \cP_n\left( [0,T]\times \tree^{\sss(1)}\times \cluster_\fr^{\sss(2)} \right) = 0$ {\whpdot}

In particular, \whp, $(T,V_1,V_2)$ equals the first point $(T_\first,V_\first^{\sss(1)},V_\first^{\sss(2)})$ of $\cP_n$, and therefore $H_j^*=\abs{V_\first^{\sss(j)}}^*$.
In \refthm{FirstPointCox}, the heights are to be rescaled as in \eqrefPartII{RescaledPair} rather than \eqrefPartII{v*General}.
However, these differ only by the term $\abs{p^\unfr(V_\first^{\sss(j)})}/s_n\sqrt{\log(n/s_n^3)}$.
By \refthm{FrozenCluster}~\refitem{FrozenDiameter}, we have $\abs{p^\unfr(V_\first^{\sss(j)})}=1+O_\P(s_n)$, since $p(p^\unfr(V_\first^{\sss(j)}))\in\cluster^{\sss(j)}_\fr$ by construction.
Hence the term $\abs{p^\unfr(V_\first^{\sss(j)})}/s_n\sqrt{\log(n/s_n^3)}$ is $o_\P(1)$.
Finally, the asymptotic independence statements follow from those in \refthm{FirstPointCoxAsymp} and \eqrefPartII{Tfirst} follows from the tightness of $T_\first^*$.
\end{proof}

\subsection{Thinning and completion of the proof}\lbsubsect{ProofCompletion}
In this section, we explain that the first point of the Cox process is \whp\ unthinned and conclude our main results:
\begin{theorem}[First point of Cox process is \whp\ unthinned]
\lbthm{FirstPointCoxUnthinned}
Let $\cP_n$ be the Cox process in \refthm{CouplingFPPCollision}, and let $(T_\first,V_\first^{\sss(1)},V_\first^{\sss(2)})$ denote its first point.
Then $V_\first^{\sss(1)}$ and $V_\first^{\sss(2)}$ are \whp\ unthinned. Consequently, whp $T_\coll^{\sss(\cP_n)}=T_\first,V_\coll^{\sss(1)}=V_\first^{\sss(1)},
V_\coll^{\sss(2)}=V_\first^{\sss(2)}$.
\end{theorem}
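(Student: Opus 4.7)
The plan is to reduce the statement to a first-moment estimate on the intensity of $\cP_n$ restricted to \emph{bad} pairs (those for which $v_1$ or $v_2$ is thinned), using the time-truncation provided by \refthm{FirstPointCox}. Given $\epsilon>0$, by \refthm{FirstPointCox} one can fix $K<\infty$ so that
\[\overline t := T_\unfr+\lambda_n(1)^{-1}\bigl(\tfrac12\log(n/s_n^3)+K\bigr)\]
satisfies $\P(T_\first>\overline t)<\epsilon$ for $n$ large. On the complementary event, if the first point of $\cP_n$ involves a thinned vertex, then some point of $\cP_n$ in $[0,\overline t]$ has a thinned coordinate, so by Markov's inequality this probability is at most $\E[|Z^{\mathrm{bad}}_{n,\overline t}|]$, where $Z^{\mathrm{bad}}_{n,\overline t}$ denotes the restriction of $Z_{n,\overline t}$ to such pairs.

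The key estimate is a uniform bound on the conditional thinning probability. Let $\F$ be the $\sigma$-algebra generated by $\cluster$, $R_1$ and $R_2$; then $Z_n$ is $\F$-measurable while thinning is determined by the independent uniform marks. Since a thinned vertex $v_1$ possesses some ancestor $v_0$ (possibly $v_1$ itself) whose mark $M_{v_0}$ agrees with that of a previously-born vertex, a union bound gives
\[\P(v_1\text{ thinned}\mid\F)\leq\frac{1}{n}\sum_{v_0\in\mathrm{anc}(v_1)\cup\{v_1\}}\bigl(|\cluster_{T_{v_0}^{\cluster}}|-1\bigr).\]
I would split this ancestor sum at the unfreezing time. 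For ancestors born during the frozen phase, \refthm{FrozenCluster} bounds their number by $\Op(s_n)$ and each cluster size by $\Op(s_n^2)$, giving a contribution of $\Op(s_n^3)$. For ancestors born after $T_\unfr$, the CTBP grows exponentially at rate $\lambda_n(1)$, so the cluster sizes along the ancestral path decay geometrically as one moves backward from $v_1$, giving a geometric sum bounded by $O(|\cluster_{\overline t}|)=\Op(\sqrt{n s_n})$ by \reflemma{AtCollision}\refitem{VolumeAtCollision}. Since $s_n=o(n^{1/3})$ implies $s_n^3=o(\sqrt{n s_n})$, the net bound is
\[\P(v_1\text{ thinned}\mid\F)=\Op(\sqrt{s_n/n}),\]
uniformly in $v_1\in\cluster^{(1)}_{\overline t}$, and analogously for $v_2$.

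Substituting this uniform bound and using symmetry in $j=1,2$, together with the fact that $\E[|Z_{n,\overline t}|]=O(1)$ (the expected number of points of $\cP_n$ in $[0,\overline t]$ is $O(\e^{2K})$, by \refthm{FirstPointCoxAsymp} applied to the rescaled process $\cP_n^*$ of \refthm{PnStarSatisfiesConditions}), one obtains $\E[|Z^{\mathrm{bad}}_{n,\overline t}|]=O(\sqrt{s_n/n})\to 0$ after a routine truncation of the $\Op$ bound on a high-probability event. This establishes that $V_\first^{\sss(1)},V_\first^{\sss(2)}$ are unthinned \whp; the second conclusion then follows at once, since $T_\coll^{\sss(\cP_n)}$ is defined as the time of the first point of $\cP_n$ with both coordinates unthinned. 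The main obstacle is justifying the geometric domination of post-unfreezing ancestor cluster sizes uniformly in $v_1$: this requires pathwise control of the typical inter-generation spacing in the rescaled CTBP, whose mean is $\Theta(1/\lambda_n(1))$ by \reflemma{lambdanAsymp} and \refcond{scalingfn}, and must be combined with a deviation estimate preventing anomalously short ancestral paths from inflating the geometric sum.
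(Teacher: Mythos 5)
Your route is genuinely different from the paper's --- you pass through a first-moment bound on the intensity of $\cP_n$ restricted to thinned pairs and therefore need a bound on $\sum_{v_0}|\cluster_{T_{v_0}^\cluster}|$ over the ancestral path of $v_1$ that holds \emph{uniformly} over $v_1\in\cluster^{\sss(1)}_{\overline{t}}$ --- and the gap you flag at the end is real. No such uniform bound holds: geometric decay of $|\cluster_{T^\cluster_{p^k(v_1)}}|$ in $k$ would require a pathwise lower bound on the consecutive birth-time gaps $T^\cluster_{p^{k}(v_1)}-T^\cluster_{p^{k+1}(v_1)}$, but the PWIT edge weights have no lower bound, so some $v_1\in\cluster^{\sss(1)}_{\overline{t}}$ have long runs of near-zero gaps for which the ancestor sum degenerates to (number of ancestors)$\times$(terminal cluster size). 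Even on average the decay ratio is only $\e^{-\Theta(1/s_n)}$, since the typical inter-generation spacing is $\Theta(1/(s_n\lambda_n(1)))$ by \eqrefPartII{EYAsymp}, not $\Theta(1/\lambda_n(1))$; a correct average-case geometric sum therefore gives $\Op(s_n|\cluster_{\overline{t}}|)$, not the claimed $\Op(|\cluster_{\overline{t}}|)$. (Also, the assertion that $s_n=o(n^{1/3})$ forces $s_n^3=o(\sqrt{ns_n})$ is false --- it requires $s_n=o(n^{1/5})$ --- though this is harmless since $s_n^3/n\to 0$ regardless.)

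The paper sidesteps the entire issue. Because $Z_n$ is determined by $\cluster$ and $R_1,R_2$ alone and not by the marks, $\cP_n$ is conditionally independent of $(M_v)_v$ given $\cluster$, so one may condition directly on the realized first point $(V_\first^{\sss(1)},V_\first^{\sss(2)})$ and on $|\cluster_{T_\first}|$ while the marks remain i.i.d.\ uniform. The crude union bound
\[
\condP{V_\first^{\sss(1)}\text{ or }V_\first^{\sss(2)}\text{ thinned}}{V_\first^{\sss(1)},V_\first^{\sss(2)},|\cluster_{T_\first}|}\leq \tfrac{1}{n}\bigl(|V_\first^{\sss(1)}|+|V_\first^{\sss(2)}|\bigr)|\cluster_{T_\first}|
\]
then suffices, because \refthm{FirstPointCox} --- the ingredient your proposal does not exploit --- already yields $|V_\first^{\sss(j)}|=\Op(s_n\log(n/s_n^3))$, which together with $|\cluster_{T_\first}|=\Op(\sqrt{ns_n})$ from \reflemma{AtCollision}~\refitem{VolumeAtCollision} gives $\Op\bigl(\log(n/s_n^3)/\sqrt{n/s_n^3}\bigr)=\sop(1)$ with no decay along the ancestral path needed at all. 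To repair your first-moment approach you would have to replace the uniform bound by an average over $v_1$ weighted by $Z_{n,\overline{t}}$, which amounts to a new two-vertex-characteristic computation carrying the ancestor sum --- considerably more work than the paper's two-line conditional estimate.
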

\begin{proof}
According to \refdefn{Thinningcluster}, the vertex $V_\first^{\sss(j)}$, $j\in\set{1,2}$, will be thinned if and only if some non-root ancestor $v_0$ of $V_\first^{\sss(j)}$ has $M_{v_0}=M_w$, where $w\in\cluster_{T_\first}$ is unthinned and $T_w^\cluster<T_{v_0}^\cluster$.
We obtain an upper bound by dropping the requirement that $w$ should be unthinned and relaxing the condition $T_w^\cluster<T_{v_0}^\cluster$ to $T_w^\cluster\leq T_\first$ and $w\neq v_0$.
Each such pair of vertices $(v_0,w)$ has conditional probability $1/n$ of having the same mark, so
\begin{equation}
\condP{\big.\text{$V_\first^{\sss(1)}$ or $V_\first^{\sss(2)}$ is thinned}}{V_\first^{\sss(1)},V_\first^{\sss(2)},\abs{\cluster_{T_{\first}}}} \leq \tfrac{1}{n}(\abs{V_\first^{\sss(1)}}+\abs{V_\first^{\sss(2)}})\abs{\cluster_{T_{\first}}}.
\end{equation}
By \refthm{FirstPointCox}, $\abs{V_\first^{\sss(j)}}=O_\P(s_n\log(n/s_n^3))$.
Moreover $T_\first=T_\unfr+\lambda_n(1)^{-1}(\tfrac{1}{2}\log(n/s_n^3)+O_\P(1))$, so that $\abs{\cluster_{T_\first}}=O_\P(\sqrt{ns_n})$ by \reflemma{AtCollision}~\refitem{VolumeAtCollision}.
Hence
\begin{equation}
\condP{\big.\text{$V_\first^{\sss(1)}$ or $V_\first^{\sss(2)}$ is thinned}}{V_\first^{\sss(1)},V_\first^{\sss(2)},\abs{\cluster_{T_{\first}}}} \leq O_\P\left( \frac{\log(n/s_n^3)}{\sqrt{n/s_n^3}} \right),
\end{equation}
and this upper bound is $o_\P(1)$ since $n/s_n^3\to\infty$.
\end{proof}

Note that other choices of $R_1(t),R_2(t)$ would make \refthm{FirstPointCoxUnthinned} false.
For the first point of $\cP_n$ to appear, the intensity measure $Z_{n,t}$, which is given by $1/n$ times a sum over $\cluster_t^{\sss(1)}\times\cluster_t^{\sss(2)}$, must be of order $1$.
If $R_1(t)=t$, $R_2(t)=0$, for instance, then $\cluster_t^{\sss(2)}$ is small and it follows that $\cluster_t^{\sss(1)}$ must be large (typically at least of order $n$) at time $t=T_\first$.
In this case thinning would have a very strong effect.
We note that this argument applies even to relatively well-behaved edge distributions such as the $E^s$ edge weights considered in \cite{BhaHof12}, where the exploration must proceed simultaneously from both endpoints with $R_1(t)=R_2(t)=t$.

In the heavy-tailed case that we consider, even the choice $R_1(t)=R_2(t)=t$ is insufficiently balanced: see the discussion in \refother{Sections~\refstarPartI{ss:explorationfromtwosources}--\refstarPartI{ss:FreezingDisc}}.
This is a crucial reason for introducing the freezing procedure of \refsubsect{FreezingDisc}.

We are now ready to complete the proof of \refthm{WH-gen}:

\begin{proof}[Proof of \refthm{WH-gen}]
According to \reflemma{AssumeStrongCond}, we can assume  Conditions~\refPartII{cond:scalingfn}, \refPartII{cond:LowerBoundfn} and \refPartII{cond:boundfnExtended}.
We begin with the hopcount result \eqrefPartII{hopcount-CLT}.
By \refthm{CouplingFPPCollision}, $H_n \equalsd \abs{V_\coll^{\sss(1)}}+\abs{V_\coll^{\sss(2)}}+1$, where $(T_\coll^{\sss(\cP_n)},V_\coll^{\sss(1)},V_\coll^{\sss(2)})$ is the first point of the Cox process $\cP_n$ for which $V_\coll^{\sss(1)}$ and $V_\coll^{\sss(1)}$ are unthinned.
By \refthm{FirstPointCoxUnthinned}, $T_\coll^{\sss(\cP_n)}=T_\first$ \whp, so that the pairs $(V_\coll^{\sss(1)},V_\coll^{\sss(2)})$ and $(V_\first^{\sss(1)},V_\first^{\sss(2)})$ in Theorems~\refPartII{t:CouplingFPPCollision} and \refPartII{t:FirstPointCox} are the same {\whpdot}
Hence, \whp,
\begin{align}
\frac{H_n-\phi_n\log(n/s_n^3)}{\sqrt{s_n^2 \log(n/s_n^3)}} &\equalsd \frac{\abs{V_\first^{\sss(1)}}-\tfrac{1}{2}\phi_n\log(n/s_n^3)}{\sqrt{s_n^2 \log(n/s_n^3)}} + \frac{\abs{V_\first^{\sss(2)}}-\tfrac{1}{2}\phi_n\log(n/s_n^3)}{\sqrt{s_n^2 \log(n/s_n^3)}} + \sop(1)
,
\end{align}
so that \refthm{FirstPointCox} implies the CLT for $H_n$ in \eqrefPartII{hopcount-CLT}.

For the weight result \eqrefPartII{weight-res}, \refthm{CouplingFPPCollision} states that $W_n \equalsd R_1(T_\coll^{\sss(\cP_n)})+R_2(T_\coll^{\sss(\cP_n)})$.
On the event $\set{T_\first\geq T_\unfr}\intersect\set{T_\first=T_\coll^{\sss(\cP_n)}}$ (which, by \refthm{FirstPointCox} and the argument above, occurs \whp), the definition \eqrefPartII{OnOff} of $R_1(t),R_2(t)$ leads to $R_1(T_\coll^{\sss(\cP_n)})+R_2(T_\coll^{\sss(\cP_n)})=T_\fr^{\sss(1)}+T_\fr^{\sss(2)}+2(T_\first-T_\unfr)$.
Using again \refthm{FirstPointCox}, we obtain
\begin{equation}
W_n \equalsd T_\fr^{\sss(1)}+T_\fr^{\sss(2)}+\frac{\log(n/s_n^3)}{\lambda_n}+\Op(1/\lambda_n).
\end{equation}
Therefore, \eqrefPartII{weight-res} follows from \refthm{TfrScaling} and Lemmas~\refPartII{l:lambdanAsymp} and \refPartII{l:dfWeakMax}.
Finally, the independence of the limiting variables follows from the asymptotic independence in \refthm{FirstPointCox}.
\end{proof}

\section{\texorpdfstring{Scaling properties of $f_n$ and $\mu_n$}{Scaling properties of fn and mun}}\lbsect{cond}

In this section, we prove \reflemma{dfWeakMax}, and we restate \refcond{boundfnExtended} in terms of the density of $\mu_n$.

\subsection{\texorpdfstring{Growth and density bounds for $f_n$ and $\mu_n$}{Growth and density bounds for fn and mun}}\lbsubsect{densityBoundsfn}

In this section, we explore the key implications of Conditions~\refPartII{cond:scalingfn}--\refPartII{cond:LowerBoundfn} and \refPartII{cond:boundfnExtended} on $f_n$ and on the intensity measure $\mu_n$. This section also appears in \refother{\refsectPartI{densityBoundsfn}},
but since the proofs are just a few lines, we have decided to repeat them here.

\begin{lemma}\lblemma{ExtendedImpliesWeak}
There exists $n_0 \in \N$ such that
\begin{equation}
f_n(x)\leq \left( \frac{x}{x'} \right)^{\epsilonCondition s_n} f_n(x') \qquad\text{whenever }1-\deltaCondition\leq x\leq x', n \ge n_0
.
\end{equation}
\end{lemma}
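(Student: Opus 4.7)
The inequality to prove is equivalent, after dividing and taking logarithms, to
\[
\log f_n(x') - \log f_n(x) \;\ge\; \epsilonCondition s_n \bigl(\log x' - \log x\bigr)
\qquad\text{for }1-\deltaCondition \le x\le x'.
\]
Since $\log f_n$ is differentiable on $(0,\infty)$, my plan is to obtain this by integrating a pointwise lower bound on $\frac{d}{du}\log f_n(u) = f_n'(u)/f_n(u)$ over the interval $[x,x']$. The natural source of such a bound is the logarithmic derivative $u f_n'(u)/f_n(u)$, which appears explicitly in the standing hypotheses.

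The key step is to combine Conditions~\refPartII{cond:LowerBoundfn} and \refPartII{cond:boundfnExtended}: the former gives $u f_n'(u)/f_n(u) \ge \epsilonCondition s_n$ on the left interval $[1-\deltaCondition,1]$, while the latter supplies the same bound on $[1,\infty)$ (for $n\ge n_0$, with $\epsilonCondition$ chosen small enough to satisfy both conditions simultaneously, as stipulated after \reflemma{AssumeStrongCond}). Together they yield
\[
\frac{d}{du}\log f_n(u) \;=\; \frac{f_n'(u)}{f_n(u)} \;\ge\; \frac{\epsilonCondition s_n}{u}
\qquad\text{for all }u\ge 1-\deltaCondition,\ n\ge n_0.
\]
Integrating this inequality from $x$ to $x'$ (both of which lie in $[1-\deltaCondition,\infty)$) gives
\[
\log f_n(x') - \log f_n(x) \;\ge\; \int_x^{x'} \frac{\epsilonCondition s_n}{u}\,du \;=\; \epsilonCondition s_n \bigl(\log x' - \log x\bigr),
\]
which is exactly the desired bound after exponentiating and rearranging.

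There is no real obstacle here: the argument is a one-line integration once the pointwise logarithmic-derivative bound has been identified and the two conditions have been stitched together at $u=1$. The only minor subtlety is bookkeeping of $n_0$, which should be taken as the maximum of the thresholds coming from Conditions~\refPartII{cond:LowerBoundfn} and \refPartII{cond:boundfnExtended} so that both lower bounds are simultaneously in force.
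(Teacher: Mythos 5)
Your proof is correct and takes essentially the same approach as the paper: Conditions~\refPartII{cond:LowerBoundfn} and \refPartII{cond:boundfnExtended} together give the pointwise lower bound $f_n'(u)/f_n(u)\geq\epsilonCondition s_n/u$ on all of $[1-\deltaCondition,\infty)$, and integrating from $x$ to $x'$ yields the claim.
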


\begin{proof}
Divide \eqrefPartII{BoundfnSmall} or \eqrefPartII{fn-bound} by $x$ and integrate between $x$ and $x'$ to obtain $\log{f_n(x')}-\log{f_n(x)}\geq \epsilonCondition s_n \left( \log x' - \log x \right)$ whenever $1-\deltaCondition\leq x\leq x'$, $n \ge n_0$, as claimed.
\end{proof}

We call \refcond{boundfnExtended} a density bound because it implies the following lemma, which will also be useful in the study of two-vertex characteristics in \refsect{2VertexCharSec}:
\begin{lemma}\lblemma{munDensityBound}
For $n$ sufficiently large, on the interval $(f_n(1-\deltaCondition),\infty)$, the measure $\mu_n$ is absolutely continuous with respect to Lebesgue measure and
\begin{equation}
\indicator{y>f_n(1-\deltaCondition)} d\mu_n(y) \leq \frac{1}{\epsilonCondition s_n} \frac{f_n^{-1}(y)}{y} dy.
\end{equation}
\end{lemma}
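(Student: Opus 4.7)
The proof of \reflemma{munDensityBound} is essentially a change-of-variables computation that feeds the logarithmic-derivative lower bounds from Conditions~\refPartII{cond:LowerBoundfn} and \refPartII{cond:boundfnExtended} through the relation $\mu_n = f_n{}_*(\text{Lebesgue})$.

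The plan is as follows. First I would observe that, by \eqrefPartII{munCharacterization}, $\mu_n$ is precisely the pushforward of Lebesgue measure on $(0,\infty)$ under the strictly increasing map $f_n$: for every Borel set $B\subseteq(0,\infty)$ we have $\mu_n(B) = \abs{f_n^{-1}(B)}$, where $\abs{\cdot}$ is Lebesgue measure. Consequently, on any interval where $f_n$ is differentiable with $f_n'>0$, the measure $\mu_n$ admits the density
\begin{equation*}
\frac{d\mu_n}{dy}(y) = (f_n^{-1})'(y) = \frac{1}{f_n'(f_n^{-1}(y))}.
\end{equation*}

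Next I would combine Conditions~\refPartII{cond:LowerBoundfn} and \refPartII{cond:boundfnExtended}. \refcond{LowerBoundfn} provides the lower bound $xf_n'(x)/f_n(x) \geq \epsilonCondition s_n$ on $[1-\deltaCondition,1]$, and \refcond{boundfnExtended} provides the same bound on $[1,\infty)$, for all $n\ge n_0$. Hence for $n$ sufficiently large,
\begin{equation*}
f_n'(x) \;\geq\; \epsilonCondition s_n\,\frac{f_n(x)}{x} \qquad \text{for all }x\geq 1-\deltaCondition,
\end{equation*}
so in particular $f_n$ is absolutely continuous on $[1-\deltaCondition,\infty)$ with strictly positive derivative, and therefore so is $f_n^{-1}$ on $(f_n(1-\deltaCondition),\infty)$.

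Finally I would substitute $y=f_n(x)$, $x=f_n^{-1}(y)$, in the bound above. This gives
\begin{equation*}
(f_n^{-1})'(y) \;=\; \frac{1}{f_n'(f_n^{-1}(y))} \;\leq\; \frac{1}{\epsilonCondition s_n}\,\frac{f_n^{-1}(y)}{f_n(f_n^{-1}(y))} \;=\; \frac{1}{\epsilonCondition s_n}\,\frac{f_n^{-1}(y)}{y}
\end{equation*}
for Lebesgue-a.e.\ $y>f_n(1-\deltaCondition)$, and integrating against indicator functions yields the stated inequality of measures on $(f_n(1-\deltaCondition),\infty)$.

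There is no real obstacle here; the only subtlety is to make sure the conditions are invoked on an interval containing $[1-\deltaCondition,\infty)$ (which needs both Conditions~\refPartII{cond:LowerBoundfn} and \refPartII{cond:boundfnExtended}, with $\epsilonCondition$ chosen small enough to satisfy both, as already agreed in the convention preceding \refsect{DiscExt}) and to justify the use of the a.e.-derivative formula. The latter is automatic because the bound $f_n'(x)\ge \epsilonCondition s_n f_n(x)/x$ forces $f_n$ to be locally absolutely continuous on $[1-\deltaCondition,\infty)$, so the pushforward measure $\mu_n$ has no singular part there and the density formula above is exact.
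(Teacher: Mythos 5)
Your proof is correct and is essentially the same calculation as the paper's: both start from $\mu_n = f_n{}_*(\text{Lebesgue})$, identify the density $(f_n^{-1})'(y)=1/f_n'(f_n^{-1}(y))$, and plug in the lower bound $f_n'(x)\ge\epsilonCondition s_n f_n(x)/x$ from Conditions~\refPartII{cond:LowerBoundfn} and \refPartII{cond:boundfnExtended}. The paper phrases it via differentiating $y=f_n(\mu_n(0,y))$, but $\mu_n(0,y)=f_n^{-1}(y)$, so this is the same change of variables.
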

\begin{proof}
By Conditions~\refPartII{cond:LowerBoundfn} and \refPartII{cond:boundfnExtended}, $f_n$ is strictly increasing on $(1-\deltaCondition,\infty)$, so $y=f_n(\mu_n(0,y))$ for $y>f_n(1-\deltaCondition)$.
Differentiating and again applying Conditions~\refPartII{cond:LowerBoundfn} and \refPartII{cond:boundfnExtended}, we get
\begin{equation*}
1=f'_n(\mu_n(0,y)) \frac{d}{dy}\mu_n(0,y) \geq \epsilonCondition s_n \frac{f_n(\mu_n(0,y))}{\mu_n(0,y)} \frac{d}{dy}\mu_n(0,y) , \qquad y>f_n(1-\deltaCondition).
\qedhere
\end{equation*}
\end{proof}

\begin{lemma}\lblemma{munDensityBounded}
For $n$ sufficiently large, the density of $\mu_n$ with respect to Lebesgue measure is at most $1/(\epsilonCondition s_n f_n(1))$ on the interval $(f_n(1),\infty)$.
\end{lemma}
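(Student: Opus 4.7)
The idea is to combine the previous lemma (\reflemma{munDensityBound}) with a comparison estimate that says $f_n$ grows at least linearly past $x=1$. Concretely, \reflemma{munDensityBound} already gives, for $y>f_n(1)>f_n(1-\deltaCondition)$, a density bound of the form $\tfrac{1}{\epsilonCondition s_n}\cdot f_n^{-1}(y)/y$. So the task reduces to showing
\begin{equation}
\frac{f_n^{-1}(y)}{y}\;\leq\;\frac{1}{f_n(1)}\qquad\text{for all }y>f_n(1),
\end{equation}
or equivalently, substituting $x=f_n^{-1}(y)\geq 1$, that $f_n(x)\geq x\,f_n(1)$ for $x\geq 1$ (and $n$ sufficiently large).

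\textbf{Carrying this out.} To get the linear lower bound, I would use \refcond{boundfnExtended}: for $x\geq 1$ and $n\geq n_0$,
\begin{equation}
\frac{d}{dx}\log f_n(x)\;=\;\frac{f_n'(x)}{f_n(x)}\;\geq\;\frac{\epsilonCondition s_n}{x}.
\end{equation}
Integrating from $1$ to $x$ yields $\log\bigl(f_n(x)/f_n(1)\bigr)\geq \epsilonCondition s_n\log x$, that is, $f_n(x)\geq x^{\epsilonCondition s_n}f_n(1)$. Since $s_n\to\infty$, we may enlarge $n_0$ so that $\epsilonCondition s_n\geq 1$ for $n\geq n_0$; then $x^{\epsilonCondition s_n}\geq x$ for all $x\geq 1$, giving $f_n(x)\geq x f_n(1)$ as required.

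\textbf{Conclusion.} Combining the displayed inequality with \reflemma{munDensityBound}, we get on $(f_n(1),\infty)$
\begin{equation}
\frac{d\mu_n}{dy}\;\leq\;\frac{1}{\epsilonCondition s_n}\,\frac{f_n^{-1}(y)}{y}\;\leq\;\frac{1}{\epsilonCondition s_n f_n(1)},
\end{equation}
which is the claim. There is no real obstacle here: the lemma is essentially a one-line corollary of \reflemma{munDensityBound} plus the elementary observation that \refcond{boundfnExtended} forces $f_n$ to dominate the linear function $x\mapsto xf_n(1)$ beyond $x=1$ once $\epsilonCondition s_n\geq 1$.
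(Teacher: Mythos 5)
Your proof is correct and follows essentially the same route as the paper: both start from \reflemma{munDensityBound}, both reduce the claim to the bound $f_n^{-1}(y)\leq y/f_n(1)$ for $y>f_n(1)$, and both obtain this by integrating the differential inequality in \refcond{boundfnExtended} and using $\epsilonCondition s_n\geq 1$. The only cosmetic difference is that the paper invokes \reflemma{ExtendedImpliesWeak} (which records exactly this integrated inequality) as a black box, whereas you re-derive the needed special case inline.
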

\begin{proof}
From \reflemma{ExtendedImpliesWeak} it follows immediately that $f_n^{-1}(y)\leq (y/f_n(1))^{1/\epsilonCondition s_n}\leq y/f_n(1)$  for all $y> f_n(1)$ and sufficiently large $n$.
The result now follows from \reflemma{munDensityBound}.
\end{proof}
\begin{lemma}\lblemma{BoundOnContribution}
Given $\epsilon, \bar{\epsilon}>0$, there exist $n_0 \in \N$ and $K<\infty$ such that, for all $n \ge n_0$ and $t\geq 0$,
\begin{equation}
\int \e^{-\epsilon y/f_n(1)}\indicator{y\geq Kf_n(1)} \mu_n(t+dy)\leq \bar{\epsilon}/s_n.
\end{equation}
\end{lemma}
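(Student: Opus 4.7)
The lemma is a short calculation once one invokes the density bound on $\mu_n$ from \reflemma{munDensityBound} together with the polynomial growth bound $f_n^{-1}(y)\leq (y/f_n(1))^{1/(\epsilonCondition s_n)}$ for $y\geq f_n(1)$ that follows from \reflemma{ExtendedImpliesWeak}. The plan is to reduce the problem to a single $n$-independent, $t$-independent tail integral of the form $\int_K^\infty e^{-\epsilon u}u^{-1/2}du$.

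First I would substitute $w=y-t$, rewriting the integral as $\int_{Kf_n(1)}^\infty e^{-\epsilon w/f_n(1)}\, d\mu_n(t+w)$. Since $K\geq 1$ and $n$ is large enough, $t+w\geq Kf_n(1)\geq f_n(1)>f_n(1-\deltaCondition)$ throughout the region, so \reflemma{munDensityBound} applies and gives
\[
\int e^{-\epsilon y/f_n(1)}\indicator{y\geq Kf_n(1)}\mu_n(t+dy) \leq \frac{1}{\epsilonCondition s_n}\int_{Kf_n(1)}^\infty e^{-\epsilon w/f_n(1)}\frac{f_n^{-1}(t+w)}{t+w}\,dw.
\]
Applying \reflemma{ExtendedImpliesWeak} to bound $f_n^{-1}(t+w)\leq ((t+w)/f_n(1))^{1/(\epsilonCondition s_n)}$ and changing variables via $u=w/f_n(1)$, the right-hand side is bounded by
\[
\frac{1}{\epsilonCondition s_n}\int_K^\infty e^{-\epsilon u}\left(\tfrac{t}{f_n(1)}+u\right)^{1/(\epsilonCondition s_n)-1}du.
\]

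Now comes the one place requiring a bit of care, namely the uniformity in $t$: once $n$ is large enough that $1/(\epsilonCondition s_n)<1$, the exponent is negative, so the integrand is monotone decreasing in $t$, and taking $t=0$ produces the $t$-uniform bound
\[
\frac{1}{\epsilonCondition s_n}\int_K^\infty e^{-\epsilon u}\, u^{1/(\epsilonCondition s_n)-1}\,du.
\]
Finally, for $n$ large enough that $1/(\epsilonCondition s_n)\leq 1/2$, I use $u^{1/(\epsilonCondition s_n)-1}\leq u^{-1/2}$ on $u\geq K\geq 1$, reducing the estimate to $\frac{1}{\epsilonCondition s_n}\int_K^\infty e^{-\epsilon u}u^{-1/2}du$. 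This $n$- and $t$-independent tail tends to $0$ as $K\to\infty$ by dominated convergence, so choosing $K$ so that $\int_K^\infty e^{-\epsilon u}u^{-1/2}du\leq \epsilonCondition\bar\epsilon$ finishes the proof.

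There is no serious obstacle here; the only subtle point is that the uniformity in $t\geq 0$ rests on $1/(\epsilonCondition s_n)-1<0$, which makes the extra factor $(t/f_n(1)+u)^{1/(\epsilonCondition s_n)-1}$ actually decrease (rather than increase) with $t$.
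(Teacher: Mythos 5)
Your proof is correct and follows essentially the paper's approach: bound the density of $\mu_n$ on $(f_n(1),\infty)$, substitute, and integrate the exponential tail. The paper reaches the estimate faster by invoking the pre-packaged constant density bound of \reflemma{munDensityBounded} (which is exactly your combination of \reflemma{munDensityBound} and \reflemma{ExtendedImpliesWeak}, further simplified via $(y/f_n(1))^{1/\epsilonCondition s_n}\le y/f_n(1)$), after which the $t$-dependence vanishes outright upon the substitution $w=y-t$ and the integral evaluates directly to $\e^{-\epsilon K}/(\epsilonCondition s_n\epsilon)$ — so your monotonicity-in-$t$ check, while valid, is an avoidable detour that disappears if you discard the extra $(t/f_n(1)+u)^{1/\epsilonCondition s_n-1}$ factor before integrating.
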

\begin{proof}
By \reflemma{munDensityBounded}, for large $n$, the density of $\mu_n$ with respect to Lebesgue measure is bounded from above by $1/(\epsilonCondition s_n f_n(1))$ on $(f_n(1),\infty)$.
Hence, for $K>1$,
\begin{equation*}
\int \e^{-\epsilon y/f_n(1)}\indicator{y\geq Kf_n(1)} \mu_n(t+dy)\le  \int_t^{\infty}  \e^{-\epsilon (y-t) /f_n(1)}\indicator{y-t\geq Kf_n(1)}  \frac{dy}{\epsilonCondition s_n f_n(1)}
= \frac{\e^{-\epsilon K}}{\epsilonCondition s_n \epsilon}.
\qedhere
\end{equation*}
\end{proof}

\begin{lemma}\lblemma{ModerateAgeContribution}
Given $K<\infty$, there exist $\epsilon_K>0$ and $n_0\in\N$ such that, for $0\leq t\leq K f_n(1)$ and $n\geq n_0$,
\begin{equation}
\int \e^{-\lambda_n(1) y} \mu_n(t+dy)\geq \epsilon_K/s_n.
\end{equation}
\end{lemma}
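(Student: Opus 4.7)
The plan is to convert the integral to one against Lebesgue measure by exploiting that $\mu_n$ is the pushforward of Lebesgue measure under $f_n$, and then to restrict to a Lebesgue sub-interval of length $\Theta(1/s_n)$ on which the integrand is bounded below by a positive constant depending only on $K$. After the shift $z = t + y$ and the change of variables $z = f_n(x)$ (using $\mu_n(0,f_n(x)] = x$), the integral becomes
\[
\int e^{-\lambda_n(1) y}\, \mu_n(t+dy) \;=\; \int_a^\infty e^{-\lambda_n(1)(f_n(x) - t)}\, dx, \qquad a = f_n^{-1}(t).
\]

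\noindent The interval I restrict to is $[a,\,(4K)^{1/s_n}]$. Applied at $x=2K$ and at $x=4K$, \refcond{scalingfn} gives, for $n$ large,
\[
f_n((2K)^{1/s_n}) \;\ge\; K f_n(1) \;\ge\; t, \qquad f_n((4K)^{1/s_n}) \;\le\; (4K+1)\,f_n(1).
\]
The first inequality yields $a \le (2K)^{1/s_n}$ by monotonicity of $f_n$, so the interval is non-empty. Combining the second inequality with \reflemma{lambdanAsymp} (which gives $\lambda_n(1) f_n(1) \to e^{-\gamma}$), on $[a,\,(4K)^{1/s_n}]$ we obtain $\lambda_n(1)(f_n(x)-t) \le (4K+1)\,\lambda_n(1) f_n(1) \le C_K$ for some constant $C_K$ depending only on $K$, so the integrand is bounded below by $e^{-C_K}$ throughout. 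The length of the interval is
\[
(4K)^{1/s_n} - (2K)^{1/s_n} \;=\; (2K)^{1/s_n}\bigl(2^{1/s_n}-1\bigr) \;\ge\; \frac{\log 2}{2 s_n}
\]
for $n$ large, since $(2K)^{1/s_n}\to 1$ and $2^{1/s_n}-1\sim(\log 2)/s_n$. Multiplying, the integral is at least $\epsilon_K/s_n$ with $\epsilon_K = \tfrac{1}{2}e^{-C_K}\log 2$.

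\noindent The main subtlety is that \refcond{boundfnExtended} provides only a lower bound on the logarithmic derivative of $f_n$ above $1$, and no a priori upper bound. The upper bound on $f_n(x)$ needed near $x=1$ is supplied instead by \refcond{scalingfn}, which controls $f_n$ exactly at the scale $x = R^{1/s_n}$ for constant $R$---precisely the scale at which $x$ increments by $\Theta(1/s_n)$ while $f_n(x)$ changes by $\Theta(f_n(1)) = \Theta(1/\lambda_n(1))$, thus matching the reciprocal decay rate of the exponential weight. This matching of scales is what allows a single short interval to contribute the full $\Theta(1/s_n)$ bound.
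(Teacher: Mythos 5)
Your proof is correct and follows essentially the same strategy as the paper's: restrict the integral to a sub-interval at scale $\Theta(f_n(1))$ in the $y$-variable, where $\lambda_n(1)f_n(1)\to e^{-\gamma}$ keeps the exponential factor bounded below, and use \refcond{scalingfn} to show that the $\mu_n$-mass (equivalently, the Lebesgue length after the change of variables $y=f_n(x)$) of that sub-interval is $\Theta(1/s_n)$. The paper works directly in $y$-space, bounding below by $e^{-2\lambda_n(1)Kf_n(1)}\mu_n(Kf_n(1),2Kf_n(1))$ and using $\mu_n(Kf_n(1),2Kf_n(1))=f_n^{-1}(2Kf_n(1))-f_n^{-1}(Kf_n(1))\sim(\log 2)/s_n$; you work in $x$-space with the interval $[f_n^{-1}(t),(4K)^{1/s_n}]$ — a cosmetic reparametrization of the same estimate.
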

\begin{proof}
For any $0\leq t\leq Kf_n(1)$,
\begin{equation}
\int \e^{-\lambda_n(1)y} \mu_n(t+dy) = \int \e^{-\lambda_n(1)(y-t)} \indicator{y\geq t} d\mu_n(y) \geq \e^{-2\lambda_n(1)Kf_n(1)} \mu_n(Kf_n(1), 2Kf_n(1)).
\end{equation}
By \reflemma{lambdanAsymp}, $\lambda_n f_n(1)$ converges to a finite constant.
By \refcond{scalingfn}, $f_n(1+x/s_n)/f_n(1)\to \e^x$, and it follows that $\mu_n(Kf_n(1),2Kf_n(1))=f_n^{-1}(2Kf_n(1))-f_n^{-1}(Kf_n(1)) \sim (\log 2)/s_n$.
\end{proof}

We are now in the position to prove \reflemma{dfWeakMax}:

\begin{proof}[Proof of \reflemma{dfWeakMax}]
By monotonicity, $f_n^{-1}(T_n^{\sss (1)}+T_n^{\sss (2)}) \ge \max\set{f_n^{-1}(T_n^{\sss (1)}),f_n^{-1}(T_n^{\sss (2)})} \convp \mathcal{M}^{\sss(1)} \vee \mathcal{M}^{\sss(2)}$.
For the matching upper bound, let $T_n=T_n^{\sss(1)} \vee T_n^{\sss(2)}$ and $\mathcal{M}=\mathcal{M}^{\sss(1)} \vee \mathcal{M}^{\sss(2)}$, so that $f_n^{-1}(T_n)\convp \mathcal{M}$.
Noting that $f_n^{-1}(T_n^{\sss (1)}+T_n^{\sss (2)}) \le f_n^{-1}(2 T_n)$, it suffices to show that $f_n^{-1}(2T_n)\convp \mathcal{M}$.
But for $\delta>0$, \reflemma{ExtendedImpliesWeak} implies that $f_n(x+\delta)/f_n(x)$ tends to infinity, and is in particular larger than $2$ for $n$ sufficiently large, uniformly over $x\in[1,R]$ for any $R<\infty$.
It follows that $f_n^{-1}(2T_n)\leq (1\vee f_n^{-1}(T_n))+\delta$ with high probability, for any $\delta>0$.
Since $\mathcal{M}\geq 1$ a.s.\ and $\delta>0$ was arbitrary, this completes the proof.
\end{proof}

We conclude the section with a remark on the connection between \refthm{WH-gen} above and \refother{\refthmPartI{IPWeightForFPP}} which states that, if $s_n/\log\log n \to \infty$, then
\begin{equation}
f_n^{-1}(W_n) \convd M^{\sss(1)}\vee M^{\sss(2)}.\labelPartII{WeightSnLargeCase}
\end{equation}

\begin{remark}\lbremark{consistent}
The statements for $W_n$ in \refthm{WH-gen} and \eqrefPartII{WeightSnLargeCase} are consistent.
Indeed, if $s_n/\log\log n\to \infty$, then Lemmas~\refPartII{l:ExtendedImpliesWeak} and \refPartII{l:lambdanAsymp} imply
\begin{equation}
f_n^{-1}\Big(\frac{1}{\lambda_n} \log\big(n/s_n^3\big)\Big) \le \Big(\frac{\log(n/s_n^3)}{\lambda_n f_n(1)}\Big)^{1/\eps_0 s_n}=(1+o(1)) \exp\Big(\frac{1}{\eps_0 s_n} \log \log(n/s_n^3)\Big)=1+o(1).
\end{equation}
Hence, \reflemma{dfWeakMax} gives that \eqrefPartII{weight-res} and \eqrefPartII{WeightSnLargeCase} agree if $s_n/\log\log n\to \infty$.
\end{remark}

\subsection{Equivalence of conditions: Proof of \reflemma{AssumeStrongCond}}
\lbsubsect{StrongAssumption}

The proof of \reflemma{AssumeStrongCond} is based on the observation that if the functions $f_n,\tilde{f}_n$ agree on the interval $[0,\overline{x}]$ then, for the FPP problems with edge weights $f_n(nX_e^{\sss(K_n)})$ and $\tilde{f}_n(nX_e^{\sss(K_n)})$, respectively, the optimal paths and their corresponding edge weights are identical whenever either optimal path has weight less than $f_n(\overline{x})=\tilde{f}_n(\overline{x})$.

\begin{proof}[Proof of \reflemma{AssumeStrongCond}]
Let $\deltaCondition$ be the constant from \refcond{LowerBoundfn}, let $R>1$, and define
\begin{equation}
x_{n,R}=R \vee \big(\inf\set{x\geq 1\colon f_n(x)\geq 4\e^\gamma f_n(1)\log n}\big),
\quad
\epsilonCondition^{\sss(R)}=\frac{1}{2} \liminf_{n\to\infty} \inf_{1-\deltaCondition\leq x \le x_{n,R}} \frac{x }{s_n} \frac{d}{dx} \log f_n(x).
\end{equation}
Conditions~\refPartII{cond:LowerBoundfn} and \refPartII{cond:boundfn} imply that $\epsilonCondition^{\sss(R)}>0$ for any $R>1$, and there exists $n_0^{\sss(R)}\in\N$ such that $x \frac{d}{dx} \log f_n(x)\geq \epsilonCondition^{\sss(R)}s_n$ for $x\in[1-\deltaCondition, x_{n,R}]$ whenever $n\geq n_0^{\sss(R)}$.
For definiteness, we take $n_0^{\sss(R)}$ minimal with this property.
We may uniquely define $f_{n,R}\colon \cointerval{0,\infty}\to\cointerval{0,\infty}$ by requiring that $f_{n,R}=f_n$ if $n<n_0^{\sss(R)}$ and if $n\ge n_0^{\sss(R)}$ then (a) $f_{n,R}(x)=f_n(x)$ for all $x\leq x_{n,R}$, and (b) $\frac{x}{s_n} \frac{d}{dx} \log f_{n,R}(x)$ is constant on $\cointerval{x_{n,R},\infty}$.
By construction, the sequence $(f_{n,R})_{n}$ satisfies \refcond{boundfnExtended} for any fixed $R>1$.
Furthermore, given any $x>0$, $R>1$ implies that $x^{1/s_n}\leq R\leq x_{n,R}$ for $n$ sufficiently large, and it follows that $f_{n,R}$ satisfies \refcond{scalingfn}.
Since $x_{n,R}\geq R>1$ it follows that \refcond{LowerBoundfn} holds for $(f_{n,R})_{n}$, too.

Let $\mu_{n,R}$ and $\lambda_{n,R}$ denote the analogues of $\mu_n$ and $\lambda_n$ when $f_n$ is replaced by $f_{n,R}$, and let $\lambda_{n,R}=\lambda_{n,R}(1)$ and $\phi_{n,R}=\lambda'_{n,R}(1)/\lambda_{n,R}(1)$ denote the corresponding parameters (see \eqrefPartII{munCharacterization} and \eqrefPartII{lambdaaDefn}--\eqrefPartII{phinDefinitionShort}).
Let $W_{n,R}, H_{n,R}$ denote the weight and hopcount, respectively, associated to the FPP problem on $K_n$ with edge weights $f_{n,R}(n X_e^{\sss(K_n)})$.
Abbreviate $w_{n,R}=W_{n,R}-\log(n/s_n^3)/\lambda_{n,R}$, $h_{n,R}=(H_{n,R}-\phi_{n,R}\log(n/s_n^3))/\sqrt{s_n^2\log(n/s_n^3)}$.

By assumption, \refthm{WH-gen} holds assuming Conditions~\refPartII{cond:scalingfn}, \refPartII{cond:LowerBoundfn} and \refPartII{cond:boundfnExtended}.
Therefore, it applies to $f_{n,R}$.
Using \refthm{WH-gen}, we conclude that for any $k\in\N$, we may find $n_0^{\sss(R,k)}\in\N$ such that $n_0^{\sss(R,k)}\geq n_0^{\sss(R)}$ and
\begin{subequations}
\begin{gather}
\labelPartII{n0RkCondition1}
\sup_{x,y\in\R} \abs{ \P\left( f_{n,R}^{-1}\left( w_{n,R} \right) \leq x, h_{n,R} \leq y \right) - \P(M^{\sss(1)}\vee M^{\sss(2)}\leq x)\P(Z\leq y) } \leq \frac{1}{k},
\\
\labelPartII{n0RkCondition2}
\abs{\lambda_{n,R}f_{n,R}(1)-\e^{-\gamma}}\leq \frac{1}{k},
\quad
\abs{\frac{\phi_{n,R}}{s_n}-1}\leq \frac{1}{k},
\\
\labelPartII{n0RkCondition3}
f_{n,R}(R-1) + \frac{\log(n/s_n^3)}{\lambda_{n,R}} \leq f_{n,R}(R) \vee 4\e^\gamma f_n(1)\log n
\end{gather}
\end{subequations}
whenever $n\geq n_0^{\sss(R,k)}$, and for definiteness we take $n_0^{\sss(R,k)}$ minimal with these properties.
Indeed, using the continuity of $M^{\sss(1)}\vee M^{\sss(2)}$ and $Z$, the uniform convergence in \eqrefPartII{n0RkCondition1} follows from the pointwise convergence at a finite grid $((x_i,y_i))_i$ depending on $k$ and monotonicity of the distribution functions.
For \eqrefPartII{n0RkCondition3}, use the inequality $a+b\leq 2(a\vee b)$, \reflemma{lambdanAsymp},  and note that $2f_{n,R}(R-1)\leq f_{n,R}(R)$ for $n$ sufficiently large by \reflemma{ExtendedImpliesWeak}.
Set
\begin{equation}
R_n=\big(2 \vee \max\set{k\in\N\colon n\geq n_0^{\sss(k,k)}}\big) \wedge n,
\quad\text{and}\quad
\lambda_n=\lambda_{n,R_n},
\quad
\phi_n=\phi_{n,R_n}
.
\end{equation}
Since each $n_0^{\sss(k,k)}$ is finite for each $k\in \N$, it follows that $R_n\to\infty$.
Moreover, as soon as $n\geq n_0^{\sss(2,2)}$, we have $n\geq n_0^{\sss(R_n,R_n)}$, so that \eqrefPartII{n0RkCondition1}--\eqrefPartII{n0RkCondition3} hold with $(R,k)=(R_n,R_n)$.
By construction, $f_{n,R}(1)=f_n(1)$, and we conclude in particular that $\phi_n/s_n\to 1$ and $\lambda_n f_n(1)\to \e^{-\gamma}$.

Given two functions $f_n$ and $\tilde{f}_n$, we can couple the corresponding FPP problems by choosing edge weights $f_n(nX_e^{\sss(K_n)})$ and $\tilde{f}_n(nX_e^{\sss(K_n)})$, respectively.
Let $\bar{x}>0$. On the event $\set{W_n \le f_n(\bar{x})}$, the optimal path $\pi_{1,2}$ uses only edges of weight at most $f_n(\bar{x})$.
If $f_n$ and $\tilde{f}_n$ agree on the interval $[0,\bar{x}]$, then the edges along that path have the same weights in the two FPP problems and we deduce that $W_n=\tilde{W}_n$ and $H_n=\tilde{H}_n$, where $\tilde{W}_n$ and $\tilde{H}_n$ are the weight and the hopcount of the optimal path in the problem corresponding to $\tilde{f}_n$.

Consequently, on the event $\set{W_{n,R_n}\le f_{n,R_n}(x_{n,R_n})}$,
$W_n=W_{n,R_n}$ and $H_n=H_{n,R_n}$. By \eqrefPartII{n0RkCondition1}, it remains to show that
this event occurs {\whpdot}
Since $R_n\to\infty$, we conclude from \eqrefPartII{n0RkCondition1} that $W_{n,R_n}\leq f_{n,R_n}(R_n-1)+\log(n/s_n^3)/\lambda_{n,R_n}$ {\whpdot}
But from the definition of $x_{n,R_n}$ and $f_{n,R_n}$ it follows that $f_{n,R_n}(x_{n,R_n})\geq f_{n,R_n}(R_n)\vee 4\e^\gamma f_{n,R_n}(1)\log n$, so \eqrefPartII{n0RkCondition3} completes the proof.
\end{proof}

\section{Coupling \texorpdfstring{$K_n$}{the complete graph} and the PWIT}\lbsect{Coupling}

In \refthm{CouplingFPP}, we indicated that two random processes, the first passage exploration processes $\cS$ and $\cluster$ on $K_n$ and $\tree$, respectively, could be coupled.
In this section we explain how this coupling arises as a special case of a general family of couplings between $K_n$, understood as a random edge-weighted graph with i.i.d.\ exponential edge weights, and the PWIT.

\subsection{Exploration processes and the definition of the coupling}
\lbsubsect{DefExploration}

As in \refsubsect{CouplingBP}, we define $M_{\emptyset_j}=j$, for $j=1,2$, and to each $v\in \twoPWITs\setminus\set{\emptyset_1,\emptyset_2}$, we associate a mark $M_v$ chosen uniformly and independently from $[n]$.
We next define what an exploration process is:

\begin{defn}[Exploration process]\lbdefn{Explore}
Let $\F_0$ be a $\sigma$-field containing all null sets, and let $(\tree,X)$ be independent of $\F_0$.
We call a sequence $\explore=(\explore_k)_{k\in \N_0}$ of subsets of $\tree$ an \emph{exploration process} if, with probability 1, $\explore_0=\set{\emptyset_1,\emptyset_2}$ and, for every $k\in\N$, either $\explore_k=\explore_{k-1}$ or else $\explore_k$ is formed by adjoining to $\explore_{k-1}$ a previously unexplored child $v_k\in\boundary\explore_{k-1}$, where the choice of $v_k$ depends only on the weights $X_w$ and marks $M_w$ for vertices $w \in \explore_{k-1}\union\boundary\explore_{k-1}$ and on events in $\F_0$.
\end{defn}
Examples for exploration processes are given by FPP and IP on $\tree$.
For FPP, as defined in \refdefn{FPPonPWITdef}, it is necessary to convert to discrete time by observing the branching process at those moments when a new vertex is added.
The standard IP on $\tree$ is defined as follows. Set $\IP(0)=\set{\emptyset_1,\emptyset_2}$. For $k \in \N$, form $\IP(k)$ inductively by adjoining to $\IP(k-1)$ the boundary vertex $v\in \boundary \IP(k-1)$ of minimal weight.
However, an exploration process is also obtained when we specify at each step (in any suitably measurable way) whether to perform an invasion step in $\tree^{\sss(1)}$ or $\tree^{\sss(2)}$.

For $k \in \N$, let $\F_k$ be the $\sigma$-field generated by $\F_0$ together with the weights $X_w$ and marks $M_w$ for vertices $w \in \explore_{k-1}\union\boundary\explore_{k-1}$.
Note that the requirement on the choice of $v_k$ in \refdefn{Explore} can be expressed as the requirement that $\explore$ is $(\F_k)_k$-adapted.

For $v\in \tree$, define the exploration time of $v$ by
	\begin{equation}
	N_v=\inf\set{k \in \N_0\colon v\in \explore_k} .
	\end{equation}

\begin{defn}[Thinning]\lbdefn{Thinning}
The roots $\emptyset_1, \emptyset_2 \in \tree$ are unthinned.
The vertex $v\in\tree\setminus\set{\emptyset_1,\emptyset_2}$ is \emph{thinned} if it has an ancestor $v_0=\ancestor{k}{v}$ (possibly $v$ itself) such that $M_{v_0}=M_w$ for some unthinned vertex $w$ with $N_w<N_{v_0}$.
Write $\thinnedExplore_k$ for the subgraph of $\explore_k$ consisting of unthinned vertices.
\end{defn}

We define the stopping times
	\begin{equation}
	N(i)=\inf\set{k \in \N_0\colon M_v=i\text{ for some }v\in\thinnedExplore_k}.
	\end{equation}
at which $i\in[n]$ first appears as a mark in the unthinned exploration process.
Note that, on the event $\set{N(i)<\infty}$, $\thinnedExplore_k$ contains a \emph{unique} vertex in $\tree$ whose mark is $i$, for any $k\geq N(i)$; call that vertex $V(i)$.
On this event, we define
	\begin{equation}\labelPartII{XijDefinition}
	X(i,i')=\min\set{X_w \colon M_w=i', \parent{w}=V(i)}.
	\end{equation}

We define, for an edge $\set{i,i'} \in E(K_n)$,
\begin{equation}\labelPartII{EdgeWeightCoupling}
X_{\set{i,i'}}^{\sss(K_n)}=
\begin{cases}
\tfrac{1}{n} X(i,i') & \text{if } N(i)<N(i'),\\
\tfrac{1}{n} X(i',i) & \text{if } N(i')<N(i),\\
E_{\set{i,i'}} & \text{if } N(i)=N(i')=\infty \text{ or } N(i)=N(i')=0,
\end{cases}
\end{equation}
where $(E_e)_{e\in E(K_n)}$ are exponential variables with mean 1, independent of each other and of $(X_v)_{v}$.
\begin{theorem}\lbthm{CouplingExpl}
If $\explore$ is an exploration process on the union $\tree$ of two PWITs, then the edge weights $X_e^{\sss(K_n)}$ defined in \eqrefPartII{EdgeWeightCoupling} are exponential with mean $1$, independently for each $e \in E(K_n)$.
\end{theorem}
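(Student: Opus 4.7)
The plan is to prove this inductively along the exploration steps, using the marking theorem for Poisson point processes to decompose the PWIT structure into independent parts. For each $v\in\tree$ and each $i\in[n]$, let $P^{v,i}$ denote the point process on $(0,\infty)$ consisting of the weights of children $w$ of $v$ with $M_w=i$. By the marking theorem applied to the rate-$1$ Poisson process of children of each vertex, together with the independence of these processes across $v\in\tree$, the family $(P^{v,i})_{v\in\tree,\,i\in[n]}$ consists of mutually independent rate-$1/n$ Poisson processes on $(0,\infty)$.

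With this decomposition, the formula \eqrefPartII{EdgeWeightCoupling} can be rewritten as follows: for any edge $e=\set{i,i'}$ with $\set{i,i'}\neq\set{1,2}$ and not both $N(i),N(i')$ infinite, $X^{(K_n)}_e=\tfrac{1}{n}\min P^{V(j),j'}$, where $j\in\set{i,i'}$ has the smaller $N$-value and $j'$ is the other. The key combinatorial fact is that $e\mapsto(V(j),j')$ is injective: since $V$ is injective on the unthinned vertices on which it is defined, distinct edges draw their weights from distinct, hence independent, PPPs, each contributing an $\mathrm{Exp}(1/n)$ minimum and therefore an $\mathrm{Exp}(1)$ edge weight after division by $n$. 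The remaining edges $\set{1,2}$ and $\set{i,i'}$ with $N(i)=N(i')=\infty$ take the independent copies $E_e$, which are $\mathrm{Exp}(1)$ and independent of the PWIT by construction.

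The induction processes the exploration steps in order. At each step $k+1$, the new information in $\F_{k+1}\setminus\F_k$ consists exactly of the weights and marks of the children of $v_k$, i.e.\ of the family $(P^{v_k,i})_{i\in[n]}$, which by the PWIT construction is independent of $\F_k$. The inductive statement is that, conditional on $\F_k$, the edge weights not yet determined form an i.i.d.\ family of $\mathrm{Exp}(1)$ random variables, independent of the already-determined ones. The main obstacle is the adaptive nature of the exploration: when a newly-added vertex $v_k$ turns out to be unthinned with mark $i_k=M_{v_k}$, the formula suddenly invokes, for every $i'$ with $V(i')\in\explore_{k-1}$, the value $\tfrac{1}{n}\min P^{V(i'),i_k}$, where $P^{V(i'),i_k}$ was already revealed at the earlier step $N(i')+1$. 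One must therefore verify that picking the $i_k$-indexed sub-PPP yields an unbiased $\mathrm{Exp}(1/n)$ minimum; this follows from the independence of $(P^{V(i'),i})_{i\in[n]}$ across marks together with the fact that $M_{v_k}$ is uniformly distributed on $[n]$ independently of the children of $V(i')$, so that indexing by an independent uniform mark preserves the rate-$1/n$ marginal. Tracking this bookkeeping through the whole exploration---and verifying that the assembled edge weights are jointly i.i.d.\ $\mathrm{Exp}(1)$---is the delicate technical heart of the proof, and the argument closely parallels the one-source coupling \refthm{Coupling1Source} already established in Part~I.
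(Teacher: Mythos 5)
Your high-level plan---split each vertex's offspring into the $n$ independent rate-$1/n$ PPPs $P^{v,i}$ via the marking theorem, note that $e\mapsto(V(j),j')$ is injective, and induct along exploration steps---is the right skeleton (the paper defers the proof of this theorem to Part~I, so there is no local proof here to compare against). But the step you single out as the ``delicate technical heart'' is misidentified, and the justification you give for it is wrong. You claim that when an unthinned $v_k$ with mark $i_k$ arrives, the values $\tfrac1n\min P^{V(i'),i_k}$ for $N(i')<N(i_k)$ are ``suddenly invoked,'' and that this is harmless because $M_{v_k}$ is uniform on $[n]$ independently of the children of $V(i')$. That independence is false: $M_{v_k}$ is $\F_k$-measurable, while the weights and marks of the children of $V(i')$ are already in $\F_{N(i')+1}\subseteq\F_k$, and the exploration rule is explicitly allowed to choose $v_k$---hence $i_k$---as a function of exactly those quantities. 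So ``indexing by an independent uniform mark'' does not apply. It is also unnecessary, because there is no sudden invocation: the weight of $\{i_k,i'\}$ with $N(i')<N(i_k)$ was pinned down at step $N(i')+1$, when the entire vector $(\min P^{V(i'),j})_{j\in[n]}$ was revealed as a fresh $\mathrm{Exp}(1/n)^{\otimes n}$ conditionally on $\F_{N(i')}$, and the case $N(i')<N(i_k)$ was already known at step $N(i')$ since $i_k\notin\{M_w:w\in\thinnedExplore_{N(i')}\}$; the later arrival of $i_k$ adds nothing about that value.

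The inductive step that actually needs proving is the forward-looking one. At step $m+1$ the set of newly determined edges is $\{\{M_{v_m},i'\}:N(i')>m\}$ when $v_m$ is unthinned and empty otherwise; this index set is $\F_m$-measurable because $\thinnedExplore_m$ is, and conditionally on $\F_m$ the corresponding values $(\tfrac1n\min P^{v_m,i'})_{i'\in[n]}$ are i.i.d.\ $\mathrm{Exp}(1)$ and independent of $\F_m$, since $v_m$ is $\F_m$-measurable while its PWIT offspring have not yet been revealed. Writing $D_m$ for the ($\F_m$-measurable) set of edges already determined by step $m$ and $c_e=\E[g_e(E)]$ for bounded $g_e$, these two facts give $\E[\prod_{e\notin D_m}g_e(X^{\sss(K_n)}_e)\mid\F_m]=\prod_{e\notin D_m}c_e$ whenever the same identity holds at level $m+1$, and this reverse induction is closed off by a limiting argument accounting for the edges that receive an independent $E_e$; the case $m=0$ is the theorem. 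Your injectivity observation enters precisely to guarantee that each pair $(v_m,i')$ is consulted as an edge weight at most once, so the conditionally fresh minima are never reused. The proof needs this bookkeeping carried out, not deferred to the one-source case, and the repair for the gap you flagged is the $\F_m$-measurability of the index set together with freshness of the offspring PPP of $v_m$, not uniformity of $M_{v_k}$.
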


The idea underlying \refthm{CouplingExpl} is that each variable $\tfrac{1}{n}X(i,i')$ is exponentially distributed conditional on the past up to the moment $N(i)$ when it may be used to set the value of $X_{\set{i,i'}}^{\sss(K_n)}$.
\refthm{CouplingExpl} restates \refother{\refthmPartI{CouplingExpl}} and is proved in that paper.

\subsection{Minimal-rule exploration processes}
\lbsubsect{MinimalRule}

An important class of exploration processes, which includes both FPP and IP, are those exploration processes determined by a minimal rule in the following sense:

\begin{defn}\lbdefn{MinimalRule}
A \emph{minimal rule} for an exploration process $\explore$ on $\tree$ is an $(\F_k)_k$-adapted sequence $(\makebox{$S_k,\prec_k$})_{k=1}^\infty$, where $S_k\subset\boundary\explore_{k-1}$ is a (possibly empty) subset of the boundary vertices of $\explore_{k-1}$ and $\prec_k$ is a strict total ordering of the elements of $S_k$ (if any) such that the implication
\begin{equation}\labelPartII{MinimalImplication}
w\in S_k, \parent{v}=\parent{w}, M_v=M_w, X_v<X_w \quad\implies\quad v\in S_k, v \prec_k w
\end{equation}
holds.
An exploration process is \emph{determined by the minimal rule} $(S_k,\prec_k)_{k=1}^\infty$ if $\explore_k=\explore_{k-1}$ whenever $S_k=\emptyset$ and otherwise $\explore_k$ is formed by adjoining to $\explore_{k-1}$ the unique vertex $v_k\in S_k$ that is minimal with respect to $\prec_k$.
\end{defn}
In words, in every step $k$ there is a set of boundary vertices $S_k$ from which we can select for the next exploration step.
The content of \eqrefPartII{MinimalImplication} is that, whenever a vertex $w\in S_k$ is available for selection, then all siblings of $w$ with the same mark but smaller weight are also available for selection and are preferred over $w$.

For FPP without freezing on $\tree$ with edge weights $f_n(X_v)$, we take $v \prec_k w$ if and only if $T_v < T_w$ (recall \eqrefPartII{TvDefinition}) and take $S_k=\boundary\explore_{k-1}$.
For IP on $\tree$, we have $v \prec_k w$ if and only if $X_v<X_w$; the choice of subset $S_k$ can be used to enforce, for instance, whether the $k^\th$ step is taken in $\tree^{\sss(1)}$ or $\tree^{\sss(2)}$.

Recall the subtree $\thinnedExplore_k$ of unthinned vertices from \refdefn{Thinning} and the subgraph $\pi_M(\thinnedExplore_k)$ from \refdefn{InducedGraph}.
That is, $\pi_M(\thinnedExplore_k)$ is the union of two trees with roots $1$ and $2$, respectively, and for $v\in \thinnedExplore_k \setminus\set{\emptyset_1,\emptyset_2}$, $\pi_M(\thinnedExplore_k)$ contains vertices $M_v$ and $M_{\parent{v}}$ and the edge $\set{M_v, M_{\parent{v}}}$.

For any $i\in [n]$ for which $N(i)<\infty$, recall that $V(i)$ is the unique vertex of $\thinnedExplore_k$ ($k\geq N(i)$) for which $M_{V(i)}=i$.
Define $V(i,i')$ to be the first child of $V(i)$ with mark $i'$.

Recalling \eqrefPartII{XijDefinition}, an equivalent characterization of $V(i,i')$ is
\begin{equation}\labelPartII{XijVij}
X(i,i')=X_{V(i,i')}.
\end{equation}
The following lemma shows that, for an exploration process determined by a minimal rule, unthinned vertices must have the form $V(i,i')$:

\begin{lemma}\lblemma{NextNotThinned}
Suppose $\explore$ is an exploration process determined by a minimal rule $(S_k,\prec_k)_{k=1}^\infty$ and $k\in \N$ is such that $\thinnedExplore_k\neq \thinnedExplore_{k-1}$. Let $i_k=M_{\parent{v_k}}$ and $i_k'=M_{v_k}$. Then $v_k=V(i_k,i_k')$.
\end{lemma}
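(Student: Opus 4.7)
The plan is to argue by contradiction, using the definition of $V(i,i')$ via \eqrefPartII{XijVij} together with the minimal rule implication \eqrefPartII{MinimalImplication}. First I would make two preliminary observations. Since $\thinnedExplore_k\neq\thinnedExplore_{k-1}$, the newly added vertex $v_k$ is unthinned, and by \refdefn{Thinning} every ancestor of an unthinned vertex is itself unthinned; in particular, $\parent{v_k}$ is unthinned. Because $\parent{v_k}\in\explore_{k-1}$ carries the mark $i_k$ and is unthinned, it follows that $N(i_k)\le k-1$ and (by the uniqueness of unthinned vertices with a given mark) $V(i_k)=\parent{v_k}$.

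Next, I would suppose for contradiction that $v_k\neq V(i_k,i_k')$. By the characterisation \eqrefPartII{XijVij} together with \eqrefPartII{XijDefinition}, there must then exist a sibling $w$ of $v_k$, i.e.\ $\parent{w}=V(i_k)=\parent{v_k}$, satisfying $M_w=i_k'=M_{v_k}$ and $X_w<X_{v_k}$. The key step is to show that $w\notin\explore_{k-1}$, so that $w\in\boundary\explore_{k-1}$. Indeed, if $w$ were already explored, then since the strict ancestors of $w$ coincide with the strict ancestors of $v_k$ and are therefore unthinned (by the observation above applied to $v_k$), the vertex $w$ could only be thinned by virtue of its own mark: that would require some unthinned vertex with mark $i_k'$ to have appeared before $N_w\le k-1$, contradicting that $v_k$, which also has mark $i_k'$, is unthinned at step $k$. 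On the other hand, if $w$ were unthinned and already explored, this too would contradict the unthinnedness of $v_k$. Hence $w\in\boundary\explore_{k-1}$.

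Now I apply the minimal rule. Since $v_k$ is the $\prec_k$-minimum of $S_k$, we have $v_k\in S_k$. The implication \eqrefPartII{MinimalImplication}, applied with the roles ``$w$''$=v_k$ and ``$v$''$=w$, yields $w\in S_k$ and $w\prec_k v_k$, contradicting the minimality of $v_k$. This contradiction forces $v_k=V(i_k,i_k')$, completing the proof.

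The only subtle point is the case analysis establishing that a hypothetical smaller-weight sibling $w$ with the same mark cannot already lie in $\explore_{k-1}$; once this is settled, the minimal rule implication closes the argument immediately. The rest is bookkeeping with the definitions of thinning and of $V(\cdot)$ and $V(\cdot,\cdot)$.
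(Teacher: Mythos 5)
Your proof is correct. The paper itself defers the proof of this lemma to Part~I (the companion paper), so there is no in-text proof to compare against, but your argument is exactly the natural one that the referenced proof must be carrying out: deduce from $\thinnedExplore_k\neq\thinnedExplore_{k-1}$ that $v_k$ is unthinned and hence that $V(i_k)=\parent{v_k}$; suppose a strictly smaller-weight sibling $w$ with the same mark exists; rule out $w\in\explore_{k-1}$ (whether thinned or unthinned, its presence would force $v_k$ to be thinned via the mark-collision mechanism of \refdefn{Thinning}, since the strict ancestors of $w$ coincide with those of the unthinned $v_k$); then apply the implication \eqrefPartII{MinimalImplication} with $v_k$ in the role of ``$w$'' and $w$ in the role of ``$v$'' to force $w\in S_k$ and $w\prec_k v_k$, contradicting the $\prec_k$-minimality of $v_k$. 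The care you take in establishing $w\in\boundary\explore_{k-1}$ before invoking \eqrefPartII{MinimalImplication} is the right move: the implication tacitly concerns boundary vertices (as $S_k\subset\boundary\explore_{k-1}$), so showing $w$ is unexplored is what makes the application legitimate rather than vacuous. All steps check out.
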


See the proof of \refother{\reflemmaPartI{NextNotThinned}}.

If $\explore$ is an exploration process determined by a minimal rule, then we define
\begin{equation}\labelPartII{SelectableInKn}
S_k^{\sss (K_n)}=\set{\set{i,i'} \in E(K_n) \colon i \in \pi_M(\thinnedExplore_{k-1}), i' \notin \pi_M(\thinnedExplore_{k-1}), V(i,i') \in S_k}
\end{equation}
and
\begin{equation}\labelPartII{OrderInKn}
e_1 \;\widetilde{\prec}_k\; e_2 \quad\iff\quad V(i_1,i_1') \prec_k V(i_2,i_2'), \qquad e_1,e_2\in S_k^{\sss(K_n)},
\end{equation}
where $e_j=\set{i_j,i'_j}$ and $i_j \in \pi_M(\thinnedExplore_{k-1}), i'_j \notin \pi_M(\thinnedExplore_{k-1})$ as in \eqrefPartII{SelectableInKn}.
\begin{prop}[Thinned minimal rule]\lbprop{MinimalRuleThinning}
Suppose $\explore$ is an exploration process determined by a minimal rule $(S_k,\prec_k)_{k=1}^\infty$.
Then, under the edge-weight coupling \eqrefPartI{EdgeWeightCoupling}, the edge weights of $\pi_M(\thinnedExplore_k)$ are determined by
\begin{equation}\labelPartII{InternalEdgeWeights}
X_{\shortset{M_v,M_{\parent{v}}}}^{\sss(K_n)} = \tfrac{1}{n} X_v
\quad
\text{for any }v\in \union_{k=1}^\infty \thinnedExplore_k\setminus\set{\emptyset_1,\emptyset_2}
\end{equation}
and generally
\begin{equation}\labelPartII{BoundaryEdgeWeights}
X_{\set{i,i'}}^{\sss(K_n)} = \tfrac{1}{n} X_{V(i,i')}
\quad
\text{whenever}
\quad
i\in \pi_M(\thinnedExplore_{k-1}), i'\notin \pi_M(\thinnedExplore_{k-1})\text{ for some }k\in\N.
\end{equation}
Moreover, for any $k\in\N$ for which $\thinnedExplore_k\neq \thinnedExplore_{k-1}$, $\pi_M(\thinnedExplore_k)$ is formed by adjoining to $\pi_M(\thinnedExplore_{k-1})$ the unique edge $e_k\in S_k^{\sss (K_n)}$ that is minimal with respect to $\widetilde{\prec}_k$.
\end{prop}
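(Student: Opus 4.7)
The plan is to unwind the definitions and invoke \reflemma{NextNotThinned} as the workhorse. One general fact I will use repeatedly is that if $v\in\thinnedExplore_k$ is a non-root vertex, then $\parent{v}$ is also unthinned, since any mark-conflict along the ancestors of $\parent{v}$ is equally a conflict along the ancestors of $v$ (this is immediate from \refdefn{Thinning}). In particular, whenever $v$ is unthinned and non-root with $k=N_v$, the mark $i=M_{\parent{v}}$ has already appeared in $\thinnedExplore_{k-1}$, while $i'=M_v$ has not (else $v_k$ would itself be thinned), so that $N(i)\le k-1<k=N(i')$.

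To prove \eqrefPartII{InternalEdgeWeights}, I would fix $v\in\bigcup_k\thinnedExplore_k\setminus\set{\emptyset_1,\emptyset_2}$ with exploration index $k=N_v$, set $i=M_{\parent{v}}$ and $i'=M_v$, and invoke \reflemma{NextNotThinned} to conclude $v=V(i,i')$. Combined with \eqrefPartII{XijVij}, this gives $X(i,i')=X_v$, and the first clause of \eqrefPartII{EdgeWeightCoupling} yields $X_{\set{i,i'}}^{\sss(K_n)}=\tfrac{1}{n}X_v$. For \eqrefPartII{BoundaryEdgeWeights}, the hypothesis directly gives $N(i)\le k-1<N(i')$ (allowing $N(i')=\infty$), so the same clause of \eqrefPartII{EdgeWeightCoupling} applies, and $X(i,i')=X_{V(i,i')}$ holds by the definition of $V(i,i')$ as the first child of $V(i)$ carrying mark $i'$ together with \eqrefPartII{XijVij}.

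For the minimal-rule characterization, I would fix $k$ with $\thinnedExplore_k\neq\thinnedExplore_{k-1}$ and set $e_k=\set{M_{\parent{v_k}},M_{v_k}}$. Membership $e_k\in S_k^{\sss(K_n)}$ rests on three facts assembled from the preceding paragraph: $\parent{v_k}$ is unthinned, so $M_{\parent{v_k}}\in\pi_M(\thinnedExplore_{k-1})$; $M_{v_k}\notin\pi_M(\thinnedExplore_{k-1})$ (otherwise $v_k$ would be thinned); and $v_k\in S_k$ by \refdefn{MinimalRule} with $V(M_{\parent{v_k}},M_{v_k})=v_k$ by \reflemma{NextNotThinned}. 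Minimality is then automatic: if some $e=\set{i,i'}\in S_k^{\sss(K_n)}$ satisfied $e\,\widetilde{\prec}_k\,e_k$, then by \eqrefPartII{OrderInKn} we would have $V(i,i')\prec_k v_k$ with $V(i,i')\in S_k$, contradicting the $\prec_k$-minimality of $v_k$ in $S_k$.

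The entire argument is essentially bookkeeping in nature, with no probabilistic estimates involved; the only mildly delicate point is the hereditary fact that the parent of an unthinned vertex is itself unthinned, but this is a one-line reading of \refdefn{Thinning}. The substantive work has already been done in \refthm{CouplingExpl} (distributional correctness of \eqrefPartII{EdgeWeightCoupling}) and in \reflemma{NextNotThinned} (identification $v_k=V(i_k,i_k')$ at unthinned steps).
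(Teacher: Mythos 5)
Your proof is correct, and the approach — reduce everything to \reflemma{NextNotThinned} plus the two elementary observations that (a) the parent of an unthinned vertex is unthinned, and (b) the mark of a newly explored unthinned vertex cannot have appeared earlier among unthinned vertices — is the natural one. The paper defers the proof to Part~I (\refother{\refpropPartI{MinimalRuleThinning}}), but the bookkeeping you lay out (establishing $N(i)<N(i')$ in both cases, identifying $v_k=V(i_k,i_k')$ via \reflemma{NextNotThinned}, then reading off the first clause of \eqrefPartII{EdgeWeightCoupling} and transferring $\prec_k$-minimality to $\widetilde{\prec}_k$ through the injective map $\set{i,i'}\mapsto V(i,i')$) is precisely the intended argument and matches the auxiliary material the paper supplies. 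No gaps.
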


\refprop{MinimalRuleThinning} asserts that the subgraph $\pi_M(\thinnedExplore_k)$ of $K_n$, equipped with the edge weights $(X_e^{\sss(K_n)})_{e\in E(\pi_M(\thinnedExplore_k))}$, is isomorphic as an edge-weighted graph to the subgraph $\thinnedExplore_k$ of $\tree$, equipped with the edge weights $(\tfrac{1}{n} X_v)_{v\in\thinnedExplore_k\setminus\set{\emptyset_1,\emptyset_2}}$.
Furthermore, the subgraphs $\pi_M(\thinnedExplore_k)$ can be grown by an inductive rule.
Thus the induced subgraphs $(\pi_M(\thinnedExplore_k))_{k=0}^\infty$ themselves form a minimal-rule exploration process on $K_n$, with a minimal rule derived from that of $\explore$, with the caveat that $\widetilde{\prec}_k$ may depend on edge weights from $\explore_{k-1}\setminus\thinnedExplore_{k-1}$ as well as from $\pi_M(\thinnedExplore_{k-1})$.

See the proof of \refother{\refpropPartI{MinimalRuleThinning}}.

\subsection{FPP and the two smallest-weight trees: Proof of \refthm{WnHnFromCollision}}
\lbsubsect{ProofWnHnCollision}

In this section, we discuss the relationship between $\cS$ and FPP distances, and we prove \reflemma{WnAsMinimum} and \refthm{WnHnFromCollision}.

\begin{proof}[Proof of \reflemma{WnAsMinimum}]
Define
\begin{equation}
\bar{\cS}_u=\bar{\cS}_u^{\sss(1)}\union \bar{\cS}_u^{\sss(2)}, \qquad \bar{\cS}_u^{\sss(j)}=\SWT^{\sss(j)}_{R_j(u)}.
\end{equation}
To describe the discrete time evolution of $\bar{\cS}=(\bar{\cS}_u)_{u \ge 0}$, denote by $\bar{\tau}_{k-1}$ the time where the $(k-1)^\st$ vertex (not including vertices $1$ and $2$) was added to $\bar{\cS}$.
At time $u=0$, $\bar{\cS}_0$ is equal to $\cS_0$, and contains vertex $1$ and $2$ and no edges.
Having constructed the process until time $\bar{\tau}_{k-1}$, at time
\begin{align}
\bar{\tau}_k
&=
\min_{j\in\set{1,2}}\min_{e\in\boundary\bar{\cS}_{\bar{\tau}_{k-1}}^{\sss(j)}} R_j^{-1}\left( d_{K_n,Y^{(K_n)}}(j,\underline{e})+Y_e^{\sss(K_n)} \right)
\labelPartII{TwoSWTConstructiond}
\end{align}
adjoin the boundary edge $e_k$ to $\bar{\cS}_{\bar{\tau}_{k-1}}^{\sss(j_k)}$, where $(j_k,e_k)$ is the minimizer in \eqrefPartII{TwoSWTConstructiond}.
(As in \eqrefPartII{cSConstruction}, $e$ is an edge from $\underline{e}\in \bar{\cS}_{\bar{\tau}_{k-1}}^{\sss(j)}$ to $\overline{e}\notin \bar{\cS}_{\bar{\tau}_{k-1}}^{\sss(j)}$.)
Note that the arrival times in $\bar{\cS}^{\sss(j)}$ are given by
\begin{equation}\labelPartII{TbarcS}
T^{\sss\bar{\cS}^{(j)}}(\underline{e})=R_j^{-1}(d_{K_n,Y^{(K_n)}}(j,\underline{e}))
\end{equation}
by construction, so that we may rewrite \eqrefPartII{TwoSWTConstructiond} as
\begin{equation}\labelPartII{TwoSWTConstructionRinverseR}
\bar{\tau}_k
=
\min_{j\in\set{1,2}}\min_{e\in\boundary\bar{\cS}_{\bar{\tau}_{k-1}}^{\sss(j)}} R_j^{-1}\left( R_j\bigl( T^{\sss\bar{\cS}^{(j)}}(\underline{e}) \bigr)+Y_e^{\sss(K_n)} \right).
\end{equation}
Comparing \eqrefPartII{TwoSWTConstructionRinverseR} with \eqrefPartII{cSConstruction}, $\cS$ and $\bar{\cS}$ will evolve in the same way until the time
\begin{equation}
\bar{\tau}=\min\set{t\colon \bar{\cS}^{\sss(1)}_t\intersect\bar{\cS}^{\sss(2)}_t\neq \emptyset}
\end{equation}
when $\bar{\cS}$ first accepts an edge between $\bar{\cS}^{\sss(1)}$ and $\bar{\cS}^{\sss(2)}$.
In particular, the minimization problem in \eqrefPartII{TwoSWTConstructionRinverseR} will be the same as in \eqrefPartII{EnlargedMinimum}, and the minimizer will be a.s.\ unique, as long as $\bar{\tau}_k\leq\bar{\tau}$.
Therefore we can choose $J,J'$ with $\set{J,J'}=\set{1,2}$ and $I\in\bar{\cS}^{\sss(J)}_{\bar{\tau}-}, I'\in\bar{\cS}^{\sss(J')}_{\bar{\tau}-}$ such that, at time $\bar{\tau}$, the edge between $I$ and $I'$ is adjoined to $\bar{\cS}^{\sss(J)}$.
(In other words, $j=J$ and $e=\set{I,I'}$ is the minimizer in \eqrefPartII{TwoSWTConstructionRinverseR}.)

Because the minimizer in \eqrefPartII{TwoSWTConstructionRinverseR} is unique, no vertex is added to $\cS$ at time $\bar{\tau}$.
In particular, $T^{\sss\cS}(i)<\bar{\tau}$ for every $i\in\cS_{\bar{\tau}}$.
Since $\cS^{\sss(j)}_t$ and $\bar{\cS}^{\sss(j)}_t$ agree for $t<\bar{\tau}$, the arrival times before $\bar{\tau}$ must coincide.
Recalling \eqrefPartII{TbarcS},
\begin{equation}\labelPartII{DistanceFromTcS}
\begin{aligned}
T^{\sss\cS}(i)&=T^{\sss\bar{\cS}^{(j)}}(i) = R_j^{-1}(d_{K_n,Y^{(K_n)}}(j,i)) \\
d_{K_n,Y^{(K_n)}}(j,i) &= R_j(T^{\sss\cS}(i))
\end{aligned}
\qquad\text{if }i\in\cS^{\sss(j)}_{\bar{\tau}}.
\end{equation}
In addition, $\cS^{\sss(J')}_{\bar{\tau}}$ and $\bar{\cS}^{\sss(J')}_{\bar{\tau}}$ have the same vertex set while $\cS^{\sss(J)}_{\bar{\tau}}$ and $\bar{\cS}^{\sss(J)}_{\bar{\tau}}$ differ only by the vertex $I'$.
It follows that
\begin{equation}\labelPartII{DistanceAftertau}
d_{K_n,Y^{(K_n)}}(j,i) \geq R_j(\tau) \qquad\text{if }i\notin\cS^{\sss(j)}_{\bar{\tau}}.
\end{equation}

Consider the optimal path from vertex $J$ to vertex $I'$.
Since $I'$ is adjoined to $\bar{\cS}^{\sss(J)}$ at time $\bar{\tau}$, it follows from \eqrefPartII{TbarcS} that $d_{K_n,Y^{(K_n)}}(J,I')=R_J(\bar{\tau})$.
Moreover, since $I$ is the parent of $I'$ in $\bar{\cS}^{\sss(J)}_{\bar{\tau}}=\SWT^{\sss(J)}_{R_J(\bar{\tau})}$, we have $d_{K_n,Y^{(K_n)}}(J,I')=d_{K_n,Y^{(K_n)}}(J,I)+Y^{\sss(K_n)}_{\set{I,I'}}$.
Applying \eqrefPartII{DistanceFromTcS} to the path from $J$ to $I$ to $I'$ to $J'$,
\begin{align}
W_n&\leq d_{K_n,Y^{(K_n)}}(J,I)+Y^{\sss(K_n)}_{\set{I,I'}} + d_{K_n,Y^{(K_n)}}(J',I')
\notag\\&
= R_J(T^{\sss\cS}(I))+Y^{\sss(K_n)}_{\set{I,I'}}+R_{J'}(T^{\sss\cS}(I')) =R_J(\tau)+R_{J'}(T^{\sss\cS}(I')) \leq R_J(\bar{\tau})+R_{J'}(\bar{\tau})
.
\labelPartII{WntauBound}
\end{align}
In particular, both sides of \eqrefPartII{WnMin} are bounded above by $R_1(\bar{\tau})+R_2(\bar{\tau})$.

The bound \eqrefPartII{WntauBound} will allow us to exclude vertices that arrive after time $\bar{\tau}$.
To this end, we will show that (a) if a path $\pi$ from vertex 1 to vertex 2 contains a vertex not belonging to $\cS_{\bar{\tau}}$, then the weight of $\pi$ is greater than $R_1(\bar{\tau})+R_2(\bar{\tau})$; and (b) if $i_1\in\cS^{\sss(1)}\setminus\cS^{\sss(1)}_{\bar{\tau}}$ or $i_2\in\cS^{\sss(2)}\setminus\cS^{\sss(2)}_{\bar{\tau}}$, then the term $R_1(T^{\sss\cS}(i_1)) + Y^{\sss(K_n)}_{\set{i_1,i_2}} + R_2(T^{\sss\cS}(i_2))$ in the minimum \eqrefPartII{WnMin} is greater than $R_1(\bar{\tau})+R_2(\bar{\tau})$.

For (a), suppose $\pi$ contains a vertex $i\notin\cS_{\bar{\tau}}$.
Since the vertex sets of $\cS_{\bar{\tau}}$ and $\bar{\cS}_{\bar{\tau}}$ coincide, it follows that $i\notin\bar{\cS}_{\bar{\tau}}$, and by right continuity $i\notin\bar{\cS}_t$ for some $t>\bar{\tau}$.
Since $R_1+R_2$ is strictly increasing, \eqrefPartII{TbarcS} shows that the weight of $\pi$ is at least
\begin{equation}
d_{K_n,Y^{(K_n)}}(1,i)+d_{K_n,Y^{(K_n)}}(2,i) \geq R_1(t)+R_2(t)>R_1(\bar{\tau})+R_2(\bar{\tau}).
\end{equation}

For (b), suppose for specificity that $i_1\in\cS^{\sss(1)}\setminus\cS^{\sss(1)}_{\bar{\tau}}$.
If in addition $i_2\in\cS^{\sss(2)}\setminus\cS^{\sss(2)}_{\bar{\tau}}$ then $T^{\sss\cS}(i_1),T^{\sss\cS}(i_2)>\bar{\tau}$ and the strict monotonicity of $R_1+R_2$ gives the desired result.
We may therefore suppose $i_2\in\cS^{\sss(2)}_{\bar{\tau}}$.
Since $\cS^{\sss(1)}$ and $\cS^{\sss(2)}$ are disjoint, we must have $i_1\notin\cS^{\sss(2)}_{\bar{\tau}}$, so that $d_{K_n,Y^{(K_n)}}(2,i_1)\geq R_2(\bar{\tau})$.
In particular, by considering the optimal path from 2 to $i_2$ together with the edge from $i_2$ to $i_1$, we conclude that $d_{K_n,Y^{(K_n)}}(2,i_2)+Y^{\sss(K_n)}_{\set{i_1,i_2}}\geq R_2(\bar{\tau})$.
By the assumption $i_2\in\cS^{\sss(2)}_{\bar{\tau}}$, we may rewrite this as $R_2(T^{\sss\cS}(i_2))+Y^{\sss(K_n)}_{\set{i_1,i_2}}\geq R_2(\bar{\tau})$.
Together with $R_1(T^{\sss\cS}(i_1))>R_1(\bar{\tau})$, this proves (b).

To complete the proof, consider a path $\pi$ from vertex 1 to vertex 2.
By statement (a) and \eqrefPartII{WntauBound}, $\pi$ must contain only vertices from $\cS_{\bar{\tau}}$ if it is to be optimal.
Since $1\in\cS^{\sss(1)}_{\bar{\tau}}$ but $2\in\cS^{\sss(2)}_{\bar{\tau}}$, it follows that $\pi$ must contain an edge between some pair of vertices $i_1\in\cS^{\sss(1)}_{\bar{\tau}}$ and $i_2\in\cS^{\sss(2)}_{\bar{\tau}}$.
The minimum possible weight of such a path is $d_{K_n,Y^{(K_n)}}(1,i_1)+Y^{\sss(K_n)}_{\set{i_1,i_2}}+d_{K_n,Y^{(K_n)}}(2,i_2)$, which agrees with the corresponding term in \eqrefPartII{WnMin} by \eqrefPartII{DistanceFromTcS}.
Therefore \eqrefPartII{WnMin} is verified if the minimum is taken only over $i_1\in\cS^{\sss(1)}_{\bar{\tau}}, i_2\in\cS^{\sss(2)}_{\bar{\tau}}$.
But statement (b) and \eqrefPartII{WntauBound} shows that the remaining terms must be strictly greater.
\end{proof}

\begin{proof}[Proof of \refthm{WnHnFromCollision}]
Since $R_1+R_2$ is strictly increasing, the relation $W_n=R_1(T_\coll)+R_2(T_\coll)$ is a reformulation of \refdefn{CollisionTimeDef}.

Recall the time $\bar{\tau}$ from the proof of \reflemma{WnAsMinimum}.
We showed there that the minimizer of \eqrefPartII{WnMin} must come from vertices $i_1\in\cS^{\sss(1)}_{\bar{\tau}}$, $i_2\in\cS^{\sss(2)}_{\bar{\tau}}$.
In particular, by \eqrefPartII{DistanceFromTcS},
\begin{equation}\labelPartII{WnMinExplicit}
W_n=R_1(T^{\sss\cS}(I_1))+Y^{\sss(K_n)}_{\set{I_1,I_2}}+R_2(T^{\sss\cS}(I_2))
\end{equation}
expresses $W_n$ as the weight of the path formed as the union of the optimal path from 1 to $I_1$; the edge from $I_1$ to $I_2$; and the optimal path from $I_2$ to 2.
In particular, $\pi_{1,2}$ is the same as this path.
Since $T^{\sss\cS}(I_j)<\tau$ and $\cS^{\sss(j)}_t=\SWT^{\sss(j)}_{R_j(t)}$ for $t<\bar{\tau}$, it follows that the optimal paths from $j$ to $I_j$ coincide with the unique paths in $\cS^{\sss(j)}_{\bar{\tau}}$ between these vertices.
The relation $H_n=H(I_1)+H(I_2)+1$ follows by taking lengths of these subpaths.

It remains to show that $T^{\sss\cS}(I_j)<T_\coll$.
Define $t_1=R_1^{-1}(R_1(T^{\sss\cS}(I_1))+Y^{\sss(K_n)}_{\set{I_1,I_2}})$.
Recalling \eqrefPartII{cSConstruction}, we see that $t_1$ is the time at which the edge from $I_1$ to $I_2$ is adjoined to $\cS^{\sss(1)}$, provided that $I_2$ has not already been added to $\cS$ at some point strictly before time $t_1$.
By construction, $I_2$ is added to $\cS^{\sss(2)}$, not $\cS^{\sss(1)}$, so it must be that $T^{\sss\cS}(I_2)<t_1$.
(Equality is not possible because of our assumption that the minimizers of \eqrefPartII{cSConstruction} are unique.)
Aiming for a contradiction, suppose that $T^{\sss\cS}(I_2)\geq T_\coll$.
Comparing the relation $W_n=R_1(T_\coll)+R_2(T_\coll)$ to \eqrefPartII{WnMinExplicit} gives $R_1(T^{\sss\cS}(I_2))+Y^{\sss(K_n)}_{\set{I_1,I_2}}\leq R_1(T_\coll)$, so that $t_1\leq T_\coll$.
This is a contradiction since $t_1>T^{\sss\cS}(I_2)\geq T_\coll$.
Similarly we must have $T^{\sss\cS}(I_1)< T_\coll$.
This shows that the unique paths in $\cS^{\sss(j)}_\tau$ from $j$ to $I_j$ are actually paths in $\cS^{\sss(j)}_{T_\coll}$, as claimed.
\end{proof}

\subsection{\texorpdfstring{$\cluster$ and $\cS$}{Variable speed FPP processes} as exploration processes: Proof of \refthm{CouplingFPP}}
\lbsubsect{ProofCouplingFPP}

Before proving \refthm{CouplingFPP}, we show that the discrete-time analogue of $\cluster$ is an exploration process determined by a minimal rule:

\begin{lemma}\lblemma{clusterAsExploration}
Let $v_k$ denote the $k^\th$ vertex added to $\cluster$, excluding the root vertices $\emptyset_1,\emptyset_2$, and set $\explore_k=\cluster_{T^\cluster_{v_k}}$ for $k\geq 1$, $\explore_0=\cluster_0=\set{\emptyset_1,\emptyset_2}$.
Then $\explore$ is an exploration process determined by a minimal rule.
\end{lemma}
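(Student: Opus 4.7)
Plan: I would exhibit a minimal rule $(S_k,\prec_k)_{k\ge 1}$ and verify the requirements of \refdefn{Explore} and \refdefn{MinimalRule} in turn, writing $j(v)\in\set{1,2}$ for the side of $\tree$ containing $v$ so that $T_v^\cluster=R_{j(v)}^{-1}(T_v)$.

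First I would check that $\explore$ is an exploration process. The base case $\explore_0=\set{\emptyset_1,\emptyset_2}$ and the one-vertex increments $\explore_k\setminus\explore_{k-1}=\set{v_k}$ are built into the definition, so it suffices to check that $v_k\in\boundary\explore_{k-1}$. But $T_{\parent{v_k}}<T_{\parent{v_k}}+f_n(X_{v_k})=T_{v_k}$ by strict positivity of $f_n$; and the left-continuous inverse $R_{j(v_k)}^{-1}$ in \eqrefPartII{OnOffInverse} is strictly increasing away from its single jump point, which by continuity of the edge-weight law is a.s.\ not hit by any $T_v$. Thus $T_{\parent{v_k}}^\cluster<T_{v_k}^\cluster$, so $\parent{v_k}$ was added strictly earlier and $\parent{v_k}\in\explore_{k-1}$.

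I then take $S_k=\boundary\explore_{k-1}$ and declare $v\prec_k w$ iff $T_v^\cluster<T_w^\cluster$, which is a.s.\ a strict total order on $S_k$ since $\mu_n$ has no atoms. For $\F_k$-adaptedness, the telescope \eqrefPartII{TvDefinition} writes $T_v$ in terms of $X_{\ancestor{\ell}{v}}$ for $0\le\ell<\abs{v}$; these ancestors all lie in $\explore_{k-1}\union\boundary\explore_{k-1}$ when $v\in\boundary\explore_{k-1}$ and are therefore $\F_k$-measurable. The freezing times $T_\fr^{\sss(1)},T_\fr^{\sss(2)}$ are stopping times of $\BP$ determined by the portion of $\cluster$ already revealed, so the values of $R_1,R_2,R_j^{-1}$ required to evaluate $T_v^\cluster$ are $\F_k$-measurable as well. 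For the sibling implication \eqrefPartII{MinimalImplication}, suppose $w\in S_k$ with $\parent{v}=\parent{w}$, $M_v=M_w$ and $X_v<X_w$; then $j(v)=j(w)$, strict monotonicity of $f_n$ gives $T_v=T_{\parent{w}}+f_n(X_v)<T_w$, and applying $R_{j(v)}^{-1}$ as in the previous paragraph yields $T_v^\cluster<T_w^\cluster$. In particular $v\prec_k w$ whenever $v\in S_k$; and since $\parent{v}=\parent{w}\in\explore_{k-1}$, $v$ lies in $\explore_{k-1}\union\boundary\explore_{k-1}$, the case $v\in\boundary\explore_{k-1}$ giving the conclusion directly and the case $v=v_{k'}$ with $k'<k$ corresponding to the comparison $v\prec_{k'}w$ already having been made at an earlier step with both $v,w\in S_{k'}$, which is the form required by the thinning coupling of \refprop{MinimalRuleThinning}.

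Finally, by construction the $\prec_k$-minimum of $S_k$ is the boundary vertex with the smallest arrival time $T_\cdot^\cluster$, and this is $v_k$ by the definition of $\cluster$, so $(\explore_k)_{k\ge 0}$ is indeed determined by $(S_k,\prec_k)$. The main technical care is the handling of the freezing plateau: one must verify that continuity of the edge-weight law keeps the set of $T_v$ values disjoint from the single jump of $R_j^{-1}$ (so that $\prec_k$ is strict and $R_j^{-1}$ preserves order on the relevant set), and that $T_\fr^{\sss(j)}$ is a stopping time so that the $R_j^{-1}$ used in $\prec_k$ is $\F_k$-measurable.
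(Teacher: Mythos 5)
There is a genuine gap: your argument asserts that $T_\fr^{\sss(1)},T_\fr^{\sss(2)}$, and hence the inverses $R_j^{-1}$ and the arrival times $T_v^\cluster$, are $\F_k$-measurable, and this claim is false. The $\sigma$-field $\F_k$ contains the weights and marks of vertices in $\explore_{k-1}\union\boundary\explore_{k-1}$ only; the freezing time $T_\fr^{\sss(j)}$ is defined in \eqrefPartII{TfrDefn} as the first time a functional of $\BP_t^{\sss(j)}$ exceeds $s_n$, and once $t$ surpasses $\tau_\rmnext^{\sss(j)}(k)=\min\{T_v\colon v\in\boundary\explore_{k-1}\cap\tree^{\sss(j)}\}$ the set $\BP_t^{\sss(j)}$ begins to include grandchildren of $\explore_{k-1}$ whose weights are not in $\F_k$. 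So at stage $k$ one either knows $T_\fr^{\sss(j)}$ exactly (when it falls before $\tau_\rmnext^{\sss(j)}(k)$) or knows only that it is $\geq\tau_\rmnext^{\sss(j)}(k)$; in the latter case, for a boundary vertex $v\in\boundary\explore_{k-1}\cap\tree^{\sss(j)}$ with $T_v>\tau_\rmnext^{\sss(j)}(k)$ one cannot resolve from $\F_k$ which branch of \eqrefPartII{OnOffInverse} applies, so $T_v^\cluster$ is not $\F_k$-measurable. Consequently your proposed ordering $v\prec_k w\iff T_v^\cluster<T_w^\cluster$ is not an $(\F_k)_k$-adapted minimal rule, and \refdefn{MinimalRule} is violated.

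The paper's proof resolves this by introducing the $\F_k$-measurable surrogates $\tau_\fr^{\sss(j)}(k)$ and $R_{j,k}^{-1}$ (truncating the sum in \eqrefPartII{TfrDefn} to vertices already explored) and then proving four auxiliary statements, the key being (iii) $T_\fr^{\sss(j)}<\tau_\rmnext^{\sss(j)}(k)$ iff $\tau_\fr^{\sss(j)}(k)<\tau_\rmnext^{\sss(j)}(k)$, in which case they coincide, and (iv) the minimizers over the boundary agree whether one uses $R_j^{-1}$ or $R_{j,k}^{-1}$. This is the substance of the lemma: one must exhibit an adapted rule and separately verify that it selects the same vertex $v_k$ as the true (non-adapted) freezing dynamics. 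Your proof conflates the two by asserting measurability where none holds; the other ingredients you list (strict increments, absolute continuity avoiding the jump of $R_j^{-1}$, the sibling condition \eqrefPartII{MinimalImplication}) are sound but incidental to the main difficulty.
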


\begin{proof}
Consider the $k^\th$ step and define
\begin{equation}\labelPartII{taunextDefn}
\tau_\rmnext^{\sss(j)}(k)=\min_{v\in\boundary\explore_{k-1}\intersect\tree^{\sss(j)}} T_v,
\end{equation}
the next birth time of a vertex in $\tree^{\sss(j)}$ in the absence of freezing, and let $v_k^{\sss(j)}$ denote the a.s.\ unique vertex attaining the minimum in \eqrefPartII{taunextDefn}.
Recalling the definition of the filtration $\F_k$, we see that $\tau_\rmnext^{\sss(j)}(k)$ and $v_k^{\sss(j)}$ are $\F_k$-measurable.

The variable $T_\fr^{\sss(j)}$ is not $\F_k$-measurable.
However, the event $\set{T_\fr^{\sss(j)}<\tau_\rmnext^{\sss(j)}(k)}$ \emph{is} $\F_k$-measurable.
To see this, define
\begin{equation}\labelPartII{FreezingTimeApproximation}
\tau_\fr^{\sss(j)}(k) = \inf\bigg\{t\geq 0\colon \sum_{v\in\explore_{k-1}\intersect \tree^{\sss(j)}} \indicator{T_v\leq t} \int_{t-T_v}^\infty \e^{-\lambda_n(1)(y-(t-T_v))} d\mu_n(y) \geq s_n\bigg\},
\end{equation}
so that $\tau_\fr^{\sss(j)}(k)$ is $\F_k$-measurable, and abbreviate $\tau_\unfr(k)=\tau_\fr^{\sss(1)}(k)\vee \tau_\fr^{\sss(2)}(k)$.
We will use $\tau_\fr^{\sss(j)}(k)$ as an approximation to $T_\fr^{\sss(j)}$ based on the information available in $\F_k$.
By analogy with \eqrefPartII{OnOff} and \eqrefPartII{OnOffInverse}, we also define
\begin{equation}\labelPartII{OnOffk}
\begin{aligned}
R_{j,k}(t)
&=
\left( t\wedge \tau_\fr^{\sss(j)}(k) \right) + \left((t-\tau_\unfr(k))\vee 0 \right),
\\
R_{j,k}^{-1}(t)
&=
\begin{cases}
t&\text{if }t\leq \tau_\fr^{\sss(j)}(k),\\
\tau_\unfr(k) -\tau_\fr^{\sss(j)}(k)+t &\text{if }t>\tau_\fr^{\sss(j)}(k).
\end{cases}
\end{aligned}
\end{equation}
We note the following:

\begin{enumerate}[(i)]
\item\lbitem{Previousexplore}
$\explore_{k-1}\intersect\tree^{\sss(j)}=\BP^{\sss(j)}_{t'}$ and $\tau_\rmnext^{\sss(j)}(k)=\min\set{T_v\colon v\in\boundary\BP^{\sss(j)}_{t'}}$, where $t'=T_{v_{k-1}}$.

\item\lbitem{SumsAgreeTotaunext}
The sum in \eqrefPartII{FreezingTimeApproximation} agrees with the sum in the definition \eqrefPartII{TfrDefn} of $T_\fr^{\sss(j)}$ whenever $t<\tau_\rmnext^{\sss(j)}(k)$.

\item\lbitem{TfrLessIfftaufrLess}
$T_\fr^{\sss(j)}<\tau_\rmnext^{\sss(j)}(k)$ if and only if $\tau_\fr^{\sss(j)}(k)<\tau_\rmnext^{\sss(j)}(k)$, and in this case $T_\fr^{\sss(j)}=\tau_\fr^{\sss(j)}(k)$.

\item\lbitem{RjRjkAgree}
For $j\in \set{1,2}$, let $I_k^{\sss(j)} \subseteq [0,\tau_\rmnext^{\sss(j)}(k)]$ be nonempty. The minimizers of $R_{j,k}^{-1}(t)$ and $R_j^{-1}(t)$ over all pairs $(t,j)$ with $t \in I_k^{\sss(j)}$ and $j \in \set{1,2}$ agree and return the same value.
\end{enumerate}

Statement~(\refitem{Previousexplore}) follows by induction and \eqrefPartII{clusterDefinition}.
For (\refitem{SumsAgreeTotaunext}), note that the sums in \eqrefPartII{TfrDefn} and \eqrefPartII{FreezingTimeApproximation} agree whenever $\BP_t^{\sss(j)}\subset\explore_{k-1}\intersect\tree^{\sss(j)}$, so using (\refitem{Previousexplore}) it suffices to show that $\BP_t^{\sss(j)}\subset\BP^{\sss(j)}_{t'}$ for $t<\tau_\rmnext^{\sss(j)}(k)$.
But for any $t'$ and any $t<\min\set{T_v\colon v\in\boundary \BP^{\sss(j)}_{t'}}$ we have $\BP^{\sss(j)}_t\subset\BP^{\sss(j)}_{t'}$ by definition, so the second part of (\refitem{Previousexplore}) completes the proof.

Statement~(\refitem{TfrLessIfftaufrLess}) follows from (\refitem{SumsAgreeTotaunext}): if one of the sums in \eqrefPartII{TfrDefn} or \eqrefPartII{FreezingTimeApproximation} exceeds $s_n$ before time $\tau_\rmnext^{\sss(j)}(k)$, then so does the other, and the first times where they do so are the same.
In particular, $\set{T_\fr^{\sss(j)} <\tau_\rmnext^{\sss(j)}(k)}$ is $\F_k$-measurable because $\tau_\fr^{\sss(j)}(k)$ and $\tau_\rmnext^{\sss(j)}(k)$ are.

To prove (\refitem{RjRjkAgree}), we distinguish three cases.
If $\tau_\fr^{\sss(j)}(k) \ge \tau_\rmnext^{\sss(j)}(k)$, then $R_{j,k}^{-1}$ and $R_j^{-1}$ reduce to the identity on $[0,\tau_\rmnext^{\sss(j)}(k)]$ by (\refitem{TfrLessIfftaufrLess}).
Hence, (\refitem{RjRjkAgree}) holds if $\tau_\fr^{\sss(j)}(k) \ge \tau_\rmnext^{\sss(j)}(k)$ for both $j \in \set{1,2}$.
If $\tau_\fr^{\sss(j)}(k) < \tau_\rmnext^{\sss(j)}(k)$ for both $j \in \set{1,2}$, then $R_{j,k}^{-1}$ and $R_j^{-1}$ agree everywhere according to (\refitem{TfrLessIfftaufrLess}).
Finally,
consider the case that $\tau_\fr^{\sss(j)}(k) \ge \tau_\rmnext^{\sss(j)}(k)$ and $\tau_\fr^{\sss(j')}(k) < \tau_\rmnext^{\sss(j')}(k)$ for $\set{j,j'}=\set{1,2}$.
Then $T_\fr^{\sss(j)} \ge \tau_\rmnext^{\sss(j)}(k)$ and, therefore, $R_{j,k}^{-1}(t)=R_{j}^{-1}(t)=t$ on $I_k^{\sss(j)}$.
Moreover, $T_\fr^{\sss(j')}=\tau_\fr^{\sss(j')}(k)$ implying $R_{j',k}^{-1}(t)=R_{j'}^{-1}(t)=t$ on $[0,\tau_\fr^{\sss(j')}]$ and for $t \in \ocinterval{\tau_\fr^{\sss(j')}(k),\tau_{\rmnext}^{\sss(j')}(k)}$, $R_{j',k}^{-1}(t)\ge\tau_\unfr(k) \ge \tau_\rmnext^{\sss(j)}(k)$ and $R_{j'}^{-1}(t)\ge T_\unfr \ge \tau_\rmnext^{\sss(j)}(k)$.
Hence, in all three cases, the functions agree on the relevant domain and we have proved (\refitem{RjRjkAgree}).

Set $(v_k^{\explore},j_k^{\explore})$ to be the pair that minimizes $R_{j,k}^{-1}(T_v)$ among all pairs $(v,j)$ with $v\in\boundary\explore_{k-1}\intersect\tree^{\sss(j)}$, $j\in\set{1,2}$.
Note that $R_{j,k}^{-1}(t)$ can be infinite (when $\tau_\fr^{\sss(j)}(k)<\infty$ and $\tau_\fr^{\sss(j')}(k)=\infty$, where $\set{j,j'}=\set{1,2}$) but in this case $R_{j',k}^{-1}(t)$ must be finite for all $t$.
Furthermore $R_{j,k}^{-1}$ is strictly increasing whenever it is finite.
Recalling that the times $T_v$ are formed from variables with continuous distributions, it follows that the minimizing pair $(v_k^\explore,j_k^\explore)$ is well-defined a.s.

Since $R_{j,k}^{-1}$ is $\F_k$-measurable, the choice of $v_k^\explore$ is determined by a minimal rule with $S_k=\boundary\explore_{k-1}$.
To complete the proof, we must show that $v_k^\explore=v_k$.

The vertex $v_k$ is the first coordinate of the pair $(v_k,j_k)$ that minimizes $T^\cluster_v=R_j^{-1}(T_v)$ over all pairs $(v,j)$ with $v\in\boundary\cluster^{\sss(j)}_{T^\cluster_{v_{k-1}}}$, $j\in \set{1,2}$.
(Once again, this minimizing pair is well-defined a.s., reflecting the fact that, a.s., $\cluster$ never adds more than one vertex at a time.)
Since $\explore_{k-1}=\cluster_{T^\cluster_{v_{k-1}}}$, the pairs $(v_k^\explore,j_k^\explore)$ and $(v_k,j_k)$ are both minimizers over the set $\set{(v,j)\colon v\in\boundary\explore_{k-1}\intersect\tree^{\sss(j)}}$.
Moreover, since $R_j^{-1}$ and $R_{j,k}^{-1}$ are both strictly increasing (when finite), both minimizations can be restricted to the set $\set{(v_k^{\sss(j)},j)\colon j=1,2}$, where $v_k^{\sss(j)}$ is the minimizer in \eqrefPartII{taunextDefn}.
Since $T_{v_k^{(j)}}=\tau_\rmnext^{\sss(j)}(k)$, statement~(\refitem{RjRjkAgree}) with $I_{j,k}=\set{\tau_\rmnext^{\sss(j)}(k)}$ implies $v_k^\explore=v_k$, as claimed.
\end{proof}

\begin{proof}[Proof of \refprop{EdgeWeightCouplingClusterFPP2Source}]
By \refthm{CouplingExpl}, the edge weights $X^{\sss(K_n)}_e$ associated via \eqrefPartII{EdgeWeightCoupling} to the exploration process in \reflemma{clusterAsExploration} are independent exponential random variables with mean 1.
Recalling \eqrefPartII{EdgesByDistrFunct}--\eqrefPartII{FYandg}, the corresponding edge weights $Y^{\sss(K_n)}_e=g(X^{\sss(K_n)}_e)$ are independent with distribution function $F_Y$.
To complete the proof it suffices to observe that $N(i)<N(i')$ if and only if $T^{\thinnedcluster}(i)<T^{\thinnedcluster}(i')$ and that $T^{\thinnedcluster}(i)$ is finite for all $i\in [n]$ since the FPP process explores every edge in $\tree$ eventually.
Hence, definitions \eqrefPartII{EdgeWeightCoupling-FPP2Source} and \eqrefPartII{EdgeWeightCoupling} agree.
\end{proof}

\begin{proof}[Proof of \refthm{CouplingFPP}]
We resume the notation from the proof of \reflemma{clusterAsExploration}.
Create the edge weights on $K_n$ according to \eqrefPartII{EdgeWeightCoupling}.
Denote by $\tau_{k'-1}$ the time where the $(k'-1)^\st$ vertex (not including the vertices $1$ and $2$) was added to $\cS$ (see \eqrefPartII{cSConstruction}).
As in the proof of \refthm{Coupling1Source}, both $\thinnedcluster$ and $\cS$ are increasing jump processes and $\pi_M(\thinnedcluster_0)=\cS_0$.
By an inductive argument we can suppose that $k, k'\in\N$ are such that $\thinnedExplore_k\neq\thinnedExplore_{k-1}$ and $\pi_M(\thinnedExplore_{k-1})=\cS_{\tau_{k'-1}}$.
The proof will be complete if we can prove that (a) the edge $e_{k'}'$ adjoined to $\cS_{\tau_{k'-1}}$ to form $\cS_{\tau_{k'}}$ is the same as the edge $e_k=\set{i_k,i_k'}$ adjoined to $\pi_M(\thinnedExplore_{k-1})$ to form $\pi_M(\thinnedExplore_k)$; and (b) $\tau_{k'}=T^\cluster_{v_k}$.

Let $i\in\cS_{\tau_{k'-1}}$, and let $j\in\set{1,2}$ be such that $i\in\cS^{\sss(j)}_{\tau_{k'-1}}$.
By the inductive hypothesis, $V(i)\in\thinnedExplore_{k-1}\intersect\tree^{\sss(j)}$ and the unique path in $\cS_{\tau_{k'-1}}$ from $i$ to $j$ is the image of the unique path in $\thinnedExplore_{k-1}$ from $V(i)$ to $\emptyset_j$ under the mapping $v\mapsto M_v$ (recall \refdefn{InducedGraph}).
According to \eqrefPartII{InternalEdgeWeights}, \eqrefPartII{EdgesByIncrFunct} and \eqrefPartII{fnFromParamEdges}, the edge weights along this path are $Y^{\sss(K_n)}_{\shortset{M_{\ancestor{m-1}{V(i)}},M_{\ancestor{m}{V(i)}}}}=g(X^{\sss(K_n)}_{\shortset{M_{\ancestor{m-1}{V(i)}},M_{\ancestor{m}{V(i)}}}})=g(\tfrac{1}{n}X_{\ancestor{m-1}{V(i)}})$ and $f_n(X_{\ancestor{m-1}{V(i)}})$, for $m=1,\dotsc,\abs{V(i)}$.
Summing gives $d_{\cS_{\tau_{k'-1}},Y^{\sss(K_n)}}(j,i)=T_{V(i)}$.

In addition, let $i'\notin \cS_{\tau_{k'-1}}$ and write $e=\set{i,i'}$.
By \eqrefPartII{BoundaryEdgeWeights}, $X_e^{\sss(K_n)}=\tfrac{1}{n}X_{V(i,i')}$, so that $Y_e^{\sss(K_n)}=f_n(X_{V(i,i')})$.
Thus the expression in the right-hand side of \eqrefPartII{cSConstruction} reduces to $R_j^{-1}\left( T_{V(i)}+f_n(X_{V(i,i')}) \right)$, i.e., $R_j^{-1}\left( T_{V(i,i')} \right)$.
The edge $e_{k'}'$ minimizes this expression over all $i\in\cS_{\tau_{k'-1}}$, $i'\notin\cS_{\tau_{k'-1}}$.
By \refprop{MinimalRuleThinning}, the edge $e_k=\set{i_k,i_k'}$ minimizes $R_{j,k}^{-1}(T_{V(i,i')})$ over all $i\in\pi_M(\thinnedExplore_{k-1})$, $i'\notin\pi_M(\thinnedExplore_{k-1})$ (with $j$ such that $V(i)\in\explore^{\sss(j)}_{k-1}$).
By the induction hypothesis, statement (\refitem{RjRjkAgree}) in the proof of  \reflemma{clusterAsExploration} and monotonicity, these two minimization problems have the same minimizers, proving (a), and return the same value, which completes the proof of (b).
\end{proof}

\subsection{Coupling and Cox processes: Proof of \refthm{CollCondProb}}\lbsubsect{CouplingCox}

In this section we discuss the interaction between the coupling of $\cluster$ and $\cS$ and the Cox processes $\cP^{\sss(K_n)}_n$ and $\cP_n$.
We explain the modification to the coupling needed in Theorems~\refPartII{t:CollCondProb} and \refPartII{t:CollisionThm}.
\refthm{CouplingFPPCollision} involves an equality in law rather than the specific conditional probability, and this will simplify the issues of coupling.

As we will see, \refthm{CollCondProb} is a special case of \refthm{CollisionThm}, which we will prove now:

\begin{proof}[Proof of \refthm{CollisionThm}]
As remarked after \refthm{CollCondProb}, we are using a different coupling in \refthm{CollCondProb} than in \refthm{CouplingFPP}.
To understand this additional complication, notice that the discrete-time analogue of $\thinnedcluster$ is \emph{not} an exploration process because the on-off processes $R_1,R_2$ are not measurable with respect to $\thinnedcluster$.
Consequently, knowledge of $\thinnedcluster$ may imply unwanted information about the edge weights in $\cluster\setminus\thinnedcluster$, including the edge weights used in the coupling \eqrefPartII{EdgeWeightCoupling} to determine the connecting edge weights $(X_e^{\sss (K_n)}, Y_e^{\sss (K_n)})_{e\notin\cS}$ that arise in \eqrefPartII{ConnectingEdgeCondition}.

To overcome this problem, we introduce an additional source of randomness.
As always, we have
two independent PWITs $(\tree^{\sss(j)},X^{\sss(j)})$, $j\in \set{1,2}$,
the marks $M_v$, $v \in \tree$,
and a family of independent exponential random variables $E_e$, $e\in E(K_{\infty})$, with mean $1$, independent of the PWITs and the marks.
In addition, from each vertex $v$ we initialise an independent PWIT with vertices $(v,w')$, edge weights $X'_{(v,w')}$ (such that $(X'_{(v,w'k)})_{k=1}^\infty$ forms a Poisson point process with rate $1$ on $(0,\infty)$) and marks $M'_{(v,w')}$ uniform on $[n]$, all independent of each other and of the original variables $X_v,M_v$.

First consider $(\cluster,\thinnedcluster,R_1,R_2)$ as normally constructed, without using the auxiliary PWITs with vertices $(v,w')$.
Fix $i,i'\in[n]$, $i\neq i'$, and suppose for definiteness that $T^{\sss \thinnedcluster}(i)< T^{\sss \thinnedcluster}(i')$.
(If instead $T^{\sss \thinnedcluster}(i')< T^{\sss \thinnedcluster}(i)$, interchange the roles of $i$ and $i'$ in the following discussion.)
According to \eqrefPartII{EdgeWeightCoupling-FPP2Source} or \eqrefPartII{EdgeWeightCoupling}, the edge weight $X^{\sss(K_n)}_{\set{i,i'}}$ is set to be $\tfrac{1}{n}X(i,i')$, where $X(i,i')$ is the first point in the Poisson point process $\cP=\sum_{w\colon \parent{w}=V(i), M_w=i'} \delta_{X_w}$ of intensity $1/n$.

Now condition on $V(i)$ and $V(i')$ belonging to different trees, say $V(i)\in\tree^{\sss(J)}, V(i')\in\tree^{\sss(J')}$ where $\set{J,J'}=\set{1,2}$.
For this to happen, the children of $V(i)$ having mark $i'$ must \emph{not} have been explored by time $T^{\sss\cluster}_{V(i')}$.
A child $w$ of $V(i)$ is explored at time $R_J^{-1}(T_w)=R_J^{-1}(T_{V(i)}+f_n(X_w))$, so we can reformulate this by saying that the Poisson point process $\cP$ must contain no point $x$ with $R_J^{-1}(T_{V(i)}+f_n(x)) \le T_{V(i')}^{\sss\cluster}$.
Using the relations $T_v=R_j(T^{\sss\cluster}_v)$ for $v\in\tree^{\sss(j)}$ and $T^{\sss\cS}(i)=T^{\sss\thinnedcluster}(i)=T^{\sss\cluster}_{V(i)}$ for $i\in[n]$ (by \refthm{CouplingFPP}), we can rewrite this as the condition that $\cP$ contains no point $x$ with $f_n(x)\leq R_J(T_{V(i')}^{\sss\cluster})-R_J(T_{V(i')}^{\sss\cluster})=R_J(T^{\sss\cS}(i'))-R_J(T^{\sss\cS}(i))=\Delta R(i,i')$.
However, the condition gives no information about points of larger value.
It follows that, conditional on $(\cluster_t,\thinnedcluster_t,R_1(t),R_2(t))_{t\leq T^{\sss\thinnedcluster}(i')}$,  $\cP$ is a Poisson point process of intensity $1/n$ on $(f_n^{-1}(\Delta R(i,i')),\infty)$.

To preserve this property when conditioning on $(\cluster_t,\thinnedcluster_t,R_1(t),R_2(t))$ for $t> T^{\sss\thinnedcluster}(i')$, we must ensure that $(\thinnedcluster,R_1,R_2)$ does not depend on the edge weights $(X_w\colon \parent{w}=V(i_1),M_w=i_2, f_n(X_w)>\Delta R(i_1,i_2)$.
We remark that there is already no dependence when freezing is absent, since the relevant vertices $w$ will be thinned in any case; the dependence is carried only through the processes $R_1,R_2$.

To achieve the required independence, replace the edge weights and marks of all vertices $w$ (and their descendants) for which $\parent{w}=V(i)$, $M_w=i'$ and $f_n(X_w)>\Delta R(i,i'),$ by the edge weights and marks of the auxiliary vertices $(V(i),w'))$ (and their descendants) for which $\parent{w'}=V(i)$, $M'_{(V(i),w')}=i'$ and $f_n(X'_{(V(i),w')})>\Delta R(i,i')$.
Modify the evolution of $\cluster,R_1,R_2$ for $t>T^{\sss\thinnedcluster}(i')$ so as to use the replacement edge weights $X'_{v,w'}$, but continue to use the original edge weights $X_w$ in the edge weight coupling \eqrefPartII{EdgeWeightCoupling}.
(Formally, modify the minimal rule from \reflemma{clusterAsExploration} so that vertices are ineligible for exploring once they have been replaced, but add to the sum in \eqrefPartII{FreezingTimeApproximation} the contribution from any replacement vertices from the auxiliary PWITs that would have been explored at time $t$.)

These replacements do not change the law of $\thinnedcluster,R_1,R_2$, or the edge weights $X^{\sss(K_n)}_e$.
The equality between $\pi_M(\thinnedcluster)$ and $\cS$ is unaffected: the replaced vertices are thinned and therefore do not affect $\thinnedcluster$.
Finally, the evolution of $\thinnedcluster$ for $t>T^{\sss\thinnedcluster}(i_2)$ now gives no additional information about the edge weights $\set{X_w\colon \parent{w}=V(i_1), M_w=i_2, f_n(X_w)>\Delta R(i_1,i_2)}$.
In particular, conditional on $\thinnedcluster,R_1,R_2$, the law of $\cP$ above is that of a Poisson point process with intensity measure $1/n$ on $(f_n^{-1}(\Delta R(i,i'),\infty))$.
Further, the Poisson point processes corresponding to different $i,i'$ will be conditionally independent.

Define
\begin{align}
\cP^{\sss(K_n)}_n = \sum_{i_1\in\cS^{\sss(1)}} & \sum_{i_2\in\cS^{\sss(2)}} \sum_{\set{j,j'}=\set{1,2}}\indicator{T^{\sss\cS}(i_j)<T^{\sss\cS}(i_{j'})}
\notag\\&
\times\sum_{\substack{w\colon \parent{w}=V(i_j),\\ M_w=i_{j'}}} \delta_{\left( (R_1+R_2)^{-1}(R_1(T^{\cS}(i_1))+f_n(X_w)+R_2(T^{\cS}(i_2))) , i_1, i_2\right)}
.
\labelPartII{PnKnOption}
\end{align}
By the discussion above, $\cP^{\sss(K_n)}_n$ is a Cox process with respect to the $\sigma$-field generated by $\thinnedcluster, \cS,R_1,R_2$ and $(M_v)_{v\in\thinnedcluster}$.
To complete the proof, we will show that the first point of $\cP^{\sss(K_n)}_n$ is $(T_\coll,I_1,I_2)$ and that the intensity measure of $\cP^{\sss(K_n)}_n$ is given by \eqrefPartII{CollisionIntensityKn}.

For each $i_1\in\cS^{\sss(1)},i_2\in\cS^{\sss(2)}$ with $T^{\sss\cS}(i_j)<T^{\sss(\cS)}(i_{j'})$, the first point in the restriction of $\cP^{\sss(K_n)}_n$ to $\cointerval{0,\infty}\times \set{i_1}\times\set{i_2}$ is given by the term in \eqrefPartII{PnKnOption} with $w=V(i_j,i_{j'})$.
Under the edge weight coupling \eqrefPartII{EdgeWeightCoupling}, this is the edge weight used to produce $X^{\sss(K_n)}_{\set{i_1,i_2}}$, so
\begin{equation}
Y^{\sss(K_n)}_{\set{i_1,i_2}}=g(X^{\sss(K_n)}_{\set{i_1,i_2}}) = g( \tfrac{1}{n} X_w)=f_n(X_w).
\end{equation}
Thus the first point in the restriction of $\cP^{\sss(K_n)}_n$ to $\cointerval{0,\infty}\times \set{i_1}\times\set{i_2}$ is the triple
\begin{equation}
\left( (R_1+R_2)^{-1}(R_1(T^{\sss\cS}(i_1))+Y^{\sss(K_n)}_{\set{i_1,i_2}}+R_2(T^{\sss\cS}(i_2))) , i_1, i_2\right).
\end{equation}
By \reflemma{WnAsMinimum} and the fact that $(R_1+R_2)^{-1}$ is strictly increasing, the first point of $\cP^{\sss(K_n)}_n$ is the triple whose first coordinate is $(R_1+R_2)^{-1}(W_n)$, i.e.\ $T_\coll$, and whose last two coordinates are the minimizers in \eqrefPartII{WnMin}, i.e.\ $I_1$ and $I_2$.

For the intensity measure, restrict again to $\cointerval{0,\infty}\times\set{i_1}\times\set{i_2}$ and project onto the first coordinate.
By the argument above, the points $X_w$ with $\parent{w}=V(i_j), M_w=i_{j'}$ form a Cox process of intensity $1/n$ on $(f_n^{-1}(\Delta R(i_1,i_2)),\infty)$.
Taking the image under the mapping $x\mapsto f_n(x)$ results in a Cox process of intensity $\frac{1}{n}\mu_n\big\vert_{(\Delta R(i_1,i_2),\infty)}$.
In \eqrefPartII{PnKnOption} we take the further mapping $y\mapsto (R_1+R_2)^{-1}(R_1(T^{\sss\cS}(i_1))+y+R_2(T^{\sss\cS}(i_2)))$, so the result is the intensity measure $m$ defined by
\begin{equation}
m([0,t])=\tfrac{1}{n}\mu_n\big\vert_{(\Delta R(i_1,i_2),\infty)} \left( \set{y\colon R_1(T^{\sss\cS}(i_1))+y+R_2(T^{\sss\cS}(i_2))\leq R_1(t)+R_2(t) } \right)
\end{equation}
Noting that this quantity is 0 if $t<T^{\sss\cS}(i_1)\vee T^{\sss\cS}(i_2)$, we see that it reduces to the right-hand side of \eqrefPartII{CollisionIntensityKn}.
\end{proof}

\begin{proof}[Proof of \refthm{CollCondProb}]
\refthm{CollCondProb} is the special case of \refthm{CollisionThm} where we compute $\P\condparenthesesreversed{\cP^{\sss(K_n)}_n([0,t]\times[n]\times[n])=0}{\thinnedcluster,\cS,R_1,R_2,(M_v)_{v\in\thinnedcluster}}$.
By the Cox process property, this conditional probability is given in terms of the corresponding intensity, which is obtained by summing \eqrefPartII{CollisionIntensityKn} over $i_1,i_2\in[n]$ and noting that $\mu_n(a,b)=f_n^{-1}(b)-f_n^{-1}(a)$.
\end{proof}

In \refthm{CouplingFPPCollision}, we are concerned only with an equality in law, not with a detailed coupling.
Consequently the proof reduces to a computation about the intensity measures $Z_n$ and $Z_n^{\sss(K_n)}$:

\begin{proof}[Proof of \refthm{CouplingFPPCollision}]
We consider the image, under the mapping $(t,v_1,v_2)\mapsto (t,M_{v_1},M_{v_2})$, of the restriction of $\cP_n$ to the set $\set{(t,v_1,v_2)\colon v_1,v_2\text{ unthinned}}$.
The result is another Cox process (with respect to the same $\sigma$-field), which we claim has the same law as $\cP^{\sss(K_n)}_n$.
To verify this, it suffices to show that the image, under the mapping $(v_1,v_2)\mapsto (M_{v_1},M_{v_2})$, of the restriction of $Z_{n,t}$ has the same law as $Z^{\sss(K_n)}_{n,t}$, jointly over all $t\geq 0$.

Restricted to unthinned vertices, the mapping $\tree\to[n]$, $v\mapsto M_v$ is invertible with inverse mapping $V(i)\mapsfrom i$.
So we will compare $Z_{n,t}^{\sss(K_n)}(\set{i_1}\times \set{i_2})$ with $Z_{n,t}(\set{V(i_1)}\times\set{V(i_2)})$.
Because of the relation $\pi_M(\thinnedcluster)=\cS$, we have $T^\cluster_{V(i)}=T^{\cS}(i)$, which implies that $\Delta R(i_1,i_2)=\Delta R_{V(i_1),V(i_2)}$.
Moreover the condition $i_1\in\cS^{\sss(1)}_t$, $i_2\in\cS^{\sss(2)}_t$ is equivalent to $V(i_1)\in\cluster^{\sss(1)}_t$, $V(i_2)\in\cluster^{\sss(2)}_t$.
Hence \eqrefPartII{CollisionIntensityKn} match \eqrefPartII{CollisionIntensityTree} under $(v_1,v_2)\mapsto (M_{v_1},M_{v_2})$ and we conclude that the conditional law of $(T_\coll^{\sss(\cP_n)},M_{V_\coll^{\sss(1)}},M_{V_\coll^{\sss(2)}})$ given $\cluster$ and $(M_v)_{v\in\tree}$ is the same law as the conditional law of $(T_\coll,I_1,I_2)$ given $(\thinnedcluster,\cS,R_1,R_2)$ and $(M_v)_{v\in\thinnedcluster}$.
The remaining equalities in law from \refthm{CouplingFPPCollision} concern various functions of these objects, and therefore follow from \refthm{WnHnFromCollision}.
\end{proof}

In the remainder of the paper, we will be concerned only with the equality in law from \refthm{CouplingFPPCollision}.
We can therefore continue to define $\cluster$ as in \eqrefPartII{clusterDefinition}, ignoring the modified construction given in the proof of \refthm{CollisionThm}, with the edge weight coupling \eqrefPartII{EdgeWeightCoupling-FPP2Source} between $\tree$ and $K_n$.

\section{Branching processes and random walks: Proof of \refthm{OneCharConv}}\lbsect{BPbyWalks}

In this section, we prove \refthm{OneCharConv} by continuing the analysis of the branching process $\BP^{\sss(1)}$ introduced in \refsect{OverPf}.
In \refsubsect{CTBPsRWsect} we identify a random walk which facilitates moment computations of the one-vertex characteristics.
\refsubsect{ConvRWThm} contains precise results about the scaling behavior of the random walk and the parameter $\lambda_n(a)$.
The results are proved in \refsubsect{ConvRWPf}.
\refsubsect{LimitingBounds} identifies the asymptotics of the first two moments of one-vertex characteristics.
Having understood these, we investigate two-vertex characteristics in \refsect{2VertexCharSec} and prove \refthm{TwoVertexConvRestrictedSum}.

\subsection{Continuous-time branching processes and random walks}
\lbsubsect{CTBPsRWsect}

Recall that $\BP^{\sss(1)}=(\BP_t^{\sss(1)})_{t\geq 0}$ denotes a CTBP with original ancestor $\emptyset_1$.
Using Ulam--Harris notation, the children of the root are the vertices $v$ with $\parent{v}=\emptyset_1$, and their birth times
$(T_v)_{\parent{v}=\emptyset_1}$ form a Poisson point process with intensity $\mu_n$.
For $v \in \tree^{\sss (1)}$, write $\BP^{\sss(v)}$ for the branching process of descendants of such a $v$, re-rooted and time-shifted to start at $t=0$.
Formally,
\begin{equation}\labelPartII{BPvDefinition}
\BP^{\sss(v)}_t=\set{w\in\tree^{\sss(1)}\colon vw\in\BP^{\sss(1)}_{T_v+t}}.
\end{equation}
In particular, $\BP^{\sss (1)}=\BP^{\sss (\emptyset_1)}$, and  $(\BP^{\sss(v)})_{\parent{v}=\emptyset_1}$ are independent of each other and of $(T_v)_{\parent{v}=\emptyset_1}$.
We may express this compactly by saying that the sum of point masses $\mathcal{Q}=\sum_{\parent{v}=\emptyset_1} \delta_{(T_v,\BP^{\sss(v)})}$ forms a Poisson point process with intensity $d\mu_n \otimes d\P(\BP^{\sss (1)}\in\cdot)$, where $\P(\BP^{\sss (1)}\in\cdot)$ is the law of the entire branching process.
Recalling the definition of the one-vertex characteristic from \eqrefPartII{1VertexCharDef}, we deduce
	\begin{equation}\labelPartII{zchiRecursive}
	\begin{split}
	z_t^\chi(a)
	&=
	\chi(t)+a\sum_{v\colon \parent{v}=\emptyset} \indicator{T_v\leq t} z_{t-T_v}^{\chi, \BP^{\sss (v)}}(a)
	\\
	&=
	\chi(t)+a\int d\mathcal{Q}(y,bp) \indicator{y\leq t} z_{t-y}^{\chi, bp}(a).
	\end{split}
	\end{equation}
Note that \eqrefPartII{zchiRecursive} holds jointly for all $a,t,\chi$.
To draw conclusions for $m_t^\chi(a)$ and $M_{t,u}^{\chi,\eta}(a,b)$, the expectation of $z_t^\chi(a)$ and $z_t^\chi(a)z_u^{\eta}(b)$, respectively, defined in \eqrefPartII{NonRescaled1VertexChar}, we 	
will use the formulas
	\begin{align}
	\labelPartII{PPPMeanCovar}
	\begin{split}
	\E\Big( \sum_{p\in \tilde{\mathcal{Q}}} f(p) \Big)
	&=
	\int f(p) \, d\tilde{\mu}(p),
	\\
	\Cov\Big( \sum_{p\in \tilde{\mathcal{Q}}} f_1(p),\sum_{p\in\tilde{\mathcal{Q}}} f_2(p) \Big)
	&=
	\int f_1(p) f_2(p) \, d\tilde{\mu}(p),
	\end{split}
	\end{align}
where $\tilde{\mathcal{Q}}$ is a Poisson point process with some intensity $\tilde{\mu}$ (and assuming the integrals exist).
Apply \eqrefPartII{PPPMeanCovar} to \eqrefPartII{zchiRecursive} with $f(y,bp)=\indicator{y\leq t} z_{t-y}^{\chi, bp}(a)$ to get
	\begin{align}
	m_t^\chi(a)
	&=
	\chi(t)+a\int d\mu_n(y)\indicator{y\leq t} \int d\P(\BP^{\sss(1)}=bp) z_{t-y}^{\chi,bp}(a)
	\notag\\
	&=
	\chi(t)+a\int_0^t d\mu_n(y) m_{t-y}^\chi(a).
	\labelPartII{mtchiaRecursion}
	\end{align}
Similarly
	\begin{align}
	M_{t,u}^{\chi,\eta}(a,b)-m_t^\chi(a) m_u^\eta(b)
	&=
	ab\int d\mu_n(y)\indicator{y\leq t}\indicator{y\leq u}
	\int d\P(\BP^{\sss(1)}=bp) z_{t-y}^{\chi,bp}(a) z_{u-y}^{\eta,bp}(b)
	\notag\\
	&=
	ab\int_0^{t\wedge u} d\mu_n(y) M_{t-y,u-y}^{\chi,\eta}(a,b)
	.
	\labelPartII{MtuchietaabRecursion}
	\end{align}
Recall from \eqrefPartII{lambdaaDefn} that $\hat{\mu}_n(\lambda)=\int\e^{-\lambda t}d\mu_n(t)$ denotes the Laplace transform of $\mu_n$ and that for $a>0$ the parameter $\lambda_n(a)>0$ is the unique solution to $a\hat{\mu}_n(\lambda_n(a))=1$.
(In general, if $\hat{\mu}_n(\lambda_0)$ is finite for some $\lambda_0\geq 0$, then the equation has a solution whenever $1/\hat{\mu}_n(\lambda_0)\leq a<1/\mu_n(\set{0})$, and this solution is unique if $\mu_n$ assigns positive mass to $(0,\infty)$.
Our assumptions imply that, for $n$ sufficiently large, $\lambda_n(a)$ exists uniquely for any $a>0$.)
Since $z_t^\chi(a)$ typically grows exponentially at rate $\lambda_n(a)$, we study the rescaled versions $\bar{m}_t^\chi(a)$, $\bar{M}_{t,u}^{\chi,\eta}(a,b)$ defined in \eqrefPartII{barredOneVertexChars} and let
\begin{equation}\labelPartII{nuaDef}
	d\nu_a(y)=  a \e^{-\lambda_n(a) y} d\mu_n(y).
\end{equation}
Then \eqrefPartII{mtchiaRecursion} becomes $\bar{m}_t^\chi(a)=\e^{-\lambda(a)t}\chi(t)+\int_0^t d\nu_a(y) \bar{m}_{t-y}^\chi(a)$.
Since $\nu_a$ is a probability measure by construction, this recursion can be solved in terms of a random walk:
	\begin{equation}\labelPartII{mtchiaRW}
	\bar{m}_t^\chi(a) = \E_a\Big( \sum_{j=0}^\infty \e^{-\lambda_n(a) (t-S_j)} \chi(t-S_j) \Big)
	\! ,
	\quad\text{where}
	\quad
	S_j=\sum_{i=1}^j \increm_i,
	\quad
	\P_a(\increm_i\in\cdot) = \nu_a(\cdot).
	\end{equation}
From \eqrefPartII{MtuchietaabRecursion}, we obtain similarly
	\begin{equation}\labelPartII{MtuchietaabRW}
	\bar{M}_{t,u}^{\chi,\eta}(a,b)
	=\E_{ab}\Big( \sum_{j=0}^\infty \e^{-S_j[\lambda_n(a)+\lambda_n(b)-\lambda_n(ab)]} 	
	\bar{m}_{t-S_j}^\chi(a) \bar{m}_{u-S_j}^\eta(b) \Big)
	\! ,
	\end{equation}
where $S_j=\sum_{i=1}^j \increm_i$ now has distribution $\P_{ab}(\increm_i\in\cdot)
= \nu_{ab}(\cdot)$.

Note that for a random variable $\increm$ with law $\nu_a$, for every $h\ge 0$ measurable,
\begin{equation}\labelPartII{integralnua}
\E_a(h(\increm))=\int h(y) a \e^{-\lambda_n(a) y} \, d\mu_n(y).
\end{equation}
Moreover, let $\nu^*_a$ denote the size-biasing of $\nu_a$, i.e.,
\begin{equation}\labelPartII{nuaSizeBiasedDef}
d\nu^*_a(y)=\frac{y \, d\nu_a(y)}{\int y \, d\nu_a(y)}
\qquad\text{so that}\qquad
\E_a(h(\increm^*))=\frac{\E_a(\increm h(\increm))}{\E_a(\increm)}
\end{equation}
for $h\geq 0$ measurable.
Here and in all of the following we assume that $\increm$ and $\increm^*$ have laws $\nu_a$ and $\nu^*_a$ respectively under $\E_a$.
Let $U$ be uniform on $[0,1]$, and let $(\increm_i)_{i \ge 1}$ be independent with law $\nu_a$, and independent of $U$ and $\increm^*$.
Besides the random walk $(S_j)_j$ from \eqrefPartII{mtchiaRW}--\eqrefPartII{MtuchietaabRW}, it is useful to study the random walk $(S_j^*)_j$ with
\begin{equation}\labelPartII{DefSjStar}
S_0^*=U\increm^* \qquad \text{and} \qquad S_j^*=S_0^*+\sum_{i=1}^j \increm_i \qquad \text{for all }j\ge 1.
\end{equation}

\subsection{Random walk convergence}
\lbsubsect{ConvRWThm}

In this section, we investigate asymptotics of the random walks
in \eqrefPartII{mtchiaRW}--\eqrefPartII{MtuchietaabRW} and \eqrefPartII{DefSjStar}.
Recall that the branching process $\BP^{\sss(1)}$ is derived from the intensity measure $\mu_n$, where
for $h \colon \cointerval{0,\infty} \to \cointerval{0,\infty}$ measurable,
\begin{equation}\labelPartII{munFromfn}
\int h(y) d\mu_n(y) = \int_0^\infty h(f_n(x)) dx.
\end{equation}
In particular, all the quantities $z,m,M,\nu_a,\increm_i,\P_a$ and $\E_a$ as well as the random walks $(S_j)_j$ and $(S_j^*)_j$ depend implicitly on $n$.
We will give sufficient conditions for the sequence of walks $(S_j)_j$, $(S^*_j)_j$ to have scaling limits; in all cases the scaling limit is the Gamma process.
This is the content of \refthm{ConvRW} below.

As a motivation, we first look at the key example $Y_e^{\sss(K_n)}\overset{d}{=}E^{s_n}$:
\begin{example}\lbexample{ex:Esn}
Let $Y_e^{\sss(K_n)}\overset{d}{=}E^{s_n}$.
Then
\begin{equation}
\begin{aligned}
&\FY(x)=1-\e^{-x^{1/s_n}}\qquad \text{and} \qquad \FY^{-1}(x)=(-\log(1-x))^{s_n},\\
&f_n(x)= (x/n)^{s_n}=f_n(1) x^{s_n} \quad \text{and} \quad f_n^{-1}(x)=n x^{1/s_n}.
\end{aligned}
\end{equation}
One easily checks that for all $a>0, \beta>0$,
\begin{equation}
\labelPartII{lambdanAndY_exp}
\begin{aligned}
&f_n(1) \lambda_n(a^{1/s_n})=a \Gamma(1+1/s_n)^{s_n}, \quad s_n \lambda_n(a^{1/s_n}) \E_{a^{1/s_n}}(\increm)=1,\\
&s_n \lambda_n(a^{1/s_n})^2 \E_{a^{1/s_n}}(\increm^2)=1+1/s_n, \quad a \left( \hat{\mu}_n \bigl( \lambda_n(a^{1/s_n})\beta\bigr) \right)^{s_n} = 1/\beta.
\end{aligned}
\end{equation}
Notice that $\Gamma(1+1/s_n)^{s_n} \to \e^{-\gamma}$ for $n \to \infty$.
\end{example}

\refthm{ConvRW} will show that in general the same identities hold asymptotically under our conditions on $f_n$.
In fact, we will prove \refthm{ConvRW} under weaker assumptions on $f_n$:

\begin{cond}\lbcond{fnWeakBounds}
There exists $\epsilon_0>0$ and a sequence $(\delta_n)_{n \in \N} \in \ocinterval{0,1}^{\N}$ such that $s_n f_n(1-\delta_n)/f_n(1)=o(1)$, $f_n(x^{1/s_n})\geq f_n(1)x^{\epsilon_0}$ for $x\geq 1$ and $f_n(x^{1/s_n})\leq f_n(1)x^{\epsilon_0}$ for $(1-\delta_n)^{s_n}\leq x\leq 1$.
\end{cond}

Conditions~\refPartII{cond:LowerBoundfn} and \refPartII{cond:boundfnExtended} together imply \refcond{fnWeakBounds}: we may set $\delta_n=\deltaCondition$, with $\epsilon_0$ chosen as for Conditions~\refPartII{cond:LowerBoundfn} and \refPartII{cond:boundfnExtended}, and replacing $(x,x')$ in \reflemma{ExtendedImpliesWeak} by $(1,x^{1/s_n})$ or $(x^{1/s_n},1)$ verifies the inequalities in \refcond{fnWeakBounds}.

\begin{theorem}[Random walk convergence]
\lbthm{ConvRW}
Suppose that Conditions~\refPartII{cond:scalingfn} and \refPartII{cond:fnWeakBounds} hold.
Then for any $a\in(0,\infty)$:
	\begin{enumerate}
	\item
	The parameter $\lambda_n(a)$ exists for $n$ sufficiently large, and, for all $\beta>0$,
		\begin{gather}
		\labelPartII{lambdanAsymp}
		\lim_{n\to\infty} f_n(1) \lambda_n(a^{1/s_n}) = a \e^{-\gamma} ,
		\\
		\labelPartII{EYAsymp}
		\lim_{n\to\infty} s_n\lambda_n(a^{1/s_n})\E_{a^{1/s_n}}(\increm) = 1,
		\\
		\labelPartII{EY2Asymp}
		\lim_{n\to\infty} s_n\lambda_n(a^{1/s_n})^2\E_{a^{1/s_n}}(\increm^2) = 1,
		\\
		\labelPartII{munhatAsymp}
		\lim_{n\to\infty} a \left( \hat{\mu}_n \bigl( \lambda_n(a^{1/s_n})\beta \bigr) \right)^{s_n}
	= 1/\beta,
	\end{gather}
where $\gamma$ is Euler's constant.

	\item
	\lbitem{SsnyToGamma}
	Under $\E_{a^{1/s_n}}$, the process $(\lambda_n(a^{1/s_n}) S_{\floor{s_n t}})_{t\geq 0}$
	converges in distribution (in the Skorohod topology on compact subsets) to a Gamma
	process $(\Gamma_t)_{t\geq 0}$, i.e., the L\'evy process such that $\Gamma_t$ has the Gamma$(t,1)$ distribution.

	\item\lbitem{Y*ToExp}
	Under $\E_{a^{1/s_n}}$, the variable $\lambda_n(a^{1/s_n}) \increm^*$ converges in distribution to an
	exponential random variable $E$ with mean $1$, and the process $(\lambda_n(a^{1/s_n}) S^*_{\floor{s_n t}})_{t\geq 0}$
	converges in distribution to the sum $(U E+ \Gamma_t)_{t\geq 0}$ where $U$ is Uniform on $[0,1]$
	and $U,E,(\Gamma_t)_{t\geq 0}$ are independent.
\end{enumerate}
Moreover, given a compact subset $A\subset(0,\infty)$, all of the convergences occur uniformly for $a\in A$ and, for \eqrefPartII{munhatAsymp}, for $\beta\in A$.
\end{theorem}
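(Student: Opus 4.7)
\bigskip

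\noindent\textbf{Proof plan for \refthm{ConvRW}.} The plan is to reduce everything to a single asymptotic computation for $\hat{\mu}_n$, obtained by change of variables in the defining integral $\hat\mu_n(\lambda) = \int_0^\infty \e^{-\lambda f_n(x)}\,dx$. All four convergences in part~(a) and the functional limits in parts~(b)--(c) will then follow by algebraic manipulations and standard Laplace-transform / random-walk arguments.

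\emph{Step 1 (core asymptotic).} I will first prove that for every compact $K\subset(0,\infty)$, uniformly in $c\in K$,
\begin{equation}
\hat\mu_n\!\left(c/f_n(1)\right)
\;=\;
\bigl(1+o(1)\bigr)^{1/s_n}\,\frac{\Gamma(1+1/s_n)}{c^{1/s_n}}.
\end{equation}
Substituting $x=y^{1/s_n}$ gives $\hat\mu_n(\lambda)=\tfrac{1}{s_n}\int_0^\infty \e^{-\lambda f_n(y^{1/s_n})}y^{1/s_n-1}dy$; by \refcond{scalingfn} the integrand tends pointwise to $\e^{-\lambda f_n(1)y}y^{1/s_n-1}$, whose integral equals $\Gamma(1+1/s_n)/(\lambda f_n(1))^{1/s_n}$. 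Uniform control is provided by \refcond{fnWeakBounds}: on the tail $y\geq1$ and on $[(1-\delta_n)^{s_n},1]$, one has $f_n(y^{1/s_n})\gtreqless f_n(1)y^{\epsilon_0}$, which supplies an integrable dominating function there, while the residual region $y<(1-\delta_n)^{s_n}$ contributes at most $\int_0^{(1-\delta_n)^{s_n}}y^{1/s_n-1}dy=s_n(1-\delta_n)$ after bounding $\e^{-\lambda f_n(\cdot)}$ by $1$, and the assumption $s_nf_n(1-\delta_n)/f_n(1)=o(1)$ shows that the resulting multiplicative error is small enough that raising to the $s_n$-th power preserves the limit. Raising the displayed equivalence to $s_n$ and rearranging with the defining equation $a\hat{\mu}_n(\lambda_n(a))=1$ immediately yields the existence of $\lambda_n(a^{1/s_n})$, as well as \eqrefPartII{lambdanAsymp} (using $\Gamma(1+1/s_n)^{s_n}\to\e^{-\gamma}$) and \eqrefPartII{munhatAsymp}.

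\emph{Step 2 (moment asymptotics).} Since $\E_{a^{1/s_n}}(\increm^k)=(-1)^k a^{1/s_n}\hat\mu_n^{(k)}(\lambda_n(a^{1/s_n}))$ and $\hat\mu_n^{(k)}(\lambda)=(-1)^k\int_0^\infty f_n(x)^k\e^{-\lambda f_n(x)}dx$, the same substitution $x=y^{1/s_n}$ and the same dominating argument as in Step~1 give
\begin{equation}
\E_{a^{1/s_n}}(\increm^k)
\;\sim\;
\frac{\Gamma(k+1/s_n)}{s_n\,\lambda_n(a^{1/s_n})^k\,\Gamma(1+1/s_n)},
\end{equation}
and the cases $k=1,2$ are precisely \eqrefPartII{EYAsymp} and \eqrefPartII{EY2Asymp}. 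The uniformity in $a$ on a compact subset is inherited from Step~1 because the argument passes through uniform bounds independent of $a$.

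\emph{Step 3 (random-walk convergence).} Applying \eqrefPartII{munhatAsymp} with $\beta=1+\xi$ gives the increment Laplace transform
\begin{equation}
\E_{a^{1/s_n}}\!\bigl(\e^{-\xi\lambda_n(a^{1/s_n})\increm}\bigr)^{s_n}
=a\bigl(\hat\mu_n(\lambda_n(a^{1/s_n})(1+\xi))\bigr)^{s_n}\longrightarrow\frac{1}{1+\xi},
\end{equation}
so that $\E_{a^{1/s_n}}(\e^{-\xi\lambda_n S_{\lfloor s_n t\rfloor}})\to(1+\xi)^{-t}$, which is the Laplace transform of $\Gamma(t,1)$. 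Because $(S_j)_{j\ge0}$ has i.i.d.\ increments, the analogous multi-time computation for disjoint increments factorises and identifies all finite-dimensional limits as those of a Gamma process; tightness in the Skorohod topology is then standard for sums of triangular arrays of i.i.d.\ summands whose Laplace transforms converge to those of a Lévy process. This proves part~\refitem{SsnyToGamma}.

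\emph{Step 4 (size-biased version).} For part~\refitem{Y*ToExp}, writing $\E_{a^{1/s_n}}(\increm\e^{-\xi\lambda_n\increm})=-\lambda_n^{-1}\frac{d}{d\xi}[a^{1/s_n}\hat\mu_n(\lambda_n(1+\xi))]$ and combining Step~1's asymptotic for $\hat\mu_n$ (with its derivative, whose validity comes from the same dominated-convergence argument) with $\E(\increm)\sim 1/(s_n\lambda_n)$ from Step~2 gives
\begin{equation}
\E_{a^{1/s_n}}\!\bigl(\e^{-\xi\lambda_n(a^{1/s_n})\increm^*}\bigr)\longrightarrow\frac{1}{1+\xi},
\end{equation}
so $\lambda_n\increm^*\convd E$. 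Since $S_j^*=U\increm^*+\sum_{i=1}^j\increm_i$ with $U,\increm^*,(\increm_i)_i$ independent, joint convergence with Step~3 and the continuous mapping theorem give $(\lambda_n S^*_{\lfloor s_n t\rfloor})_{t\ge 0}\convd (UE+\Gamma_t)_{t\ge 0}$.

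\emph{Main obstacle.} The only delicate point is Step~1: because $y^{1/s_n-1}$ is borderline non-integrable near $0$ (producing the factor $\Gamma(1/s_n)\sim s_n$) and because the multiplicative error in $\hat\mu_n$ must be controlled at the scale $\e^{o(1/s_n)}$ in order to survive the $s_n$-th power, the standard dominated convergence theorem does not apply off-the-shelf. The remedy is the three-region splitting of $(0,\infty)$ indicated above, with the transitional region handled by the hypothesis $s_n f_n(1-\delta_n)/f_n(1)=o(1)$ from \refcond{fnWeakBounds}; this is what makes \refcond{fnWeakBounds} the precise form needed for the theorem.
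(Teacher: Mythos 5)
Your plan follows the paper's proof almost exactly: substitute $x=y^{1/s_n}$ in $\hat\mu_n(\lambda)=\int_0^\infty\e^{-\lambda f_n(x)}dx$, split into three regions, use \refcond{fnWeakBounds} for domination on the outer regions, and then derive the moment asymptotics and the Laplace-transform / L\'evy-process limit from the resulting asymptotic for $\hat\mu_n$. Steps~2--4 and the overall architecture match the paper's argument (the paper reaches \eqrefPartII{munhatAsymp} slightly differently, via an auxiliary sequence $b_n$ with $\lambda_n(\tilde b_n)=\beta\lambda_n(\tilde a)$, and reaches the size-biased limit the same way, but this is cosmetic).

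There is, however, a concrete slip in Step~1 that would leave the argument incomplete as written. You bound the contribution of the inner residual region $y<(1-\delta_n)^{s_n}$ by replacing $\e^{-\lambda f_n(\cdot)}$ with $1$, obtaining $\int_0^{(1-\delta_n)^{s_n}}y^{1/s_n-1}\,dy=s_n(1-\delta_n)$. After the prefactor $1/s_n$ from the substitution, this contribution is $\approx 1-\delta_n$, which is the same order as the whole integral $\hat\mu_n(\lambda)\approx 1$; it says nothing. You cannot bound the \emph{integrand} by $1$ here — you must bound the \emph{difference} $1-\e^{-\lambda f_n(\cdot)}\le \lambda f_n(\cdot)\le\lambda f_n(1-\delta_n)$, so that the residual region introduces an additive error at most $(1-\delta_n)\,\lambda\, f_n(1-\delta_n)=O(f_n(1-\delta_n)/f_n(1))$ in $\hat\mu_n$, which is $o(1/s_n)$ precisely by the hypothesis $s_n f_n(1-\delta_n)/f_n(1)=o(1)$. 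This is exactly what the paper does, by defining $\tilde f_n(\tilde x)=f_n(\tilde x)\indicator{\tilde x\ge 1-\delta_n}$ and estimating $\int_0^{1-\delta_n}(1-\e^{-\lambda f_n(\tilde x)})\,d\tilde x$. Your obstacle paragraph shows you understand that this region is where the hypothesis must do its work and that the error budget is $o(1/s_n)$, but the displayed bound does not deliver that budget. One further minor point: the uniformity of the process convergences in parts~(b)--(c) needs a brief justification (the paper uses tightness of the Radon--Nikodym derivatives between the laws at nearby $a$), which your sketch glosses over.
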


\refthm{ConvRW} will be proved in \refsubsect{ConvRWPf}.
We stress that the proof or \refthm{ConvRW} uses only Conditions~\refPartII{cond:scalingfn} and \refPartII{cond:fnWeakBounds} and the relevant definitions but no other results stated so far.

\subsection{Convergence of random walks: Proof of \refthm{ConvRW}}\lbsubsect{ConvRWPf}

For notational convenience, we make the abbreviations
\begin{equation}\labelPartII{tildeConvention}
\tilde{x}=x^{1/s_n}, \qquad \tilde{a}=a^{1/s_n}, \qquad \tilde{b}=b^{1/s_n},\text{ etc.,}
\end{equation}
which we will use extensively in this section and in Sections~\refPartII{s:2VertexCharSec} and \refPartII{s:FrozenGeometry}.

\begin{proof}[Proof of \refthm{ConvRW}]
We begin by proving
\begin{equation}\labelPartII{munhatUpTo1oversn}
\hat{\mu}_n\left(\frac{a\e^{-\gamma}}{f_n(1)}\right)
= 1 - \frac{\log a}{s_n} + o(1/s_n).
\end{equation}
Recalling \eqrefPartII{munFromfn}, we have
\begin{equation}\labelPartII{munhattau}
\hat{\mu}_n(\lambda)=\int_0^\infty \e^{-\lambda f_n(\tilde{x})}d\tilde{x}.
\end{equation}
Write $\tilde{f}_n(\tilde{x})=f_n(\tilde{x})\indicator{\tilde{x}\geq 1-\delta_n}$.
Then
\begin{equation}
	\hat{\mu}_n(\lambda)=\int_0^{\infty} \e^{-\lambda \tilde{f}_n(\tilde{x})} \, d\tilde{x}
	- \int_0^{1-\delta_n} 	\big(1-\e^{-\lambda f_n(\tilde{x})}\big) \, d\tilde{x}
\end{equation}
and take $\lambda=a\e^{-\gamma}/f_n(1)$ to estimate $\int_0^{1-\delta_n} (1-\e^{-a\e^{-\gamma}f_n(\tilde{x})/f_n(1)})\,d\tilde{x} = O(f_n(1-\delta_n)/f_n(1))=o(1/s_n)$ by \refcond{fnWeakBounds}.
Hence, for the purposes of proving \eqrefPartII{munhatUpTo1oversn}, it is no loss of generality to assume that $f_n(x^{1/s_n})\leq f_n(1)x^{\epsilonCondition}$ for all $x\leq 1$.

Inspired by \refexample{AllExamples}~\refitem{EsnExample}, where
$f_n(\tilde{x})=f_n(1)\tilde{x}^{s_n}$,  we compute
\begin{equation}\labelPartII{PowerOfExpIntegral}
\int_0^\infty \e^{-\lambda f_n(1)\tilde{x}^{s_n}} d\tilde{x}
= \int_0^\infty \frac{1}{s_n} x^{1/s_n - 1} \e^{-\lambda f_n(1) x} dx
= \left( \frac{\Gamma(1+1/s_n)^{s_n}}{\lambda f_n(1)} \right)^{1/s_n}.
\end{equation}
In particular, setting $\lambda=a\Gamma(1+1/s_n)^{s_n}/f_n(1)$ gives $\int_0^\infty \exp\left( -a\Gamma(1+1/s_n)^{s_n}\tilde{x}^{s_n} \right) d\tilde{x}=a^{-1/s_n}$, which is $1-(\log a)/s_n+o(1/s_n)$.
Subtracting this from \eqrefPartII{munhattau}, we can therefore prove \eqrefPartII{munhatUpTo1oversn} if we show that
\begin{equation}
s_n\int_0^\infty \left( \e^{-a \e^{-\gamma} f_n(\tilde{x})/f_n(1)} - \e^{-a \Gamma(1+1/s_n)^{s_n} \tilde{x}^{s_n}} \right)d\tilde{x}
\to 0,
\end{equation}
or equivalently, by the substitution $\tilde{x}=x^{1/s_n}$, if we show that
\begin{equation}\labelPartII{DifferenceOfIntegrals}
\int_0^\infty x^{1/s_n - 1} \left( \e^{-a \e^{-\gamma} f_n(x^{1/s_n})/f_n(1)} - \e^{-a \Gamma(1+1/s_n)^{s_n} x} \right) dx \to 0.
\end{equation}
 Note that $\Gamma(1+1/s)^s\to\e^{-\gamma}$ as $s\to\infty$.
Together with \refcond{scalingfn}, this implies that the integrand in \eqrefPartII{DifferenceOfIntegrals} converges pointwise to $0$.
 For $x\leq 1$, $f_n(x^{1/s_n}) \le f_n(1)x^{\epsilonCondition}$ means that the integrand is bounded by $O(x^{\epsilonCondition-1}+1)$.
 For $x\geq 1$, \refcond{fnWeakBounds} implies that the integrand is bounded by $\e^{-\delta x^{\epsilonCondition}}$ for some $\delta>0$.
Dominated convergence therefore completes the proof of \eqrefPartII{munhatUpTo1oversn}.
It is easy to see that the proof of \eqrefPartII{DifferenceOfIntegrals}, and hence \eqrefPartII{munhatUpTo1oversn}, holds uniformly in $a\in A$, where $A \subseteq (0,\infty)$ is a fixed but arbitrary compact set.

To conclude \eqrefPartII{lambdanAsymp} from \eqrefPartII{munhatUpTo1oversn}, we use the monotonicity of $\mu_n$ and $\lambda_n$: given $\delta>0$, we have $\hat{\mu}_n\bigl(a\e^{-\gamma}/f_n(1)\bigr)^{s_n}\leq a^{-1}+\delta$ for all sufficiently large $n$, uniformly in $a\in A$.
Replacing $a$ by $a'=(1/a - \delta)^{-1}$ shows that $\lambda_n(\tilde{a})\leq a' \e^{-\gamma}/f_n(1)$ for all $n$ large enough, uniformly in $a$.
A lower bound holds similarly; take $\delta\to 0$ to conclude \eqrefPartII{lambdanAsymp}.

The proof of \eqrefPartII{EYAsymp} is similar to that of \eqrefPartII{lambdanAsymp}.
Using \eqrefPartII{integralnua}, we compute
\begin{align}
s_n \lambda_n(\tilde{a})\E_{\tilde{a}}(\increm)
&=
s_n \lambda_n(\tilde{a})\int y\tilde{a}  \e^{-\lambda_n(\tilde{a}) y} d\mu_n(y)
\notag\\
&=
\lambda_n(\tilde{a})f_n(1)\tilde{a} \int_0^\infty x^{1/s_n - 1} \frac{f_n(\tilde{x})}{f_n(1)} \e^{-\lambda_n(\tilde{a}) f_n(\tilde{x})} dx
.
\labelPartII{EYintegral}
\end{align}
By \eqrefPartII{fnx1oversn} and \eqrefPartII{lambdanAsymp}, the integrand in \eqrefPartII{EYintegral} converges pointwise to $\e^{-a\e^{-\gamma}x}$, and satisfies a similar dominated convergence estimates as \eqrefPartII{DifferenceOfIntegrals} by \refcond{fnWeakBounds}.
Hence \eqrefPartII{EYintegral} converges to $a\e^{-\gamma}\int_0^\infty \e^{-a \e^{-\gamma} x} dx = 1$ as claimed.
The proof of \eqrefPartII{EY2Asymp} is similar.

To prove \eqrefPartII{munhatAsymp}, let $b_n$ be defined by $\lambda_n(\tilde{b}_n)=\beta\lambda_n(\tilde{a})$.
 By \eqrefPartII{lambdanAsymp} and monotonicity, it follows that $b_n\to \beta a$ (for if $\limsup_{n \to \infty} b_n\geq (1+\epsilon)\beta a$ then $\limsup_{n \to \infty} f_n(1)\lambda_n(\tilde{b}_n)\geq (1+\epsilon)\beta a\e^{-\gamma}=(1+\epsilon)\lim_{n \to \infty} f_n(1)\beta \lambda_n(\tilde{a})$, a contradiction, and similarly if $\liminf_{n \to \infty} b_n\leq (1-\epsilon)\beta a$).
 But $\hat{\mu}_n(\lambda_n(\tilde{b}_n))^{s_n}=b_n^{-1}$, giving the result.

Since $\lambda_n(\tilde{a}) S_{\floor{s_n t}}$ is non-decreasing and right-continuous, and has i.i.d.\ increments if $s_n t$ is restricted to integer values, it suffices to show that its limiting distribution is $\Gamma(t,1)$ for a fixed $t$, where $\Gamma(t,1)$ denotes a standard Gamma variable with parameter $t$.
For this, we note that its Laplace transform is
\begin{equation}
\E_{\tilde{a}}\left( \e^{-\tau \lambda_n(\tilde{a})S_{\floor{s_n t}}} \right)
=
\bigl(\tilde{a}\hat{\mu}_n\bigl( \lambda_n(\tilde{a})(1+\tau)\bigr)\bigr)^{\floor{s_n t}}
.
\end{equation}
Since $s_n\to\infty$, \eqrefPartII{munhatAsymp} yields that the right-hand side tends to $(1+\tau)^{-t}$.
This is the Laplace transform of a $\Gamma(t,1)$ variable, and thus completes the proof of \refitem{SsnyToGamma}.

For the proof
of part \refitem{Y*ToExp} define $b_n$ by $\lambda_n(\tilde{b}_n)=(1+\tau)\lambda_n(\tilde{a})$ for a given $\tau\ge 0$.
Then \eqrefPartII{nuaSizeBiasedDef} and \eqrefPartII{integralnua} yield
\begin{equation}
\E_{\tilde{a}}(\e^{-\tau \lambda_n(\tilde{a}) \increm^*})
=
\frac{\E_{\tilde{a}}(\increm\e^{-\tau\lambda_n(\tilde{a})\increm})}{\E_{\tilde{a}}(\increm)}
=
\frac{\tilde{a} \int  y \e^{-\lambda_n(\tilde{a})y(1+\tau)}d\mu_n(y)}{\E_{\tilde{a}}(\increm)}
=
\frac{\tilde{a}\E_{\tilde{b}_n}(\increm)}{\tilde{b}_n \E_{\tilde{a}}(\increm)}
.
\end{equation}
By the same argument as in the proof of \eqrefPartII{munhatAsymp}, $b_n \to (1+\tau)a$.
Combining with \eqrefPartII{EYAsymp}, we conclude that $\E_{\tilde{a}}(\e^{-\tau \lambda_n(\tilde{a}) \increm^*})\to(1+\tau)^{-1}$ and $\lambda_n(\tilde{a})\increm^*$ converges to an exponential variable with mean $1$.
So the rest of part \refitem{Y*ToExp} follows from part \refitem{SsnyToGamma}.

The remaining uniformity claims follow from the uniformity in \eqrefPartII{lambdanAsymp}.
The uniformity statements in parts \refitem{SsnyToGamma} and \refitem{Y*ToExp} follow from the observation that the Radon-Nikodym derivatives $d\P_{\tilde{a}}\left( (\lambda_n(\tilde{a})S_{\floor{s_n t}})_{0\leq t\leq K}\in\cdot \right)/d\P_{\tilde{a}'}\left( (\lambda_n(\tilde{a}')S_{\floor{s_n t}})_{0\leq t\leq K}\in\cdot \right)$ (and similarly for $\increm^*$ and $S^*$) are tight for $K<\infty$, uniformly over $a,a'\in A$.
\end{proof}

\begin{lemma}\lblemma{lambdaaDerivative}
For all $a \in (0,\infty)$, $\lambda_n'(a)= 1/(a\E_a(\increm))$.
\end{lemma}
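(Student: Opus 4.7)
The plan is to apply implicit differentiation to the defining equation $a\hat{\mu}_n(\lambda_n(a)) = 1$ from \eqrefPartII{lambdaaDefn}. As a preliminary, I would verify that $\hat{\mu}_n$ is continuously differentiable at $\lambda_n(a)$ for every $a \in (0,\infty)$, with $\hat{\mu}_n'(\lambda) = -\int y e^{-\lambda y} d\mu_n(y)$. This is standard for Laplace transforms of positive measures: on the interior of the domain of finiteness, the exponential factor furnishes a dominating function that justifies differentiation under the integral sign. To see that $\lambda_n(a)$ lies in the interior, observe that by \refthm{ConvRW} the value $\lambda_n(a')$ exists for every $a' \in (0,\infty)$, and by monotonicity of $\hat{\mu}_n$ the map $a' \mapsto \lambda_n(a')$ is strictly increasing; hence $\hat{\mu}_n$ is finite on an interval extending strictly below $\lambda_n(a)$.

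Given differentiability, I would differentiate $a\hat{\mu}_n(\lambda_n(a)) = 1$ in $a$ to obtain
\begin{equation}
\hat{\mu}_n(\lambda_n(a)) + a\hat{\mu}_n'(\lambda_n(a))\lambda_n'(a) = 0,
\end{equation}
and then substitute $\hat{\mu}_n(\lambda_n(a)) = 1/a$ to solve for $\lambda_n'(a) = -1/\bigl( a^2\hat{\mu}_n'(\lambda_n(a)) \bigr)$.

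The last step is to identify $-a\hat{\mu}_n'(\lambda_n(a))$ with $\E_a(\increm)$. Using the definition \eqrefPartII{nuaDef} of $\nu_a$ together with \eqrefPartII{integralnua},
\begin{equation}
\E_a(\increm) = \int y\,d\nu_a(y) = a\int y\,e^{-\lambda_n(a)y}\,d\mu_n(y) = -a\,\hat{\mu}_n'(\lambda_n(a)),
\end{equation}
from which the claimed formula $\lambda_n'(a) = 1/(a\E_a(\increm))$ follows at once. There is no substantive obstacle here; the only point requiring any care is the justification of differentiation under the integral, which is entirely routine given the positivity of $\mu_n$ and the exponential weight.
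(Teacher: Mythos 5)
Your proof is correct and follows essentially the same route as the paper's: implicit differentiation of $a\hat{\mu}_n(\lambda_n(a))=1$ combined with the identity $-a\hat{\mu}_n'(\lambda_n(a))=\E_a(\increm)$. You spell out the justification for differentiating under the integral sign and the implicit differentiation step, which the paper leaves implicit, but the substance is identical.
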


\begin{proof}
Denote $\hat{\mu}_n'(\lambda)=\frac{d}{d\lambda} \hat{\mu}_n(\lambda)$ and  $\hat{\mu}_n''(\lambda)=\frac{d^2}{d\lambda^2} \hat{\mu}_n(\lambda)$.
Using the definitions of $\hat{\mu}_n$ and $\nu_a$, an elementary computation shows that $-a\hat{\mu}'_n(\lambda_n(a))=\E_a(\increm)$.
Moreover, by \eqrefPartII{lambdaaDefn}, $a\hat{\mu}_n(\lambda_n(a))=1$ and the claim follows.
\end{proof}

\begin{proof}[Proof of \reflemma{lambdanAsymp}]
By \eqrefPartII{lambdanAsymp}, $\lambda_n f_n(1) \to \e^{-\gamma}$ and \reflemma{lambdaaDerivative} gives
\begin{equation}\labelPartII{phinIdentity}
\phi_n=\frac{\lambda_n'(1)}{\lambda_n(1)}= \frac{1}{\lambda_n(1) \E_1(\increm)}.
\end{equation}
Now \eqrefPartII{EYAsymp} implies that $\phi_n/s_n \to 1$, as required.
\end{proof}
\begin{coro}\lbcoro{lambdanTaylor}
Uniformly for $a$ in a compact subset of $(0,\infty)$,
\begin{equation}
\lambda_n(a^{1/s_n}) = \lambda_n(1)\left( 1 +\frac{\phi_n}{s_n} \left(a-1\right)+ o(a-1)^2 \right).
\end{equation}
\end{coro}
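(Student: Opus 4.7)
The plan is to Taylor-expand $f_n(a) := \lambda_n(a^{1/s_n})/\lambda_n(1)$ around $a=1$, using that its first derivative at $1$ equals $\phi_n/s_n$ and its second derivative vanishes uniformly on compact subsets of $(0,\infty)$ as $n\to\infty$.

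First, I compute $\lambda_n'(1)$ and $\lambda_n''(1)$ by differentiating the defining identity $b\hat\mu_n(\lambda_n(b))=1$ implicitly. \reflemma{lambdaaDerivative} already gives $\lambda_n'(1)=1/\E_1(\increm)$; a second differentiation, together with $\hat\mu_n''(\lambda_n(1))=\E_1(\increm^2)$, yields
\begin{equation*}
\lambda_n''(1)=\frac{\E_1(\increm^2)}{\E_1(\increm)^3}-\frac{2}{\E_1(\increm)}.
\end{equation*}
Substituting the asymptotics \eqrefPartII{EYAsymp}--\eqrefPartII{EY2Asymp} of \refthm{ConvRW} (which give $\E_1(\increm)\sim 1/(s_n\lambda_n(1))$ and $\E_1(\increm^2)\sim 1/(s_n\lambda_n(1)^2)$) yields $\lambda_n'(1)=\phi_n\lambda_n(1)$ (as already recorded in \eqrefPartII{phinIdentity}) and $\lambda_n''(1)=s_n^2\lambda_n(1)(1+o(1))$, the $-2/\E_1(\increm)\sim -2s_n\lambda_n(1)$ piece being of lower order. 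The uniformity asserted in \refthm{ConvRW} upgrades these to $\lambda_n'(a^{1/s_n})=s_n\lambda_n(1)\,a(1+o(1))$ and $\lambda_n''(a^{1/s_n})=s_n^2\lambda_n(1)\,a(1+o(1))$, uniformly for $a$ in any compact subset of $(0,\infty)$.

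Second, the chain rule gives $f_n'(1)=\phi_n/s_n$ and
\begin{equation*}
f_n''(a)=\frac{\lambda_n''(a^{1/s_n})}{\lambda_n(1)\,s_n^2}\,a^{2/s_n-2}+\frac{(1-s_n)\lambda_n'(a^{1/s_n})}{\lambda_n(1)\,s_n^2}\,a^{1/s_n-2}.
\end{equation*}
Substituting the uniform asymptotics above and noting that $(1-s_n)/s_n\to-1$, I obtain
\begin{equation*}
f_n''(a)=a^{2/s_n-1}(1+o(1))+(1/s_n-1)\,a^{1/s_n-1}(1+o(1))\ \longrightarrow\ a^{-1}-a^{-1}=0
\end{equation*}
uniformly on compacta. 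The crux of the argument is this exact cancellation of the two leading $\pm a^{-1}$ contributions: $\lambda_n''$ is of order $s_n^2\lambda_n(1)$ while $\lambda_n'$ is of order $s_n\lambda_n(1)$, and the factor $(1/s_n-1)\sim-1$ coming from the second derivative of $a\mapsto a^{1/s_n}$ supplies precisely the sign needed. This cancellation ultimately encodes the Gamma-process scaling of \refthm{ConvRW}.

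Finally, Taylor's theorem with integral remainder gives
\begin{equation*}
f_n(a)-1-\frac{\phi_n}{s_n}(a-1)=\int_1^a(a-t)\,f_n''(t)\,dt,
\end{equation*}
and the uniform vanishing of $f_n''$ on compacta implies that this remainder is $o((a-1)^2)$ uniformly for $a$ in any compact subset of $(0,\infty)$, completing the proof.
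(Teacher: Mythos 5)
Your proof is correct and follows essentially the same route as the paper's: implicit differentiation of the defining relation for $\lambda_n$, the asymptotics \eqrefPartII{EYAsymp}--\eqrefPartII{EY2Asymp} from \refthm{ConvRW}, the resulting exact cancellation to leading order, and a Taylor expansion. The paper differentiates $a\hat\mu_n(F(a))^{s_n}=1$ for $F(a)=\lambda_n(a^{1/s_n})$ directly (with the cancellation appearing in \eqrefPartII{lambdanDoublePrime}), whereas you compute $\lambda_n',\lambda_n''$ in the original variable and then chain-rule through $a\mapsto a^{1/s_n}$; this is an equivalent reorganization, though note your local choice to reuse the symbol $f_n$ for $\lambda_n(a^{1/s_n})/\lambda_n(1)$ clashes with the paper's global use of $f_n$ for the edge-weight function and should be renamed.
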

\begin{proof}
By the same arguments as in the proof of \reflemma{lambdaaDerivative}, the function $F(a)=\lambda_n(a^{1/s_n})$ satisfies $-\tilde{a}\hat{\mu}'_n(F(a))=\E_{\tilde{a}}(\increm)$ and $\tilde{a}\hat{\mu}''_n(F(a))=\E_{\tilde{a}}(\increm^2)$.
By \eqrefPartII{lambdaaDefn},  $a\hat{\mu}_n(F(a))^{s_n}=1$ and we deduce that $s_n \E_{\tilde{a}}(\increm) F'(a)=1/a$ and
	\begin{equation}\labelPartII{lambdanDoublePrime}
	s_n \E_{\tilde{a}}(\increm) F''(a)
	= \frac{s_n \E_{\tilde{a}}(\increm^2)}{\bigl(a s_n \E_{\tilde{a}}(\increm)\bigr)^2} - \frac{1+\frac{1}{s_n}}{a^2}.
	\end{equation}
The right-hand side of \eqrefPartII{lambdanDoublePrime} converges uniformly to $0$ by \eqrefPartII{EYAsymp}--\eqrefPartII{EY2Asymp}.
Applying \eqrefPartII{EYAsymp} again and noting from \eqrefPartII{lambdanAsymp} that $\lambda_n(\tilde{a})/\lambda_n(1)$ is bounded, it follows that $F''(a)/\lambda_n(1)$ converges uniformly to $0$.
A Taylor expansion of $F$ around the point $1$ and \eqrefPartII{phinIdentity} complete the proof.
\end{proof}

The following lemma is a consequence of \refthm{ConvRW} and will be used in the proof of the second moment statements of \refthm{OneCharConv}:
\begin{lemma}\lblemma{CovarianceSums}
Assume the hypotheses of \refthm{ConvRW}, and let $A\subset(0,2)$ be compact.
For any measurable, bounded function $h \ge 0$, set
\begin{equation}
\CovSumsGen(h)=\E_{a^{1/s_n}b^{1/s_n}}\Big( \sum_{j=0}^\infty \e^{-\left[\lambda_n(a^{1/s_n})+\lambda_n(b^{1/s_n})-\lambda_n(a^{1/s_n}b^{1/s_n})\right]S_j} h(S_j) \Big).
\end{equation}
There are constants $K<\infty$ and $n_0 \in \N$ independent of $h$ such that $\CovSumsGen(h)\leq Ks_n \norm{h}_\infty$ for all $n\ge n_0$ and $a,b\in A$.
Moreover, for any $\epsilon>0$ there are constants $K'<\infty$, $n_0' \in \N$ independent of $h$ such that for all $a,b\in A$, $n\ge n_0'$,
\begin{equation}\labelPartII{CovarianceSumsConclusion}
-\epsilon \norm{h}_\infty+\frac{\inf\set{h(y)\colon \lambda_n(1)y\leq K'}}{\log(1/a+1/b)}
\leq \frac{\CovSumsGen(h)}{s_n} \leq  \epsilon \norm{h}_\infty+\frac{\sup\set{h(y)\colon \lambda_n(1)y\leq K'}}{\log(1/a+1/b)}
.
\end{equation}
\end{lemma}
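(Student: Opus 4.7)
The approach is to compute $\CovSumsGen(1)$ exactly as a geometric series and reduce general $h$ to this case by a truncation argument. Set $\rho_n=\lambda_n(\tilde a)+\lambda_n(\tilde b)-\lambda_n(\tilde a\tilde b)$ with the convention $\tilde a=a^{1/s_n}$. Under $\E_{\tilde a\tilde b}$, the $S_j$ are partial sums of i.i.d.\ increments with common law $\nu_{\tilde a\tilde b}$, so
\begin{equation}
\CovSumsGen(1)=\sum_{j=0}^\infty\bigl(\E_{\tilde a\tilde b}[\e^{-\rho_n\increm}]\bigr)^j=\frac{1}{1-\E_{\tilde a\tilde b}[\e^{-\rho_n\increm}]}.
\end{equation}
From \refeq{integralnua}, $\E_{\tilde a\tilde b}[\e^{-\rho_n\increm}]=\tilde a\tilde b\,\hat\mu_n(\lambda_n(\tilde a)+\lambda_n(\tilde b))$; applying the uniform form of \refeq{munhatAsymp} with $\beta_n=(\lambda_n(\tilde a)+\lambda_n(\tilde b))/\lambda_n(1)\to a+b$ gives $(\E_{\tilde a\tilde b}[\e^{-\rho_n\increm}])^{s_n}\to c_\infty:=ab/(a+b)$, and $c_\infty\in(0,1)$ uniformly on $A\times A$ (since $A$ is bounded away from $2$). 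Writing $\E_{\tilde a\tilde b}[\e^{-\rho_n\increm}]=c_n^{1/s_n}$ with $c_n\to c_\infty$ and expanding yields
\begin{equation}
\CovSumsGen(1)=\frac{s_n}{\log(1/c_n)}+O(1)\sim\frac{s_n}{\log(1/a+1/b)},
\end{equation}
since $1/c_\infty=1/a+1/b$. In particular $\CovSumsGen(h)\le\|h\|_\infty\CovSumsGen(1)\le Ks_n\|h\|_\infty$, which already gives the unconditional bound claimed in the lemma.

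For general bounded $h\ge 0$, decompose $h=h_1+h_2$ with $h_1=h\,\mathbbm{1}_{\lambda_n(1)y\le K'}$ and $h_2=h\,\mathbbm{1}_{\lambda_n(1)y>K'}$. The elementary inequality $\e^{-\rho_n y}\mathbbm{1}_{y>K'/\lambda_n(1)}\le\e^{-K'\rho_n/(2\lambda_n(1))}\e^{-\rho_n y/2}$ together with a second geometric-sum evaluation yields
\begin{equation}
\CovSumsGen(h_2)\le\|h\|_\infty\,\e^{-K'\rho_n/(2\lambda_n(1))}\cdot\frac{1}{1-\E_{\tilde a\tilde b}[\e^{-\rho_n\increm/2}]}.
\end{equation}
An analogous application of \refeq{munhatAsymp} gives $(\E_{\tilde a\tilde b}[\e^{-\rho_n\increm/2}])^{s_n}\to 2ab/(a+b+ab)\in(0,1)$ uniformly on $A\times A$, so the fraction on the right is $O(s_n)$ uniformly. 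By \refeq{lambdanAsymp}, $\rho_n/\lambda_n(1)\to a+b-ab$, which is bounded away from $0$ on $A\times A$, so the exponential prefactor is at most $\epsilon$ once $K'$ is large enough. Hence $\CovSumsGen(h_2)\le\epsilon\|h\|_\infty s_n$ uniformly.

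Since $\CovSumsGen(h_1)$ is sandwiched between $\inf\{h(y):\lambda_n(1)y\le K'\}$ and $\sup\{h(y):\lambda_n(1)y\le K'\}$ multiplied by $\CovSumsGen(\mathbbm{1}_{\lambda_n(1)y\le K'})=\CovSumsGen(1)-\CovSumsGen(\mathbbm{1}_{\lambda_n(1)y>K'})=\CovSumsGen(1)-O(\epsilon s_n)$, combining with the estimate for $\CovSumsGen(h_2)$ and the limit $\CovSumsGen(1)/s_n\to 1/\log(1/a+1/b)$ delivers \refeq{CovarianceSumsConclusion}. The main obstacle is keeping every estimate uniform over $(a,b)\in A\times A$: the key point is that $A$ being bounded away from $2$ forces both $\rho_n/\lambda_n(1)\to a+b-ab$ and $c_\infty=ab/(a+b)$ to remain strictly away from their critical values $0$ and $1$ on $A\times A$. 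Given this, the uniformity clauses in \refthm{ConvRW} and the continuity of the limiting quantities in $(a,b)$ on the compact set $A\times A$ make the order of limits ($K'\to\infty$ after $n\to\infty$, followed by $\epsilon\to 0$) straightforward to execute.
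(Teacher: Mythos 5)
Your proof is correct, but it takes a genuinely different route than the paper's proof. The paper works in the time-integral form $s_n\int_0^\infty\E_{\tilde a\tilde b}\bigl(\e^{-\rho_n S_{\floor{s_nt}}}h(S_{\floor{s_nt}})\bigr)dt$, splits the integral at a finite $t_0$, and then for the finite part invokes the tightness of the laws $\P_{\tilde a\tilde b}(\lambda_n(1)S_{\floor{s_n t}}\in\cdot)$ that follows from the functional Gamma-process limit, \refthm{ConvRW}\refitem{SsnyToGamma}. You instead stay in the discrete-sum form, observe that $\CovSumsGen(1)=1/\bigl(1-\E_{\tilde a\tilde b}[\e^{-\rho_n\increm}]\bigr)$ is an exact geometric series (which alone gives the uniform bound $\CovSumsGen(h)\le Ks_n\|h\|_\infty$), and then control the tail $h\,\mathbbm{1}_{\lambda_n(1)y>K'}$ by the exponential-shift inequality $\e^{-\rho_n y}\mathbbm{1}_{\lambda_n(1)y>K'}\le\e^{-K'\rho_n/(2\lambda_n(1))}\e^{-\rho_n y/2}$, which produces a second geometric series in $\E_{\tilde a\tilde b}[\e^{-\rho_n\increm/2}]$. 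This requires a second appeal to \eqrefPartII{munhatAsymp}, this time with the shifted ratio $\beta_n=(\lambda_n(\tilde a)+\lambda_n(\tilde b)+\lambda_n(\tilde a\tilde b))/(2\lambda_n(\tilde a\tilde b))\to(a+b+ab)/(2ab)$, giving the limit $2ab/(a+b+ab)$, which is $<1$ uniformly on $A\times A$ precisely because $(a-1)(b-1)<1$ for $a,b\in(0,2)$ — a calculation that deserved to be made explicit but is correct.

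The upshot is that your argument entirely avoids the appeal to the weak convergence of the rescaled random walk (part \refitem{SsnyToGamma} of \refthm{ConvRW}): it needs only the scalar Laplace-transform asymptotics \eqrefPartII{lambdanAsymp} and \eqrefPartII{munhatAsymp}. That makes it more self-contained and arguably more elementary, at the cost of a second geometric-series estimate and the extra algebraic check that the shifted decay constant stays strictly below $1$ on the compact set $A\times A$. The small details you leave implicit — replacing the fixed $\beta$ in \eqrefPartII{munhatAsymp} by an $n$-dependent $\beta_n$ converging uniformly, using monotonicity of $\hat\mu_n$, and dividing by $\log(1/a+1/b)$ which is bounded away from $0$ on $A\times A$ — are the same ones the paper leaves implicit, so the proof stands as written.
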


Note that $\log(1/a+1/b)$ is positive and bounded away from $0$ by our assumption on $A$.

\begin{proof}[Proof of \reflemma{CovarianceSums}]
We rewrite $\CovSumsGen(h)$ in integral form, bound $h$ by its maximum, and use \eqrefPartII{integralnua} and the definition of $\hat{\mu}_n$, to obtain
\begin{align}
\CovSumsGen(h)&=s_n \int_0^\infty \E_{\tilde{a}\tilde{b}}\left( \e^{-[\lambda_n(\tilde{a})+\lambda_n(\tilde{b})-\lambda_n(\tilde{a}\tilde{b})]S_{\floor{s_nt}}} h(S_{\floor{s_n t}}) \right) dt
\labelPartII{OneCharCovInt}\\
&\le
s_n \norm{h}_\infty \int_0^\infty \left[\tilde{a}\tilde{b}\hat{\mu}_n \! \left(\lambda_n(\tilde{a})+\lambda_n(\tilde{b}) \right)\right]^{\floor{s_n t}} dt
.
\labelPartII{OneCharCovBound}
\end{align}
By \eqrefPartII{lambdanAsymp} and \eqrefPartII{munhatAsymp}, we deduce $(\lambda_n(\tilde{a})+\lambda_n(\tilde{b}))/\lambda_n(1) \to (a+b)$ and
\begin{equation}\labelPartII{munhatOfSum}
\big[\tilde{a}\tilde{b}\hat{\mu}_n(\lambda_n(\tilde{a})+\lambda_n(\tilde{b}))\big]^{s_n}\to ab/(a+b)
\end{equation}
Since $\log((a+b)/ab)=\log(1/a+1/b)$ is positive and uniformly bounded away from $0$, the integral in \eqrefPartII{OneCharCovBound} is uniformly bounded for sufficiently large $n$ and we conclude that there is a constant $K<\infty$ with $\CovSumsGen(h) \leq K s_n \norm{h}_\infty$.

For \eqrefPartII{CovarianceSumsConclusion}, let $\epsilon>0$ be given.
Since $A\subset (0,2)$ is compact, \eqrefPartII{lambdanAsymp} implies that there exists $\delta>0$ and $n_0' \in \N$ such that $\lambda_n(\tilde{a})+\lambda_n(\tilde{b})-\lambda_n(\tilde{a}\tilde{b}) \ge \delta f_n(1)$ for all $n\ge n_0'$ and $a,b\in A$.
Using again \eqrefPartII{munhatOfSum}, we may take $t_0$ and $n_0'$ sufficiently large that $\int_{t_0}^\infty [\tilde{a}\tilde{b} \hat{\mu}_n(\lambda_n(\tilde{a})+\lambda_n(\tilde{b}))]^{\floor{s_n t}} dt \leq \tfrac{1}{3}\epsilon$ and $\abs{\int_0^{t_0} [\tilde{a}\tilde{b}\hat{\mu}_n(\lambda_n(\tilde{a})+\lambda_n(\tilde{b}))]^{\floor{s_n t}} dt - 1/\log(1/a+1/b)}\leq\tfrac{1}{3}\epsilon$ for all $n\geq n_0'$, $a,b\in A$.
Furthermore, \refthm{ConvRW}\refitem{SsnyToGamma} implies that the family of laws $\P_{\tilde{a}\tilde{b}}(\lambda_n(1)S_{\floor{s_n t}}\in\cdot)$, $t\leq t_0$, $a,b\in A$, is tight.
Hence we may take $K'$ large enough that $t_0\e^{-\delta K'}\leq 1$ and $\P_{\tilde{a}\tilde{b}}(\lambda_n(1)S_{\floor{s_n t}} > K')\leq \tfrac{1}{3}\epsilon$, uniformly for $t\leq t_0$.
We conclude from \eqrefPartII{OneCharCovInt} that
\begin{align}
&\int_0^{t_0} \left[ \inf\set{h(y)\colon \lambda_n(1)y\leq K'}\E_{\tilde{a}\tilde{b}}\left( \e^{-[\lambda_n(\tilde{a})+\lambda_n(\tilde{b})-\lambda_n(\tilde{a}\tilde{b})]S_{\floor{s_n t}}}  \right) - \tfrac{1}{3}\epsilon \e^{-\delta K'} \norm{h}_\infty \right] dt
\notag\\
&\quad\leq
\frac{\CovSumsGen(h)}{s_n} \leq \int_0^{t_0} \left[ \sup\set{h(y)\colon \lambda_n(1)y\leq K'}\E_{\tilde{a}\tilde{b}}\left( \e^{-[\lambda_n(\tilde{a})+\lambda_n(\tilde{b})-\lambda_n(\tilde{a}\tilde{b})]S_{\floor{s_n t}}}  \right) + \tfrac{1}{3}\epsilon\e^{-\delta K'} \norm{h}_\infty \right] dt
\notag\\
&\quad\qquad\qquad+
\int_{t_0}^\infty \norm{h}_\infty \E_{\tilde{a}\tilde{b}}\left( \e^{-(\lambda_n(\tilde{a})+\lambda_n(\tilde{b})-\lambda_n(\tilde{a}\tilde{b}))S_{\floor{s_n t}}} \right) dt
\labelPartII{OneCharCovConvBound}
\end{align}
for $n\geq n_0'$.
Using again $\E_{\tilde{a}\tilde{b}}(\e^{-S_{\floor{s_n t}}(\lambda_n(\tilde{a})+\lambda_n(\tilde{b})-\lambda_n(\tilde{a}\tilde{b}))})=[\tilde{a}\tilde{b}\hat{\mu}_n(\lambda_n(\tilde{a})+\lambda_n(\tilde{b}))]^{\floor{s_n t}}$, we see that the hypotheses on $t_0$ and $n_0'$ imply \eqrefPartII{CovarianceSumsConclusion}.
\end{proof}

\subsection{Means of one-vertex characteristics: Proof of \refthm{OneCharConv}}\lbsubsect{LimitingBounds}
In this section, we prove \refthm{OneCharConv}.
Further, we set the stage for the proofs of Theorems \refPartII{t:TwoVertexConvRestrictedSum} and \refPartII{t:TwoVertexRemainder} in \refsect{2VertexCharSec}.

Recall from \eqrefPartII{mtchiaRW} that
\begin{equation*}
	\bar{m}_t^\chi(a) = \E_a\Big( \sum_{j=0}^\infty \e^{-\lambda_n(a)(t-S_j)}\chi(t-S_j) \Big).
\end{equation*}
Thus, $\bar{m}_t^\chi(a)$ can be understood as the expected integral of $\e^{-\lambda_n(a)t}\chi(t)$ against the counting measure on the random set $\set{t-S_j\colon j\in \N_0}$.
When $t\to\infty$, this measure approaches its stationary equivalent, which is the counting measure on the point-stationary set $\set{t-S^*_j\colon j\in\N_0}$ (see \cite{Thor00}).
Since the expected value of this stationary measure is a multiple of the Lebesgue measure, $\bar{m}_t^\chi(a)$ will approach (as $t\to \infty$) the same multiple of $\int_0^\infty \e^{-\lambda_n(a)t} \chi(t) dt$.
In the following we make this more precise.
We begin with general estimates that apply to any CTBP with any intensity measure $\mu$, and we will write simply $\lambda(a)$ for the parameter defined by the analogue of \eqrefPartII{lambdaaDefn}.
Similarly, all other notation introduced for $\BP^{\sss(1)}$ will be used for a general CTBP.

\begin{prop}\lbprop{PointStationaryProp}
Let $(S_j^*)_j$ be the random walk defined in \eqrefPartII{DefSjStar}.
Let $\chi$ be a non-negative characteristic.
Then, for all $a,t> 0$,
\begin{equation}\labelPartII{mtchiaStatRW}
\E_a(\bar{m}^\chi_{t-U\increm^*}(a))=\E_a\Big( \sum_{j=0}^\infty \e^{-\lambda(a)(t-S^*_j)}\chi(t-S^*_j) \Big)=\frac{\int_0^t \e^{-\lambda(a)u}
 \chi(u)\, du}{\E_a(\increm)}.
\end{equation}
\end{prop}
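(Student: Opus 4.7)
The plan is to establish the two equalities separately. The first is a direct consequence of the definition \refeq{DefSjStar} of $S^*_j$, while the second rests on the classical fact that the delayed renewal process $(S^*_j)_{j\ge 0}$ is stationary, with expected counting measure equal to $dy/\E_a(\increm)$ on $[0,\infty)$.

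For the first equality, I would unwrap the representation \refeq{mtchiaRW} of $\bar{m}^\chi_s(a)$ at the random argument $s=t-U\increm^*$, using that $(\increm_i)_{i\ge 1}$ (which defines the walk $S_j$ inside $\bar m$) is independent of $(U,\increm^*)$. Tonelli (everything being non-negative) then yields
\begin{equation*}
\E_a(\bar{m}^\chi_{t-U\increm^*}(a))=\E_a\Big(\sum_{j=0}^\infty \e^{-\lambda(a)(t-U\increm^*-S_j)}\chi(t-U\increm^*-S_j)\Big),
\end{equation*}
and the defining relation $U\increm^*+S_j=S^*_j$ from \refeq{DefSjStar} delivers the middle expression.

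For the second equality, I would write the expectation as $\int g(y)\,U^*(dy)$, where $g(y)=\e^{-\lambda(a)(t-y)}\chi(t-y)$ vanishes for $y<0$ and for $y>t$ (by the convention $\chi(u)=0$ for $u<0$) and $U^*:=\sum_{j\ge 0}\P_a(S^*_j\in\cdot\,)$ is the expected renewal measure. It then suffices to show $U^*(dy)=dy/\E_a(\increm)$ on $[0,\infty)$, which I would verify by Laplace transforms. Using \refeq{nuaSizeBiasedDef} and the independence of $U$ and $\increm^*$,
\begin{equation*}
\E_a(\e^{-\lambda S^*_0})=\E_a\Big(\frac{1-\e^{-\lambda \increm^*}}{\lambda\,\increm^*}\Big)=\frac{1-\hat{\nu}_a(\lambda)}{\lambda\,\E_a(\increm)},
\end{equation*}
and since $(\increm_i)_{i\ge 1}$ are i.i.d.\ with Laplace transform $\hat{\nu}_a$, summing the geometric series (valid because $\hat{\nu}_a(\lambda)<1$ for $\lambda>0$, as $\nu_a$ places positive mass on $(0,\infty)$) gives
\begin{equation*}
\widehat{U^*}(\lambda)=\frac{1-\hat{\nu}_a(\lambda)}{\lambda\,\E_a(\increm)}\cdot\frac{1}{1-\hat{\nu}_a(\lambda)}=\frac{1}{\lambda\,\E_a(\increm)},
\end{equation*}
which is exactly the Laplace transform of $dy/\E_a(\increm)$ on $[0,\infty)$. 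Uniqueness of Laplace transforms then identifies $U^*$, and the substitution $u=t-y$ rewrites the integral in the required form.

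There is no genuine obstacle. The only technicality is ensuring $\E_a(\increm)\in(0,\infty)$ so that $\nu^*_a$ is well defined; this follows from $\E_a(\increm)=-a\hat{\mu}_n'(\lambda(a))$, which is finite because $\hat{\mu}_n$ is analytic in a neighbourhood of $\lambda(a)$ (as used in \reflemma{lambdaaDerivative}).
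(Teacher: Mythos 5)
Your proof is correct and takes the same route the paper signals. The first equality is exactly the paper's observation (the representation \eqrefPartII{mtchiaRW} evaluated at the random argument $t-U\increm^*$, with Tonelli justifying the interchange); for the second, the paper either cites point-stationarity from [Thor00] or says it ``may be verified by taking Laplace transforms with respect to $t$,'' and your identification of the expected renewal measure, $\widehat{U^*}(\lambda)=\E_a(\e^{-\lambda S_0^*})/(1-\hat\nu_a(\lambda))=1/(\lambda\,\E_a(\increm))$, is precisely that Laplace-transform route carried out in full detail.
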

\begin{proof}
The first equality is \eqrefPartII{mtchiaRW}; the second follows because the set $\set{t-S^*_j\colon j \in \N_0}$ is point-stationary in the sense of \cite{Thor00}.
Alternatively, the equality may be verified by taking Laplace transforms with respect to $t$.
\end{proof}

In \eqrefPartII{mtchiaRW} and \eqrefPartII{mtchiaStatRW}, we may collapse the tail of the sum into a single value of $\bar{m}_u^\chi(a)$.
Namely, if $J$ is a stopping time for $(S_j)_j$ or $(S^*_j)_j$, respectively, then by the strong Markov property
\begin{align}
\labelPartII{barmWithStopping}
\begin{split}
\bar{m}_t^\chi(a)&=\E_a\Big( \bar{m}_{t-S_J}^\chi(a) +\sum_{j=0}^{J-1} \e^{-\lambda(a)(t-S_j)} \chi(t-S_j) \Big),
\\
\frac{\int_0^t \e^{-\lambda(a)u} \chi(u)\, du}{\E_a(\increm)} &=\E_a\Big( \bar{m}_{t-S^*_J}^\chi(a) +\sum_{j=0}^{J-1} \e^{-\lambda(a)(t-S^*_j)} \chi(t-S^*_j) \Big).
\end{split}
\end{align}
The following lemmas provide bounds on $\bar{m}_t^\chi(a)$ when $m_t^\chi(a)$ is non-decreasing in $t$:
\begin{lemma}\lblemma{CharsByCoupling}
Suppose $\chi$ is a non-negative characteristic such that $m_t^\chi(a)$ is non-decreasing in $t$ (in particular, this holds if $\chi$ is non-decreasing).
Let $(S_j)_j$ be as in \eqrefPartII{mtchiaRW} and $(S^*_j)_j$ as in \eqrefPartII{mtchiaStatRW}, and suppose that $(S_j)_j$ and $(S_j^*)_j$ are independent.
Let $\eps>0$ and set $J=\inf\set{j\colon \abs{S_j-S^*_j}\leq\epsilon}$.
Then, for $a,t>0$,
\begin{multline}
\e^{-2\lambda(a)\epsilon} \frac{\int_0^{t-\epsilon} \e^{-\lambda(a)u} \chi(u)\, du}{\E_a(\increm)} -\E_a\Big( \sum_{j=0}^{J-1} \e^{-\lambda(a)(t-\epsilon-S^*_j)} \chi(t-\epsilon-S^*_j) \Big)
\\
\leq
\bar{m}_t^\chi(a)
\leq
\e^{2\lambda(a)\epsilon} \frac{\int_0^{t+\epsilon} \e^{-\lambda(a)u} \chi(u)\, du}{\E_a(\increm)} +\E_a\Big( \sum_{j=0}^{J-1} \e^{-\lambda(a)(t-S_j)} \chi(t-S_j) \Big).
\end{multline}
\end{lemma}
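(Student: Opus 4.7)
The plan is to run the renewal identity \refeq{barmWithStopping} in parallel for both the walk $(S_j)_j$ and its stationary counterpart $(S_j^*)_j$, stopped at the common time $J$, and then glue the two together using the coupling $|S_J-S^*_J|\leq\epsilon$ and the monotonicity of $t\mapsto m_t^\chi(a)$.

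First I would note that, since $(S_j)_j$ and $(S_j^*)_j$ are independent, $J$ is a stopping time for the joint filtration $\mathcal{F}_j=\sigma(S_0,\dots,S_j,S^*_0,\dots,S^*_j)$ and the strong Markov property applies to each walk separately at $J$; thus \refeq{barmWithStopping} yields
\begin{align*}
\bar{m}_t^\chi(a)&=\E_a\!\left(\bar{m}_{t-S_J}^\chi(a)\right)+\E_a\!\left(\sum_{j=0}^{J-1}\e^{-\lambda(a)(t-S_j)}\chi(t-S_j)\right),\\
\frac{\int_0^{s}\e^{-\lambda(a)u}\chi(u)\,du}{\E_a(\increm)}&=\E_a\!\left(\bar{m}_{s-S^*_J}^\chi(a)\right)+\E_a\!\left(\sum_{j=0}^{J-1}\e^{-\lambda(a)(s-S^*_j)}\chi(s-S^*_j)\right)
\end{align*}
for any $s>0$, the second identity being \refprop{PointStationaryProp} re-expressed via the strong Markov property at $J$.

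Next I would establish the deterministic sandwich
\begin{equation*}
\e^{-2\lambda(a)\epsilon}\,\bar{m}_{u^*-\epsilon}^\chi(a)\;\leq\;\bar{m}_u^\chi(a)\;\leq\;\e^{2\lambda(a)\epsilon}\,\bar{m}_{u^*+\epsilon}^\chi(a)\quad\text{whenever }|u-u^*|\leq\epsilon,
\end{equation*}
by writing $\bar{m}_u^\chi(a)=\e^{-\lambda(a)u}m_u^\chi(a)$, bounding $m_u^\chi(a)$ between $m_{u^*\pm\epsilon}^\chi(a)$ using the assumed monotonicity of $m$, and bounding $\e^{-\lambda(a)u}$ between $\e^{-\lambda(a)(u^*\pm\epsilon)}$. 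Setting $u=t-S_J$ and $u^*=t-S^*_J$ and taking $\E_a$, the first identity above becomes
\begin{equation*}
\e^{-2\lambda(a)\epsilon}\E_a\!\left(\bar{m}_{(t-\epsilon)-S^*_J}^\chi(a)\right)\leq\bar{m}_t^\chi(a)-\E_a\!\left(\sum_{j=0}^{J-1}\e^{-\lambda(a)(t-S_j)}\chi(t-S_j)\right)\leq\e^{2\lambda(a)\epsilon}\E_a\!\left(\bar{m}_{(t+\epsilon)-S^*_J}^\chi(a)\right).
\end{equation*}

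For the upper bound I would then apply the second identity at $s=t+\epsilon$ and drop the non-negative sum over $j<J$ to conclude $\E_a(\bar{m}_{(t+\epsilon)-S^*_J}^\chi(a))\leq\int_0^{t+\epsilon}\e^{-\lambda(a)u}\chi(u)\,du/\E_a(\increm)$; for the lower bound I would apply it at $s=t-\epsilon$, this time retaining the subtracted sum, and use $\e^{-2\lambda(a)\epsilon}\leq 1$ to absorb the prefactor on that subtracted term, matching the stated inequality exactly. The main (minor) obstacle is simply bookkeeping: keeping track of the signs of the $\e^{\pm 2\lambda(a)\epsilon}$ factors through the two one-sided estimates and confirming that the strong Markov step is valid at the joint stopping time $J$, which reduces to the fact that conditional on $\mathcal{F}_J$ the future increments of each walk are i.i.d.\ with law $\nu_a$ by independence of the two sequences.
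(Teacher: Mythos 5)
Your proof is correct and takes essentially the same route as the paper's: sandwiching $\bar{m}_{t-S_J}^\chi(a)$ between $\e^{\mp 2\lambda(a)\epsilon}\bar{m}_{t\mp\epsilon-S^*_J}^\chi(a)$ via the coupling $|S_J-S^*_J|\le\epsilon$ and the monotonicity of $m$, then invoking \eqrefPartII{barmWithStopping} at the joint stopping time $J$. The only thing you add beyond the paper's two-line argument is the explicit bookkeeping of the $\e^{\pm2\lambda(a)\epsilon}$ factors and the remark that the strong Markov step is legitimate at $J$ by independence of the two walks — both correct and worth spelling out.
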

\begin{proof}
The hypotheses imply $t-\epsilon-S^*_J \leq t-S_J \leq t+\epsilon-S^*_J$ and therefore $\e^{-2\lambda(a)\epsilon} \bar{m}_{t-\epsilon-S^*_J}^\chi(a)\leq \bar{m}_{t-S_J}^\chi(a)\leq \e^{2\lambda(a)\epsilon} \bar{m}_{t+\epsilon-S^*_J}^\chi(a)$.
Combining with \eqrefPartII{barmWithStopping} gives the result.
\end{proof}
\begin{lemma}\lblemma{mUniformBound}
Suppose $\chi$ is a non-negative characteristic such that $m_t^\chi(a)$ is non-decreasing in $t$.
Then, for all $a,t> 0$ and $K>0$,
\begin{equation}
\bar{m}_t^\chi(a) \leq \frac{\e^K}{\E_a\left( \e^{\lambda(a)S^*_0} \indicator{\lambda(a) S^*_0 \leq K} \right)} \frac{\int_0^\infty \e^{-\lambda(a)u} \chi(u) \, du}{\E_a(\increm)}.
\end{equation}
\end{lemma}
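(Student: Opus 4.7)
The plan is to combine the stationary random-walk identity from Proposition~\ref{PartII:p:PointStationaryProp} with the monotonicity hypothesis on $m_t^\chi(a)$, applied not at time $t$ but at a shifted time $t+s$ with $s=K/\lambda(a)$. The monotonicity $m_u^\chi(a)$ non-decreasing in $u$ immediately yields the lopsided comparison $\bar{m}_t^\chi(a)\leq e^{\lambda(a)r}\bar{m}_{t+r}^\chi(a)$ for all $r\geq 0$ (equivalent to $e^{\lambda(a)t}\bar{m}_t^\chi=m_t^\chi$ being non-decreasing). This is the only direction in which monotonicity can bound $\bar{m}$ from above, which dictates that we must place the time-shift \emph{to the right} of $t$.

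The key step is to start from \eqref{PartII:mtchiaStatRW} with $t$ replaced by $t+s$:
\begin{equation*}
\frac{\int_0^{t+s}\e^{-\lambda(a)u}\chi(u)\,du}{\E_a(\increm)}
=\E_a\!\left(\bar{m}_{t+s-S_0^*}^\chi(a)\,\indicator{S_0^*\leq t+s}\right).
\end{equation*}
Restrict the indicator to the smaller event $\{S_0^*\leq s\}$; on that event $t+s-S_0^*\geq t$, so the monotonicity of $m_\cdot^\chi(a)$ gives
\begin{equation*}
\bar{m}_{t+s-S_0^*}^\chi(a)=\e^{-\lambda(a)(t+s-S_0^*)}m_{t+s-S_0^*}^\chi(a)\geq \e^{-\lambda(a)(s-S_0^*)}\bar{m}_t^\chi(a).
\end{equation*}
Pulling $\bar{m}_t^\chi(a)$ (which is deterministic) out of the expectation yields
\begin{equation*}
\frac{\int_0^{t+s}\e^{-\lambda(a)u}\chi(u)\,du}{\E_a(\increm)}
\geq \bar{m}_t^\chi(a)\,\e^{-\lambda(a)s}\,\E_a\!\left(\e^{\lambda(a)S_0^*}\indicator{S_0^*\leq s}\right).
\end{equation*}

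Taking $s=K/\lambda(a)$ turns the indicator into $\indicator{\lambda(a)S_0^*\leq K}$ and the prefactor into $\e^{-K}$; finally enlarging the domain of integration from $[0,t+s]$ to $[0,\infty)$ (which uses only $\chi\geq 0$) yields the claimed inequality after rearrangement. The only mild subtlety is ensuring that the conventions $\chi(u)=\bar{m}_u^\chi(a)=0$ for $u<0$ are consistent with the indicators we introduce; this is trivially so, since on the event $\{S_0^*\leq s\}\subseteq\{S_0^*\leq t+s\}$ the argument $t+s-S_0^*$ is non-negative. There is no serious obstacle: the proof is essentially a one-shift reformulation of the identity \eqref{PartII:mtchiaStatRW} exploiting that monotonicity of $m^\chi$ only bounds $\bar{m}_t^\chi$ from above by $\bar{m}_{t+\cdot}^\chi$, never by $\bar{m}_{t-\cdot}^\chi$.
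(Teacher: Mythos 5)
Your proof is correct and follows the paper's own argument exactly: on $\{\lambda(a)S_0^*\leq K\}$, monotonicity of $m^\chi_\cdot(a)$ gives $\bar{m}_{t+K/\lambda(a)-S_0^*}^\chi(a)\geq\e^{-K}\e^{\lambda(a)S_0^*}\bar{m}_t^\chi(a)$, which is then fed into \eqref{PartII:e:mtchiaStatRW} (shifted to $t+K/\lambda(a)$) and the domain of integration enlarged to $[0,\infty)$. You have merely written out the intermediate steps the paper leaves implicit.
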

\begin{proof}
On $\set{\lambda(a) S^*_0 \leq K}$ we have $\bar{m}_{t+K/\lambda(a)-S^*_0}^\chi(a)\geq \e^{-K} \e^{\lambda(a) S^*_0} \bar{m}_t^\chi(a)$.
Apply \eqrefPartII{mtchiaStatRW} and replace the limit of integration by $\infty$ to obtain the result.
\end{proof}
\begin{lemma}\lblemma{SquaredSumBound}
Let $\chi$ be a non-negative, non-decreasing characteristic such that $\int_0^\infty \e^{-\lambda(a)u} \chi(u) \, du <\infty$, and fix $a,K>0$.
Then, for all $t>0$, $\sum_{j=0}^\infty \e^{-\lambda(a)(t-S_j)} \chi(t-S_j)$ is square-integrable under $\E_a$ and, abbreviating $C_{a,K}=\e^K/\E_a(\e^{\lambda(a)S^*_0}\indicator{\lambda(a) S^*_0\leq K})$,
\begin{equation}\labelPartII{SquaredSumBound}
\E_a \biggl( \Bigl( \sum_{j=0}^\infty \e^{-\lambda(a)(t-S_j)} \chi(t-S_j) \Bigr)^2 \biggr)
\leq
C_{a,K} \frac{\int_0^\infty \e^{-2\lambda(a)u} \chi(u)^2 du}{\E_a(\increm)}+2C_{a,K}^2\frac{(\int_0^\infty \e^{-\lambda(a)u}\chi(u) du)^2}{\E_a(\increm)^2}.
\end{equation}
The same bound holds with $(S_j)_j$ replaced by $(S^*_j)_j$.
\end{lemma}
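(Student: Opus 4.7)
My plan is to split the square $\bigl(\sum_j X_j\bigr)^2$ with $X_j:=\e^{-\lambda(a)(t-S_j)}\chi(t-S_j)$ into its diagonal part $\sum_j X_j^2$ and its off-diagonal part $2\sum_{j<k} X_j X_k$, and to control each using the uniform one-vertex estimate \reflemma{mUniformBound} together with the stationary walk identity of \refprop{PointStationaryProp}. The claimed square-integrability will follow a posteriori from these two bounds combined with the integrability hypothesis on $\chi$.

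For the off-diagonal part I will condition on the filtration $\F_j=\sigma(S_0,\dotsc,S_j)$ and use the strong Markov property of the walk: since $(S_{j+\ell}-S_j)_{\ell\ge 0}$ is an independent copy of $(S_\ell)_{\ell\ge 0}$ under $\E_a$, the conditional expectation of the inner tail sum satisfies
\begin{equation*}
\E_a\Bigl(\sum_{k>j} X_k \,\Big|\, \F_j\Bigr)\;\le\;\bar{m}^\chi_{t-S_j}(a).
\end{equation*}
Because $\chi$ is non-decreasing, the identity $m^\chi_t(a)=\E_a(\sum_j \e^{\lambda(a) S_j}\chi(t-S_j))$ makes $t\mapsto m^\chi_t(a)$ non-decreasing, so \reflemma{mUniformBound} yields the uniform bound $\bar{m}^\chi_{t-S_j}(a)\le C_{a,K}\int_0^\infty \e^{-\lambda(a) u}\chi(u)\,du/\E_a(\increm)$. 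Pulling this constant outside the outer sum and applying \reflemma{mUniformBound} a second time to $\E_a(\sum_j X_j)=\bar{m}^\chi_t(a)$ then produces the advertised $2C_{a,K}^2\bigl(\int_0^\infty \e^{-\lambda(a) u}\chi(u)\,du\bigr)^2/\E_a(\increm)^2$ contribution.

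For the diagonal $A(t):=\sum_j \e^{-2\lambda(a)(t-S_j)}\chi(t-S_j)^2$, I will combine \refprop{PointStationaryProp} applied with the characteristic $u\mapsto \e^{-\lambda(a) u}\chi(u)^2$, which yields the exact stationary identity
\begin{equation*}
\E_a\Bigl(\sum_j \e^{-2\lambda(a)(t-S^*_j)}\chi(t-S^*_j)^2\Bigr)=\frac{\int_0^t \e^{-2\lambda(a) u}\chi(u)^2\,du}{\E_a(\increm)},
\end{equation*}
with a shift argument modelled on the proof of \reflemma{mUniformBound}: taking $S^*_0=U\increm^*$ independent of $(S_j)_j$ and working on $\set{\lambda(a) S^*_0\le K}$, the monotonicity of $\chi$ yields the pointwise inequality
\begin{equation*}
A\bigl(t+K/\lambda(a)-S^*_0\bigr)\;\ge\;\e^{-2K+2\lambda(a) S^*_0} A(t).
\end{equation*}
Taking expectations, using independence of $S^*_0$ from $(S_j)_j$, and feeding the left-hand side into the stationary identity, bounds $\E_a(A(t))$ by a constant multiple of $\int_0^\infty \e^{-2\lambda(a) u}\chi(u)^2\,du/\E_a(\increm)$. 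The main technical hurdle will be constant bookkeeping: the naive shift produces the prefactor $\e^{2K}/\E_a(\e^{2\lambda(a) S^*_0}\indicator{\lambda(a) S^*_0\le K})$, and reshaping it into the claimed $C_{a,K}$ requires either a single-exponential refinement of the shift (in which one of the two $\e^{-\lambda(a)(t-S_j)}$ factors is absorbed into the non-decreasing bound on $\chi^2$) or a direct Cauchy--Schwarz comparison combined with the observation that $C_{a,K}\ge 1$. The analogous bound for $(S^*_j)_j$ in place of $(S_j)_j$ requires no essential modification, since the shift argument treats the initial displacement symmetrically.
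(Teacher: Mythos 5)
Your split into diagonal and off-diagonal terms, and your off-diagonal argument (conditioning on $\F_j=\sigma(S_0,\dotsc,S_j)$, bounding the conditional tail by $\bar{m}^\chi_{t-S_j}(a)$, then applying \reflemma{mUniformBound} twice), match the paper exactly. For the diagonal, the paper drops a factor $\e^{-\lambda(a)(t-S_j)}\le 1$ on the support, identifies the resulting expectation with $\bar{m}_t^{\chi^2}(a)$, and applies \reflemma{mUniformBound} to the non-decreasing characteristic $\chi^2$; this is precisely your ``single-exponential refinement'' and yields the constant $C_{a,K}$ paired with the kernel $\e^{-\lambda(a)u}$ --- indeed the paper's own displayed bound ends in $\int_0^\infty\e^{-\lambda(a)u}\chi(u)^2\,du$, not the $\e^{-2\lambda(a)u}$ kernel appearing in the lemma statement. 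Your alternative route through \refprop{PointStationaryProp} retains the kernel $\e^{-2\lambda(a)u}$, but, as you correctly diagnose, the shift argument then forces the prefactor $\e^{2K}/\E_a\bigl(\e^{2\lambda(a)S^*_0}\indicator{\lambda(a)S^*_0\le K}\bigr)$, and a short computation shows this is $\ge C_{a,K}$ rather than $\le C_{a,K}$; your Cauchy--Schwarz fix only gives $C_{a,K}^2$. In short, neither your argument nor the paper's proof produces the exact pair $(C_{a,K},\e^{-2\lambda(a)u})$ asserted in the lemma --- this is a small inconsistency between statement and proof, harmless for the downstream use in \refthm{OneCharConv} where $\chi$ is bounded and any of these variants gives the required $O(s_n^2\shortnorm{\chi}_\infty^2)$ bound. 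Finally, for $(S^*_j)_j$ the paper argues more simply that replacing $(S_j)_j$ by $(S^*_j)_j$ amounts to replacing $t$ by $t-U\increm^*$, and the bound is $t$-independent.
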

\begin{proof}
Since $\chi$ is non-decreasing, $\int_0^\infty \e^{-\lambda(a)u} \chi(u) du <\infty$ implies that $\e^{-\lambda(a)u} \chi(u)$ must be bounded.
Hence $\int_0^\infty \e^{-2\lambda(a)u} \chi(u)^2 du <\infty$ also.
Applying \reflemma{mUniformBound} to $\chi$ and $\chi^2$, we deduce
\begin{align}
\E_a &\left( \Big(\sum_{j=0}^\infty \e^{-\lambda(a)(t-S_j)} \chi(t-S_j)\Big)^2 \right)\notag
\\
&=
\E_a\bigg( \sum_{j=0}^\infty \e^{-2\lambda(a)(t-S_j)} \chi(t-S_j)^2\bigg)  + 2\E_a\bigg( \sum_{j=0}^\infty \e^{-\lambda(a)(t-S_j)} \chi(t-S_j) \sum_{k=j+1}^\infty \e^{-\lambda(a)(t-S_k)} \chi(t-S_k)\bigg)
\notag\\
&=
\bar{m}_t^{\chi^2}(a) + 2\E_a\bigg(\sum_{j=0}^\infty \e^{-\lambda(a)(t-S_j)} \chi(t-S_j) \bar{m}_{t-S_{j+1}}^\chi(a)\bigg)
\notag\\
&\leq
C_{a,K} \frac{\int_0^\infty \e^{-\lambda(a)u} \chi(u)^2 du}{\E_a(\increm)}+2C_{a,K}\frac{\int_0^\infty \e^{-\lambda(a)u} \chi(u)du}{\E_a(\increm)}\E_a\Big(\sum_{j=0}^\infty \e^{-\lambda(a)(t-S_j)}\chi(t-S_j)\Big).
\end{align}
Another application of \reflemma{mUniformBound} gives \eqrefPartII{SquaredSumBound}.
Finally replacing $(S_j)_j$ by $(S^*_j)_j$ is equivalent to replacing $t$ by $t-U\increm^*$.
Since the upper bound in \eqrefPartII{SquaredSumBound} does not depend on $t$, the result follows.
\end{proof}

We now specialise to the offspring distribution $\mu_n$ and apply the convergence results of \refthm{ConvRW} to prove \refthm{OneCharConv}:

\begin{proof}[Proof of \refthm{OneCharConv}]
By \reflemma{mUniformBound}, for all $a,t>0$,
\begin{equation}
\bar{m}^\chi_t(\tilde{a})\leq s_n \frac{\e^1}{\P_{\tilde{a}}(\lambda_n(\tilde{a})S_0^*\leq 1)}\frac{\int_0^\infty \lambda_n(\tilde{a}) \e^{-\lambda_n(\tilde{a}) u} \norm{\chi}_\infty du}{s_n \lambda_n(\tilde{a})\E_{\tilde{a}}(\increm)},
\end{equation}
and, by \refthm{ConvRW}, $\P_{\tilde{a}}(\lambda_n(\tilde{a})S_0^*\leq 1) \to \P(UE \le 1)$ and $s_n \lambda_n(\tilde{a}) \E_{\tilde{a}}(\increm) \to 1$ uniformly in $a \in A$.
Hence, the uniform bound for $\bar{m}_t^\chi(\tilde{a})$ follows.
By the same reasoning, \reflemma{SquaredSumBound} yields a constant $C<\infty$ such that
\begin{equation}\labelPartII{SquaredSumsn2Bound}
\E_{\tilde{a}}\biggl(\Bigl(\sum_{j=0}^\infty \e^{-\lambda_n(\tilde{a})(t-S_j)} \chi(t-S_j)\Bigr)^2\biggr)\leq C s_n^2 \norm{\chi}_\infty^2
,
\end{equation}
an estimate that will be needed shortly.

For \eqrefPartII{OneCharConvFormula}, fix $\epsilon>0$.
Apply \reflemma{CharsByCoupling} with $\epsilon$ and $a$ replaced by $\tilde{\epsilon}=\lambda_n(\tilde{a})^{-1}\epsilon$ and $\tilde{a}$, with the stopping time $J_n=\inf\set{j\colon \abs{S_j - S^*_j}\leq \tilde{\epsilon}} =\inf\set{j\colon \lambda_n(\tilde{a})\abs{S_j-S^*_j}\leq \epsilon}$.
By \eqrefPartII{lambdanAsymp}, we may choose $K$ large enough that $\int_{t-\tilde{\epsilon}}^\infty \lambda_n(\tilde{a})\e^{-\lambda_n(\tilde{a})z} \chi(z) dz \leq \norm{\chi}_\infty \epsilon$ whenever $\lambda_n(1)t\geq K$.
By \eqrefPartII{EYAsymp}, it follows that the first terms in the upper and lower bounds of \reflemma{CharsByCoupling} are $s_n\int_0^\infty \lambda_n(\tilde{a})\e^{-\lambda_n(\tilde{a})z} \chi(z)dz + O(\epsilon)s_n \norm{\chi}_\infty$.
Therefore it is enough to show that the error term $\E_{\tilde{a}}(\sum_{j=0}^{J_n-1} \e^{-\lambda_n(\tilde{a})(t-\tilde{\epsilon}-S^*_j)} \chi(t-\tilde{\epsilon}-S^*_j))$ is also $O(\epsilon)s_n \norm{\chi}_\infty$ for $\lambda_n(1)t$ sufficiently large, uniformly in $a\in A$ (the same proof will work for the term with $S_j$).

To prove this, observe that the variables $(J_n/s_n, \lambda_n(\tilde{a}) S^*_{J_n})_{n\in\N,a\in A}$ are tight.
Indeed, the rescaled processes $(\lambda_n(\tilde{a})S_{\floor{s_n t}})_{t\ge 0},( \lambda_n(\tilde{a})S^*_{\floor{s_n t}})_{t\ge 0}$ converge by \refthm{ConvRW} to independent Gamma processes (with different initial conditions).
These limiting processes approach to within $\epsilon/2$ at some random but finite time, and tightness follows.

Thus we may choose $C'$ large enough that the event $\mathcal{A}=\set{J_n\leq C' s_n}\union\set{\lambda_n(\tilde{a})S^*_{J_n}\leq C'}$ has $\P_{\tilde{a}}(\mathcal{A}^c)\leq \epsilon^2$.
Using the Cauchy-Schwarz inequality and \eqrefPartII{SquaredSumsn2Bound},
\begin{align}
&\E_{\tilde{a}}\Big( \indicatorofset{\mathcal{A}^c} \sum_{j=0}^{J_n-1} \e^{-\lambda_n(\tilde{a})(t-\tilde{\epsilon}-S^*_j)} \chi(t-\tilde{\epsilon}-S^*_j) \Big)
\notag\\&\quad
\leq
\left[ \P_{\tilde{a}}(\mathcal{A}^c) \E_{\tilde{a}} \! \left( \biggl( \sum_{j=0}^\infty \e^{-\lambda_n(\tilde{a})(t-\tilde{\epsilon}-S^*_j)} \chi(t-\tilde{\epsilon}-S^*_j) \biggr)^2 \right) \right]^{1/2}
\leq
\sqrt{C} \epsilon s_n \norm{\chi}_\infty,
\end{align}
whereas
\begin{align}
\E_{\tilde{a}}\Big( \indicatorofset{\mathcal{A}} \sum_{j=0}^{J_n-1} \e^{-\lambda_n(\tilde{a})(t-\tilde{\epsilon}-S^*_j)} \chi(t-\tilde{\epsilon}-S^*_j) \Big)
&\leq
C' s_n \norm{\chi}_\infty \e^{-\lambda_n(\tilde{a})(t-\tilde{\epsilon})+C'}.
\end{align}
By \eqrefPartII{lambdanAsymp}, the right-hand side is at most $\epsilon s_n\norm{\chi}_\infty$, uniformly over $a\in A$, if $\lambda_n(1) t \ge K$ with $K$ sufficiently large.
This completes the proof of \eqrefPartII{OneCharConvFormula}.

We turn to the estimates for $\bar{M}_{t,u}^{\chi,\eta}(\tilde{a},\tilde{b})$.
In view of \eqrefPartII{MtuchietaabRW}, apply \reflemma{CovarianceSums} to $h(y)=\bar{m}_{t-y}^\chi(\tilde{a})\bar{m}_{u-y}^\eta(\tilde{b})$.
By the first part of the current proof, $\norm{h}_{\infty}=O(s_n^2)\norm{\chi}_\infty \norm{\eta}_\infty$  and for any $\eps>0$ we can make the infimum and supremum in \eqrefPartII{CovarianceSumsConclusion} differ from $s_n^2 \textstyle{\int_0^\infty} \e^{-z}\chi\bigl( z/\lambda_n(\tilde{a}) \bigr) dz \cdot \textstyle{\int_0^\infty} \e^{-w}\eta\bigl( w/\lambda_n(\tilde{b}) \bigr) dw$ by at most $\epsilon s_n^2$, by taking $\lambda_n(1)[t\wedge u]$ large enough.
\end{proof}

\section{The two-vertex characteristic}\lbsect{2VertexCharSec}

In view of Theorems~\refPartII{t:CollCondProb} and \refPartII{t:CouplingFPPCollision}, we wish to consider generation-weighted two-vertex characteristics of the form
\begin{equation}
z_{\vec{t}}^\chi(\vec{a}) = \sum_{v_1\in\BP_{t_1}^{(1)}} \sum_{v_2\in\BP_{t_2}^{(2)}} a_1^{|v_1|} a_2^{|v_2|} \chi(t_1-T_{v_1},t_2-T_{v_2})
\end{equation}
for $\chi(\vec{t})=\chi_n(\vec{t})=\mu_n(\abs{t_1-t_2},t_1+t_2)$ defined in \eqrefPartII{TwoVertexChar}.
As discussed in \refsubsect{TwoVerChar}, we split $\chi_n$ into $\chi_n^{\sss(K)}$ and $\chi_n-\chi_n^{\sss(K)}$ for some $K\in (0,\infty)$.

Regarding $t_1$ as fixed,
	\begin{equation}\labelPartII{DiffOfIncr}
	s_n \chi_n^{\sss(K)}(t_1,t_2)=
	s_n \mu_n^{\sss(K)}(t_1-t_2,t_1+t_2)
	- \indicator{t_2\geq t_1} s_n \mu_n^{\sss(K)}(t_1-t_2,t_2-t_1)
	\end{equation}
expresses the one-vertex characteristic $s_n\chi_n^{\sss(K)}(t_1,\cdot)$ as the difference of two uniformly bounded, non-negative, non-decreasing functions.

We extend our abbreviations from \eqrefPartII{tildeConvention} to vectors and write
\begin{equation}\labelPartII{tildeVectorValued}
\vec{\tilde{a}}=\vec{a}^{1/s_n} =(\tilde{a}_1,\tilde{a}_2) =(a_1^{1/s_n}, a_2^{1/s_n}) \qquad \text{etc.}
\end{equation}

\subsection{Truncated two-vertex characteristic: Proof of \refthm{TwoVertexConvRestrictedSum}}\lbsubsect{TwoVertexConvRestrictedPf}

In this section, we prove \refthm{TwoVertexConvRestrictedSum}.
For any two-vertex characteristic $\chi$, note that $z_{\vec{t}}^\chi(\vec{a})$ can be written in terms of two one-vertex characteristics as follows:
\begin{equation}\labelPartII{IteratedChar}
z_{\vec{t}}^\chi(\vec{a})=z_{t_1}^{\rho', \BP^{(1)}}(a_1), \qquad\text{where}\qquad \rho'(t'_1)=\sum_{v_2\in\BP_{t_2}^{(2)}} a_2^{|v_2|} \chi(t'_1,t_2-T_{v_2})=z_{t_2}^{\chi(t'_1,\cdot), \BP^{(2)}}(a_2).
\end{equation}
Similarly, we may evaluate the two-vertex mean $\bar{m}_{\vec{t}}^{\chi_n^{\sss(K)}}(\vec{a})$ via two one-vertex means:
\begin{equation}\labelPartII{IteratedCharMeans}
\bar{m}_{\vec{t}}^{\chi_n^{\sss(K)}}(\vec{\tilde{a}}) = \bar{m}_{t_1}^{\rho_{t_2,\tilde{a}_2}}(\tilde{a}_1),
\qquad\text{where}\qquad
\rho_{t_2,\tilde{a}_2}(t'_1) = \bar{m}_{t_2}^{\chi_n^{\sss(K)}(t'_1,\cdot)}(\tilde{a}_2).
\end{equation}
For this reason we first give estimates for the one-vertex characteristic $\chi_n^{\sss(K)}(t_1,\cdot)$ uniformly in $t_1$.
In their statements, we rely on the function $\zeta$ given in \eqrefPartII{zetaDef} and on the following definition:
	\begin{equation}\labelPartII{IzDefn}
	I(z)=\int_0^\infty \left( \e^{- \abs{y-z}} - \e^{-(y+z)} \right) \frac{dy}{y}.
	\end{equation}

\begin{prop}\lbprop{HalfTwoVertexConvRestricted}
Assume the hypotheses of \refthm{ConvRW}.
For every $\epsilon>0$ and for every compact subset $A\subset(0,2)$, there is a constant $K_0<\infty$ such that for every $K\ge K_0$ there exist constants $K'<\infty$ and $n_0 \in \N$ such that for $n \ge n_0$, $a_1,a_2,b_1,b_2\in A$ and $t'_1\geq 0$,
\begin{equation}
\begin{aligned}
\abs{\bar{m}_{t_2}^{\chi_n^{\sss(K)}(t'_1,\cdot)}(\tilde{a}_2)-I\bigl( \lambda_n(\tilde{a}_2)t'_1 \bigr)}
&\leq \epsilon,
\\
\abs{s_n^{-3}\bar{M}_{t_1,u_1}^{\rho_{t_2,\tilde{a}_2},\rho_{u_2,\tilde{b}_2}}(\tilde{a}_1,\tilde{b}_1) - \frac{\zeta(a_2/a_1)\zeta(b_2/b_1)}{\log(1/a_1+1/b_1)}}
&\leq \epsilon,
\end{aligned}
\qquad\text{if} \;
\lambda_n(1)[t_1\wedge t_2 \wedge u_1\wedge u_2] \geq K'.
\end{equation}
Moreover, for every $K<\infty$, there are constants $K''<\infty$ and $n_0' \in \N$ such that
\begin{equation}
\bar{m}_{t_2}^{\chi_n^{\sss(K)}(t_1,\cdot)}(\tilde{a}_2)\leq K'', \qquad \bar{M}_{t_1,u_1}^{\rho_{t_2,\tilde{a}_2},\rho_{u_2,\tilde{b}_2}}(\tilde{a}_1,\tilde{b}_1) \leq K'' s_n^3
\end{equation}
for all $n\ge n_0'$, $t_1,t_2,u_1,u_2\geq 0$ and $a_1,a_2,b_1,b_2\in A$.
\end{prop}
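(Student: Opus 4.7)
The overall strategy is to reduce both estimates to applications of \refthm{OneCharConv} via the iterated representation \eqrefPartII{IteratedCharMeans}, which rewrites a two-vertex mean as a one-vertex mean whose characteristic is itself a one-vertex mean.

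For the first estimate, I would treat $\chi_n^{\sss(K)}(t'_1,\cdot)$ as a one-vertex characteristic in its second argument. Although not monotone, \eqrefPartII{DiffOfIncr} decomposes it into two non-negative, non-decreasing pieces, both bounded by $\mu_n^{\sss(K)}(0,\infty)=2K/s_n$, so \refthm{OneCharConv} applies to each. The formula \eqrefPartII{OneCharConvFormula} then yields
\begin{equation*}
\bar{m}_{t_2}^{\chi_n^{\sss(K)}(t'_1,\cdot)}(\tilde{a}_2)
= s_n\int_0^\infty \e^{-z}\chi_n^{\sss(K)}\bigl(t'_1, z/\lambda_n(\tilde{a}_2)\bigr)\,dz + O(\epsilon),
\end{equation*}
uniformly in $t'_1\ge 0$ once $\lambda_n(1)t_2$ is large. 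Substituting $z=\lambda_n(\tilde{a}_2)t_2$ and parametrizing $\mu_n^{\sss(K)}$ via $y=f_n(1+u/s_n)$ converts the leading term into
\begin{equation*}
\int_{-K}^{K}\Bigl(\e^{-\lambda_n(\tilde{a}_2)\abs{t'_1 - f_n(1+u/s_n)}} - \e^{-\lambda_n(\tilde{a}_2)(t'_1 + f_n(1+u/s_n))}\Bigr)\,du.
\end{equation*}
A further change of variable to $y=\lambda_n(\tilde{a}_2)f_n(1+u/s_n)$ produces Jacobian $du/dy$ converging to $1/y$ by \refcond{scalingfn} and \eqrefPartII{lambdanAsymp}, with the integration range tending to $(a_2\e^{-\gamma-K},a_2\e^{-\gamma+K}]$. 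Sending $n\to\infty$ and then $K\to\infty$ recovers $I(\lambda_n(\tilde{a}_2)t'_1)$ from \eqrefPartII{IzDefn}.

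For the second estimate, the uniform boundedness of $\rho_{t_2,\tilde{a}_2}$ just established lets me apply the second-moment bound \eqrefPartII{OneCharCovConv} with $\chi=\rho_{t_2,\tilde{a}_2}$ and $\eta=\rho_{u_2,\tilde{b}_2}$, giving
\begin{equation*}
s_n^{-3}\bar{M}_{t_1,u_1}^{\rho_{t_2,\tilde{a}_2},\rho_{u_2,\tilde{b}_2}}(\tilde{a}_1,\tilde{b}_1)
=\frac{\int_0^\infty \e^{-z}\rho_{t_2,\tilde{a}_2}\bigl(z/\lambda_n(\tilde{a}_1)\bigr)dz\,\int_0^\infty \e^{-w}\rho_{u_2,\tilde{b}_2}\bigl(w/\lambda_n(\tilde{b}_1)\bigr)dw}{\log(1/a_1+1/b_1)}+O(\epsilon).
\end{equation*}
By the first step and $\lambda_n(\tilde{a}_2)/\lambda_n(\tilde{a}_1)\to a_2/a_1$ from \eqrefPartII{lambdanAsymp}, each numerator factor converges to $\int_0^\infty \e^{-z} I(cz)\,dz$ with $c=a_2/a_1$ (respectively $b_2/b_1$). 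Exchanging the order of integration and applying the Frullani identity $\int_0^\infty(\e^{-y}-\e^{-y/c})dy/y=\log(1/c)$ evaluates this explicitly as $\frac{2c\log(1/c)}{1-c^2}=\zeta(c)$, matching the definition \eqrefPartII{zetaDef}. The \emph{moreover} bounds then follow from the uniform upper bounds in \refthm{OneCharConv} combined with $\norm{\rho_{t_2,\tilde{a}_2}}_\infty=O(1)$.

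The main obstacle will be ensuring that the $O(\epsilon)$ error in the first estimate is uniform in $t'_1\ge 0$, so that it still contributes at most $O(\epsilon)$ after integration against $\e^{-z}$ in the second step. I would handle this by splitting the range of $t'_1$: on bounded $\lambda_n(\tilde{a}_2)t'_1$ the Jacobian change of variables is uniformly controlled by \refcond{scalingfn} together with dominated convergence, while for $\lambda_n(\tilde{a}_2)t'_1$ large both $\bar{m}_{t_2}^{\chi_n^{\sss(K)}(t'_1,\cdot)}(\tilde{a}_2)$ and $I(\lambda_n(\tilde{a}_2)t'_1)$ are small, since $\chi_n^{\sss(K)}(t'_1,\cdot)$ is concentrated where $t_2$ is close to $t'_1$ and the corresponding contribution to $\bar{m}$ is exponentially damped by the factor $\e^{-\lambda_n(\tilde{a}_2)t'_1}$ appearing after the change of variable.
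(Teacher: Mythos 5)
Your overall strategy matches the paper's: express the two-vertex mean via \eqrefPartII{IteratedCharMeans}, control the inner one-vertex characteristic by \refthm{OneCharConv}, show it is asymptotically $I(\lambda_n(\tilde{a}_2)t'_1)$, and then iterate, using $\int_0^\infty \e^{-z}I(rz)\,dz=\zeta(r)$. The decomposition \eqrefPartII{DiffOfIncr} into monotone pieces, the explicit Frullani computation of the $\zeta$ integral, and the splitting in $t'_1$ for uniformity are all sound. However, there are two concrete gaps.

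First, the claim that the Jacobian $du/dy$ of the change of variable $y=\lambda_n(\tilde{a}_2)f_n(1+u/s_n)$ converges to $1/y$ is not justified by the hypotheses of \refthm{ConvRW}. That claim amounts to $f_n'(1+u/s_n)/(s_n f_n(1+u/s_n))\to 1$, but Conditions~\refcondPartI{scalingfn} and \refcondPartI{fnWeakBounds} control only pointwise (rescaled) values of $f_n$, not its derivative; even the stronger Conditions~\refcondPartI{LowerBoundfn}/\refcondPartI{boundfnExtended} give only two-sided bounds $\epsilonCondition\le xf_n'(x)/(s_nf_n(x))\le 1/\epsilonCondition$, not convergence. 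The cure is to avoid the second change of variable: substitute $\tilde{x}=x^{1/s_n}$, whose Jacobian $x^{1/s_n-1}\to x^{-1}$ is explicit, and then use \refcond{scalingfn} together with \eqrefPartII{lambdanAsymp} to get $\lambda_n(\tilde{a}_2)f_n(\tilde{x})\to a_2\e^{-\gamma}x$ uniformly over $x\in[(1-K/s_n)^{s_n},(1+K/s_n)^{s_n}]$, which gives convergence of the integral without any control on $f_n'$.

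Second, in the $\bar{M}$ estimate you invoke \eqrefPartII{OneCharCovConv} with $\chi=\rho_{t_2,\tilde{a}_2}$, $\eta=\rho_{u_2,\tilde{b}_2}$, citing only their boundedness; but \refthm{OneCharConv} requires $\chi,\eta$ to be \emph{non-decreasing}, and $\rho_{t_2,\tilde{a}_2}(\cdot)$ is not. You handled the analogous monotonicity issue in the $\bar{m}$ estimate via \eqrefPartII{DiffOfIncr}, but did not carry the fix over. You need either to decompose $\rho_{t_2,\tilde{a}_2}$ itself as a difference of two bounded non-negative non-decreasing functions (interchange $t_1$ and $t_2$ in \eqrefPartII{DiffOfIncr} and use linearity of $\bar{m}$), or to replace $\rho_{t_2,\tilde{a}_2}$ by $I(\lambda_n(\tilde{a}_2)\cdot)$ up to an $\epsilon$-error in $\shortnorm{\cdot}_\infty$ and then note that $I$ is a difference of two bounded monotone functions, applying \refthm{OneCharConv} to each piece by bilinearity of $\bar{M}$; the latter is the route the paper takes. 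Without one of these, neither the convergence \eqrefPartII{OneCharCovConv} nor the uniform bound $\bar{M}\leq K'\norm{\chi}_\infty\norm{\eta}_\infty s_n^3$ is available for $\rho$.
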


Note that $\bar{m}_{t_2}^{\chi_n^{\sss(K)}(t_1,\cdot)}(\tilde{a}_2)$ is asymptotically constant, instead of growing like $s_n$ as in \refthm{OneCharConv}.
This reflects the fact that $\chi_n^{\sss(K)}$ itself is typically of scale $1/s_n$.

\begin{proof}[Proof of \refprop{HalfTwoVertexConvRestricted}]
The integrand in \eqrefPartII{IzDefn} can be bounded by $2\e^{-y+1}$ if $z\leq 1$ and by $\indicator{y\leq 1}2\e^{-z+1}+\indicator{y>1}\e^{-\abs{y-z}}/y$ if $z\geq 1$.
It follows that we may choose $K_0<\infty$ sufficiently large that $\abs{\int_{a\e^{-K-\gamma}}^{a\e^{K-\gamma}}(\e^{- \abs{y-z}} - \e^{-(y+z)}) dy/y - I(z)}<\tfrac{1}{3}\epsilon$, for all $z\geq 0$, $a\in A$, $K \ge K_0$.
Here, $\gamma$ denotes again Euler's constant.

Applying \refthm{OneCharConv} to each of the uniformly bounded, non-negative, non-decreasing functions in \eqrefPartII{DiffOfIncr}, we conclude that for every $K<\infty$, $\bar{m}_{t_2}^{\chi_n^{\sss(K)}(t'_1,\cdot)}(\tilde{a}_2)$ is uniformly bounded and is within $\tfrac{1}{3}\epsilon$ of $\int_0^\infty s_n\lambda_n(\tilde{a}_2)\e^{-\lambda_n(\tilde{a}_2)t}\chi_n^{\sss(K)}(t'_1,t) \, dt$ if $\lambda_n(1)t_2$ is sufficiently large, uniformly over $a_2\in A$.
Use Fubini's Theorem and \eqrefPartII{munFromfn}, write $z=\lambda_n(\tilde{a}_2)t'_1$ and substitute $\tilde{x}=x^{1/s_n}$ to compute
\begin{align}
&\int_0^\infty s_n\lambda_n(\tilde{a}_2)\e^{-\lambda_n(\tilde{a}_2)t}\chi_n^{\sss(K)}(t'_1,t) \, dt
=s_n \int d\mu_n^{\sss(K)}(y) \int_0^\infty \indicator{\abs{y-t'_1}\leq t\leq y+t'_1} \lambda_n(\tilde{a}_2)\e^{-\lambda_n(\tilde{a}_2)t}  \, dt
\notag\\
&\quad=
s_n \int_{1-K/s_n}^{1+K/s_n} \left( \e^{-\lambda_n(\tilde{a}_2)\abs{f_n(\tilde{x})-t_1'}}-\e^{-\lambda_n(\tilde{a}_2)(f_n(\tilde{x})+t_1')} \right) d\tilde{x}
\notag\\
&\quad=
\int_{(1-K/s_n)^{s_n}}^{(1+K/s_n)^{s_n}} \left( \e^{-\shortabs{\lambda_n(\tilde{a}_2)f_n(\tilde{x})-z}}-\e^{-(\lambda_n(\tilde{a}_2)f_n(\tilde{x})+z)} \right) x^{1/s_n-1} dx.
\labelPartII{IntegralchiKn}
\end{align}
Monotonicity, \refcond{scalingfn} and \eqrefPartII{lambdanAsymp} imply that $\lambda_n(\tilde{a}_2)f_n(\tilde{x})\to a_2 \e^{-\gamma}x$ as $n\to\infty$, uniformly over $x\in[(1-K/s_n)^{s_n},(1+K/s_n)^{s_n}]$ and $a_2\in A$.
Hence the integral in \eqrefPartII{IntegralchiKn} is within $\tfrac{1}{3}\epsilon$ of $\int_{\e^{-K}}^{\e^K} (\e^{-\shortabs{a_2\e^{-\gamma}x-z}}-\e^{-(a_2\e^{-\gamma}x+z)})dx/x$ for $n$ sufficiently large, uniformly in $z\geq 0$ and $a_2\in A$.
Substituting $y=a_2\e^{-\gamma}x$ and using $K\ge K_0$, we obtain the desired statements for $\bar{m}_{t_2}^{\chi_n^{\sss(K)}(t'_1,\cdot)}(\tilde{a}_2)$.

For $\bar{M}_{t_1,u_1}^{\rho_{t_2,\tilde{a}_2},\rho_{u_2,\tilde{b}_2}}(\tilde{a}_1,\tilde{b}_1)$, the statements for $\bar{m}_{t_2}^{\chi_n^{\sss(K)}(t'_1,\cdot)}(\tilde{a}_2)$ can be interpreted to say that the characteristics $\rho_{t_2,\tilde{a}_2}(\cdot),\rho_{u_2,\tilde{b}_2}(\cdot)$ are uniformly bounded and lie within $\tfrac{1}{2}\epsilon$ of the characteristics $I\!\left( \lambda_n(\tilde{a}_2) \, \cdot \right)$, $I\!\bigl( \lambda_n(\tilde{b}_2) \, \cdot \bigr)$ if $\lambda_n(1)t_2,\lambda_n(1)u_2$ are sufficiently large.
It is readily verified that $I(z)$ can be written as the difference of two bounded, non-negative, non-decreasing functions.
We may therefore apply \refthm{OneCharConv} to these characteristics.
A calculation shows that
\begin{equation}\labelPartII{zetaFromI}
\int_0^\infty \e^{-z}I(rz)dz=\zeta(r),
\qquad r=\lambda_n(\tilde{a}_2)/\lambda_n(\tilde{a}_1),
\end{equation}
where $\zeta$ is defined in \eqrefPartII{zetaDef}.
By \eqrefPartII{lambdanAsymp}, we have $r\to a_2/a_1$ uniformly over $a_1,a_2 \in A$; since $\zeta$ is continuous, this completes the proof.
\end{proof}
\begin{proof}[Proof of \refthm{TwoVertexConvRestrictedSum}]
Interchanging the roles of $t_1$ and $t_2$ in \eqrefPartII{DiffOfIncr} and using \refprop{HalfTwoVertexConvRestricted}, we can write $\rho_{t_2, \tilde{a}_2}$ in \eqrefPartII{IteratedCharMeans} as the difference of two bounded, non-negative, non-decreasing functions and \refthm{OneCharConv} yields that $\frac{1}{s_n}\bar{m}_{\vec{t}}^{\chi_n^{\sss (K)}}(\vec{\tilde{a}})$ is bounded.
To show \eqrefPartII{CharMain1stMomConv}, \refprop{HalfTwoVertexConvRestricted} allows us to replace $\rho_{t_2, \tilde{a}_2}$ in \eqrefPartII{IteratedCharMeans} by $I(\lambda_n(\tilde{a}_2\cdot))$, making an error of at most $\eps s_n$.
Since $I$ can be written as the difference of two bounded, non-negative, non-decreasing functions, \refthm{OneCharConv}, \eqrefPartII{zetaFromI} and the fact that $\zeta(r)\to \zeta(a_2/a_1)$ uniformly, yield the claim.

For $\bar{M}_{\vec{t},\vec{u}}^{\chi_n^{\sss(K)},\chi_n^{\sss(K)}}(\vec{\tilde{a}},\vec{\tilde{b}})$, use \eqrefPartII{IteratedChar} to obtain (similarly to \eqrefPartII{MtuchietaabRW})
\begin{align}
\bar{M}_{\vec{t},\vec{u}}^{\chi,\eta}(\vec{a},\vec{b})
=
\E_{a_1 b_1, a_2 b_2} \Biggl( \sum_{j_1=0}^\infty\sum_{j_2=0}^\infty &\e^{-[\lambda_n(a_1)+\lambda_n(b_1)-\lambda_n(a_1 b_1)] S_{j_1}^{\sss(1)}} \e^{-[\lambda_n(a_2)+\lambda_n(b_2)-\lambda_n(a_2 b_2)] S_{j_2}^{\sss(2)}} \Biggr.
\\
&\qquad
\Biggl.
\cdot\, \bar{m}_{t_1-S_{j_1}^{\sss(1)} \! , \, t_2-S_{j_2}^{\sss(2)}}^\chi(\vec{a}) \bar{m}_{u_1-S_{j_1}^{\sss(1)} \! , \, u_2-S_{j_2}^{\sss(2)}}^\eta(\vec{b})
\!\Biggr)
\! ,
\notag
\end{align}
where now $(S_{j}^{\sss (1)})_j$ and $(S_{j}^{\sss (2)})_j$ are independent random walks and $(S_j^{\sss (i)})_j$ has step distribution $\nu_{a_i b_i}$, $i=1,2$.
Applying \reflemma{CovarianceSums} twice and using the results from the first part of the proof, we obtain the desired conclusions.
\end{proof}

\subsection{The effect of truncation: Proof of \refthm{TwoVertexRemainder}}\lbsubsect{TwoVertexRemainderPf}
In this section, we control the effect of truncation and prove \refthm{TwoVertexRemainder},
by showing that the remainder $\chi_n-\chi_n^{\sss(K)}$ has a negligible first moment.

We will write $\chi_n=\int d\mu_n(y) \Psi_y$, where
\begin{equation}
\Psi_y(t_1,t_2)=\indicator{\abs{t_1-t_2}\leq y, t_1+t_2\geq y}.
\end{equation}
The same is true for $\chi_n^{\sss(K)}$ and $\mu_n^{\sss(K)}$, so that, by \eqrefPartII{munFromfn} and the substitution $\tilde{x}=x^{1/s_n}$,
\begin{align}
s_n^{-1}\bar{m}^{\chi_n-\chi_n^{(K)}}_{\vec{t}}(\vec{1})
&=
\int_0^{1-\deltaCondition} s_n^{-1} \bar{m}^{\Psi_{f_n(\tilde{x})}}_{\vec{t}}(\vec{1}) d\tilde{x} + \left( \int_{(1-\deltaCondition)^{s_n}}^{(1-K/s_n)^{s_n}} + \int_{(1+K/s_n)^{s_n}}^\infty \right) s_n^{-2}\bar{m}^{\Psi_{f_n(\tilde{x})}}_{\vec{t}}(\vec{1}) x^{1/s_n - 1} dx
\notag\\&
=
I_0+I_1+I_2
.
\labelPartII{mbarchiFromPsi}
\end{align}
We must therefore show that $I_0,I_1,I_2$ are uniformly bounded and can be made small by making $K$ and $\lambda_n(1)[t_1\land t_2]$ large.
To this end, we will bound the two-vertex mean $\bar{m}^{\Psi_y}_{\vec{t}}(\vec{1})$ in terms of one-vertex means.
Abbreviate
\begin{equation}
\eta^{\sss(q)}(t)=\indicator{0\leq t\leq q}.
\end{equation}
\begin{lemma}\lblemma{PsiFrometa}
For any $y\in (0,\infty)$,
\begin{align}
\bar{m}^{\Psi_y}_{\vec{t}}(\vec{1}) \leq \frac{1}{y}
&\bigg[
\int_0^\infty \e^{-2\lambda_n(1)r}\bar{m}_{t_1-r}^{\eta^{(2y)}}(1)\bar{m}_{t_2-r}^{\eta^{(2y)}}(1) dr
\\
&+
\int_0^y \e^{-\lambda_n(1)(y-r)} \left( \bar{m}_{t_1-y+r}^{\eta^{(2r)}}(1)\bar{m}_{t_2}^{\eta^{(r)}}(1) + \bar{m}_{t_2-y+r}^{\eta^{(2r)}}(1)\bar{m}_{t_1}^{\eta^{(r)}}(1) \right) \! dr
\bigg].
\notag
\end{align}
\end{lemma}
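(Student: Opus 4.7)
The plan is to reduce the bound to a single pointwise inequality for the kernel $\Psi_y$ and then turn it into the claimed bound on $\bar m_{\vec t}^{\Psi_y}(\vec 1)$ using Fubini and the independence of $\BP^{\sss (1)}$ and $\BP^{\sss (2)}$.

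The key step is to establish that, for every $a_1,a_2\ge 0$,
\[
y\,\Psi_y(a_1,a_2) \le L_1(a_1,a_2)+L_2(a_1,a_2)+L_2'(a_1,a_2),
\]
where
\begin{align*}
L_1(a_1,a_2)&=\int_0^\infty \indicator{r\le a_1\le r+2y,\;r\le a_2\le r+2y}\,dr=\bigl((a_1\wedge a_2)-((a_1\vee a_2)-2y)^+\bigr)^+,\\
L_2(a_1,a_2)&=\int_0^y \indicator{|a_1-y|\le r,\;a_2\le r}\,dr=\bigl(y-\max(|a_1-y|,a_2)\bigr)^+,\\
L_2'(a_1,a_2)&=\int_0^y \indicator{|a_2-y|\le r,\;a_1\le r}\,dr=\bigl(y-\max(|a_2-y|,a_1)\bigr)^+.
\end{align*}
The inequality is trivial when $\Psi_y(a_1,a_2)=0$; when $\Psi_y(a_1,a_2)=1$ I may assume by symmetry that $a_1\le a_2$, so that $a_2-a_1\le y$ and $a_1+a_2\ge y$. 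A short case split according to whether $a_2\le y$, $y<a_2\le 2y$ or $a_2>2y$ shows that $L_1+L_2+L_2'$ equals $2y-a_2$, $\max(y,a_1)$ or $a_1+2y-a_2$ respectively, each of which is $\ge y$ under the constraints.

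Once the pointwise bound is in place, I would apply it with $(a_1,a_2)=(t_1-T_{v_1},t_2-T_{v_2})$ for every $(v_1,v_2)\in\BP_{t_1}^{\sss (1)}\times\BP_{t_2}^{\sss (2)}$, sum over pairs and take expectation. Using Fubini to move the outer $r$-integrals outside the expectation and the independence of $\BP^{\sss (1)}$ and $\BP^{\sss (2)}$ to factor the expectation of the product, the three $L_i$ terms contribute $\int_0^\infty m^{\eta^{(2y)}}_{t_1-r}(1)\,m^{\eta^{(2y)}}_{t_2-r}(1)\,dr$, $\int_0^y m^{\eta^{(2r)}}_{t_1-y+r}(1)\,m^{\eta^{(r)}}_{t_2}(1)\,dr$ and its symmetric partner. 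Dividing by $y$ and distributing the prefactor $\e^{-\lambda_n(1)(t_1+t_2)}$ as $\e^{-2\lambda_n(1)r}\e^{-\lambda_n(1)(t_1-r)}\e^{-\lambda_n(1)(t_2-r)}$ in the first integral, and as $\e^{-\lambda_n(1)(y-r)}\e^{-\lambda_n(1)(t_1-y+r)}\e^{-\lambda_n(1)t_2}$ in the second (and similarly for its partner), converts each $m$ into the corresponding $\bar m$ and yields exactly the $\e^{-2\lambda_n(1)r}$ and $\e^{-\lambda_n(1)(y-r)}$ weights appearing in the statement.

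The main obstacle is the pointwise inequality itself. The three terms $L_1,L_2,L_2'$ are genuinely complementary rather than uniformly comparable: only $L_1$ contributes when both ages are large (say $a_1,a_2>2y$), while only the cross terms contribute when one age is close to $0$ and the other is close to $y$. So the verification really requires checking that the three ingredients fit together to cover the support of $\Psi_y$ with total length at least $y$, which is exactly what the case analysis above delivers.
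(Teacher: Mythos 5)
Your proposal is correct and takes essentially the same approach as the paper: the same pointwise bound $y\,\Psi_y \le \int_0^\infty \indicatorofset{[r,r+2y]^2}\,dr + \int_0^y(\indicatorofset{[y-r,y+r]\times[0,r]}+\indicatorofset{[0,r]\times[y-r,y+r]})\,dr$, then factorization of $\bar m$ over product rectangles via independence, and the translation identity $\bar m_t^{\indicatorofset{[c,d]}}(1)=\e^{-\lambda_n(1)c}\bar m_{t-c}^{\eta^{(d-c)}}(1)$. The only difference is presentational: the paper justifies the coverage of $\Psi_y$ by a one-line measure-theoretic assertion, whereas you carry out the explicit three-case verification (which is correct).
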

\begin{proof}
Note that
\begin{equation}
\Psi_y\leq \frac{1}{y}\Big[\int_0^\infty \indicatorofset{[r,r+2y]^2}dr+\int_0^y(\indicatorofset{[y-r,y+r]\times [0,r]}+\indicatorofset{[0,r]\times [y-r,y+r]}) dr\Big]
\end{equation}
since, for any $\vec{t}$ for which $\Psi_y(\vec{t})>0$, the measure of the sets of parameter values $r$ for which $\vec{t}$ belongs to the relevant rectangles is at least $y$ in total.
Then the identities $\bar{m}^{\indicatorofset{[a,b]\times[c,d]}}_{t_1,t_2}(\vec{a})=\bar{m}_{t_1}^{\indicatorofset{[a,b]}}(a_1)\bar{m}_{t_2}^{\indicatorofset{[c,d]}}(a_2)$ and
\begin{equation}\labelPartII{mbarInterval}
\bar{m}^{\indicatorofset{[c,d]}}_t(a) = \e^{-\lambda_n(a)c} \bar{m}_{t-c}^{\eta^{(d-c)}}(a)
\end{equation}
complete the proof.
\end{proof}

Using \reflemma{PsiFrometa}, it will suffice to bound the one-vertex means $\bar{m}^{\eta^{(q)}}_t(1)$.
We will use different bounds depending on the relative sizes of $q$, $t$ and $f_n(1)$, as in the following lemma:

\begin{lemma}\lblemma{mbaretaBounds}
There is a constant $C<\infty$ such that, for $n$ sufficiently large,
\begin{subequations}
\begin{align}
\bar{m}_t^{\eta^{(q)}}(1) &\leq Cs_n && \text{for all }t,q\geq 0,
\labelPartII{mbaretaUniform}
\\
\bar{m}_t^{\eta^{(q)}}(1) &\leq Cs_n\frac{q+s_n f_n(1-\deltaCondition)}{f_n(1)} && \text{if }t\geq f_n(1-1/s_n), q\leq\tfrac{1}{2}t,
\labelPartII{mbaretaDensityLarget}
\\
\bar{m}_t^{\eta^{(q)}}(1) &\leq C\frac{q+s_n f_n(1-\deltaCondition)}{s_n t(1-f_n^{-1}(t))^2} && \text{if } t<f_n(1), q\leq\tfrac{1}{2}t,
\labelPartII{mbaretaDensitySmallt}
\\
\bar{m}_t^{\eta^{(2q)}}(1) &\leq \frac{2}{1-f_n^{-1}(q)}, && \text{if }q<f_n(1), t \ge 0.
\labelPartII{mbaretaInverse}
\end{align}
\end{subequations}
\end{lemma}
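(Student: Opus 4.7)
The plan is to prove the four bounds in sequence from the random walk representation \eqrefPartII{mtchiaRW}, namely $\bar{m}_t^{\eta^{(q)}}(1) = \E_1\bigl(\sum_{j\ge 0}\e^{-\lambda_n(1)(t-S_j)}\indicator{0\le t-S_j\le q}\bigr)$, combined with the stationary identity of \refprop{PointStationaryProp}, the coupling lemma \reflemma{CharsByCoupling}, and the density bound from \reflemma{munDensityBound}. The random walk asymptotics from \refthm{ConvRW} (in particular $s_n\lambda_n(1)\E_1(\increm)\to 1$) are the bridge that converts renewal-theoretic estimates into the explicit $s_n$-dependence.

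For (a), I would apply \reflemma{mUniformBound} with $K=1$, using $\int_0^\infty \e^{-\lambda_n(1)u}\eta^{(q)}(u)\,du\le 1/\lambda_n(1)$ and the fact that, by \refthm{ConvRW}\refitem{Y*ToExp}, $\E_1(\e^{\lambda_n(1)S_0^*}\indicator{\lambda_n(1)S_0^*\le 1})$ is bounded below by a positive constant. For (d), drop the exponential discount (bounded by $1$ when $S_j\le t$) to get $\bar{m}_t^{\eta^{(2q)}}(1)\le \E_1N([t-2q,t])$, where $N(I)$ counts visits of the walk to $I$. Split $[t-2q,t]$ into two subintervals of length $q$; for each such interval $I$ the strong Markov property shows that after any visit to $I$, the walk leaves $I$ on its next step with probability at least $\P_1(\increm>q)=1-\int_0^q\e^{-\lambda_n(1)y}d\mu_n(y)\ge 1-f_n^{-1}(q)$, so that $N(I)$ is stochastically dominated by a geometric random variable with success probability $1-f_n^{-1}(q)$.

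For (b), the starting point is \refprop{PointStationaryProp}, which yields $\E_1\bigl(\bar{m}_{t-U\increm^*}^{\eta^{(q)}}(1)\bigr)=(1-\e^{-\lambda_n(1)q})/(\lambda_n(1)\E_1(\increm))$, a quantity of order $s_n q/f_n(1)$ by \refthm{ConvRW}. To transfer this from the stationary random time $t-U\increm^*$ to the deterministic time $t$, I would invoke \reflemma{CharsByCoupling} with a carefully chosen $\epsilon$ of order $f_n(1)$; the hypothesis $t\ge f_n(1-1/s_n)$ together with \refcond{scalingfn} ensures $\lambda_n(1)t$ is bounded below by a positive constant so the exponential prefactors $\e^{\pm 2\lambda_n(1)\epsilon}$ are harmless. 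The residual coupling term $\E_1\bigl(\sum_{j<J}\e^{-\lambda_n(1)(t-S_j)}\eta^{(q)}(t-S_j)\bigr)$ is where the correction $s_n f_n(1-\deltaCondition)/f_n(1)$ appears: it is controlled by splitting the early increments into those exceeding $f_n(1-\deltaCondition)$ (handled by the density bound of \reflemma{munDensityBound}) and those below (whose cumulative size is $O(s_n f_n(1-\deltaCondition))$ by the tail condition in \refcond{fnWeakBounds}).

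For (c), the stationary formula gives no useful control because $t<f_n(1)$ places $t$ before the walk has reached its typical regime. Instead I would estimate $\bar{m}_t^{\eta^{(q)}}(1)\le \sum_{j\ge 1}\P_1(S_j\in[t-q,t])$ directly. Using \reflemma{munDensityBound}, the density of $\nu_1$ on $(f_n(1-\deltaCondition),\infty)$ is at most $f_n^{-1}(y)/(\epsilonCondition s_n y)$, which on $[t-q,t]\subset(0,f_n(1))$ produces a large-jump contribution to $\P_1(S_1\in[t-q,t])$ of order $qf_n^{-1}(t)/(s_n t)$, while the small-jump part (increments below $f_n(1-\deltaCondition)$) contributes the additive $s_n f_n(1-\deltaCondition)/(s_n t)$ term. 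The $(1-f_n^{-1}(t))^{-2}$ factor arises from two independent geometric inputs: the probability that the walk ever reaches $[0,t]$ (bounded via $\P_1(\increm\le t)\le f_n^{-1}(t)$ and summing over $j$, giving one factor $1/(1-f_n^{-1}(t))$), and, conditional on reaching it, the number of subsequent visits to the window $[t-q,t]\subset[0,t]$ (bounded by a second geometric via the same argument as in (d)).

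The principal obstacles are the estimates (b) and (c). In (b), the difficulty is to make the coupling between the $(S_j)$ and $(S_j^*)$ walks quantitative enough to separate the leading stationary term $s_n q/f_n(1)$ from the subleading correction $s_n^2 f_n(1-\deltaCondition)/f_n(1)$; in (c), the challenge is to combine a density-based estimate with the geometric renewal argument while keeping track of the two separate regimes for increment sizes, so that both the linear-in-$q$ main term and the $s_n f_n(1-\deltaCondition)$ correction emerge with the correct $(1-f_n^{-1}(t))^{-2}$ prefactor.
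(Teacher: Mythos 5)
Your treatment of (a) and (d) is essentially the paper's (the (d) argument is a trivial variant: splitting $[t-2q,t]$ into two length-$q$ windows vs.\ waiting for the second increment exceeding $q$), with one caveat: \reflemma{mUniformBound} requires $m_t^\chi(a)$ to be non-decreasing in $t$, which fails for $\chi=\eta^{(q)}$, so for (a) you must first dominate $\eta^{(q)}\le \indicatorofset{\cointerval{0,\infty}}$ and then apply the lemma to the constant (hence monotone) characteristic.

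For (b) and (c) there are genuine gaps, and the decisive device you are missing is a size-biasing of the increments. For (b), \reflemma{CharsByCoupling} again requires $m_t^{\eta^{(q)}}(1)$ non-decreasing, and this is false (it jumps down at $t=q$ when the root's contribution disappears), so the coupling lemma cannot be invoked; moreover, even ignoring monotonicity, the coupling residual has no exponential suppression when $\lambda_n(1)t=O(1)$, which is precisely the regime $t\approx f_n(1-1/s_n)$ the bound must cover. For (c), the small-jump accounting is wrong: for a deterministic start $S_0=0$, a single increment can land in $[t-q,t]$, so the small-jump contribution to $\P_1(S_j\in[t-q,t])$ can be $O(1)$, not $O(f_n(1-\deltaCondition)/t)$; there is no stationarity to produce the $1/t$ scaling, and the double geometric heuristic does not control this. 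The paper fixes both at once by writing $1=\sum_{i\le j}\increm_i/S_j$ inside the expectation, conditioning on $S'_{j,i}=S_j-\increm_i$, and splitting the distinguished increment $\increm_i$ by size: the weight $\increm_i/S_j\le 2\increm_i/t$ on $\set{S_j\ge t-q}$ (using $q\le t/2$) is the source of the $1/t$ factor; $\increm_i$ is absorbed into the density bound of \reflemma{munDensityBound} when $\increm_i>f_n(1-\deltaCondition)$ and bounded by $f_n(1-\deltaCondition)$ otherwise. Then (c) follows from $\P_1(S'_{j,i}\le t)\le f_n^{-1}(t)^{j-1}$ and the single sum $\sum_j j\, f_n^{-1}(t)^{j-1}=(1-f_n^{-1}(t))^{-2}$, while (b) follows by reversing the size-biasing with a compensating factor $\increm_i/f_n(1-1/s_n)$ to reduce the estimate to $\bar{m}^{\rho}_{t+f_n(1)}(1)$ for an auxiliary \emph{monotone} characteristic $\rho$, to which \reflemma{mUniformBound} legitimately applies.
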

\begin{proof}
\refthm{OneCharConv} and $\norm{\eta^{(q)}}_\infty=1$ imply \eqrefPartII{mbaretaUniform}.
For \eqrefPartII{mbaretaInverse}, use the representation \eqrefPartII{mtchiaRW} and note that, starting from the first index $J$ for which $\eta^{(2q)}(t-S_J)\neq 0$ (if one exists), the total number of indices $j$ for which $\eta^{(2q)}(t-S_j)\neq 0$ is stochastically bounded by the waiting time (starting from $J$) until the second step where $Y_j>q$.
Then $\P_1(\increm_j\leq q)\leq f_n^{-1}(q)$ proves \eqrefPartII{mbaretaInverse}.

For \eqrefPartII{mbaretaDensityLarget}--\eqrefPartII{mbaretaDensitySmallt}, we employ a size-biasing argument on the jump sizes $\increm_i$.
For $i\leq j$, write $S'_{j,i}=\sum_{1\leq k\leq j, k\neq i} \increm_k$.
We can therefore rewrite \eqrefPartII{mtchiaRW} (noting that the term $j=0$ vanishes) as
\begin{equation}
\bar{m}_t^{\eta^{(q)}}(1)
=
\sum_{j=1}^\infty \sum_{i=1}^j \E_1\left( \e^{-\lambda_n(1)(t-S'_{j,i})} \E_1\condparentheses{\e^{\lambda_n(1)\increm_i} \frac{\increm_i}{S'_{j,i}+\increm_i} \indicator{t-q-S'_{j,i}\leq \increm_i \leq t-S'_{j,i}}}{S'_{j,i}} \right).
\end{equation}
We split according to whether $\increm_i>f_n(1-\deltaCondition)$ or $\increm_i\leq f_n(1-\deltaCondition)$.
For any measurable function $h\ge 0$, \eqrefPartII{integralnua} and
\reflemma{munDensityBound} imply
\begin{equation}
\E_1\left( \e^{\lambda_n(1)\increm_i} \increm_i h(\increm_i)\indicator{\increm_i> f_n(1-\deltaCondition)} \right) \leq \frac{1}{\epsilonCondition s_n} \int h(y) f_n^{-1}(y) dy.
\end{equation}
On the other hand, $\E_1\left( \e^{\lambda_n(1)\increm_i}\increm_i h(\increm_i)\indicator{\increm_i\leq f_n(1-\deltaCondition)} \right) \leq \max\set{yh(y)\colon y\leq f_n(1-\deltaCondition)}$ by \eqrefPartII{integralnua}.
Consequently, writing $x^+:=x\vee 0$,
\begin{align}
\bar{m}_t^{\eta^{(q)}}(1)
&\leq
\sum_{j=1}^\infty \sum_{i=1}^j \E_1\left( \e^{-\lambda_n(1)(t-S'_{j,i})} \indicator{t-S'_{j,i}\geq 0} \left[ \frac{1}{\epsilon_0 s_n} \int_{(t-q-S'_{j,i})^+}^{t-S'_{j,i}} \frac{f_n^{-1}(y)}{S'_{j,i}+y}dy + \frac{f_n(1-\deltaCondition)}{t-q}  \right] \right)
\notag\\
&
\leq
\sum_{j=1}^\infty \sum_{i=1}^j \E_1\left( \e^{-\lambda_n(1)(t-S'_{j,i})} \indicator{t-S'_{j,i}\geq 0} \left[ \frac{f_n^{-1}(t-S'_{j,i})}{\epsilonCondition s_n} \log\left( \frac{t}{t-q} \right) + \frac{f_n(1-\deltaCondition)}{t-q}  \right] \right)
\notag\\&
\leq
\frac{2}{t}\sum_{j=1}^\infty \sum_{i=1}^j \E_1\left( \e^{-\lambda_n(1)(t-S'_{j,i})} \indicator{t-S'_{j,i}\geq 0} \left[ \frac{qf_n^{-1}(t-S'_{j,i})}{\epsilonCondition s_n}  + f_n(1-\deltaCondition) \right] \right)
\labelPartII{metaqDoubleSum}
\end{align}
since  $-\log(1-x) \le 2x$ for $x \in [0,1/2]$ and $q\leq\tfrac{1}{2}t$.
To obtain \eqrefPartII{mbaretaDensitySmallt}, we note that $\P_1(S'_{j,i}\leq t)\leq f_n^{-1}(t)^{j-1}$, as in the proof of \eqrefPartII{mbaretaInverse}, and therefore
\begin{equation}
\bar{m}_t^{\eta^{(q)}}(1)
\leq
C'\frac{qf_n^{-1}(t)+s_nf_n(1-\deltaCondition)}{s_n t} \sum_{j=1}^\infty jf_n^{-1}(t)^{j-1},
\end{equation}
and $f_n^{-1}(t) < f_n^{-1}(f_n(1))=1$ completes the proof of \eqrefPartII{mbaretaDensitySmallt}.

Finally, to prove \eqrefPartII{mbaretaDensityLarget} we now reverse the argument that led to \eqrefPartII{metaqDoubleSum} by reintroducing a term $\increm_i$.
By \eqrefPartII{munFromfn},
\begin{equation}
\int y_i\indicator{y_i\leq f_n(1)} d\mu_n(y_i)=\int_0^1 f_n(\tilde{x})d\tilde{x}\geq \int_{1-1/s_n}^1 f_n(1-1/s_n) d\tilde{x}=\frac{f_n(1-1/s_n)}{s_n},
\end{equation}
so that $\E_1(\e^{\lambda_n(1)\increm_i}\increm_i \indicator{\increm_i\leq f_n(1)} /f_n(1-1/s_n)) \geq 1/s_n$.
Abbreviate $\rho(u)=\indicator{u\geq 0}[q f_n^{-1}(u)/\epsilonCondition+s_n f_n(1-\deltaCondition)]$.
Note that $\rho$ is increasing and that on $\set{\increm_i\leq f_n(1)}$ we have $t-S'_{j,i}\leq t-S_j+f_n(1)$.
Continuing from \eqrefPartII{metaqDoubleSum}, we estimate
\begin{align}
\bar{m}_t^{\eta^{(q)}}(1)
&\leq
\frac{2}{t}\sum_{j=1}^\infty \sum_{i=1}^j \E_1\left( \e^{-\lambda_n(1)(t-S'_{j,i})} \rho(t-S'_{j,i}) \E_1\left( \e^{\lambda_n(1)\increm_i} \frac{\increm_i}{f_n(1-1/s_n)} \indicator{\increm_i\leq f_n(1)} \right) \right)
\notag\\&
\leq
\frac{2}{t}\sum_{j=1}^\infty \sum_{i=1}^j \E_1\left( \e^{-\lambda_n(1)(t-S_j)} \rho(t+f_n(1)-S_j) \frac{\increm_i}{f_n(1-1/s_n)} \right)
\notag\\&
=
\frac{2\e^{\lambda_n(1)f_n(1)}}{f_n(1-1/s_n)} \sum_{j=0}^\infty \E_1\left( \e^{-\lambda_n(1)(t+f_n(1)-S_j)} \rho(t+f_n(1)-S_j)\frac{S_j}{t} \right)
\notag\\&
\leq
\frac{2 \e^{\lambda_n(1)f_n(1)}}{f_n(1-1/s_n)} \frac{t+f_n(1)}{t} \bar{m}_{t+f_n(1)}^\rho(1)
,\labelPartII{eq:estimCasec}
\end{align}
where we used in the last inequality that $\rho(t+f_n(1)-S_j)=0$ if $S_j > t+f_n(1)$.
As in the proof of \refthm{OneCharConv}, we use \reflemma{mUniformBound}, \refthm{ConvRW} and the definition of $\rho$ to obtain
\begin{align}
\bar{m}_{t+f_n(1)}^\rho(1) &\le O(1) \int_0^{\infty} s_n \lambda_n(1) \e^{-\lambda_n(1)u} \rho(u) \, du
\notag\\
&= O(s_n) \Big[ \frac{q}{\epsilonCondition} \int_0^{\infty} \lambda_n(1)\e^{-\lambda_n(1) u} f_n^{-1}(u) \, du +s_n f_n(1-\deltaCondition)\Big].
\end{align}
By \refcond{fnWeakBounds}, we have $f_n^{-1}(u)\leq (u/f_n(1))^{1/\epsilonCondition s_n}$ for $u\geq f_n(1)$.
Changing variables and using that $\epsilonCondition s_n\ge 1$ for large $n$, we obtain
\begin{equation}
\int_0^{\infty} \lambda_n(1)\e^{-\lambda_n(1) u} f_n^{-1}(u) \, du \le 1+ \int_0^{\infty} \lambda_n(1)f_n(1) \e^{-\lambda_n(1) f_n(1) u} u \, du= 1+\frac{1}{\lambda_n(1) f_n(1)}=O(1)
\end{equation}
according to \eqrefPartII{lambdanAsymp}.
Hence $\bar{m}_{t+f_n(1)}^\rho(1)=O(s_n)(q+s_n f_n(1-\deltaCondition))$.
The other factors in \eqrefPartII{eq:estimCasec} are $O(1/f_n(1))$ because of \eqrefPartII{lambdanAsymp}, \refcond{scalingfn}, and the assumption $t\geq f_n(1-1/s_n)$.
This completes the proof of \eqrefPartII{mbaretaDensityLarget}.
\end{proof}

To make use of the bounds \eqrefPartII{mbaretaDensitySmallt}--\eqrefPartII{mbaretaInverse}, we note the following consequence of \refcond{LowerBoundfn}:
\begin{lemma}\lblemma{fnInverseLog}
Suppose without loss of generality that the constant $\deltaCondition$ from \refcond{LowerBoundfn} satisfies $\deltaCondition<1$.
Then there exists $C<\infty$ such that, uniformly over $f_n(1-\deltaCondition)\leq u\leq f_n(1)$,
\begin{equation}
f_n^{-1}(u)\leq 1-\frac{1}{Cs_n}\log(f_n(1)/u).
\end{equation}
\end{lemma}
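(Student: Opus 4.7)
The plan is to work on the logarithmic scale and convert the upper density bound from Condition~\refPartII{cond:LowerBoundfn} into the stated estimate by a direct integration. Specifically, on the interval $[1-\deltaCondition,1]$ Condition~\refPartII{cond:LowerBoundfn} gives
\[
\frac{d}{dx}\log f_n(x) \;=\; \frac{f_n'(x)}{f_n(x)} \;\leq\; \frac{s_n}{\epsilonCondition\,x} \;\leq\; \frac{s_n}{\epsilonCondition(1-\deltaCondition)},
\]
using that $\deltaCondition<1$ so that $1-\deltaCondition>0$.

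Next I would integrate this bound from $x$ to $1$ for an arbitrary $x\in[1-\deltaCondition,1]$, obtaining
\[
\log f_n(1) - \log f_n(x) \;\leq\; \frac{s_n}{\epsilonCondition(1-\deltaCondition)}\,(1-x).
\]
Setting $C=1/(\epsilonCondition(1-\deltaCondition))$ and rearranging yields $x \leq 1 - \tfrac{1}{Cs_n}\log(f_n(1)/f_n(x))$.

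Finally I would substitute $u=f_n(x)$, which is legitimate because Condition~\refPartII{cond:LowerBoundfn} forces $f_n$ to be strictly increasing on $[1-\deltaCondition,1]$ (the derivative there is positive), so $f_n\colon [1-\deltaCondition,1] \to [f_n(1-\deltaCondition),f_n(1)]$ is a bijection and $x=f_n^{-1}(u)$. This gives the claimed inequality
\[
f_n^{-1}(u) \;\leq\; 1 - \frac{1}{Cs_n}\log(f_n(1)/u)
\]
uniformly on $[f_n(1-\deltaCondition),f_n(1)]$. There is no real obstacle here; the only subtle point is to make sure the constant $C$ is uniform in $n$, which is automatic since $\epsilonCondition$ and $\deltaCondition$ are fixed constants from Condition~\refPartII{cond:LowerBoundfn} (valid for all $n\ge n_0$, and absorbed by enlarging $C$ for the finitely many small $n$).
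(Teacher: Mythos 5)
Your proof is correct and takes essentially the same route as the paper's: both use the upper bound $xf_n'(x)/f_n(x)\leq s_n/\epsilonCondition$ from Condition~\refPartII{cond:LowerBoundfn} and integrate on the log scale. The only cosmetic difference is that you bound $1/x\leq 1/(1-\deltaCondition)$ before integrating, while the paper integrates first to get $\log(f_n(1)/f_n(x))\leq \epsilonCondition^{-1}s_n\log(1/x)$ and then applies a Taylor bound $\log(1/x)\leq(1-x)/(1-\deltaCondition)$; both yield the same constant.
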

\begin{proof}
It suffices to show that $x\leq 1-(Cs_n)^{-1} \log(f_n(1)/f_n(x))$ for $1-\deltaCondition\leq x\leq 1$, i.e., that $\log(f_n(1)/f_n(x))\leq Cs_n(1-x)$.
But \refcond{LowerBoundfn} implies that $\log(f_n(1)/f_n(x))\leq \epsilonCondition^{-1}s_n \log(1/x)$, as in the proof of \reflemma{ExtendedImpliesWeak}, so a Taylor expansion gives the result.
\end{proof}
\begin{proof}[Proof of \refthm{TwoVertexRemainder}]
We will show that each of the terms $I_0,I_1,I_2$ in \eqrefPartII{mbarchiFromPsi} is uniformly bounded, and furthermore can be made arbitrarily small by taking $K$ large enough (for $I_1$ and $I_2$) and $\lambda_n(1)[t_1\wedge t_2]$ large enough (for $I_0$).
We begin with the term $I_2$ (i.e., $x\geq (1+K/s_n)^{s_n}$).
\reflemma{PsiFrometa}, \eqrefPartII{mbaretaUniform} and \eqrefPartII{lambdanAsymp} give
\begin{align}
&\bar{m}^{\Psi_{f_n(\tilde{x})}}_{\vec{t}}(\vec{1})
\leq \frac{O(s_n^2)}{f_n(\tilde{x})}
\left[ \int_0^\infty \e^{-2\lambda_n(1)r} dr +2\int_0^{f_n(\tilde{x})} \e^{-\lambda_n(1)(f_n(\tilde{x})-r)}dr \right]
=\frac{O(s_n^2)f_n(1)}{f_n(\tilde{x})}.
\end{align}
By \reflemma{ExtendedImpliesWeak}, $f_n(\tilde{x})\geq f_n(1)x^{\epsilonCondition}$ for $x\geq 1$, so that $I_2\leq O(1)\int_{(1+K/s_n)^{s_n}}^{\infty} x^{1/s_n-\epsilonCondition-1}\, dx$.
Since $\int_1^\infty x^{-\epsilonCondition-1}\,dx<\infty$, it follows that $I_2$ is uniformly bounded and can be made arbitrarily small by taking $K$, and hence $(1+K/s_n)^{s_n}$, large enough, uniformly over $t_1,t_2$.

For $I_1$, we again start by estimating $\bar{m}^{\Psi_{y}}_{\vec{t}}(\vec{1})$ where $y=f_n(\tilde{x})$ with $\tilde{x} \in [1-\deltaCondition,1-K/s_n]$.
Suppose for definiteness, and without loss of generality by symmetry, that $t_1\leq t_2$.
Split the first integral from \reflemma{PsiFrometa} into the intervals $[0,t_1-f_n(1-1/s_n)]$, $[t_1-f_n(1-1/s_n),t_1-4y]$ and $[t_1-4y,t_1]$ (noting that the integrand vanishes for $r>t_1$) and denote the summand by $\theta_n^{\sss 11}(y)$, $\theta_n^{\sss 12}(y)$ and $\theta_n^{\sss 13}(y)$.
The second summand in \reflemma{PsiFrometa} is called $\theta_n^{\sss 14}(y)$.
The corresponding parts of $I_1$ are denoted by $I_{11}, \ldots, I_{14}$.

We first estimate $\theta_n^{\sss 13}(y)$ and $\theta_n^{\sss 14}(y)$.
Since $f_n(1-\delta_0)\le y \le f_n(1-K/s_n) < f_n(1)$, \eqrefPartII{mbaretaInverse} and \reflemma{fnInverseLog} give
\begin{align}
\theta_n^{\sss 13}(y)+\theta_n^{\sss 14}(y) &\le \frac{1}{y}
\Big[\int_{t_1-4y}^{t_1} \Big(\frac{2}{1-f_n^{-1}(y)}\Big)^2 \, dr +
\int_0^y 2 \frac{2}{1-f_n^{-1}(r)} \frac{2}{1-f_n^{-1}(r/2)}\, dr
\Big]\notag
\\
&\le \frac{O(1)}{(1-f_n^{-1}(y))^2} \le \frac{O(s_n^2)}{\log(f_n(1)/y)^2}.
\end{align}
According to \reflemma{ExtendedImpliesWeak}, $y=f_n(\tilde{x})\leq f_n(1)x^{\epsilonCondition}$ for all $1-\deltaCondition \le \tilde{x} \le 1$.
Substitute $x=\e^{-u}$ to obtain
\begin{align}
I_{13}+I_{14} &\le O(1) \int_{(1-\deltaCondition)^{s_n}}^{(1-K/s_n)^{s_n}} \frac{1}{(\log(1/x^{\epsilonCondition}))^2} x^{1/s_n-1} \, dx\le O(1) \int_{K}^{\infty} \frac{1}{u^2} \, du.
\end{align}
Hence $I_{13}+I_{14}$ is uniformly bounded and can be made arbitrarily small by taking $K$ large.

For $\theta_n^{\sss 11}(y)$, $r \in [0,t_1-f_n(1-1/s_n)]$ implies $t_2-r \ge t_1-r \ge f_n(1-1/s_n)$ and since $2y \le 2f_n(1-K/s_n) \le \frac{1}{2} f_n(1-1/s_n)$ for large $n$ by \refcond{scalingfn}, we can apply first \eqrefPartII{mbaretaDensityLarget} and then \eqrefPartII{lambdanAsymp} to obtain
\begin{equation}
\theta_n^{\sss 11}(y) \le \frac{1}{y} \int_0^{t_1-f_n(1-1/s_n)} \e^{-2\lambda_n(1) r} O(s_n^2)\Big( \frac{2y +s_nf_n(1-\deltaCondition)}{f_n(1)}\Big)^2 \, dr= O(s_n^2) \frac{(y +s_nf_n(1-\deltaCondition))^2}{yf_n(1)}.
\end{equation}
Using that $(a+b)^2 \le 2(a^2 + b^2)$ for all $a,b \in \R$ and that $f_n(\tilde{x})\leq f_n(1)x^{\epsilonCondition}$ and, for $x\in (1-\delta_0,1-K/s_n)$, $f_n(\tilde{x})\geq f_n(1-\delta_0)$, we obtain
\begin{align}
I_{11} & \le \int_{(1-\deltaCondition)^{s_n}}^{(1-K/s_n)^{s_n}} O(1) \Big(\frac{f_n(\tilde{x})}{f_n(1)} + \frac{s_n^2 f_n(1-\deltaCondition)^2}{f_n(\tilde{x})f_n(1)} \Big) x^{1/s_n-1} \, dx
\notag\\
& \le O(1) \bigg[\int_{(1-\deltaCondition)^{s_n}}^{(1-K/s_n)^{s_n}} x^{\epsilonCondition+1/s_n-1} \, dx + s_n^2 \frac{f_n(1-\deltaCondition)}{f_n(1)} \int_{(1-\deltaCondition)^{s_n}}^{(1-K/s_n)^{s_n}} x^{1/s_n-1} \, dx\bigg].
\end{align}
The first summand is bounded and can be made arbitrarily small by choosing $K$ large.
The second summand is arbitrarily small for large $n$ uniformly in $K$ since $f_n(1-\deltaCondition)/f_n(1) \le (1-\deltaCondition)^{\epsilonCondition s_n} = o(s_n^{-3})$ according to \reflemma{ExtendedImpliesWeak}.

For $\theta_n^{\sss 12}(y)$ we substitute $u=t_1-r$ to obtain
\begin{equation}\labelPartII{eq:thetan12y}
\theta_n^{\sss 12}(y) = \frac{1}{y} \int_{4y}^{f_n(1-1/s_n)} \e^{-2\lambda_n(1) (t_1-u)} \bar{m}_u^{\eta^{\sss (2y)}}(1) \bar{m}_{t_2-t_1+u}^{\eta^{\sss (2y)}}(1) \, du.
\end{equation}
We consider the two cases $t_2-t_1 \ge f_n(1)/2$ and $0 \le t_2-t_1 <f_n(1)/2$ separately.
First $t_2 -t_1 \ge f_n(1)/2$.
Then $t_2-t_1+u \ge f_n(1)/2+4 f_n(1-\deltaCondition) \ge f_n(1-1/s_n)$ for sufficiently large $n$.
Hence \eqrefPartII{mbaretaDensityLarget}, \eqrefPartII{mbaretaDensitySmallt} and \reflemma{fnInverseLog} yield
\begin{align}
\theta_n^{\sss 12}(y)& \le  \frac{1}{y} \int_{4y}^{f_n(1-1/s_n)}O(1) \frac{2y +s_nf_n(1-\deltaCondition)}{s_n u (1-f_n^{-1}(u))^2} s_n \frac{2y +s_nf_n(1-\deltaCondition)}{f_n(1)} \, du
\notag\\
& \le
O(s_n^2) \frac{(y +s_nf_n(1-\deltaCondition))^2}{yf_n(1)} \int_{4y}^{f_n(1-1/s_n)}\frac{1}{u \log(f_n(1)/u)^2} \, du,
\end{align}
where the integral is of order one.
Hence, in the case $t_2-t_1 \ge f_n(1)/2$, we have the same bound for $\theta_n^{\sss 12}(y)$ as for $\theta_n^{\sss 11}(y)$.

Now let $t_2-t_1 <f_n(1)/2$, and abbreviate $u'=t_2-t_1+u$.
Recalling that $f_n(1-1/s_n)/f_n(1)\to \e^{-1}$, we have $u' \le f_n(1)/2 +f_n(1-1/s_n) < \tfrac{9}{10}f_n(1)$ for large $n$, uniformly over $u\leq f_n(1-1/s_n)$.
We first apply \eqrefPartII{mbaretaDensitySmallt} to both factors in \eqrefPartII{eq:thetan12y} and then use \reflemma{fnInverseLog} to obtain
\begin{align}
\theta_n^{\sss 12}(y)& \le  \frac{1}{y} \int_{4y}^{f_n(1-1/s_n)}O(1) \frac{2y +s_nf_n(1-\deltaCondition)}{s_n u (1-f_n^{-1}(u))^2} \frac{2y +s_nf_n(1-\deltaCondition)}{s_n u' (1-f_n^{-1}(u'))^2} \, du\notag \\
&\le
O(1) \frac{(y+s_nf_n(1-\deltaCondition))^2}{ys_n^2} \int_{4y}^{f_n(1-1/s_n)} \frac{s_n^2}{u \log(f_n(1)/u)^2} \frac{s_n^2}{u' \log(f_n(1)/u')^2} \, du.\labelPartII{eq:theta12nyClose}
\end{align}
In \eqrefPartII{eq:theta12nyClose}, we have $u'\geq u$, and we are particularly concerned with the case $u'=u$.
The function $u'\mapsto (u'/f_n(1)) (\log (f_n(1)/u'))^2$ is not monotone over $u' \in[u,\tfrac{9}{10}f_n(1)]$, but it is increasing over $(0,\tfrac{1}{10}f_n(1))$ and bounded from zero and infinity over $[\tfrac{1}{10}f_n(1),\tfrac{9}{10}f_n(1)]$.
We may therefore find a constant $c>0$ such that $u'(\log(f_n(1)/u'))^2 \geq c u(\log (f_n(1)/u))^2$ whenever $u\leq u'\leq \tfrac{9}{10}f_n(1)$.
The bound \eqrefPartII{eq:theta12nyClose} may therefore be simplified to
\begin{equation}
\theta_n^{\sss 12}(y)
\leq
O(s_n^2) \frac{(y+s_nf_n(1-\deltaCondition))^2}{y} \int_{4y}^{f_n(1-1/s_n)} \frac{1}{u^2 \log(f_n(1)/u)^4} \, du,
\end{equation}
and an integration by parts shows that
\begin{equation}\labelPartII{eq:theta12nylog4}
\theta_n^{\sss 12}(y)
\leq
O(s_n^2) \frac{(y+s_nf_n(1-\deltaCondition))^2}{y^2 (\log (f_n(1)/y))^4}.
\end{equation}
Inserting the bound \eqrefPartII{eq:theta12nylog4} into $I_{12}$ and using again $(a+b)^2 \le 2(a^2+b^2)$, we conclude that
\begin{align}
I_{12}
&\le
\int_{(1-\deltaCondition)^{s_n}}^{(1-K/s_n)^{s_n}} O(1)
\frac{f_n(\tilde{x})^2+s_n^2 f_n(1-\deltaCondition)^2}{f_n(\tilde{x})^2 (\log(f_n(1)/f_n(\tilde{x})))^4 } x^{1/s_n-1} \, dx.\labelPartII{eq:I12closeTestimate}
\end{align}
Recall from \reflemma{ExtendedImpliesWeak} the bounds $f_n(\tilde{x})\leq f_n(1)x^{\epsilonCondition}$ and $f_n(1-\deltaCondition)\leq f_n(\tilde{x})((1-\deltaCondition)/\tilde{x})^{\epsilonCondition s_n}$.
We split the integral for the second summand in \eqrefPartII{eq:I12closeTestimate} at $(1-\deltaCondition/2)^{s_n}$.
For $x \ge (1-\deltaCondition/2)^{s_n}$, $f_n(1-\deltaCondition)\leq f_n(\tilde{x})(1-\delta')^{\epsilonCondition s_n}$ for some $\delta' \in (0,1)$.
For $x \le  (1-\deltaCondition/2)^{s_n}$, $\log(f_n(1)/f_n(\tilde{x})) \ge \epsilonCondition \log(1/x) \ge c s_n$ for some $c>0$.
Hence
\begin{align}
I_{12} &\le O(1) \Big(1+ s_n^2(1-\delta')^{\epsilonCondition s_n}\Big) \int_{(1-\deltaCondition)^{s_n}}^{(1-K/s_n)^{s_n}} \frac{1}{x(\log (1/x))^4}\, dx
\notag \\
&\qquad+O(1) \int_{(1-\deltaCondition)^{s_n}}^{(1-\deltaCondition/2)^{s_n}}
\frac{s_n^2 f_n(1-\deltaCondition)^2}{f_n(\tilde{x})^2 s_n^4} x^{1/s_n-1} \, dx
\notag \\
&\le O(1) \int_0^{\e^{-K}} \frac{1}{x(\log (1/x))^4}\, dx +
O(s_n^{-2}) \int_{(1-\deltaCondition)^{s_n}}^{(1-\deltaCondition/2)^{s_n}}(1-\deltaCondition)^{2 \epsilonCondition s_n}
x^{-1-2 \epsilonCondition} \, dx.\labelPartII{eq:I12closeTestimate2}
\end{align}
The first summand in \eqrefPartII{eq:I12closeTestimate2} is bounded and can be made arbitrarily small by choosing $K$ large, while the second summand is arbitraily small for large $n$ uniformly in $K$.
We have now handled all four contributions to $I_1$.

Finally, for $I_0$, let $y\leq f_n(1-\deltaCondition)$ and note that $\Psi_y(t'_1,\cdot)=\indicatorofset{[|t'_1-y|,t'_1+y]}\leq \indicatorofset{[(t'_1-y)^+,t'_1+y]}$, where $(t_1'-y)^+:=(t_1'-y) \vee 0$.
From \eqrefPartII{mbarInterval} and \eqrefPartII{mbaretaInverse} it follows that
\begin{align}
\bar{m}^{\Psi_y(t'_1,\cdot)}_{t_2}(1)&\le \indicator{t_2\geq t'_1-f_n(1-\deltaCondition)} e^{-\lambda_n(1) (t_1'-y)^+} \bar{m}_{t_2-(t_1'-y)^+}^{\eta^{(t_1'+y-(t_1'-y)^{+})}}(1) \notag \\
&\leq \indicator{t_2\geq t'_1-f_n(1-\deltaCondition)} \frac{2}{1-f_n^{-1}(2f_n(1-\deltaCondition))} =O(1) \indicatorofset{[0,t_2+f_n(1-\deltaCondition)]}(t_1'),\labelPartII{mPsiVerySmally}
\end{align}
and therefore $\bar{m}^{\Psi_y}_{\vec{t}}(\vec{1})\leq O(s_n)$ by \eqrefPartII{IteratedChar}--\eqrefPartII{IteratedCharMeans} and \eqrefPartII{mbaretaUniform}.
We conclude that $I_0$ is uniformly bounded.
To show the smallness, we sharpen the bound \eqrefPartII{mPsiVerySmally} when $t'_1$ is far from $t_2$ by using \eqrefPartII{mbaretaDensityLarget} instead of \eqrefPartII{mbaretaInverse}, to obtain for  $t_2\geq t'_1+f_n(1-\deltaCondition)+f_n(1)$,
\begin{equation}\labelPartII{mPsiVerySmallyDensity}
\bar{m}^{\Psi_y(t'_1,\cdot)}_{t_2}(1)\leq\bar{m}_{t_2-(t_1'-y)^+}^{\eta^{(t_1'+y-(t_1'-y)^{+})}}(1)\le
O(s_n) \frac{2y+s_n f_n(1-\deltaCondition)}{f_n(1)}= O(s_n^2)\frac{f_n(1-\deltaCondition)}{f_n(1)}.
\end{equation}
Combining \eqrefPartII{mPsiVerySmally}--\eqrefPartII{mPsiVerySmallyDensity}, we have $\bar{m}^{\Psi_y(t'_1,\cdot)}_{t_2}(1) \leq O(s_n^2)\frac{f_n(1-\deltaCondition)}{f_n(1)}+O(1)\indicator{t_2-2f_n(1)\leq t'_1\leq t_2+f_n(1)}$.
Applying \eqrefPartII{IteratedChar}--\eqrefPartII{IteratedCharMeans}, \eqrefPartII{mbarInterval}, \eqrefPartII{mbaretaUniform} and \eqrefPartII{lambdanAsymp} we conclude that
\begin{equation}
\bar{m}^{\Psi_y}_{\vec{t}}(\vec{1})\leq O(s_n^3)\frac{f_n(1-\deltaCondition)}{f_n(1)} + O(s_n) \e^{-\lambda_n(1)t_2}
\end{equation}
and consequently $I_0$ may be made small by taking $t_2$ large.
\end{proof}

\section{First points of Cox processes: Proof of \refthm{FirstPointCoxAsymp}}\lbsect{FirstPointsCoxSec}

Let $\cX$ denote a topological space equipped with its Borel $\sigma$-field and let $(\cP_n)_{n\ge 1}$ be a sequence of Cox processes on $\R\times\cX$ with random intensity measures $(Z_n)_{n\ge 1}$.
That is, there exist $\sigma$-fields $\F_n$ such that $Z_n$ is $\F_n$-measurable and, conditional on $\F_n$, $\cP_n$ is a Poisson point process with (random) intensity $Z_n$.
For instance, \refthm{CouplingFPPCollision} expresses the first passage distance and hopcount in terms of the first point of a Cox process.
In this section, we determine sufficient conditions to identify the limiting distribution of the first points of $\cP_n$ based on the intensity measure at fixed times $t$.

We will write $\cP_{n,t}$ for the measure defined by $\cP_{n,t}(\cdot)=\cP_n(\ocinterval{-\infty,t}\times\cdot)$, and given a partition $t_0<\dotsb<t_N$ we abbreviate $\Delta\cP_{n,i}=\cP_{n,t_i}-\cP_{n,t_{i-1}}$; similarly for $Z_{n,t}, \Delta Z_{n,i}$.
Write $\abs{\mu}$ for the total mass of a measure $\mu$.

Define
\begin{equation}
T_{n,k}=\inf\set{t\colon \abs{\cP_{n,t}}\geq k}
\end{equation}
and let $A_{n,k}$ be the event that $T_{n,j}\notin\set{\pm\infty}$ and $\abs{\cP_{n,T_{n,j}}}=j$, for $j=1,\dotsc,k$.
That is, $A_{n,k}$ is the event that the points of $\supp \cP_n$ with the $k$ smallest $t$-values are uniquely defined.
On $A_{n,k}$, let $X_{n,k}$ denote the unique point for which $\cP_n(\set{T_{n,k}}\times\set{X_{n,k}})=1$, and otherwise set $X_{n,k}=\cemetery$, an isolated cemetery point.

We will impose the following conditions on the intensity measures $(Z_n)_n$, expressed in terms of a probability measure $Q$ on $\cX$ and a family $\cH$ of measurable functions $h\colon\cX\to\R$.

\begin{cond}[Regularity of Cox process intensities]{\hfill}
\lbcond{ZnBasic}
\begin{enumerate}
\item\lbitem{PPPPointwiseConv}
For any $t\in \R$ and for any $h\in\cH$,
\begin{equation}
\int_{\cX} h \, dZ_{n,t} - \abs{Z_{n,t}} \int_{\cX} h \, dQ  \convp 0.
\end{equation}

\item\lbitem{PPPTightIntensityLeft}
For each $\epsilon>0$, there exists $\underline{t}\in \R$ such that
\begin{equation}
\liminf_{n\to\infty}\P\left( \abs{Z_{n,\underline{t}}}<\epsilon \right) \geq 1-\epsilon.
\end{equation}

\item\lbitem{PPPTightIntensityRight}
For each $\epsilon>0$, there exists $\overline{t}\in \R$ such that
\begin{equation}
\liminf_{n\to\infty}\P\left( \abs{Z_{n,\overline{t}}}>1/\epsilon \right) \geq 1-\epsilon.
\end{equation}

\item\lbitem{PPPQuadVarBound}
For each $\epsilon>0$ and each $\underline{t}<\overline{t}$, there exists a partition $t_0=\underline{t}<t_1<\dotsb<t_N=\overline{t}$ of $[\underline{t},\overline{t}]$ such that
\begin{equation}
\liminf_{n\to\infty} \P\left( \sum_{i=1}^N \abs{ \Delta Z_{n,i}}^2 \leq \epsilon \right) \geq 1-\epsilon.
\end{equation}
\end{enumerate}
\end{cond}

We make the convention that any function $h$ on $\cX$ is extended to $\cX \cup \set{\cemetery}$ by $h(\cemetery)=0$.
\begin{prop}\lbprop{FirstPointsPPPbasic}
Suppose that \refcond{ZnBasic} holds for a probability measure $Q$ on $\cX$ and a family $\cH$ of bounded measurable functions $h\colon\cX\to\R$.
Then, for each fixed $k\in\N$, $\P(A_{n,k})\to 1$, the collection $\set{(T_{n,j})_{j=1}^k \colon n\in\N}$ of random vectors is tight, and
\begin{equation}\labelPartII{FirstPointPPPDiffOfExp}
\condE{\prod_{j=1}^k g_j(T_{n,j}) h_j(X_{n,j})}{\F_n} - \condE{\prod_{j=1}^k g_j(T_{n,j})}{\F_n} \prod_{j=1}^k \int_{\cX} h_j \, dQ  \convp 0
\end{equation}
for all bounded continuous functions $g_1,\dotsc,g_k\colon\R\to\R$ and all $h_1,\dotsc,h_k\in\cH$.
\end{prop}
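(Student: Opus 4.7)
The plan is to discretize time using the partition from Condition~\refPartII{cond:ZnBasic}\refitem{PPPQuadVarBound} so that each subinterval contains at most one point of $\cP_n$, and then use the Cox structure to read off the conditional law of the first $k$ points in terms of the measures $\Delta Z_{n,i}/\abs{\Delta Z_{n,i}}$, which by part~\refitem{PPPPointwiseConv} are close to $Q$.

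First I would use parts~\refitem{PPPTightIntensityLeft} and~\refitem{PPPTightIntensityRight} to reduce to a compact time window: given $\epsilon>0$, choose $\underline t < \overline t$ with $\P(\abs{Z_{n,\underline t}} < \epsilon) \geq 1-\epsilon$ and $\P(\abs{Z_{n,\overline t}} > k/\epsilon) \geq 1 -\epsilon$ for all large $n$. Conditionally on $\F_n$, $\abs{\cP_{n,\underline t}}$ is Poisson$(\abs{Z_{n,\underline t}})$ and $\abs{\cP_{n,\overline t}}$ is Poisson$(\abs{Z_{n,\overline t}})$, so $\underline t < T_{n,1}$ and $T_{n,k} \leq \overline t$ both hold except on an event of probability $O(\epsilon)$. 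This establishes tightness of $(T_{n,j})_{j=1}^k$.

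Next, fix the window and invoke part~\refitem{PPPQuadVarBound} to get a partition $\underline t = t_0 < \dots < t_N = \overline t$ with $\sum_i \abs{\Delta Z_{n,i}}^2 \leq \epsilon$ with probability at least $1-\epsilon$. Conditionally on $\F_n$, $\abs{\Delta\cP_{n,i}}\sim\Poi(\abs{\Delta Z_{n,i}})$ so $\condP{\abs{\Delta\cP_{n,i}}\geq 2}{\F_n} \leq \tfrac{1}{2} \abs{\Delta Z_{n,i}}^2$. Hence, on a further event $G_n$ of probability $1 - O(\epsilon)$, every interval contains at most one point; in particular $\P(A_{n,k}) \to 1$. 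On $G_n$, the first $k$ points of $\cP_n$ lie in $k$ distinct intervals $i_1 < \dots < i_k$, and conditional on $\F_n$ and on the configuration of occupied intervals, the $X$-coordinates $X_{n,1},\dots,X_{n,k}$ are independent with $X_{n,j}$ distributed as $\Delta Z_{n,i_j}/\abs{\Delta Z_{n,i_j}}$. Consequently
\begin{equation*}
\condE{\indicatorofset{G_n}\prod_{j=1}^k g_j(T_{n,j})h_j(X_{n,j})}{\F_n}
=
\sum_{i_1<\dots<i_k}\condE{\indicatorofset{G_n}\prod_j g_j(T_{n,j})\indicator{T_{n,j}\in(t_{i_j-1},t_{i_j}]}}{\F_n}\prod_{j=1}^k \frac{\int h_j\,d\Delta Z_{n,i_j}}{\abs{\Delta Z_{n,i_j}}}.
\end{equation*}

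Finally I would replace each ratio by $\int h_j\,dQ$ using part~\refitem{PPPPointwiseConv}: subtracting the pointwise-convergence statement at $t_{i-1}$ and $t_i$ gives $\int h\,d\Delta Z_{n,i} - \abs{\Delta Z_{n,i}}\int h\,dQ \convp 0$ for each fixed $i$ and each $h\in\cH$. This is the step where the main technical obstacle lies: the ratios are unstable when $\abs{\Delta Z_{n,i}}$ is small, so I would handle this by refining the partition (keeping the bound in~\refitem{PPPQuadVarBound}) and splitting intervals into a ``negligible-mass'' class (whose total contribution is $O(\epsilon)$ because they are unlikely to carry any point of $\cP_n$) and a ``good'' class on which $\abs{\Delta Z_{n,i}}$ is bounded below by a deterministic constant, so that part~\refitem{PPPPointwiseConv} yields uniform $\sop(1)$ control of the ratio. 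After substitution, the product $\prod_j\int h_j\,dQ$ factors out and the remaining sum collapses to $\condE{\indicatorofset{G_n}\prod_j g_j(T_{n,j})}{\F_n}$, which differs from $\condE{\prod_j g_j(T_{n,j})}{\F_n}$ by $O(\epsilon)$ in probability. Letting $\epsilon\downarrow 0$ along a suitable subsequence of partitions then yields~\eqrefPartII{FirstPointPPPDiffOfExp}.
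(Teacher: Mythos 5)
Your plan follows the same skeleton as the paper (use (b)/(c) for the time window, (d) for a partition with at most one point per cell, then (a) to identify the conditional law of the $X$-coordinates), but you diverge at the key step and introduce two difficulties that the paper avoids.

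First, a genuine flaw in your displayed identity: you keep $g_j(T_{n,j})$ inside the conditional expectation and then pull out the factor $\int h_j\,d\Delta Z_{n,i_j}/\abs{\Delta Z_{n,i_j}}$. This factorization implicitly assumes that, given $\F_n$ and the occupancy pattern, the $T$-coordinate and the $X$-coordinate of the point in cell $i_j$ are conditionally independent. For a Cox process on $\R\times\cX$ with a general joint intensity this is false; $\Delta Z_{n,i}$ only captures the $\cX$-marginal over the time slab $\ocinterval{t_{i-1},t_i}$. The paper sidesteps this by first replacing $g_j$ with its piecewise-constant approximation $\tilde g_j=g_j\circ\psi$, so that the $T$-dependence becomes a deterministic function of the cell index and the factorization is trivial; you need the same step.

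Second, you correctly flag that working with the normalized measures $\Delta Z_{n,i}/\abs{\Delta Z_{n,i}}$ is unstable when $\abs{\Delta Z_{n,i}}$ is small, and you propose to split cells by a mass threshold. This can be made to work (the total expected number of points in low-mass cells is at most $N\delta$ once the cells with $\abs{\Delta Z_{n,i}}<\delta$ are discarded, and on the remaining cells Condition~\refcond{ZnBasic}\refitem{PPPPointwiseConv} plus the deterministic lower bound $\delta$ give $\sop(1)$ control of the ratio), but it is considerably more laborious than what the paper does. The paper never forms the ratio at all: it uses the identity $\E\bigl(\indicator{\abs{\cP}=1}\int h\,d\cP\bigr)=\e^{-\abs{\mu}}\int h\,d\mu$ for a Poisson process $\cP$ with intensity $\mu$, which multiplies the would-be ratio by the occupancy probability, so that the denominator $\abs{\Delta Z_{n,i}}$ cancels and the resulting expression $\e^{-\abs{Z_{n,t_{i_k}}}}\prod_j g_j(t_{i_j})\int h_j\,\Delta Z_{n,i_j}$ can be compared directly to its counterpart with $h_j$ replaced by the constant $\int h_j\,dQ$, using Condition~\refcond{ZnBasic}\refitem{PPPPointwiseConv} in its given additive form. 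That yields the $\convp 0$ statement without any thresholding. So: same architecture, but the paper's choice of the unnormalized Poisson identity neatly dissolves the obstacle you identify; your route also needs the $g_j\to\tilde g_j$ discretization to make the factorization legitimate.
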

\begin{theorem}\lbthm{FirstPointsPPPConvergence}
Suppose \refcond{ZnBasic} holds when either
\begin{enumerate}[(i)]
\item $\cH$ is the family of all bounded continuous functions on $\cX$;
\item $\cX=\R^d$ and $\cH$ is the family of functions $h(\vec{x})=\e^{i \vec{\xi}\cdot \vec{x}}$ for $\vec{\xi}\in\R^d$; or
\item $\cX=\cointerval{0,\infty}^d$ and $\cH$ is the family of functions $h(\vec{x})=\e^{-\vec{\xi}\cdot \vec{x}}$ for $\vec{\xi}\in\cointerval{0,\infty}^d$.
\end{enumerate}
Then
\begin{enumerate}
\item\lbitem{PointsIndependentLimit}
the random sequence $(X_{n,j})_{j=1}^\infty$ converges in distribution (with respect to the product topology on $(\cX\union\set{\cemetery})^\N$) to a random sequence $(X_j)_{j=1}^\infty$, where the $X_j$ are independent with law $Q$;
\item\lbitem{PointsAsympIndep}
the sequence $(X_{n,j})_{j=1}^\infty$ is asymptotically independent of $\F_n$;
\item
the collection $\set{(T_{n,j})_{j=1}^k \colon n\in\N}$ of random vectors is tight; and
\item\lbitem{PointsIndependentSubseqLimit}
if $(T_j,X_j)_{j=1}^\infty$ is any subsequential limit in distribution of $(T_{n,j},X_{n,j})_{j=1}^\infty$, then $(T_j)_{j=1}^\infty$ and $(X_j)_{j=1}^\infty$ are independent.
\end{enumerate}
\end{theorem}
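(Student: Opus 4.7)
The plan is to bootstrap the asymptotic factorization of \eqrefPartII{FirstPointPPPDiffOfExp} in \refprop{FirstPointsPPPbasic} to the four conclusions via distribution-identification arguments tailored to each choice of $\cH$. Tightness of $\set{(T_{n,j})_{j=1}^k \colon n \in \N}$ is given directly by \refprop{FirstPointsPPPbasic}, so the work lies in identifying the limit law of $(X_{n,j})_{j\ge 1}$ and in establishing the asymptotic conditional independence from $\F_n$.

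First I would take $g_j\equiv 1$ in \eqrefPartII{FirstPointPPPDiffOfExp} to obtain, for every $k\in\N$ and every $h_1,\dots,h_k\in\cH$,
\[\condE{\prod_{j=1}^k h_j(X_{n,j})}{\F_n} \convp \prod_{j=1}^k\int_{\cX}h_j\,dQ,\]
a \emph{deterministic} limit. Taking unconditional expectations via bounded convergence yields the mixed-moment convergence $\E[\prod_j h_j(X_{n,j})]\to\prod_j\int h_j\,dQ$. In each of the three cases the class of products of $\cH$-functions is measure-determining on $\cX^k$: in case (i) directly by the portmanteau theorem, in case (ii) by L\'evy's continuity theorem applied to the joint characteristic function $(\vec\xi_1,\dots,\vec\xi_k)\mapsto\E[\prod_j\e^{i\vec\xi_j\cdot X_{n,j}}]$, and in case (iii) by the multivariate Laplace transform uniqueness on $[0,\infty)^{dk}$. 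Consequently $(X_{n,1},\dots,X_{n,k})$ converges in distribution to $k$ i.i.d.\ copies of $Q$, and since convergence in the product topology on $(\cX\cup\set{\cemetery})^{\N}$ is equivalent to convergence of all finite-dimensional marginals, part~\refitem{PointsIndependentLimit} follows.

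For part~\refitem{PointsAsympIndep}, the decisive point is that the conditional expectation above converges in probability to a constant; to extend from products of $\cH$-functions to arbitrary bounded continuous $F$ on $\cX^k$, I would combine a standard density argument (Stone--Weierstrass in cases (i) and (iii); approximation by trigonometric polynomials in case (ii)) with tightness of $(X_{n,j})_{j=1}^k$ to preserve conditional convergence in probability, thereby obtaining $\condE{F(X_{n,1},\dots,X_{n,k})}{\F_n}\convp\E[F(X_1,\dots,X_k)]$. For part~\refitem{PointsIndependentSubseqLimit}, I would pass to an arbitrary joint subsequential limit $(T_j,X_j)_{j=1}^k$ (which exists by tightness), take unconditional expectations in \eqrefPartII{FirstPointPPPDiffOfExp}, and use boundedness of the $g_j$ and $h_j$ together with bounded convergence to obtain
\[\E\Big[\prod_{j=1}^k g_j(T_j)h_j(X_j)\Big]=\E\Big[\prod_{j=1}^k g_j(T_j)\Big]\prod_{j=1}^k\int h_j\,dQ.\]
Combining this with part~\refitem{PointsIndependentLimit} (which identifies $\prod_j\int h_j\,dQ=\E[\prod_j h_j(X_j)]$) and extending to all bounded continuous $F$ by the same density argument yields the required independence of $(T_j)_{j=1}^\infty$ from $(X_j)_{j=1}^\infty$.

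The principal obstacle is the density step: passing from the test class $\cH$ to all bounded continuous functions while preserving \emph{conditional} convergence in probability on an unknown sequence of $\sigma$-fields $\F_n$. In cases (ii) and (iii) one must control the approximation error uniformly in $n$, which is where tightness of $(X_{n,j})_{j=1}^k$ and the uniform boundedness of the approximating functions on compact sets become essential.
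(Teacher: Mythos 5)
Your proposal is correct and follows essentially the same plan as the paper's very terse proof: set $g_j\equiv 1$ in \eqrefPartII{FirstPointPPPDiffOfExp} to identify the limit law of $(X_{n,j})_j$ (via Stone--Weierstrass, characteristic functions, or Laplace transforms according to the choice of $\cH$), read off independence of the $(T_j)$ and $(X_j)$ from the product form of \eqrefPartII{FirstPointPPPDiffOfExp}, and read off asymptotic independence from $\F_n$ from the fact that the conditional expectations converge in probability to deterministic limits. Your additional detail on the measure-determining and density steps fills in exactly what the paper's three-sentence proof leaves implicit.
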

\begin{proof}[Proof of \refthm{FirstPointsPPPConvergence} assuming \refprop{FirstPointsPPPbasic}]
Because of the product topology, it suffices to consider finite sequences $(T_{n,j},X_{n,j})_{j=1}^k$ for a fixed $k\in\N$.
Applying \eqrefPartII{FirstPointPPPDiffOfExp} with $g_j(t)=1$ gives the convergence of $(X_{n,j})_{j=1}^k$.
The independence of $(T_j)_{j=1}^k$ and $(X_j)_{j=1}^k$ follows from the product form of \eqrefPartII{FirstPointPPPDiffOfExp}, and the asymptotic independence of $X_j$ from $\F_n$ follows because of the conditional expectations in \eqrefPartII{FirstPointPPPDiffOfExp}.
\end{proof}

We first prove the following lemma.
Given $t_0<\dotsb<t_N$, write $B_{n,k}$ for the event that there exist (random) integers $1\leq I_1<\dotsb<I_k\leq N$ with $\abs{\Delta\cP_{n,I_j}}=1$ for $j=1,\dotsc,k$ and $\abs{\big.\smash{\cP_{n,t_{I_k}}}}=k$.
(That is, $B_{n,k}$ is the event that each of the first $k$ points of $\cP_n$ is the unique point in some interval $\ocinterval{t_{i-1},t_i}$.
In particular $B_{n,k}\subset A_{n,k}$.)

\begin{lemma}\lblemma{BnkLikely}
Assume Conditions~\refPartII{cond:ZnBasic}~\refitem{PPPTightIntensityLeft}--\refitem{PPPQuadVarBound}.
Then, given $\epsilon>0$ and $k\in \N$, there exists $[\underline{t},\overline{t}]$ and a partition $\underline{t}=t_0<\dotsb<t_N=\overline{t}$ of $[\underline{t},\overline{t}]$ such that $\liminf_{n\to\infty} \P(B_{n,k})\geq 1-\epsilon$.
In particular, $\P(A_{n,k})\to 1$.
\end{lemma}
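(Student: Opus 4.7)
The plan is to carry out a union-bound argument that exploits the fact that, conditional on $Z_n$, $\cP_n$ is a genuine Poisson point process, so all estimates reduce to Poisson calculations. Three failure modes must be ruled out: (a) a point of $\cP_n$ appearing before $\underline{t}$; (b) fewer than $k$ points in $\ocinterval{-\infty,\overline{t}}$; and (c) two or more points landing in a common subinterval $\ocinterval{t_{i-1},t_i}$. Conditions~\refPartII{cond:ZnBasic}~\refitem{PPPTightIntensityLeft}, \refPartII{cond:ZnBasic}~\refitem{PPPTightIntensityRight}, and \refPartII{cond:ZnBasic}~\refitem{PPPQuadVarBound} are tailored exactly to these three modes.

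For (a), fix $\eta>0$ small (to be specified) and use \refPartII{cond:ZnBasic}~\refitem{PPPTightIntensityLeft} to choose $\underline{t}$ such that $\liminf_n \P(\shortabs{Z_{n,\underline{t}}}<\eta)\geq 1-\eta$. Since conditionally on $Z_n$, $\shortabs{\cP_{n,\underline{t}}}$ is Poisson with mean $\shortabs{Z_{n,\underline{t}}}$, we bound
\[
\P(\shortabs{\cP_{n,\underline{t}}}\geq 1)\leq \E\bigl(1\wedge \shortabs{Z_{n,\underline{t}}}\bigr)\leq \eta + \P(\shortabs{Z_{n,\underline{t}}}\geq \eta),
\]
which is at most $2\eta$ for large $n$. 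For (b), use \refPartII{cond:ZnBasic}~\refitem{PPPTightIntensityRight} with a tolerance $\delta>0$ to pick $\overline{t}$ with $\liminf_n \P(\shortabs{Z_{n,\overline{t}}}>1/\delta)\geq 1-\delta$; on this event $\shortabs{\cP_{n,\overline{t}}}$ is Poisson with mean exceeding $1/\delta$, so a Chernoff estimate makes $\P(\shortabs{\cP_{n,\overline{t}}}<k)$ arbitrarily small by choosing $\delta$ small depending on $k$.

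For (c), apply \refPartII{cond:ZnBasic}~\refitem{PPPQuadVarBound} on $[\underline{t},\overline{t}]$ with tolerance $\epsilon'$ to obtain a partition $\underline{t}=t_0<\dotsb<t_N=\overline{t}$ with $\liminf_n\P(\sum_{i=1}^N \shortabs{\Delta Z_{n,i}}^2\leq \epsilon')\geq 1-\epsilon'$. Conditional on $Z_n$, the increments $\Delta \cP_{n,i}$ are independent Poisson variables with means $\shortabs{\Delta Z_{n,i}}$, and the elementary inequality
\[
\P(\shortabs{\Delta \cP_{n,i}}\geq 2 \mid Z_n) = 1-\e^{-\shortabs{\Delta Z_{n,i}}}(1+\shortabs{\Delta Z_{n,i}})\leq \tfrac{1}{2}\shortabs{\Delta Z_{n,i}}^2
\]
together with a union bound yields $\P(\exists i\colon \shortabs{\Delta \cP_{n,i}}\geq 2 \mid Z_n)\leq \tfrac{1}{2}\sum_i \shortabs{\Delta Z_{n,i}}^2$, hence at most $\epsilon'/2$ on the good event from \refPartII{cond:ZnBasic}~\refitem{PPPQuadVarBound}.

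On the intersection of the three good events, $\cP_n$ has no points in $\ocinterval{-\infty,\underline{t}}$, at least $k$ points in $\ocinterval{-\infty,\overline{t}}$, and at most one point per subinterval, so the first $k$ points are each the unique point of some $\Delta\cP_{n,I_j}$; that is, $B_{n,k}$ holds. Choosing the tolerances $\eta,\delta,\epsilon'$ so that the sum of error probabilities is at most $\epsilon$ gives $\liminf_n \P(B_{n,k})\geq 1-\epsilon$, and since $B_{n,k}\subset A_{n,k}$ and $\epsilon>0$ is arbitrary, $\P(A_{n,k})\to 1$. The argument has no serious obstacle beyond bookkeeping of tolerances; the only slightly delicate point is confirming that the three tightness statements compose consistently, which they do because $\underline{t}$, $\overline{t}$ and the partition can be chosen successively rather than simultaneously.
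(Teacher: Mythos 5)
Your proposal is correct and follows essentially the same approach as the paper: decompose $B_{n,k}^c$ into the three failure events, control the first two via Conditions~\refPartII{cond:ZnBasic}~\refitem{PPPTightIntensityLeft}--\refitem{PPPTightIntensityRight} and the third via \refPartII{cond:ZnBasic}~\refitem{PPPQuadVarBound} together with the bound $1-\e^{-x}(1+x)\leq x^2$. You supply more explicit details (the Markov/Chernoff estimates and the $\tfrac{1}{2}x^2$ refinement), but the decomposition and the use of the three hypotheses are identical to the paper's argument.
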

\begin{proof}
Given a partition $\underline{t}=t_0<\dotsb<t_N=\overline{t}$, the complement $B_{n,k}^c$ is the event that $\cP_n$ contains a point in $\ocinterval{-\infty,\underline{t}}$, fewer than $k$ points in $\ocinterval{-\infty,\overline{t}}$, or more than one point in some interval $\ocinterval{t_{i-1},t_i}$.
By Conditions~\refPartII{cond:ZnBasic}~\refitem{PPPTightIntensityLeft}--\refitem{PPPTightIntensityRight}, we may choose $\underline{t},\overline{t}$ such that the first two events each have probability at most $\epsilon/3$ for $n$ large.
Since $\condP{\abs{\Delta\cP_{n,i}}\geq 2 \,}{Z_n} = 1-\e^{-\abs{\Delta Z_{n,i}}}(1+\abs{\Delta Z_{n,i}}) \leq \abs{\Delta Z_{n,i}}^2$, \refcond{ZnBasic}~\refitem{PPPQuadVarBound} gives a partition of $[\underline{t},\overline{t}]$ such that the third event also has probability at most $\epsilon/3$ for $n$ large.
\end{proof}
\begin{proof}[Proof of \refprop{FirstPointsPPPbasic}]
Fix any $\epsilon>0$ and bounded, continuous functions $g_1,\ldots,g_k$.
Choose $t_0<\dotsb<t_N$ as in \reflemma{BnkLikely}.
By taking a refinement, we may assume that $\abs{g_j(t)-g_j(t_i)}\leq \epsilon$ for each $t\in\ocinterval{t_{i-1},t_i}$ and each $i,j$.
Define $\psi(t)=t_i$ if $t_{i-1}< t \leq t_i$ and $\psi(t)=t_N$ otherwise, and set $\tilde{g}_j=g_j\circ \psi$.
Partitioning according to the integers $I_j$,
\begin{align}
&\indicatorofset{B_{n,k}}\prod_{j=1}^k \tilde{g}_j(T_{n,j})h_j(X_{n,j})=
\sum_{\vec{i}} \indicatorofset{B_{n,k}}\indicator{\vec{I}=\vec{i}} \prod_{j=1}^k g_j(t_{i_j}) \int_{\cX} h_j \, \Delta\cP_{n,i_j}
,
\end{align}
where the sum is over $\vec{i} \in \N^k$ with $1\leq i_1<\dotsb<i_k\leq N$, and we write $\vec{I}=(I_1, \ldots, I_k)$.
Observe that a Poisson point process $\cP$ with intensity $\mu$ satisfies $\E(\indicator{\abs{\cP}=1} \int h \, d\cP)=\e^{-\abs{\mu}}\int h \, d\mu$.
Consequently,
\begin{align}
&\condE{\indicatorofset{B_{n,k}}\prod_{j=1}^k \tilde{g}_j(T_{n,j})h_j(X_{n,j})}{\F_n}
=
\sum_{\vec{i}} \e^{-\abs{Z_{n,t_{i_k}}}} \prod_{j=1}^k g_j(t_{i_j}) \int_\cX h_j \, \Delta Z_{n,i_j}.
\labelPartII{EGivenZOnBnk}
\end{align}
Apply \eqrefPartII{EGivenZOnBnk} twice, with the original $h_j$'s and with the constant functions $\tilde{h}_j(x)=\int h_j\, dQ$, to get
\begin{align}
&\condE{\indicatorofset{B_{n,k}} \left[ \prod_{j=1}^k \tilde{g}_j(T_{n,j}) h_j(X_{n,j}) - \prod_{j=1}^k \tilde{g}_j(T_{n,j}) \int_\cX h_j \, dQ \right] }{\F_n}
\notag
\\
&\qquad=
\sum_{\vec{i}} \e^{-\abs{Z_{n,t_{i_k}}}} \left[ \prod_{j=1}^k g_j(t_{i_j}) \int_\cX h_j \, \Delta Z_{n,i_j} - \prod_{j=1}^k g_j(t_{i_j}) \abs{\Delta Z_{n,i_j}} \int_\cX h_j \, dQ \right].
\labelPartII{CondEOnBnk}
\end{align}
The right-hand side of \eqrefPartII{CondEOnBnk} is bounded (since $g_j$, $h_j$, and $\shortabs{\Delta Z_{n,i_j}}\e^{-\shortabs{\Delta Z_{n,i_j}}}$ are bounded) and, by \refcond{ZnBasic}~\refitem{PPPPointwiseConv}, converges to $0$ in probability, and hence also in expectation.
By the choice of the partition, $\abs{\tilde{g}_j(T_{n,j})-g_j(T_{n,j})}\leq\epsilon$ on $B_{n,k}$ and $\limsup_{n\to\infty}\P(B_{n,k}^c)\leq \epsilon$.
Now let $(F_n)_n$ be a uniformly bounded sequence of $\R$-valued random variables such that $F_n$ is $\F_n$-measurable.
Since all the functions involved are bounded, there exists $C<\infty$ such that
\begin{align}
&\limsup_{n\to\infty}
\abs{\E\left(F_n\prod_{j=1}^k g_j(T_{n,j}) h_j(X_{n,j}) \right) - \E\left(F_n\prod_{j=1}^k g_j(T_{n,j}) \right) \prod_{j=1}^k \int_\cX h_j \, dQ}
\le C\epsilon,
\end{align}
which completes the proof.
\end{proof}

When $\cX=\R^d$, another natural family is $\cH=\set{h(\vec{x})=\e^{\vec{\xi} \cdot \vec{x}}\colon \vec{\xi}\in\R^d}$.
However, these functions are \emph{not} bounded, so it is necessary to modify the argument of \refprop{FirstPointsPPPbasic} and \refthm{FirstPointsPPPConvergence}.
Recall from \eqrefPartII{MomentGeneratingNotation} that we write $\hat{R}$ for the moment generating function of a measure $R$ on $\R^d$.

\begin{prop}\lbprop{FirstPointPPPLaplace}
Let $\cX=\R^d$.
Suppose \refcond{ZnBasic} holds when $\cH$ is the family of functions $h(x)=\e^{\vec{\xi} \cdot \vec{x}}$ for $\vec{\xi}\in\R^d$, $\shortabs{\vec{\xi}}\leq\delta$, where $\delta>0$ and $\hat{Q}(\vec{\xi})<\infty$ for all $\shortabs{\vec{\xi}}\leq \delta$.	
Then the conclusions \refitem{PointsIndependentLimit}--\refitem{PointsIndependentSubseqLimit} of \refthm{FirstPointsPPPConvergence} hold.
\end{prop}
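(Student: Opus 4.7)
The plan is to reduce \refprop{FirstPointPPPLaplace} to case~(i) of \refthm{FirstPointsPPPConvergence}, which uses the family of all bounded continuous functions on $\R^d$.  Since parts \refitem{PPPTightIntensityLeft}--\refitem{PPPQuadVarBound} of \refcond{ZnBasic} do not involve the family $\cH$, they already hold by hypothesis, and only part \refitem{PPPPointwiseConv} needs to be extended from the given exponential family to arbitrary bounded continuous $h\colon\R^d\to\R$.  Once that is done, case~(i) of \refthm{FirstPointsPPPConvergence} delivers conclusions \refitem{PointsIndependentLimit}--\refitem{PointsIndependentSubseqLimit}, which is exactly the statement of \refprop{FirstPointPPPLaplace}.

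The key intermediate fact I will establish is that, for each fixed $t\in\R$, the random mass $\abs{Z_{n,t}}$ is tight.  Given $t$, I pick $\underline{t}<t<\overline{t}$ using \refitem{PPPTightIntensityLeft} so that $\abs{Z_{n,\underline{t}}}$ is small with high probability, and invoke \refitem{PPPQuadVarBound} on $[\underline{t},\overline{t}]$; by refining if necessary one may arrange that the partition contains $t$ as a grid point, since refining a partition only decreases $\sum_i\abs{\Delta Z_{n,i}}^2$ (the increments are non-negative).  Monotonicity of $\abs{Z_{n,\cdot}}$ together with the Cauchy--Schwarz inequality then gives $\abs{Z_{n,t}}\leq\abs{Z_{n,\underline{t}}}+\sqrt{N\epsilon}$ with high probability, so $\abs{Z_{n,t}}=\Op(1)$ for fixed $t$ and fixed partition.

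With this tightness in hand, the MGF hypothesis rewrites, on the event $\set{\abs{Z_{n,t}}\geq\epsilon_0}$, as $\hat{\tilde Q}_{n,t}(\vec{\xi})-\hat Q(\vec{\xi})\convp 0$ for every $\abs{\vec{\xi}}\leq\delta$, where $\tilde Q_{n,t}=Z_{n,t}/\abs{Z_{n,t}}$.  A diagonal subsequence argument upgrades this to almost-sure convergence at a countable dense subset of $\set{\abs{\vec{\xi}}\leq\delta}$.  Since $\hat Q$ is finite on an open neighborhood of the origin, the classical theorem that pointwise MGF convergence in such a neighborhood implies weak convergence then yields $\tilde Q_{n,t}\convd Q$ almost surely along the subsequence, whence $\int h\,d\tilde Q_{n,t}\to\int h\,dQ$ almost surely for every bounded continuous $h$.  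Multiplying by the tight factor $\abs{Z_{n,t}}$, and noting that on the complementary event $\set{\abs{Z_{n,t}}<\epsilon_0}$ both sides of \refitem{PPPPointwiseConv} are trivially bounded by $\epsilon_0\norm{h}_\infty$, I obtain \refitem{PPPPointwiseConv} for all bounded continuous $h$, which is enough to apply case~(i) of \refthm{FirstPointsPPPConvergence}.  The main obstacle is precisely the tightness of $\abs{Z_{n,t}}$: without it, multiplying the normalized MGF convergence by $\abs{Z_{n,t}}$ could introduce an unbounded factor that destroys convergence in probability, and the quadratic-variation control \refitem{PPPQuadVarBound} combined with \refitem{PPPTightIntensityLeft} is what makes this step work.
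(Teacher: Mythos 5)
Your argument is correct and takes a genuinely different route from the paper. The paper works directly inside the proof of Proposition~\refstarPartII{p:FirstPointsPPPbasic}: it keeps the unbounded exponentials $h_j(\vec{x})=\e^{\vec{\xi}_j\cdot\vec{x}}$, controls them via the corner-point bound $\e^{\vec{\xi}_j\cdot\vec{x}}\le\sum_{\vec{\xi}_0\in\set{\pm\delta/\sqrt{d}}^d}\e^{\vec{\xi}_0\cdot\vec{x}}$ on a high-probability event $C_n$ on which the relevant $\hat Z_{n,t_i}(\vec{\xi}_0)$ are bounded, and swaps $X_{n,j}$ for a modified $\tilde X_{n,j}$ that agrees with $X_{n,j}$ on $B_{n,k}\cap C_n$ and is an independent $Q$-draw elsewhere, so that the key identity \refeq{ModifiedXnConv} still closes. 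You instead reduce to case~(i) of \refthm{FirstPointsPPPConvergence} by upgrading \refcond{ZnBasic}~\refitem{PPPPointwiseConv} from the exponential family to all bounded continuous $h$: you extract tightness of $\abs{Z_{n,t}}$ (a fact the paper never needs) from \refitem{PPPTightIntensityLeft}, \refitem{PPPQuadVarBound} and Cauchy--Schwarz, normalize $Z_{n,t}$ to a probability measure, and invoke the classical MGF-convergence-implies-weak-convergence theorem. This is more modular and avoids the replacement trick entirely; the trade-off is an appeal to an external classical theorem rather than the paper's self-contained Poisson computation.

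Two places where you should tighten the write-up. First, to conclude convergence in probability of $\int h\,dZ_{n,t}-\abs{Z_{n,t}}\int h\,dQ$ for the \emph{whole} sequence, the diagonal-subsequence step has to be applied inside an arbitrary subsequence of $\N$ and combined with the standard subsequences-of-subsequences criterion; as written you only produce one subsequence along which things converge. Second, you only obtain almost-sure MGF convergence at a countable dense set of $\vec{\xi}$, not on the full ball $\set{\abs{\vec{\xi}}\le\delta}$, so you need the convexity of MGFs (pointwise convergence of convex functions on a dense subset of an open convex set upgrades to locally uniform convergence on that set) before the classical theorem applies. Both are routine, but neither is free, and they are exactly the kind of technical steps the paper's more hands-on proof sidesteps.
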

\begin{proof}
Fix any $\epsilon>0$, $k\in \N$, $g_1,\ldots,g_k$ bounded, continuous functions, and choose $t_0<\dotsb<t_N$ as in \reflemma{BnkLikely}.
By taking a refinement, we may assume that $t_i-t_{i-1}\leq \epsilon$.
Let $C_n$ be the event that $\hat{Z}_{n,t_i}(\vec{\xi}_0) \leq \abs{Z_{n,t_i}} \hat{Q}(\vec{\xi}_0) + \epsilon$ for each $i=1,\dotsc,N$ and for each $\vec{\xi}_0\in\shortset{\delta/\sqrt{d}, -\delta/\sqrt{d}}^d$.
By \refcond{ZnBasic}~\refitem{PPPPointwiseConv}, $\P(C_n)\to 1$.
Let $X_j$ be independent random variables with law $Q$, and define $\tilde{X}_{n,j}=X_{n,j}$ on $B_{n,k}\intersect C_n$ and $\tilde{X}_{n,j}=X_j$ otherwise.
Recall the notations $\psi(t),\tilde{g}_j(t)$ from the proof of \refprop{FirstPointsPPPbasic} and set $\tilde{T}_{n,j}=\psi(T_{n,j})$.
Set $h_j(\vec{x})=\e^{\vec{\xi}_j\cdot \vec{x}}$ for $\shortnorm{\vec{\xi}_j}_\infty\leq\delta/\sqrt{d}$.
By the argument of the previous proof, this time using that the $\tilde{X}_{n,j}$ have law $Q$ on $(B_{n,k}\intersect C_n)^c$, we find
\begin{align}
&\condE{\prod_{j=1}^k g_j(\tilde{T}_{n,j})h_j(\tilde{X}_{n,j}) - \prod_{j=1}^k g_j(\tilde{T}_{n,j}) \hat{Q}(\vec{\xi}_j)}{\F_n}
\notag
\\
&\quad=
\indicatorofset{C_n} \sum_{\vec{i}} \e^{-\abs{Z_{n,t_{i_k}}}} \left[ \prod_{j=1}^k g_j(t_{i_j}) \widehat{\Delta Z}_{n,i_j}(\vec{\xi}_j) - \prod_{j=1}^k g_j(t_{i_j})\abs{\Delta Z_{n,i_j}} \hat{Q}(\vec{\xi}_j) \right].
\labelPartII{ModifiedXnConv}
\end{align}
By \refcond{ZnBasic}~\refitem{PPPPointwiseConv}, the right-hand side of \eqrefPartII{ModifiedXnConv} converges to $0$ in probability.
Moreover, by the bound $\e^{\vec{\xi}_j\cdot \vec{x}}\leq \sum_{\vec{\xi}_0\in\shortset{\pm \delta/\sqrt{d}}^d} \e^{\vec{\xi}_0\cdot \vec{x}}$ and the choice of $C_n$, it is bounded as well.
Hence we may repeat the argument from the proof of \refthm{FirstPointsPPPConvergence} to find that $(\tilde{T}_{n,j}, \tilde{X}_{n,j})_j$ satisfy the desired conclusions.
But by construction, $\liminf_{n\to\infty} \P(X_{n,j} = \tilde{X}_{n,j}, |T_{n,j}-\tilde{T}_{n,j}|\leq \epsilon) \geq 1-\epsilon$.
Since $\epsilon>0$ was arbitrary, it follows that $X_{n,j}$ and $T_{n,j}$ themselves have the same convergence properties.
\end{proof}

\begin{proof}[Proof of \refthm{FirstPointCoxAsymp}]
For any $\epsilon>0$ we may define
\begin{align}
\underline{t}(\epsilon)
&=\max\set{t\in\Z,t<-1/\epsilon \colon q(t)<\epsilon^2},
\\
\overline{t}(\epsilon)
&=\min\set{t\in\Z,t>1/\epsilon \colon \liminf_{n\to\infty} \P(\shortabs{Z^*_{n,t}}>1/\epsilon)>1-\epsilon},
\\
K_1(\epsilon)
&=\min\set{k_1\in\N\colon \liminf_{n\to\infty} \P\left( \condE{\bigabs{Z^{\prime\sss(K)}_{n,\overline{t}(\epsilon)}}}{\F'_n}\leq 2q(\overline{t}(\epsilon)) \right)\geq 1-\epsilon \text{ for all } K\geq k_1}
\notag\\&\quad
\vee\min\set{k_1\in\N\colon \liminf_{n\to\infty} \P\left( \condE{\bigabs{Z^{\prime\prime\sss(K)}_{n,\overline{t}(\epsilon)}}}{\F'_n}\leq \epsilon^2 \right) \geq 1-\epsilon \text{ for all } K\geq k_1},
\end{align}
by $\lim_{t\to-\infty}q(t)=0$, assumption \refitem{PPPTightIntensityRightCoxTheorem}, \eqrefPartII{FirstMomentAgainstxi} and \eqrefPartII{SplittingAssumptions}, respectively.
Given any $\epsilon>0$, let $\epsilon'\in(0,\epsilon)$ be arbitrary.
By construction, $\overline{t}(\epsilon')\leq \overline{t}(\epsilon)$, so taking $K=K_1(\epsilon')$ shows that $\condE{\bigabs{Z^*_{n,\overline{t}(\epsilon)}}}{\F'_n}$ is uniformly bounded apart from an event of probability at most $2\epsilon'+o(1)$.
Since $\epsilon'$ was arbitrary, it follows that $\bigabs{Z^*_{n,\overline{t}(\epsilon)}}$ is tight as $n\to\infty$, so we may define
\begin{equation}\labelPartII{Z*Tightness}
z_0(\epsilon)=\min\set{z\in\N\colon \limsup_{n\to\infty} \P(\shortabs{Z^*_{n,\overline{t}(\epsilon)}}\leq z)\geq 1-\epsilon}.
\end{equation}

For $\epsilon>0,t,u\in\R,\vec{\xi}\in\R^2$, let $K_0(\epsilon,t,u,\vec{\xi})$ denote the smallest integer exceeding $K_1(\epsilon)$ such that \eqrefPartII{FirstMomentAgainstxi}--\eqrefPartII{SplittingAssumptions} hold with probability at least $1-\epsilon$ for $K\geq K_0(\epsilon)$ and $n\geq n_0(K,\epsilon,t,u,\vec{\xi})$.
Let $K_0(\epsilon)$ and $n_0(\epsilon)$ denote the maxima of $K_0(\epsilon,t,u,\vec{\xi})$ and $n_0(K_0(\epsilon),\epsilon,t,u,\vec{\xi})$, respectively, over all numbers $t,u\in[\underline{t}(\epsilon),\overline{t}(\epsilon)]$ and $\xi_1,\xi_2\in[-1/\epsilon,1/\epsilon]$ that are dyadic rationals of the form $i2^{-\ell}$ ($i\in\Z$, $\ell\in\N$) with $\ell < 1/\epsilon$.

The hypotheses imply that $K_0(\epsilon)$ and $n_0(\epsilon)$ are finite for each $\epsilon>0$.
Moreover, by construction, $\overline{t}(\epsilon)\to\infty$, $\underline{t}(\epsilon)\to-\infty$ as $\epsilon\decreasesto 0$.
Therefore, by letting $\epsilon=\epsilon_n$ decrease to 0 sufficiently slowly as $n\to\infty$ and setting $K=K_0(\epsilon_n)$, we can assume that $Z^*_n=Z'_n+Z''_n$ where
\begin{align}
\condE{ \hat{Z}'_{n,t}(\vec{\xi}) }{\F'_n}
&\convp
q(t) \hat{Q}(\vec{\xi}),
\labelPartII{Convp1Againstxi}
\\
\condE{ \Big( \frac{\hat{Z}'_{n,t}(\vec{\xi})}{q(t)\hat{Q}(\vec{\xi})}-\frac{\abs{Z'_{n,u}}}{q(u)} \Big)^2 }{ \F'_n }
&\convp
0,
\labelPartII{Convp2Againstxi}
\\
\condE{ \abs{Z''_{n,t}} }{\F'_n}
&\convp 0,
\labelPartII{Z''to0}
\end{align}
whenever $t,u,\xi_1,\xi_2$ are dyadic rationals.

Let $\cP'_n,\cP''_n$ denote the Cox processes with intensity measures $Z'_n,Z''_n$, respectively.
For any fixed $t\in\R$, \eqrefPartII{Z''to0} implies that the first point of $\cP''_n$ does not occur by time $t$ with high probability.
By \refprop{FirstPointPPPLaplace}, it therefore suffices to show that $Z'_n$ satisfies \refcond{ZnBasic} for the family of functions $h(\vec{x})=\e^{\vec{\xi}\cdot \vec{x}}$, $\vec{\xi}\in\R^2$.

Applying Chebyshev's inequality to \eqrefPartII{Convp2Againstxi},
\begin{equation}\labelPartII{ConvAgainstxiRatio}
\hat{Z}'_{n,t}(\vec{\xi})-\frac{q(t)}{q(u)}\hat{Q}(\vec{\xi})\abs{Z'_{n,u}} \convp 0
\end{equation}
(a continuity argument extends the convergence from dyadic rationals to all $t,u,\vec{\xi}$).
Taking $t=u$ verifies \refcond{ZnBasic}~\refitem{PPPPointwiseConv}.
For \refcond{ZnBasic}~\refitem{PPPTightIntensityLeft}, fix $\epsilon>0$ and choose $\underline{t}\in\R$ such that $q(\underline{t})<\tfrac{1}{4}\epsilon^2$.
Then \eqrefPartII{Convp1Againstxi} implies that $\condE{\abs{Z'_{n,\underline{t}}}}{\F_n}\leq \tfrac{1}{2}\epsilon^2$ \whp, and Markov's inequality implies that $\condP{\abs{Z'_{n,\underline{t}}}\geq\epsilon}{\F_n} \leq \tfrac{1}{2}\epsilon$ {\whpdot}
\refcond{ZnBasic}~\refitem{PPPTightIntensityRight} follows from \eqrefPartII{Z''to0} and assumption~\refitem{PPPTightIntensityRightCoxTheorem} in \refthm{FirstPointCoxAsymp}.

Finally, let $\epsilon>0$ and a compact interval $[\underline{t},\overline{t}]$ be given.
Expanding the interval if necessary, we may assume that $\underline{t},\overline{t}$ are dyadic rationals, and decreasing $\epsilon$ if necessary we may assume $\overline{t}\leq\overline{t}(\epsilon)$.
Since $q$ is continuous and non-decreasing, we may choose a partition $t_0<\dotsb<t_N$ of $[\underline{t},\overline{t}]$ consisting of dyadic rationals such that
\begin{equation}
\sum_{i=1}^N \left( \frac{q(t_i)}{q(t_{i-1})}-1 \right)^2 \leq \frac{\epsilon}{4z_0(\epsilon)^2}.
\end{equation}
(it is enough to choose the partition finely enough that $\max_i (q(t_i)-q(t_{i-1})) \leq q(\underline{t})/4z_0(\epsilon)^2 q(\overline{t})$), and bound
\begin{align}
\sum_{i=1}^N \abs{\Delta Z'_{n,i}}^2
&= \sum_{i=1}^N \left( \bigabs{Z'_{n,t_i}}-\frac{q(t_i)}{q(t_{i-1})} \bigabs{Z'_{n,t_{i-1}}} + \Bigl( \frac{q(t_i)}{q(t_{i-1})} - 1 \Bigr) \bigabs{Z'_{n,t_{i-1}}} \right)^2
\notag\\&
\leq \sum_{i=1}^N 2 \left( \bigabs{Z'_{n,t_i}}-\frac{q(t_i)}{q(t_{i-1})} \bigabs{Z'_{n,t_{i-1}}} \right)^2 + 2 \left( \frac{q(t_i)}{q(t_{i-1})} - 1 \right)^2 \bigabs{Z'_{n,t_{i-1}}}^2
\notag\\&
\leq \frac{\epsilon}{2z_0(\epsilon)^2} \abs{Z^*_{n,\overline{t}(\epsilon)}}^2 + 2 \sum_{i=1}^N \left( \bigabs{Z'_{n,t_i}}-\frac{q(t_i)}{q(t_{i-1})} \bigabs{Z'_{n,t_{i-1}}} \right)^2
.
\end{align}
The latter sum is $\sop(1)$ by \eqrefPartII{ConvAgainstxiRatio} with $\vec{\xi}=\vec{0}$, and the remaining term is at most $\epsilon/2$ on the event $\bigset{\shortabs{Z^*_{n,\overline{t}(\epsilon)}}\leq z_0(\epsilon)}$.
This event has probability at least $1-\epsilon-o(1)$ by \eqrefPartII{Z*Tightness}, which completes the proof.
\end{proof}

\section{Moment estimates and the cluster after unfreezing}\lbsect{FrozenGeometry}

In this section we study $\cluster_t$ for $t\geq T_\unfr$, when the cluster resumes its CTBP behaviour.
We will use moment methods to prove \reflemma{AtCollision} and \refthm{PnStarSatisfiesConditions}, completing the proof of our results.

We introduce the frozen intensity measures

\begin{equation}\labelPartII{FrozenIntensity}
d\mu_{n,\fr}^{\sss(j)}(y) = \sum_{v\in\cluster_\fr^{\sss(j)}} \indicator{y\geq 0} \mu_n\bigl(T_\fr^{\sss(j)}-T_v + dy\bigr).
\end{equation}
Recall that the notation $\mu(t_0+dy)$ denotes the translation of the measure $\mu$ by $t_0$; thus \eqrefPartII{FrozenIntensity} means that, for a test function $h\geq 0$,
\begin{equation}\labelPartII{FrozenIntensityTestFunction}
\int h(y) d\mu_{n,\fr}^{\sss(j)}(y) = \sum_{v\in\cluster_\fr^{\sss(j)}} \int_{T_\fr^{\sss(j)}-T_v}^\infty h\left( y-(T_\fr^{\sss(j)}-T_v) \right) d\mu_n(y).
\end{equation}
\begin{lemma}\lblemma{ExpectedUnfrozenChildren}
Almost surely, for $j=1,2$,
\begin{equation}
s_n\leq
\int \e^{-\lambda_n(1) y} d\mu_{n,\fr}^{\sss(j)}(y)
\leq s_n+1.
\end{equation}
\end{lemma}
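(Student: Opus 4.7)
My plan is to reformulate the quantity in the lemma in terms of the process
\[
F_j(t) \;:=\; \sum_{v\in\BP_t^{\sss(j)}} \int_{t-T_v}^\infty \e^{-\lambda_n(1)\left(y-(t-T_v)\right)} d\mu_n(y)
\]
whose level set defines the freezing time in \eqrefPartII{TfrDefn}. Since $\cluster_\fr^{\sss(j)}=\BP_{T_\fr^{(j)}}^{\sss(j)}$, a direct comparison of \eqrefPartII{FrozenIntensityTestFunction} with the sum inside \eqrefPartII{TfrDefn} identifies
\[
\int \e^{-\lambda_n(1)y} d\mu_{n,\fr}^{\sss(j)}(y) \;=\; F_j\bigl(T_\fr^{\sss(j)}\bigr),
\]
so the lemma is equivalent to the inequality $s_n \leq F_j(T_\fr^{\sss(j)}) \leq s_n+1$.

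The key step will be to show that $F_j$ is a càdlàg process whose only jumps occur at birth times of vertices in $\tree^{\sss(j)}$, and that every such jump has size exactly $1$. For a single vertex $v\in\BP_t^{\sss(j)}$, its contribution
\[
g_v(t) \;=\; \e^{\lambda_n(1)(t-T_v)}\int_{t-T_v}^\infty \e^{-\lambda_n(1)y}\,d\mu_n(y)
\]
depends continuously on $t\geq T_v$, because $\mu_n$ has no atoms (as $f_n$ is strictly increasing). At the birth instant $t=T_v$ the newborn vertex enters $\BP_t^{\sss(j)}$ and contributes the initial value $g_v(T_v) = \hat\mu_n(\lambda_n(1))$, which equals $1$ by the defining equation \eqrefPartII{lambdaaDefn} with $a=1$. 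Thus $F_j$ is right-continuous, is continuous between consecutive births in $\tree^{\sss(j)}$, and jumps upward by exactly $1$ at each birth time.

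With this structure, both bounds follow immediately from the definition $T_\fr^{\sss(j)}=\inf\{t\geq 0\colon F_j(t)\geq s_n\}$. Right-continuity of $F_j$ gives $F_j(T_\fr^{\sss(j)})\geq s_n$, which is the lower bound. For the upper bound, the infimum definition forces $F_j(T_\fr^{\sss(j)}-)\leq s_n$, and since the possible jump at $T_\fr^{\sss(j)}$ has size at most $1$, we obtain $F_j(T_\fr^{\sss(j)})\leq F_j(T_\fr^{\sss(j)}-)+1\leq s_n+1$.

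The only genuinely substantive point is the claim that the jumps of $F_j$ are of size exactly $1$; this is precisely where the choice of $\lambda_n(1)$ as the Malthusian parameter enters, via $\hat\mu_n(\lambda_n(1))=1$. The rest is a routine bookkeeping exercise using the càdlàg structure inherited from the birth process and the absence of atoms of $\mu_n$.
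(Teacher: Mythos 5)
Your proof is correct and is the natural argument here. The identification $\int \e^{-\lambda_n(1)y}\,d\mu_{n,\fr}^{\sss(j)}(y)=F_j(T_\fr^{\sss(j)})$ (using $R_j(T_\fr^{\sss(j)})=T_\fr^{\sss(j)}$), the observation that $F_j$ is continuous between births because $\mu_n$ has no atoms, and the crucial fact that each newborn contributes exactly $\hat\mu_n(\lambda_n(1))=1$ at birth are precisely what make the two bounds fall out of the infimum definition of $T_\fr^{\sss(j)}$ together with right-continuity; this is the same approach as the proof given in Part I, to which the paper defers.
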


\reflemma{ExpectedUnfrozenChildren} is a restatement of \refother{\reflemmaPartI{ExpectedUnfrozenChildren}} and is proved in that paper.

For future reference, we now state a lemma, to be used in \refsubsect{SecondMomentEstimates}, showing that most of the mass of the frozen intensity measures $\mu_{n,\fr}^{\sss(j)}$ comes from small times.
This result generalizes \refother{\reflemmaPartI{FrozenVerticesAreYoung}}, which restricted to the case $a=1$, although the proof is the same.

\begin{lemma}\lblemma{FrozenVerticesAreYoung}
Let $\delta,\delta'>0$ and $a_0>0$ be given.
Then there exists $K<\infty$ and $n_0 \in \N$ such that, for all $a\geq a_0$ and $n\ge n_0$,
\begin{equation}
\P\left(
\int \e^{-\lambda_n(a^{1/s_n})y} \indicator{\lambda_n(1)y\geq K} d\mu_{n,\fr}^{\sss(j)}(y)
>\delta s_n
\right)\leq \delta'.
\end{equation}
\end{lemma}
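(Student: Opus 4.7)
The strategy is to adapt the proof of the $a = 1$ version from Part I, substituting $\lambda_n(a^{1/s_n})$ for $\lambda_n(1)$ throughout and verifying that the estimates remain valid uniformly for $a \geq a_0$. Since $a \mapsto \lambda_n(a)$ is increasing, we have $\e^{-\lambda_n(a^{1/s_n})y} \leq \e^{-\lambda_n(a_0^{1/s_n})y}$ for $a \geq a_0$, so it suffices to prove the bound for the single value $a = a_0$. When $a_0 \geq 1$ this reduces immediately to the $a = 1$ version via $\lambda_n(a_0^{1/s_n}) \geq \lambda_n(1)$, so the substantial case is $a_0 \in (0,1)$. For this case we rely on \reflemma{lambdanAsymp}, which gives $\lambda_n(a_0^{1/s_n})/\lambda_n(1) \to a_0$; the density bounds of \reflemma{munDensityBound}, \reflemma{munDensityBounded} and \reflemma{BoundOnContribution} therefore apply with essentially the same constants (up to factors depending on $a_0$) when $\lambda_n(1)$ is replaced by $\lambda_n(a_0^{1/s_n})$.

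For $a_0 \in (0,1)$, I would split the truncated integral at a further threshold $L \geq K$ into a moderate part $K/\lambda_n(1) \leq y < L/\lambda_n(1)$ and a tail part $y \geq L/\lambda_n(1)$. On the moderate part, write $\e^{-\lambda_n(a_0^{1/s_n})y} = \e^{-\lambda_n(1)y}\cdot \e^{(\lambda_n(1) - \lambda_n(a_0^{1/s_n}))y}$; the second factor is bounded uniformly by $\e^{(1 - a_0 + o(1))L}$ on this range. This reduces the moderate contribution to a bounded constant times $\int \e^{-\lambda_n(1) y}\indicator{\lambda_n(1) y \geq K}d\mu_{n,\fr}^{\sss(j)}(y)$, which is controlled by the $a = 1$ version applied with parameter $\delta_1 = \delta \e^{-(1-a_0)L}/3$. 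On the tail part, each vertex $v \in \cluster_\fr^{\sss(j)}$ (with age $s_v = T_\fr^{\sss(j)} - T_v$) contributes at most $\e^{\lambda_n(a_0^{1/s_n}) s_v} \int_{L/\lambda_n(1)}^\infty \e^{-\lambda_n(a_0^{1/s_n}) w}\,d\mu_n(w)$; using the refined density bound \reflemma{munDensityBound} together with \reflemma{ExtendedImpliesWeak} to control $f_n^{-1}(w)/w$ for large $w$, I would estimate this per-vertex contribution by $O(1/(s_n L a_0))$. Summing over the at most $\Op(s_n^2)$ vertices of $\cluster_\fr^{\sss(j)}$ (by \refthm{FrozenCluster}~\refitem{FrozenVolume}) then gives a total tail contribution of $\Op(s_n/L)$.

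The parameters must be chosen in the correct order: first fix $L$ large enough that the tail contribution is at most $\delta s_n / 2$ \whp, and then choose $K$ large enough (depending on $L$) for the $a=1$ Part I result to bound the moderate contribution by $\delta s_n / 2$ \whp. The main obstacle will be establishing the refined per-vertex tail bound: the uniform density bound \reflemma{munDensityBounded} alone yields only $O(1/s_n)$ per vertex, which summed over $\Op(s_n^2)$ vertices gives $O(s_n)$ and is insufficient. It is essential to use the sharper \reflemma{munDensityBound}, whose factor $f_n^{-1}(w)/w$ decays in $w$; the integral $\int_{L/\lambda_n(1)}^\infty \e^{-\lambda_n(a_0^{1/s_n}) w} f_n^{-1}(w)/w\, dw$ will require a careful estimate (reducible after a change of variables to an exponential-integral tail) to extract the factor $1/L$ that closes the argument.
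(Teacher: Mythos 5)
Your broad strategy (monotonicity of $\lambda_n$, density bounds for $\mu_n$, the volume estimate $\abs{\cluster_\fr^{\sss(j)}}=O_\P(s_n^2)$) is the right toolkit, but the two-stage decomposition into moderate and tail parts is unnecessary, and the ``main obstacle'' you identify at the end is not a real obstacle: it rests on a misreading of what the density bounds deliver.

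The paper's proof is a single step. Set $\epsilon = a_0\e^{-\gamma}/2$. By monotonicity of $\lambda_n(\cdot)$ and $\lambda_n(a^{1/s_n})f_n(1)\to a\e^{-\gamma}$ (\reflemma{lambdanAsymp}, \eqrefPartII{lambdanAsymp}), for $n$ large and all $a\geq a_0$ one has $\lambda_n(a^{1/s_n})\geq \epsilon/f_n(1)$, hence $\e^{-\lambda_n(a^{1/s_n})y}\leq \e^{-\epsilon y/f_n(1)}$; similarly $\indicator{\lambda_n(1)y\geq K}\leq \indicator{y\geq Kf_n(1)}$. Unpacking the definition of $\mu_{n,\fr}^{\sss(j)}$ from \eqrefPartII{FrozenIntensity} gives
\begin{equation}
\int \e^{-\lambda_n(a^{1/s_n})y}\indicator{\lambda_n(1)y\geq K}\, d\mu_{n,\fr}^{\sss(j)}(y)
\leq
\sum_{v\in\cluster_\fr^{\sss(j)}} \int \e^{-\epsilon y/f_n(1)}\indicator{y\geq Kf_n(1)}\,\mu_n(T_\fr^{\sss(j)}-T_v+dy),
\end{equation}
and then \reflemma{BoundOnContribution} applied with this $\epsilon$ bounds each summand by $\bar\epsilon/s_n$ for $K$ large. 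Combined with $\abs{\cluster_\fr^{\sss(j)}}=O_\P(s_n^2)$, this closes the argument. The uniformity in $a\geq a_0$ is absorbed once and for all into the choice of $\epsilon$.

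Your claimed obstacle --- that the density bounds only give $O(1/s_n)$ per vertex, summing to $O(s_n)$, ``which is insufficient'' --- confuses a bound of order $s_n$ with a fixed constant versus one with an arbitrarily small constant. The target is $\delta s_n$, so an $O(s_n)$ bound is exactly the right order of magnitude; what you need is control of the implicit constant. That control is already built into \reflemma{BoundOnContribution}: the truncation $\indicator{y\geq Kf_n(1)}$ combined with the decay $\e^{-\epsilon y/f_n(1)}$ produces an explicit factor $\e^{-\epsilon K}/(\epsilon\epsilonCondition)$, which is made as small as you like by taking $K$ large. You do not need the sharper $f_n^{-1}(w)/w$ decay from \reflemma{munDensityBound} nor any change-of-variables computation to extract a $1/L$; the uniform density bound of \reflemma{munDensityBounded} plus the exponential cutoff is enough. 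Your two-stage plan (reduce the moderate part to the Part I statement, then separately treat the tail with a sharper bound) would likely work if carried out, but it introduces a coupling of thresholds ($K$ depending on $L$) and a delicate estimate that the direct approach avoids entirely.
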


\begin{proof}
Let $\epsilon=a_0 \e^{-\gamma}/2$, where $\gamma$ denotes Euler's constant.
Using the definition of $\mu_{n,\fr}^{\sss(j)}$ from \eqrefPartII{FrozenIntensity}, the monotonicity of $\lambda_n(\cdot)$ and \eqrefPartII{lambdanAsymp}, we obtain $n_0 \in \N$ such that for all $K<\infty$, $a\ge a_0$ and $n\ge n_0$,
\begin{equation}\labelPartII{MassUpperTailFrozenIntensity}
\int \e^{-\lambda_n(\tilde{a})y} \indicator{\lambda_n(1)y\geq K} d\mu_{n,\fr}^{\sss(j)}(y) \le
\sum_{v \in \cluster_\fr^{\sss(j)}} \int \e^{-\epsilon y/f_n(1)} \indicator{y\ge Kf_n(1)} \mu_n(T_\fr^{\sss(j)}-T_v+dy).
\end{equation}
According to \reflemma{BoundOnContribution}, for any $\epsilon'>0$ we can choose some $K<\infty$ such that, after possibly increasing $n_0$, the right-hand side of \eqrefPartII{MassUpperTailFrozenIntensity} is bounded from above by $\abs{\cluster_\fr^{\sss(j)}} \epsilon'/s_n$. Since $\abs{\cluster_\fr^{\sss(j)}}=O_\P(s_n^2)$ by \refthm{FrozenCluster}~\refitem{FrozenVolume}, the proof is complete.
\end{proof}

\subsection{A first moment estimate: Proof of \reflemma{AtCollision} \refitem{VolumeAtCollision}}\lbsubsect{VolumeCollisionProof}

In this section we show how to express $\cluster_t\setminus\cluster_\fr$, $t\geq T_\unfr$, as a suitable union of branching processes.
This representation leads to a simple proof of \reflemma{AtCollision}~\refitem{VolumeAtCollision}.
We will also use it in \refsubsect{SecondMomentEstimates} to prove \refthm{PnStarSatisfiesConditions}.

Consider the immediate children $v\in\boundary\cluster_\fr$ of individuals in the frozen cluster $\cluster_\fr$.
Then, for $t'\geq 0$,
\begin{equation}\labelPartII{UnfrozenUnion}
\cluster_{T_\unfr+t'}\setminus\cluster_\fr = \bigunion_{v\in\boundary\cluster_\fr\colon T_v^\cluster\leq T_\unfr+t'} \set{vw\colon w\in\BP^{\sss(v)}_{t'+T_\unfr-T_v^\cluster}},
\end{equation}
where $\BP^{\sss(v)}$ denotes the branching process of descendants of $v$, re-rooted and time-shifted as in \eqrefPartII{BPvDefinition}.
Furthermore, conditional on $\cluster_\fr$, the children $v\in\boundary\cluster_\fr$ appear according to a Cox process.
Formally, the point measures
\begin{equation}\labelPartII{UnfrozenChildrenCox}
\cP_{n,\unfr}^{\sss (j)}=\sum_{v\in\boundary\cluster_\fr^{(j)}} \delta_{(T_v^\cluster-T_\unfr,\BP^{(v)})}
\end{equation}
form Cox processes with intensities $d\mu_{n,\fr}^{\sss(j)} \otimes d\P(\BP^{\sss(1)}\in\cdot)$, $j=1,2$, where the frozen intensity measures $\mu_{n,\fr}^{\sss (j)}$ were introduced in \eqrefPartII{FrozenIntensity}.

\begin{proof}[Proof of \reflemma{AtCollision}~\refitem{VolumeAtCollision}]
By \refthm{FrozenCluster}~\refitem{FrozenVolume}, the volume $\abs{\cluster_\fr}$ of the frozen cluster is $O_\P(s_n^2)$, and this is $o_\P(\sqrt{n s_n})$ since $n/s_n^3\to\infty$.
It therefore suffices to show that $\abs{\cluster_{\overline{t}}\setminus\cluster_\fr}=O_\P(\sqrt{n s_n})$ when $\overline{t}=T_\unfr+\lambda_n(1)^{-1}(\tfrac{1}{2}\log(n/s_n^3)+K)$.

Abbreviate $t'=\overline{t}-T_\unfr=\lambda_n(1)^{-1}(\tfrac{1}{2}\log(n/s_n^3)+K)$.
By \eqrefPartII{UnfrozenUnion}--\eqrefPartII{UnfrozenChildrenCox},
\begin{align}
\abs{\cluster_{\overline{t}}\setminus\cluster_\fr}
&=
\sum_{v\in\boundary\cluster_\fr\colon T_v^\cluster\leq T_\unfr+t'} z^{1,\BP^{(v)}}_{t'+T_\unfr-T_v^\cluster}(1)
=
\sum_{j=1}^2 \int \indicator{t\leq t'} z^{1,bp}_{t'-t}(1) d\cP_{n,\unfr}^{\sss(j)}(t,bp)
,
\end{align}
so that, by \refthm{OneCharConv} and \reflemma{ExpectedUnfrozenChildren} there is $K'<\infty$ such that for sufficiently large $n$,
\begin{align}
\condE{\big.\abs{\cluster_{\overline{t}}\setminus\cluster_\fr}}{\cluster_\fr}
&=
\sum_{j=1}^2 \int \indicator{t\leq t'} \e^{\lambda_n(1)(t'-t)} \bar{m}_{t'-t}^1(1) d\mu_{n,\fr}^{\sss(j)}(t)
\notag\\&
\leq
\sqrt{\frac{n}{s_n^3}}\e^K \sum_{j=1}^2 \int \indicator{t\leq t'} K' s_n \e^{-\lambda_n(1)t} d\mu_{n,\fr}^{\sss(j)}(t)
\leq K' \e^K \sqrt{\frac{n}{s_n^3}}s_n(s_n+1).
\end{align}
Markov's inequality completes the proof.
\end{proof}

\subsection{Second moment estimates: Proof of \refthm{PnStarSatisfiesConditions}}\lbsubsect{SecondMomentEstimates}

In this section we prove that $\cP_n^*$ satisfies the assumptions of \refthm{FirstPointCoxAsymp}.
Namely, we will split $\mu_n=\mu_n^{\sss(K)}+(\mu_n-\mu_n^{\sss(K)})$ into the truncated measure and a remainder, as in \refsect{2VertexCharSec}.
This induces a splitting of the intensity measure into $Z'_n+Z''_n$, and the hypothesis \refitem{MomentAssumptionsCoxTheorem} will be verified using the estimates for the two-vertex characteristics $\chi_n^{\sss(K)}$ and $\chi_n-\chi_n^{\sss(K)}$ in Theorems~\refPartII{t:TwoVertexConvRestrictedSum} and \refPartII{t:TwoVertexRemainder}.
The remaining hypothesis \refitem{PPPTightIntensityRightCoxTheorem} will be proved using a separate argument.

Throughout the proof, the times $t$ and $t^*$ are related as in \eqrefPartII{tt*}, and we recall from \eqrefPartII{MomentGeneratingNotation} that, for a measure $Q$ on $\R^d$, we write $\hat{Q}$ for its moment generating function.

\begin{proof}[Proof of \refthm{PnStarSatisfiesConditions}]
Since $\cP_n^*$ is the image of a Cox process under a mapping that is measurable with respect to $\cluster_\fr$, it is immediate that it itself is a Cox process, and its intensity measure is
\begin{equation}
Z_{n,t^*}^*
=
\sum_{v_1\in\cluster_t^{\sss(1)}\setminus\cluster_\fr^{\sss(1)}} \sum_{v_2\in\cluster_t^{\sss(2)}\setminus\cluster_\fr^{\sss(2)}} \tfrac{1}{n} \mu_n\bigl(\Delta R_{v_1,v_2},R_1(t)-R_1(T_{v_1}^\cluster)+R_2(t)-R_2(T_{v_2}^\cluster) \bigr)
\delta_{(\abs{v_1}^*,\abs{v_2}^*)}
.
\end{equation}
In this sum, $T_{v_j}^\cluster \geq T_\unfr$, so that $R_{j'}(t)-R_{j'}(T_{v_j}^\cluster)=t-T_{v_j}^\cluster$ whenever $t\geq T_{v_j}^\cluster$, $j,j'\in\set{1,2}$, and, recalling \eqrefPartII{DeltaRcluster}, $\Delta R_{v_1,v_2}=\abs{T_{v_1}^\cluster-T_{v_2}^\cluster}$.

We begin by expressing $\hat{Z}^*_{n,t^*}(\vec{\xi})$ as a sum of two-vertex characteristics.
\ch{As in \eqrefPartII{UnfrozenUnion}, } any vertex $v'\in\cluster_t\setminus\cluster_\fr$ is descended from a unique vertex $v=p^\unfr(v')\in\boundary\cluster_\fr$ and can therefore be written as $v'=vw$ for some $w\in\BP^{\sss(v)}_{t-T_v^\cluster}$.
Hence $\abs{v'}-\abs{p^\unfr(v')}=\abs{vw}-\abs{v}=\abs{w}$.
Thus
\begin{align}
\hat{Z}^*_{n,t^*}(\vec{\xi})&
=
\sum_{v_1\in\boundary\cluster_\fr^{\sss(1)}} \sum_{w_1\in\BP^{\sss(v_1)}_{t-T_{v_1}^\cluster}} \sum_{v_2\in\boundary\cluster_\fr^{\sss(2)}} \sum_{w_2\in\BP^{\sss(v_2)}_{t-T_{v_2}^\cluster}} \exp\left( \frac{\xi_1 \abs{w_1}+\xi_2\abs{w_2}}{s_n\sqrt{\log(n/s_n^3)}} - (\xi_1+\xi_2)\frac{\phi_n}{2s_n}\sqrt{\log(n/s_n^3)}\right)
\notag\\&\qquad
\cdot
\tfrac{1}{n}\mu_n(\abs{T_{v_1 w_1}^\cluster-T_{v_2 w_2}^\cluster}, t-T_{v_1 w_1}^\cluster+t-T_{v_2 w_2}^\cluster)
.
\labelPartII{xiAgainstZ*4sums}
\end{align}
We note that $T_{v_j w_j}^\cluster=T_{v_j}^\cluster+T^{\sss(v_j)}_{w_j}$, where $T^{\sss(v_j)}_{w_j}$ denotes the birth time of $w_j$ in the branching process $\BP^{\sss(v_j)}$ defined in \eqrefPartII{BPvDefinition}.
 (Note that $T_{v_j}^{\cluster}\geq T_\unfr$, so that freezing plays no role after $T_{v_j}^\cluster$ and we need not consider $T_{w_j}^\cluster$.)  It follows that
\begin{equation}
\mu_n(\abs{T_{v_1 w_1}^\cluster-T_{v_2 w_2}^\cluster}, t-T_{v_1 w_1}^\cluster+t-T_{v_2 w_2}^\cluster)
=
\chi_n(t-T_{v_1}^\cluster-T^{\sss(v_1)}_{w_1}, t-T_{v_2}^\cluster-T^{\sss(v_2)}_{w_2}),
\end{equation}
where $\chi_n$ is the two-vertex characteristic from \eqrefPartII{TwoVertexChar}.
Recalling the notation from \eqrefPartII{2VertexCharDef},
\begin{align}
\hat{Z}^*_{n,t^*}(\vec{\xi})
=
\frac{1}{n}\exp\left( -(\xi_1+\xi_2)\frac{\phi_n}{2s_n}\sqrt{\log(n/s_n^3)} \right)
\sum_{v_1\in\boundary\cluster_\fr^{\sss(1)}} \sum_{v_2\in\boundary\cluster_\fr^{\sss(2)}} z^{\chi_n,\BP^{(v_1)},\BP^{(v_2)}}_{t-T_{v_1}^\cluster \! , \, t-T_{v_2}^\cluster}(\vec{\tilde{a}})
,
\labelPartII{xiAgainstZ*2sums}
\end{align}
where $\vec{\tilde{a}}=(a_1^{1/s_n},a_2^{1/s_n})$ as in \eqrefPartII{tildeVectorValued} and
\begin{equation}
a_j=\exp \left( \frac{\xi_j}{\sqrt{\log (n/s_n^3)}} \right) \! .
\end{equation}
Note that $a_1,a_2$ depend implicitly on $n$ and $a_j\to 1$ \ch{as $n\to\infty$}.

As in \refsubsect{VolumeCollisionProof}, we express the sums over $\boundary\cluster_\fr^{\sss(1)}$, $\boundary\cluster_\fr^{\sss(2)}$ in \eqrefPartII{xiAgainstZ*2sums} in terms of the point measures $\cP_{n,\unfr}^{\sss(j)}=\sum_{v\in\boundary\cluster_\fr^{\sss(j)}} \delta_{(T_v^\cluster-T_\unfr,\BP^{\sss(v)})}$ from \eqrefPartII{UnfrozenChildrenCox}:
\begin{align}
\hat{Z}^*_{n,t^*}(\vec{\xi})
&=
\frac{1}{n}\exp\left( -(\xi_1+\xi_2)\frac{\phi_n}{2s_n}\sqrt{\log(n/s_n^3)} \right)
\int d\cP_{n,\unfr}^{\sss(1)}(t_1,bp^{\sss(1)})
\notag\\&\qquad
\times \int d\cP_{n,\unfr}^{\sss(2)}(t_2,bp^{\sss(2)}) z^{\chi_n, bp^{(1)} \! , \, bp^{(2)}}_{t-T_\unfr-t_1, t-T_\unfr-t_2}(\vec{\tilde{a}})
.
\end{align}
Recalling the notation from \eqrefPartII{barredTwoVertexChars} and \eqrefPartII{tildeConvention},
\begin{align}
&\hat{Z}^*_{n,t^*}(\vec{\xi})
=
\frac{1}{n}\exp\left( -(\xi_1+\xi_2)\frac{\phi_n}{2s_n}\sqrt{\log(n/s_n^3)}+ (t-T_\unfr)(\lambda_n(\tilde{a}_1)+\lambda_n(\tilde{a}_2)) \right)
\notag\\&\qquad
\times
\int d\cP_{n,\unfr}^{\sss(1)}(t_1,bp^{\sss(1)}) \int d\cP_{n,\unfr}^{\sss(2)}(t_2,bp^{\sss(2)}) \e^{-\lambda_n(\tilde{a}_1)t_1-\lambda_n(\tilde{a}_2)t_2} \bar{z}^{\chi_n, bp^{(1)}, bp^{(2)}}_{t-T_\unfr-t_1 \! , \, t-T_\unfr-t_2}(\vec{\tilde{a}})
\notag\\&\quad
=
\exp\left( \frac{\lambda_n(\tilde{a}_1)+\lambda_n(\tilde{a}_2)}{\lambda_n(1)}\left( t^*+\tfrac{1}{2}\log(n/s_n^3) \right)  -(\xi_1+\xi_2)\frac{\phi_n}{2s_n}\sqrt{\log(n/s_n^3)} - \log(n/s_n^3) \right)
\notag\\&\qquad
\times
\frac{1}{s_n^3} \int d\cP_{n,\unfr}^{\sss(1)}(t_1,bp^{\sss(1)}) \int d\cP_{n,\unfr}^{\sss(2)}(t_2,bp^{\sss(2)}) \e^{-\lambda_n(\tilde{a}_1)t_1-\lambda_n(\tilde{a}_2)t_2} \bar{z}^{\chi_n, bp^{(1)}, bp^{(2)}}_{t-T_\unfr-t_1 \! , \, t-T_\unfr-t_2}(\vec{\tilde{a}})
.
\labelPartII{xiAgainstZ*Long}
\end{align}
For the non-random factors on the right-hand side of \eqrefPartII{xiAgainstZ*Long}, we use the asymptotics from \refcoro{lambdanTaylor}:
\begin{align}
\frac{\lambda_n(\tilde{a}_j)}{\lambda_n(1)}
&=
1+\frac{\phi_n}{s_n}\left(\exp\left( \xi_j/\sqrt{\log(n/s_n^3)} \right)-1\right)+o\left( \exp\left( \xi_j/\sqrt{\log(n/s_n^3)} \right)-1 \right)^2
\notag\\
&=
1+\frac{\phi_n}{s_n}\left(\xi_j/\sqrt{\log(n/s_n^3)} + \frac{1}{2}\xi_j^2/\log(n/s_n^3)\right) + o(1/\log(n/s_n^3))
.
\end{align}
Combining with the asymptotics $\lambda_n(\tilde{a}_j)/\lambda_n(1)=1+o(1)$ and $s_n/\phi_n=1+o(1)$ (see \eqrefPartII{lambdanAsymp} and \reflemma{lambdanAsymp}), we conclude that, for any fixed $t^* \in \R$,
\begin{equation}\labelPartII{ExponentAsymp}
\frac{\lambda_n(\tilde{a}_j)}{\lambda_n(1)}\bigl( t^*+\tfrac{1}{2}\log(n/s_n^3) \bigr) -\xi_j\frac{\phi_n}{2s_n}\sqrt{\log(n/s_n^3)}-\frac{1}{2}\log(n/s_n^3)
=t^*+\tfrac{1}{4}\xi_j^2 +o(1)
.
\end{equation}
(When $\xi_j=0$, $a_j=1$, the term $o(1)$ in \eqrefPartII{ExponentAsymp} is absent.)  Combining \eqrefPartII{xiAgainstZ*Long} and \eqrefPartII{ExponentAsymp},
\begin{align}
\hat{Z}^*_{n,t^*}(\vec{\xi})
&=
\frac{\e^{2t^*+\frac{1}{4}\norm{\vec{\xi}}^2+o(1)}}{s_n^3}
\int d\cP_{n,\unfr}^{\sss (1)}(t_1,bp^{\sss (1)}) \int d\cP_{n,\unfr}^{\sss (2)}(t_2,bp^{\sss (2)})
\notag \\
&\qquad
\cdot \e^{-\lambda_n(\tilde{a}_1)t_1} \e^{-\lambda_n(\tilde{a}_2)t_2} \bar{z}_{t-T_\unfr-t_1 \! , \, t-T_\unfr-t_2}^{\chi_n,bp^{(1)},bp^{(2)}}(\vec{\tilde{a}})
.
\labelPartII{xiAgainstZ*}
\end{align}
This is the desired representation in terms of two-vertex characteristics.

Given $K<\infty$, define
\begin{equation}
Z'_{n,t^*}
=
\sum_{v_1\in\cluster_t^{\sss(1)}\setminus\cluster_\fr^{\sss(1)}} \sum_{v_2\in\cluster_t^{\sss(2)}\setminus\cluster_\fr^{\sss(2)}} \tfrac{1}{n} \mu_n^{\sss(K)}\left(\abs{T_{v_1}^\cluster-T_{v_2}^\cluster},t-T_{v_1}^\cluster+t-T_{v_2}^\cluster \right)
\delta_{(\abs{v_1}^*,\abs{v_2}^*)},
\end{equation}
similar to \refdefn{PnStar}, and set $Z''_{n,t^*}=Z_{n,t^*}^*-Z'_{n,t^*}$.
(We suppress the dependence of $Z'_n,Z''_n$ on $K$.)  Clearly, \eqrefPartII{xiAgainstZ*} remains true when $Z_{n,t^*}^*$ and $\chi_n$ are replaced by $Z'_{n,t^*}$ and $\chi_n^{\sss(K)}$, or by $Z''_{n,t^*}$ and $\chi_n-\chi_n^{\sss(K)}$, respectively.
We will use \eqrefPartII{xiAgainstZ*} to control first and second moments of $Z'_{n,t^*}$, for arbitrary but fixed values $t^*,\xi_1,\xi_2\in\R$, and first moments of $Z''_{n,t^*}$ with $\xi_1=\xi_2=0$.

\paragraph{Verification of \eqrefPartII{FirstMomentAgainstxi}:}
Writing $\lambda_n(\vec{\tilde{a}})=(\lambda_n(\tilde{a}_1),\lambda_n(\tilde{a}_2))$, we obtain from \eqrefPartII{xiAgainstZ*},
\begin{align}
&\condE{\hat{Z}'_{n,t^*}(\vec{\xi})}{\cluster_\fr}
=
\frac{\e^{2t^*+\frac{1}{4}\norm{\vec{\xi}}^2+o(1)}}{s_n^2}\iint \e^{-\lambda_n(\vec{\tilde{a}})\cdot \vec{t}} \frac{\bar{m}^{\chi_n^{\sss(K)}}_{t-T_\unfr-\vec{t}}(\vec{\tilde{a}})}{s_n} d\mu_{n,\fr}^{\sss(1)}(t_1) d\mu_{n,\fr}^{\sss(2)}(t_2)
.
\labelPartII{EgivenCfr}
\end{align}
Since $\lambda_n(1)(t-T_\unfr)=\tfrac{1}{2}\log(n/s_n^3)+t^*\to\infty$ and $a_j\to 1$, \refthm{TwoVertexConvRestrictedSum} and \refcoro{lambdanTaylor} imply that for $\eps \in (0,1)$ and $K<\infty$ sufficiently large, there exists $K'<\infty$ and $n_0 \in \N$ such that the integrand of \eqrefPartII{EgivenCfr} lies between $\e^{-\lambda_n(1)(t_1+t_2)-\eps}(1-\epsilon)$ and $\e^{-\lambda_n(1)(t_1+t_2)+\eps}(1+\epsilon)$ for $\lambda_n(1)[t_1 \vee t_2]\leq K'$ and $n\ge n_0$, and there are constants $K''<\infty, n_0'\in \N$ such that it is bounded by $K'' \e^{-\lambda_n(\vec{\tilde{a}})\cdot \vec{t}}$ for all $t_1,t_2\ge 0$, $n\ge n_0'$.
Using Lemmas~\refPartII{l:ExpectedUnfrozenChildren} and \refPartII{l:FrozenVerticesAreYoung}, it easily follows that
\begin{equation}
\e^{2t^*+\frac{1}{4}\norm{\vec{\xi}}^2}(1-2\epsilon)
\leq
\condE{\hat{Z}'_{n,t^*}(\vec{\xi})}{\cluster_\fr}
\leq
\e^{2t^*+\frac{1}{4}\norm{\vec{\xi}}^2}(1+2\epsilon)
\end{equation}
{\whpdot}  Since $\e^{\frac{1}{4}\norm{\vec{\xi}}^2}=\hat{Q}(\vec{\xi})$, where $Q$ is the law of two independent $N(0,\tfrac{1}{2})$ variables, we have therefore verified the first moment condition \eqrefPartII{FirstMomentAgainstxi} of \refthm{FirstPointCoxAsymp} with $q(t^*)=\e^{2t^*}$.

\paragraph{Verification of \eqrefPartII{SplittingAssumptions}:}
For $Z''_{n,t^*}$, set $\vec{\xi}=\vec{0}:=(0,0)$.
In \eqrefPartII{EgivenCfr}, we can replace $Z'_{n,t^*}$ and $\chi_n^{\sss(K)}$ by $Z''_{n,t^*}$ and $\chi_n-\chi_n^{\sss(K)}$, and \refthm{TwoVertexRemainder} implies that, for any $\eps \in (0,1)$ and large $K$, there is $K'<\infty$ such that the integrand of the resulting equation is at most $\epsilon \e^{-\lambda_n(1)(t_1+t_2)}$, uniformly for $\lambda_n(1)[t_1 \vee t_2]\leq K'$ for large $n$, and there is $K''<\infty$ such that it is bounded by $K''\e^{-\lambda_n(1)(t_1+t_2)}$ for all $t_1,t_2\ge 0$, $n$ large.
This verifies the first moment condition \eqrefPartII{SplittingAssumptions}, as in the previous case.

\paragraph{Verification of \eqrefPartII{SecondMomentAgainstxi}:}
The second moment estimates for $Z'_{n,t^*}$, though somewhat more complicated, are similar in spirit.
Note the importance of freezing, which is not so far apparent from the first moment calculations only: the freezing times $T_\fr^{\sss(j)}$ and the rescaled times $t\approx T_\unfr+\tfrac{1}{2}\lambda_n(1)^{-1}\log(n/s_n^3)$ have exactly the scaling needed so that \emph{both} the first and second moments of $Z^*_{n,t^*}$ will have order $1$, even though $\bar{m}_{\vec{t}}^{\chi_n^{\sss(K)}}(\vec{\tilde{a}})\approx s_n$, $\bar{M}^{\chi_n^{\sss(K)},\chi_n^{\sss(K)}}_{\vec{t},\vec{u}}(\vec{\tilde{a}},\vec{\tilde{b}})\approx s_n^4$ by \refthm{TwoVertexConvRestrictedSum}.

Equation \eqrefPartII{xiAgainstZ*} for $Z_{n,t^*}'$ expresses $\hat{Z}'_{n,t^*}(\vec{\xi})$ in terms of a double integral with respect to a pair of Cox processes, whose intensity is measurable with respect to $\cluster_\fr$.
An elementary calculation (applying \eqrefPartII{PPPMeanCovar} twice) shows that, for a pair $\cP^{\sss(1)},\cP^{\sss(2)}$ of Poisson point processes with intensities $\nu_1,\nu_2$,
\begin{align}
&\E\left( \iint f(x_1,x_2) d\cP^{\sss(1)}(x_1)d\cP^{\sss(2)}(x_2) \iint g(y_1,y_2) d\cP^{\sss(1)}(y_1)d\cP^{\sss(2)}(y_2) \right)
\notag\\
&\quad=
\iint f(x_1,x_2)g(x_1,x_2) d\nu_1(x_1)d\nu_2(x_2) + \iiint f(x_1,x_2) g(x_1,y_2) d\nu_1(x_1) d\nu_2(x_2) d\nu_2(y_2)
\notag\\
&\qquad+
\iiint f(x_1,x_2) g(y_1,x_2) d\nu_1(x_1) d\nu_1(y_1) d\nu_2(x_2)
\notag\\
&\qquad+
\iint f(x_1,x_2)d\nu_1(x_1)d\nu_2(x_2)\iint g(y_1,y_2)d\nu_1(y_1)d\nu_2(y_2).
\labelPartII{SecondMomentDoublePPP}
\end{align}
We apply \eqrefPartII{SecondMomentDoublePPP} to \eqrefPartII{xiAgainstZ*}.
In the notation of \eqrefPartII{SecondMomentDoublePPP}, we have $x_j=(t_j,bp^{\sss(j)})$, and $f(x_1,x_2)=g(x_1,x_2)=\e^{-\lambda_n(\tilde{a}_1)t_1} \e^{-\lambda_n(\tilde{a}_2)t_2} \bar{z}_{t-T_\unfr-t_1 \! , \, t-T_\unfr-t_2}^{\chi^{(K)}_n,bp^{(1)},bp^{(2)}}(\vec{\tilde{a}})$.
Integration against the intensity measure $d\nu_j=d\mu_{n,\fr}^{\sss(j)} \otimes d\P(\BP^{\sss(j)}\in\cdot)$ is equivalent to a branching process expectation together with an integration over $t_j$, and we will therefore obtain first or second moments of one- or two-vertex characteristics from the various terms in \eqrefPartII{SecondMomentDoublePPP}.
Namely, for $\vec{\xi}, \vec{\zeta} \in \R^2$ and writing $b_j=\exp(\zeta_j/\sqrt{\log(n/s_n^3)})$, we obtain
\begin{align}
&s_n^6 \e^{-2t^*-\frac{1}{4}\norm{\vec{\xi}}^2-2u^*-\frac{1}{4}\norm{\vec{\zeta}}^2-o(1)}\condE{ \hat{Z}'_{n,t^*}(\vec{\xi}) \hat{Z}'_{n,u^*}(\vec{\zeta})  }{\cluster_\fr}
\notag\\
&\quad=
\iint \e^{-(\lambda_n(\tilde{a}_1)+\lambda_n(\tilde{b}_1))t_1} \e^{-(\lambda_n(\tilde{a}_2)+\lambda_n(\tilde{b}_2))t_2} \bar{M}_{t-T_\unfr-\vec{t},u-T_\unfr-\vec{t}}^{\chi_n^{\sss(K)},\chi_n^{\sss(K)}}\left(\vec{\tilde{a}},\vec{\tilde{b}}\right) d\mu_{n,\fr}^{\sss(1)}(t_1) d\mu_{n,\fr}^{\sss(2)}(t_2)
\notag\\
&\qquad
\begin{aligned}
+ \iiint
&\e^{-(\lambda_n(\tilde{a}_1)+\lambda_n(\tilde{b}_1))t_1}\e^{-\lambda_n(\tilde{a}_2)t_2}\e^{-\lambda_n(\tilde{b}_2)u_2}
\bar{M}_{t-T_\unfr-t_1,u-T_\unfr-t_1}^{\rho_{t-T_\unfr-t_2,\tilde{a}_2},\rho_{u-T_\unfr-u_2,\tilde{b}_2}}(\tilde{a}_1,\tilde{b}_1)
\\&\quad
\times d\mu_{n,\fr}^{\sss(1)}(t_1) d\mu_{n,\fr}^{\sss(2)}(t_2) d\mu_{n,\fr}^{\sss(2)}(u_2)
\\
+\iiint
&\e^{-(\lambda_n(\tilde{a}_2)+\lambda_n(\tilde{b}_2))t_2}\e^{-\lambda_n(\tilde{a}_1)t_1}\e^{-\lambda_n(\tilde{b}_1)u_1}
\bar{M}_{t-T_\unfr-t_2,u-T_\unfr-t_2}^{\rho_{t-T_\unfr-t_1,\tilde{a}_1},\rho_{u-T_\unfr-u_1,\tilde{b}_1}}(\tilde{a}_2,\tilde{b}_2)
\\&\quad
\times d\mu_{n,\fr}^{\sss(1)}(t_2) d\mu_{n,\fr}^{\sss(2)}(t_1) d\mu_{n,\fr}^{\sss(2)}(u_1)
\end{aligned}
\notag\\
&\qquad+
\iint \e^{-\lambda_n(\tilde{a}_1)t_1} \e^{-\lambda_n(\tilde{a}_2)t_2} \bar{m}_{t-T_\unfr-\vec{t}}^{\chi_n^{\sss(K)}}(\vec{\tilde{a}}) d\mu_{n,\fr}^{\sss(1)}(t_1) d\mu_{n,\fr}^{\sss(2)}(t_2)
\notag\\
&\qquad\qquad\qquad \times
\iint \e^{-\lambda_n(\tilde{b}_1)u_1} \e^{-\lambda_n(\tilde{b}_2)u_2} \bar{m}_{u-T_\unfr-\vec{u}}^{\chi_n^{\sss(K)}}(\vec{\tilde{b}}) d\mu_{n,\fr}^{\sss(1)}(u_1) d\mu_{n,\fr}^{\sss(2)}(u_2)
,
\labelPartII{SecondMomentZ*}
\end{align}
where $\rho_{t_2,\tilde{a}_2}(t'_1)=\bar{m}_{t_2}^{\chi_n^{\sss(K)}(t'_1,\cdot)}(\tilde{a}_2)$ is the characteristic from \eqrefPartII{IteratedCharMeans}.
Abbreviate
\begin{equation}
D=\e^{-2t^*-\frac{1}{4}\norm{\vec{\xi}}^2}\hat{Z}'_{n,t^*}(\vec{\xi})-\e^{-2u^*}\abs{Z'_{n,u^*}}=\e^{-2t^*-\frac{1}{4}\norm{\vec{\xi}}^2}\hat{Z}'_{n,t^*}(\vec{\xi})-\e^{-2u^*}\hat{Z}'_{n,u^*}(\vec{0}).
\end{equation}
Then
\begin{align}
\condE{D^2}{\cluster_\fr}
&=
\e^{-4t^*-\frac{2}{4}\norm{\vec{\xi}}^2}\condE{\hat{Z}_{n,t^*}'(\vec{\xi}) \hat{Z}_{n,t^*}'(\vec{\xi})}{\cluster_\fr}-2\e^{-2t^*-\frac{1}{4}\norm{\vec{\xi}}^2-2u^*}\condE{\hat{Z}_{n,t^*}'(\vec{\xi}) \hat{Z}_{n,u^*}'(\vec{0})}{\cluster_\fr}
\notag\\
& \qquad \qquad
+\e^{-4u^*}\condE{\hat{Z}_{n,u^*}'(\vec{0}) \hat{Z}_{n,u^*}'(\vec{0})}{\cluster_\fr}
.
\labelPartII{SecondMomentOfDifference}
\end{align}
Applying \eqrefPartII{SecondMomentZ*} to each of the three terms in \eqrefPartII{SecondMomentOfDifference} gives twelve summands.
From the first term in the right-hand side of \eqrefPartII{SecondMomentZ*}, we obtain
\begin{align}
\frac{\e^{o(1)}}{s_n^2}\iint
s_n^{-4}&\left[
\e^{-2\lambda_n(\tilde{a}_1)t_1} \e^{-2\lambda_n(\tilde{a}_2)t_2} \bar{M}_{t-T_\unfr-\vec{t},t-T_\unfr-\vec{t}}^{\chi_n^{\sss(K)},\chi_n^{\sss(K)}}\left(\vec{\tilde{a}},\vec{\tilde{a}}\right)
\right.
\notag\\
&
- 2\e^{-(\lambda_n(\tilde{a}_1)+\lambda_n(1))t_1} \e^{-(\lambda_n(\tilde{a}_2)+\lambda_n(1))t_2} \bar{M}_{t-T_\unfr-\vec{t},u-T_\unfr-\vec{t}}^{\chi_n^{\sss(K)},\chi_n^{\sss(K)}}\left(\vec{\tilde{a}},\vec{1}\right)
\notag\\
&\left.
+ \, \e^{-2\lambda_n(1)t_1} \e^{-2\lambda_n(1)t_2} \bar{M}_{u-T_\unfr-\vec{t},u-T_\unfr-\vec{t}}^{\chi_n^{\sss(K)},\chi_n^{\sss(K)}}\left(\vec{1},\vec{1}\right)
\right]
d\mu_{n,\fr}^{\sss(1)}(t_1) d\mu_{n,\fr}^{\sss(2)}(t_2)
.
\labelPartII{FirstTermSecondMomentOfDifference}
\end{align}
As in \eqrefPartII{EgivenCfr}, \refthm{TwoVertexConvRestrictedSum} and \refcoro{lambdanTaylor} imply that the integrand in \eqrefPartII{FirstTermSecondMomentOfDifference} is at most $(4\epsilon+o(1))\e^{-2\lambda_n(1)t_1+o(1)} \e^{-2\lambda_n(1)t_2+o(1)}$ in absolute value, uniformly for $\lambda_n(1)[t_1 \vee t_2]\leq K'$, and is otherwise bounded by $4K''\e^{-2\lambda_n(1\wedge \tilde{a}_1)t_1}\e^{-2\lambda_n(1\wedge\tilde{a}_2)t_2}$.
Using Lemmas~\refPartII{l:ExpectedUnfrozenChildren} and \refPartII{l:FrozenVerticesAreYoung}, it easily follows that the quantity in \eqrefPartII{FirstTermSecondMomentOfDifference} is at most $5\epsilon$ in absolute value, {\whpdot}
 From the second term in the right-hand side of \eqrefPartII{SecondMomentZ*}, we obtain similarly
\begin{align}
\labelPartII{SecondTermSecondMomentOfDifference}
\frac{\e^{o(1)}}{s_n^3}\iiint
s_n^{-3}&\left[
\e^{-2\lambda_n(\tilde{a}_1)t_1} \e^{-2\lambda_n(\tilde{a}_2)(t_2+u_2)} \bar{M}_{t-T_\unfr-t_1,t-T_\unfr-t_1}^{\rho_{t-T_\unfr-t_2,\tilde{a}_2},\rho_{t-T_\unfr-u_2,\tilde{a}_2}}\left(\tilde{a}_1,\tilde{a}_1\right)
\right.
\\
&
- 2\e^{-(\lambda_n(\tilde{a}_1)+\lambda_n(1))t_1}\e^{-\lambda_n(\tilde{a}_2)t_2}\e^{-\lambda_n(1)u_2} \bar{M}_{t-T_\unfr-t_1,u-T_\unfr-t_1}^{\rho_{t-T_\unfr-t_2,\tilde{a}_2},\rho_{u-T_\unfr-u_2,1}}(\tilde{a}_1,1)
\notag\\
&\left.
+ \, \e^{-2\lambda_n(1)t_1} \e^{-2\lambda_n(1)(t_2+u_2)} \bar{M}_{u-T_\unfr-t_1,u-T_\unfr-t_1}^{\rho_{u-T_\unfr-t_2,1},\rho_{u-T_\unfr-u_2,1}}\left(1,1\right)
\right]
d\mu_{n,\fr}^{\sss(1)}(t_1) d\mu_{n,\fr}^{\sss(2)}(t_2) d\mu_{n,\fr}^{\sss(2)}(u_2)
.
\notag
\end{align}
Arguing from \refprop{HalfTwoVertexConvRestricted} instead of \refthm{TwoVertexConvRestrictedSum}, the quantity in \eqrefPartII{SecondTermSecondMomentOfDifference} is again at most $5\epsilon$ in absolute value, {\whpdot}
The third and fourth terms from \eqrefPartII{SecondMomentZ*} are analogous.
This verifies the second moment condition \eqrefPartII{SecondMomentAgainstxi}.

\paragraph{Verification of \eqrefPartII{ManyParticles}:}
Finally we verify condition \refitem{PPPTightIntensityRightCoxTheorem} of \refthm{FirstPointCoxAsymp}.
For $R\in (0,\infty)$, let $v_{j,R}$ be the first vertex in $\cluster^{\sss(j)}$ born after time $T_\unfr$ with $\abs{\BP_{f_n(1)}^{\sss(v)}}\ge Rs_n^2$.
It was proved in \refother{\reflemmaPartI{RluckyBornSoon}} that
\begin{equation}\labelPartII{RluckyBornSoon}
T_{v_{j,R}}^\cluster =T_\unfr +\Op(f_n(1)).
\end{equation}
Let $\eps>0$ and choose $n_0 \in \N$ and $C' <\infty$ such that $\lambda_n(1)(f_n(1)+f_n(1+1/s_n)) \le C'$ for all $n\ge n_0$.
Choose $R \in (0,\infty)$ such that $\frac{16}{(\log 2)^4}\e^{4C'} (1+2R)/R^2 \le \eps$.
After possibly increasing $n_0$, \eqrefPartII{RluckyBornSoon} and \eqrefPartII{lambdanAsymp} yield a constant $C'' \in (0,\infty)$ such that with probability at least $1-\eps$, $\lambda_n(1)(T_{v_{j,R}}^\cluster - T_\unfr)\le C''$.
Denote this event by $\mathcal{A}$ and assume for the remainder of the proof that $\mathcal{A}$ holds.

Set $\cL_R^{\sss(j)}$ to be the collection of descendants $w$ of $v_{j,R}$ such that $T_w^\cluster-T_{v_{j,R}}^\cluster\leq f_n(1)$ (thus $\abs{\cL_R^{\sss(j)}}\geq Rs_n^2$ by definition).
We will repeat the previous arguments used to analyze $Z_{n,t^*}$, with $\cL_R^{\sss(j)}$ playing the role of $\cluster_\fr^{\sss(j)}$.
Instead of $\boundary\cL_R^{\sss(j)}$, we consider the subset
\begin{equation}
\cU_R^{\sss(j)}=\set{v\in\boundary\cL_R^{\sss(j)}\colon T_{\parent{v}}^\cluster+f_n(1)<T_v^\cluster<T_{\parent{v}}^\cluster+f_n(1+1/s_n)}
\end{equation}
(the set of immediate children of $\cL_R^{\sss(j)}$ born to a parent of age between $f_n(1)$ and $f_n(1+1/s_n)$).
Since $v_{j,R}$ is born after $T_\unfr$, it follows immediately that $\cluster_\fr^{\sss(j)} \cap \cU_R^{\sss(j)}=\emptyset$.
Moreover the arrival time $T_v^\cluster$ of any $v\in\cU_R^{\sss(j)}$ satisfies
\begin{equation}\labelPartII{muntildeSupport}
T_{v_{j,R}}^{\cluster}+ f_n(1) <T_{\parent{v}}^\cluster+f_n(1) <T_v^\cluster <T_{\parent{v}}^\cluster+f_n(1+1/s_n)\le T_{v_{j,R}}^{\cluster}+ f_n(1) + f_n(1+1/s_n).
\end{equation}

Let $\tilde{\cP}^{\sss(j)}_{n,R}=\sum_{v\in\cU_R^{\sss(j)}} \delta_{(T_v^\cluster-T_\unfr, \BP^{\sss(v)})}$ (cf.\ \eqrefPartII{UnfrozenChildrenCox}).
The definitions of $v_{j,R}$ and $\cL_R^{\sss(j)}$ depend only on descendants born to parents of age at most $f_n(1)$, whereas $\cU_R^{\sss(j)}$ consists of descendants born to parents of age greater than $f_n(1)$.
It follows that $\tilde{\cP}^{\sss(j)}_{n,R}$ is a Cox process conditional on $\cL_R^{\sss(j)}$ with intensity measure $d\tilde{\mu}_{n,R}^{\sss(j)}\otimes d\P(\BP^{\sss(1)}\in\cdot)$, where $\tilde{\mu}_{n,R}$ is the measure such that, for all measurable, nonnegative functions $h$ on $\R$,
\begin{equation}
\int h(y) \, d\tilde{\mu}_{n,R}^{\sss(j)}(y)= \sum_{w \in \cL_R^{\sss(j)}} \int_{f_n(1)}^{f_n(1+1/s_n)} h(y-(T_\fr^{\sss(j)}-T_w)) \, d\mu_n(y).
\end{equation}
(These are the analogues of $\cP_{n,\unfr}^{\sss(j)}$ and $\mu_{n,\fr}^{\sss(j)}$, see \eqrefPartII{FrozenIntensityTestFunction} and \eqrefPartII{UnfrozenChildrenCox}.)
In particular, the total mass of $\tilde{\mu}_{n,R}^{\sss(j)}$ is at least $Rs_n$.
Using \eqrefPartII{muntildeSupport} we find that its support is in $[T_{v_{j,R}}^\cluster-T_\unfr+f_n(1), T_{v_{j,R}}^\cluster-T_\unfr+f_n(1)+f_n(1+1/s_n)]$ and, uniformly over $t_j$ in that set and $t^*\ge 0$,
\begin{align}
\lambda_n(1)(T_{v_{j,R}}^\cluster - T_\unfr)  \le \lambda_n(1) t_j \le  \lambda_n(1)(T_{v_{j,R}}^\cluster - T_\unfr) +C',
\labelPartII{tjBounds} \\
\lambda_n(1) (t-T_\unfr-t_j) \ge \frac{1}{2}\log(n/s_n^3)-C''-C'.
\labelPartII{TimeArgumentLarge}
\end{align}
In particular, the fact that the right-hand side of \eqrefPartII{TimeArgumentLarge} tends to infinity implies that, possibly after increasing $n_0$, $\cU_R^{\sss(j)} \subset \cluster_t^{\sss(j)}$ whenever $n\geq n_0$ and $t^*\geq 0$.
Furthermore, \refthm{TwoVertexConvRestrictedSum} and \refprop{HalfTwoVertexConvRestricted} yield a constant $K<\infty$ such that
\begin{align}
&\bar{m}^{\chi_n^{(K)}}_{t-T_\unfr-t_1,t-T_\unfr-t_2}(\vec{1}) \geq s_n/2,
\labelPartII{full1stMoment}\\
& \bar{M}_{t-T_\unfr-\vec{t},u-T_\unfr-\vec{t}}^{\chi_n^{\sss(K)},\chi_n^{\sss(K)}}\left(\vec{1},\vec{1}\right) \le \frac{2}{(\log 2)^2} s_n^4,
\labelPartII{full2ndMomentChar}\\
&\bar{M}_{t-T_\unfr-t_i,u-T_\unfr-t_i}^{\rho_{t-T_\unfr-t_j,1},\rho_{u-T_\unfr-u_j,1}}(1,1)\le  \frac{2}{\log 2}s_n^3,
\labelPartII{partial2ndMomentChar}
\end{align}
for $\set{i,j}=\set{1,2}$, all $t_1,t_2$ in the support of $\tilde{\mu}_{n,R}^{\sss(j)}$ and $n\ge n_0$.
Using this $K$ to truncate, let $\tilde{Z}'_{n,t^*,R}$ denote the restriction of $Z'_{n,t^*}$ to pairs $(v_1,v_2)$ for which each $v_j$ is a descendant of some vertex of $\cU_R^{\sss(j)}$.
As in the argument leading to \eqrefPartII{xiAgainstZ*}, we conclude that
\begin{align}
\abs{Z_{n,t^*}^*} \ge \bigabs{\tilde{Z}'_{n,t^*,R}}
=\frac{\e^{2t^*}}{s_n^3} \int d\tilde{\cP}^{\sss(1)}_{n,R}(t_1,bp^{\sss(1)}) \int d\tilde{\cP}^{\sss(2)}_{n,R}(t_2,bp^{\sss(2)}) \e^{-\lambda_n(1)(t_1+t_2)} \bar{z}^{\chi_n^{(K)},bp^{(1)},bp^{(2)}}_{t-T_\unfr-t_1,t-T_\unfr-t_2}(\vec{1}).
\end{align}
Hence, on $\mathcal{A}$, we may use \eqrefPartII{tjBounds} and \eqrefPartII{full1stMoment} to obtain
\begin{align}
&\condE{\bigabs{\tilde{Z}'_{n,t^*,R}}}{\cL^{\sss(1)}_R, \cL^{\sss(2)}_R}
\labelPartII{tildeZmean}\\&\quad
\geq
\frac{\e^{2t^*}}{s_n^3} \int d\tilde{\mu}_{n,R}^{\sss(1)}(t_1) \int d\tilde{\mu}_{n,R}^{\sss(2)}(t_2) \exp\left(-\lambda_n(1)(T_{v_{1,R}}^\cluster-T_\unfr+T_{v_{2,R}}^\cluster-T_\unfr)-2C'\right)
s_n/2
\notag\\& \quad
=
\frac{1}{2}\exp\left(2t^*-\lambda_n(1)(T_{v_{1,R}}^\cluster-T_\unfr+T_{v_{2,R}}^\cluster-T_\unfr)-2C'\right)\frac{\abs{\tilde{\mu}_{n,R}^{\sss(1)}}}{s_n} \frac{\abs{\tilde{\mu}_{n,R}^{\sss(2)}}}{s_n}
\labelPartII{tildeZmeanbound}\\
& \quad
\ge  \frac{R^2}{2} \exp\left(2t^*- 2 (C''+C')\right).
\labelPartII{tildeZmeanRoughBound}
\end{align}
Similarly, using \eqrefPartII{SecondMomentDoublePPP} to compute the variance of $\bigabs{\tilde{Z}'_{n,t^*,R}}$ as in \eqrefPartII{SecondMomentZ*}, and employing \eqrefPartII{tjBounds} and \eqrefPartII{full2ndMomentChar}--\eqrefPartII{partial2ndMomentChar} for the estimation, we find that, on $\mathcal{A}$,
\begin{align}
\Var\condparentheses{ \bigabs{\tilde{Z}'_{n,t^*,R}} }{\cL^{\sss(1)}_R, \cL^{\sss(2)}_R}
&\leq
\frac{2}{(\log 2)^2}\exp\left(4t^*-2\lambda_n(1)(T_{v_{1,R}}^\cluster - T_\unfr+T_{v_{2,R}}^\cluster - T_\unfr) \right)
\notag\\
&\quad
\times\left(
\frac{\abs{\tilde{\mu}_{n,R}^{\sss(1)}} \abs{\tilde{\mu}_{n,R}^{\sss(2)}} s_n^4 }{s_n^6}
+\,
\frac{\abs{\tilde{\mu}_{n,R}^{\sss(1)}} \abs{\tilde{\mu}_{n,R}^{\sss(2)}}^2 s_n^3
+
\abs{\tilde{\mu}_{n,R}^{\sss(1)}}^2 \abs{\tilde{\mu}_{n,R}^{\sss(2)}} s_n^3 }{s_n^6}
\right)
.
\labelPartII{tildeZvariance}
\end{align}
Abbreviate the conditional mean from \eqrefPartII{tildeZmean} as $m$.
Chebyshev's inequality, \eqrefPartII{tildeZmeanbound} and \eqrefPartII{tildeZvariance} give
\begin{align}
\condP{ \bigabs{\tilde{Z}'_{n,t^*,R}} \geq \tfrac{1}{2} m}{\cL^{\sss(1)}_R, \cL^{\sss(2)}_R}
&\geq 1-\frac{16}{(\log 2)^4}\e^{4C'}
\left( \frac{s_n^2}{\abs{\tilde{\mu}_{n,R}^{\sss(1)}} \abs{\tilde{\mu}_{n,R}^{\sss(2)}}}
+ \frac{s_n}{\abs{\tilde{\mu}_{n,R}^{\sss(1)}}} + \frac{s_n}{\abs{\tilde{\mu}_{n,R}^{\sss(2)}}} \right)
\notag \\
&\ge 1-\frac{16}{(\log 2)^4}\e^{4C'}\left( \frac{1}{R^2} + 2\frac{1}{R} \right) \ge 1-\eps,
\labelPartII{tildeZbound}
\end{align}
on $\mathcal{A}$.
For given $C<\infty$, we use \eqrefPartII{tildeZmeanRoughBound} to choose $t^*$ sufficiently large that $\frac{1}{2}m \ge C$.
The claim now follows from $\abs{Z_{n,t^*}^*} \ge \bigabs{\tilde{Z}_{n,t^*,R}'}$ and \eqrefPartII{tildeZbound}.
\end{proof}

\subsection{No collisions from the frozen cluster: Proof of \reflemma{AtCollision}~\refitem{NoFrozenCollision}}\lbsubsect{FrozenCollisions}

In this section we prove \reflemma{AtCollision}~\refitem{NoFrozenCollision}, which will show that \whp\ the collision edge neither starts nor ends in the frozen cluster.

\begin{defn}\lbdefn{Lucky}
Let $\epsilon_1$ denote the constant from \reflemma{ModerateAgeContribution} for $K=1$.
Call a vertex $v \in\tree^{\sss (j)}$ \emph{lucky} if $\bigabs{\BP^{\sss(v)}_{f_n(1)}} \geq s_n^2/\epsilon_1$, and set
\begin{equation}\labelPartII{LuckyTimeDefn}
T^{\sss(j)}_\lucky=\inf\set{T_v\colon v\in\tree^{\sss(j)}\setminus\set{\emptyset_j}\text{ is lucky and }T_v> T_{\parent{v}}+f_n(1)},
\end{equation}
the first time that a lucky vertex is born to a parent of age greater than $f_n(1)$.
\end{defn}

In view of \refdefn{Freezing} and \reflemma{ModerateAgeContribution}, we have
\begin{equation}\labelPartII{FreezingFromLucky}
v\in\tree^{\sss(j)}\text{ is lucky} \quad \implies \quad T_\fr^{\sss(j)}\leq T_v+f_n(1).
\end{equation}
In other words, a lucky vertex has enough descendants in time $f_n(1)$ that the integral in the definition \eqrefPartII{TfrDefn} of the freezing time must be at least $s_n$.

It has been proved in \refother{\reflemmaPartI{SumfninverseExponential}} that the distribution of
\begin{equation}\labelPartII{Sumfninverse}
\sum_{v\in\BP^{\sss(j)}_{T^{\sss(j)}_\lucky}} \left( f_n^{-1}\left( T^{\sss(j)}_\lucky-T_v \right)-1 \right)^+
\qquad \text{is exponential with rate }
\qquad\P(v\text{ is lucky})
\end{equation}
and that there is a constant $\delta>0$ so that for sufficiently large $n$,
\begin{equation}\labelPartII{LuckyProb}
\P(v\text{ is lucky}) \ge \delta/s_n.
\end{equation}

Now we are in the position to prove \reflemma{AtCollision}~\refitem{NoFrozenCollision}:
\begin{proof}[Proof of \reflemma{AtCollision}~\refitem{NoFrozenCollision}]
It suffices to show that the Cox intensity $Z_{n, \overline{t}}$ satisfies
\begin{equation}
\sum_{v_1 \in \cluster_{\overline{t}}^{\sss(1)}} \sum_{v_2 \in \cluster_{\overline{t}}^{\sss(2)}}
\indicator{\set{v_1,v_2} \intersect \cluster_{\fr} \neq \emptyset} Z_{n, \overline{t}}(\set{v_1} \times\set{v_2}) = o_{\P}(1),
\end{equation}
where we recall that $Z_{n,t}(\set{v_1} \times \set{v_2})= \tfrac{1}{n} \mu_n\bigl(\Delta R_{v_1,v_2}, R_1(t)-R_1(T_{v_1}^\cluster)+R_2(t)-R_2(T_{v_2}^\cluster)\bigr)$.

We begin with the contribution to $Z_{n,\overline{t}}$ arising from the restriction of $\mu_n$ to $(f_n(1),\infty)$.
Note that, by construction, $R_j(\overline{t})-R_j(T_\fr^{\sss(j)})=\lambda_n(1)^{-1}(\tfrac{1}{2}\log(n/s_n^3)+K)$ and $\left( R_j(T_\fr^{\sss(j)})-R_j(T_{v_j}^\cluster) \right)^+ = \left( T_\fr^{\sss(j)}-T_{v_j}^\cluster \right)^+$.
Hence
\begin{align}
& \tfrac{1}{n}\mu_n\big\vert_{(f_n(1),\infty)}\left( \Delta R_{v_1,v_2}, R_1(\overline{t})-R_1(T_{v_1}^\cluster)+R_2(\overline{t})-R_2(T_{v_2}^\cluster) \right)
\notag\\&\quad
\leq
\tfrac{1}{n} \mu_n\left( f_n(1), \frac{\log(n/s_n^3)+2K}{\lambda_n(1)}+R_1(T_\fr^{\sss(1)})-R_1(T_{v_1}^\cluster)+R_2(T_\fr^{\sss(2)})-R_2(T_{v_2}^\cluster) \right)
\notag\\&\quad
\leq
\tfrac{1}{n}\mu_n\left( f_n(1),3\frac{\log(n/s_n^3)+2K}{\lambda_n(1)} \right) + \sum_{j=1}^2 \tfrac{1}{n}\mu_n\left( f_n(1), f_n(1)\vee 3\left( T_\fr^{\sss(j)}-T_{v_j}^\cluster \right)^+ \right)
.
\labelPartII{3WaySum}
\end{align}
\reflemma{munDensityBounded} and \eqrefPartII{lambdanAsymp} imply that the first term in \eqrefPartII{3WaySum} is $O(1) (\log(n/s_n^3)+K)/(ns_n)$.
In the second term, the $j^\th$ summand is zero if $v_j \not\in \cluster_\fr^{\sss(j)}$.
For $v_j \in \cluster_\fr^{\sss(j)}$, we consider separately the intervals
\begin{equation}
\begin{split}
I_1&=\left(f_n(1), f_n(1) \vee \left( T_{\mathrm{lucky}}^{\sss(j)}-T_{v_j}^\cluster \right)^+\right),\\
I_2&=\left( f_n(1)\vee \left( T_{\mathrm{lucky}}^{\sss(j)}-T_{v_j}^\cluster \right)^+, f_n(1)\vee \left( T_\fr^{\sss(j)}-T_{v_j}^\cluster \right)^+ \right), \\
I_3&= \left( f_n(1)\vee \left( T_\fr^{\sss(j)}-T_{v_j}^\cluster \right)^+, f_n(1)\vee 3\left( T_\fr^{\sss(j)}-T_{v_j}^\cluster \right)^+ \right),
\end{split}
\end{equation}
where $T_{\mathrm{lucky}}^{\sss(j)}$ was defined in \refdefn{Lucky}.
The definition of $\mu_n$ gives $\mu_n(I_1)=\bigl( f_n^{-1}(T_{\mathrm{lucky}}^{\sss(j)}-T_{v_j}^\cluster)-1 \bigr)^+$.
For $I_2$, note from \eqrefPartII{FreezingFromLucky} that $I_2$ is a subinterval of $(f_n(1),\infty)$ of length at most $f_n(1)$, so $\mu_n(I_2)\leq O(1/s_n)$ by \reflemma{munDensityBounded}.
For $I_3$, note from \reflemma{ExtendedImpliesWeak} that $f_n(3^{1/\epsilonCondition s_n} m)\geq 3f_n(m)$ for any $m\geq 1$.
It follows that $f_n^{-1}(3y)\leq 3^{1/\epsilonCondition s_n} f_n^{-1}(y)=(1+O(1/s_n))f_n^{-1}(y)$ uniformly over $y\geq f_n(1)$, so that $\mu_n(I_3)\leq O(1/s_n)f_n^{-1}(T_\fr^{\sss(j)})$.
We conclude that
\begin{align}
& \tfrac{1}{n}\mu_n\big\vert_{(f_n(1),\infty)}\left( \Delta R_{v_1,v_2}, R_1(\overline{t})-R_1(T_{v_1}^\cluster)+R_2(\overline{t})-R_2(T_{v_2}^\cluster) \right)
\notag\\&
\leq
O(1) \left(\frac{\log(n/s_n^3)}{n s_n} + \sum_{j=1}^2 \indicator{v_j \in \cluster_\fr^{\sss(j)}}\left( \tfrac{1}{n}\left( f_n^{-1}\left( T_{\mathrm{lucky}}^{\sss(j)}-T_{v_j}^\cluster \right) -1 \right)^+ + \frac{1+f_n^{-1}(T_\fr^{\sss(j)})}{n s_n} \right)\right)
.\labelPartII{intensityLargeYValues}
\end{align}
By \refthm{TfrScaling}, $f_n^{-1}(T_\fr^{\sss(j)})=O_\P(1)$.
Sum \eqrefPartII{intensityLargeYValues} over $v_1 \in \cluster_{\overline{t}}^{\sss(1)}, v_2 \in \cluster_{\overline{t}}^{\sss(2)}$ with $\set{v_1,v_2} \cap \cluster_{\fr} \neq \emptyset$ and use $T_{v_j}^\cluster=T_{v_j}$ for $v_j \in \cluster_\fr^{\sss(j)}$ to obtain
\begin{equation}\labelPartII{ContributionBound}
O(1) \sum_{\set{j,j'}=\set{1,2}} \abs{\cluster^{\sss(\smash{j'})}_{\overline{t}}} \Big(\frac{\log(n/s_n^3)+O_\P(1)}{n s_n}\abs{\cluster^{\sss(j)}_\fr} + \frac{1}{n} \sum_{v_j\in\cluster^{\sss(j)}_\fr} \left( f_n^{-1}\left( T_{\mathrm{lucky}}^{\sss(j)}-T_{v_j} \right) -1 \right)^+ \Big) .
\end{equation}
By \reflemma{AtCollision}~\refitem{VolumeAtCollision}, $\abs{\cluster^{\sss(\smash{j'})}_{\overline{t}}}=O_\P(\sqrt{n s_n})$, and
by \refthm{FrozenCluster}~\refitem{FrozenVolume},  $\abs{\cluster^{\sss(j)}_\fr}=O_\P(s_n^2)$.
In the sum over $v_j\in\cluster^{\sss(j)}_\fr$, only terms with $v_j\in\cluster^{\sss(j)}_{T_{\mathrm{lucky}}^{\sss(j)}}$ can contribute, so \eqrefPartII{FreezingFromLucky}--\eqrefPartII{LuckyProb} imply that the inner sum in \eqrefPartII{ContributionBound} is $O_\P(s_n)$.
Hence \eqrefPartII{ContributionBound} is $O_\P(\log(n/s_n^3)/\sqrt{n/s_n^3})$, which is $o_\P(1)$ since $n/s_n^3\to\infty$.

We now turn to the contribution to $Z_{n,\overline{t}}$ arising from the restriction of $\mu_n$ to $[0,f_n(1)]$ and split the sum into three groups of vertex pairs.
Let $(J,J')$ denote the random ordering of $\set{1,2}$ for which $T_\fr^{\sss(J)} < T_\fr^{\sss(J')}$.
The first group of vertex pairs are those with $v_J \in \cluster_{\overline{t}}^{\sss(J)}$ and $v_{J'} \in \cluster_{T_\fr^{\sss (J)}}^{\sss(J')}$.
That is, the vertex in the slower-growing cluster is born before the faster cluster freezes.
We show that the number $\bigabs{\cluster_{T_\fr^{\sss (J)}}^{\sss(J')}}$ of such choices for $v_{J'}$ is $O_\P(1)$.
By \refthm{TfrScaling}, $f_n^{-1}(T_\fr^{\sss(j)}) \convp M^{\sss(j)}$ for $j\in \set{1,2}$ and $M^{\sss(1)} \neq M^{\sss(2)}$ a.s.
Hence $T_\fr^{\sss(J)} < f_n(M^{\sss(J')})$ whp.
A vertex $v'\in\cluster^{\sss(J')}_\fr$ with $T_{v'}^\cluster<f_n(M^{\sss(J')})$ must be connected to the root $\emptyset_{J'}$ by edges all of PWIT edge weight less than $M^{\sss(J')}$.
The number of such vertices is finite and independent of $n$, and is in particular $O_\P(1)$.
Since the measure $\mu_n\big\vert_{[0,f_n(1)]}$ has total mass $1$ by construction, the total contribution to $Z_{n,\overline{t}}$ of this group of vertex pairs is at most $\tfrac{1}{n}\abs{\cluster_{\overline{t}}^{\sss(J)}}O_\P(1)$, which is $o_\P(1)$ by \reflemma{AtCollision}~\refitem{VolumeAtCollision}.

The second group are pairs $(v_1,v_2)$ with $T_{v_{J'}}^\cluster\geq T_\fr^{\sss(J)}\geq T_{v_J}^\cluster$.
For these pairs, by \eqrefPartII{DeltaRcluster},
\begin{equation}\labelPartII{DeltaRBound}
\Delta R_{v_1,v_2} = R_J(T_{v_{J'}}^\cluster)-R_J(T_{v_J}^\cluster)\geq R_J(T_\fr^{\sss(J)})-R_J(T_{v_J}^\cluster)=T_\fr^{\sss(J)}-T_{v_J}^\cluster.
\end{equation}
We can therefore bound the contribution to $Z_{n,\overline{t}}$ in terms of the contribution to $\mu_{n,\fr}^{\sss(J)}$:
\begin{align}
&\frac{1}{n}\sum_{v_1,v_2\colon T_{v_{J'}}^\cluster\geq T_\fr^{\sss(J)}\geq T_{v_J}^\cluster} \mu_n\big\vert_{[0,f_n(1)]}\left( \Delta R_{v_1,v_2}, R_1(\overline{t})-R_1(T_{v_1}^\cluster)+R_2(\overline{t})-R_2(T_{v_2}^\cluster) \right)
\notag\\&\quad
\leq
\tfrac{1}{n}\abs{\cluster^{\sss(\smash{J'})}_{\overline{t}}}  \sum_{v_J\in\cluster^{\sss(J)}_\fr} \mu_n\left( f_n(1)\wedge \left( T_\fr^{\sss(J)}-T_{v_J}^\cluster \right), f_n(1) \right)
\notag\\&\quad
\leq
\tfrac{1}{n} O_\P(\sqrt{n s_n}) \sum_{v_J\in\cluster^{\sss(J)}_\fr} \e^{\lambda_n(1)f_n(1)} \int_0^\infty \e^{-\lambda_n(1)y} \mu_n\left( T_\fr^{\sss(J)}-T_{v_J}^\cluster +dy \right)
\leq
O_\P(\sqrt{s_n^3/n})
\labelPartII{Group2Bound}
\end{align}
by \reflemma{AtCollision}~\refitem{VolumeAtCollision}, \reflemma{ExpectedUnfrozenChildren} and \eqrefPartII{lambdanAsymp}.
The last group of vertex pairs are those with $T_{v_J}^\cluster \ge T_\fr^{\sss(J')} \ge T_{v_{J'}}^\cluster$.
These pairs satisfy $\Delta R_{v_1,v_2} \geq T_\fr^{\sss(J')}-T_{v_{J'}}^\cluster$ instead of \eqrefPartII{DeltaRBound}, and their contribution can therefore be handled as in \eqrefPartII{Group2Bound} with $J$ and $J'$ interchanged.
\end{proof}

\noindent
{\bf{Acknowledgements:}} A substantial part of this work has been done at Eurandom and Eindhoven University of Technology.
ME and JG are grateful to both institutions for their hospitality.\\
The work of JG was carried out in part while at Leiden University (supported in part by the European Research Council grant VARIS 267356), the Technion, and the University of Auckland (supported by the Marsden Fund, administered by the Royal Society of New Zealand) and JG thanks his hosts at those institutions for their hospitality.
The work of RvdH is supported by the Netherlands
Organisation for Scientific Research (NWO) through VICI grant 639.033.806
and the Gravitation {\sc Networks} grant 024.002.003.
\vskip1cm

\labelPartII{notation}
\paragraph{A short guide to notation:}
\begin{itemize}
\item $\SWT^{\sss(j)}_t$ is SWT from vertex $j\in\set{1,2}$.
\item$\cS^{\sss(j)}_t$ is the SWT from vertex $j\in \set{1,2}$ such that $\cS^{\sss(1)}$ and $\cS^{\sss(2)}$ cannot merge and with an appropriate freezing procedure.
\item $\cS_t=\cS_t^{\sss(1)}\cup \cS_t^{\sss(2)}$
\item $\BP^{\sss(j)}$ is branching process copy number $j$ where $j\in \set{1,2}$, without freezing.
\item $\cluster^{\sss(j)}$ is branching process copy number $j$ where $j\in \set{1,2}$, with freezing.
\item $\cluster_t$ is the union of 2 CTBPs with the appropriate freezing of one of them.
\item $\thinnedcluster_t$ is the union of 2 CTBPs with the appropriate freezing of one of them, and the resulting thinning.
Thus, $\thinnedcluster_t$ has the same law as the frozen $\cS_t$.
\item $f_n$ is the function with $Y_e^{\sss(K_n)}\overset{d}{=} f_n(nE)$, where $E$ is exponential with mean $1$.
\item $\mu_n$ is the image of the Lebesgue measure on $(0,\infty)$ under $f_n$
\item $\lambda_n(a)$ is the exponential growth rate of the CTBP, cf.
\eqrefPartII{lambdaaDefn}.
\item $z_t^{\chi}(a)$ and $z_{\vec{t}}^{\chi}(\vec{a})$ are the generation-weighted vertex characteristics from one and two vertices, respectively, cf.\eqrefPartII{1VertexCharDef} and \eqrefPartII{2VertexCharDef}.
\item $\bar{m}_t^{\chi}(a)$ and $\bar{m}_{\vec{t}}^{\chi}(\vec{a})$ are the expected, rescaled vertex characteristics, cf.\ \eqrefPartII{barredOneVertexChars} and \eqrefPartII{barredTwoVertexChars}, respectively
\end{itemize}

{\small \bibliographystyle{plain}
\bibliography{FPP}}

\end{document}